\theoremstyle{plain}
\newtheorem{theorem}{Theorem}[section]
\newtheorem{lemma}[theorem]{Lemma}
\newtheorem{corollary}[theorem]{Corollary}
\newtheorem{proposition}[theorem]{Proposition}
\newtheorem{conjecture}[theorem]{Conjecture}
\newtheorem{claim}{}[theorem] 
\newenvironment{subproof}{\begin{proof}[Subproof.]}{\end{proof}}
\theoremstyle{definition}
\Crefname{theorem}{Theorem}{Theorems}
\DeclareMathOperator{\cl}{cl}
\DeclareMathOperator{\fcl}{fcl}
\DeclareMathOperator{\si}{si}
\DeclareMathOperator{\co}{co}
\newcommand{\del}{\backslash}
\newcommand{\ba}{\backslash}
\newcommand{\cocl}{\cl^*}
\newcommand{\lc}{\sqcap}
\newcommand{\dY}{$\Delta$\nobreakdash-$Y$}
\newcommand{\Yd}{$Y$\nobreakdash-$\Delta$}
\newcommand{\TQ}{\mathit{TQ}}
\newcommand{\AG}{\mathit{AG}}
\newcommand{\GF}{\textrm{GF}}
\newcommand{\seq}[1]{[#1]}
\newcommand{\utfutf}{\{U_{2,5},U_{3,5}\}}
\tikzstyle{VertexStyle} = [shape = circle, draw, fill]
\tikzset{pre/.style={-}}    
\tikzstyle{every node}=[circle, inner sep=0pt, minimum width=4pt]
\title[The excluded minors for $2$-regular matroids]{The excluded minors for $2$- and $3$-regular matroids}
\author[N.\ Brettell]{Nick Brettell}
\address{School of Mathematics and Statistics, Victoria University of Wellington, New Zealand}
\email{nick.brettell@vuw.ac.nz}
\author[J.\ Oxley]{James Oxley}
\address{Department of Mathematics, Louisiana State University, Baton Rouge, Louisiana, USA}
\email{oxley@math.lsu.edu}
\author[C.\ Semple]{Charles Semple}
\address{School of Mathematics and Statistics, University of Canterbury, New Zealand}
\email{charles.semple@canterbury.ac.nz}
\author[G.\ Whittle]{Geoff Whittle}
\address{School of Mathematics and Statistics, Victoria University of Wellington, New Zealand}
\email{geoff.whittle@vuw.ac.nz}
\thanks{The first author was supported by a Rutherford Foundation postdoctoral fellowship.  The first, third and fourth authors were supported by the New Zealand Marsden Fund.}
\date{\today}
\keywords{matroids, excluded minors, representations, fragility}
\begin{document}

\begin{abstract}
  The class of \emph{$2$-regular} matroids is a natural generalisation of regular and near-regular matroids.
  We prove an excluded-minor characterisation for the class of $2$-regular matroids.
  The class of \emph{$3$-regular} matroids coincides with the class of matroids representable over the Hydra-$5$ partial field,
  and the $3$-connected matroids in the class with a $U_{2,5}$- or $U_{3,5}$-minor are precisely those with six inequivalent representations over $\GF(5)$.
  We also prove that an excluded minor for this class has at most $15$ elements.
\end{abstract}

\maketitle

\section{Introduction}

Let $\mathbb Q(\alpha_1,\ldots,\alpha_n)$ denote the field obtained by extending the rational numbers by $n$ independent transcendentals $\alpha_1,\ldots,\alpha_n$. For a non-negative integer~$k$, a matroid is
\emph{$k$-regular} if it has a representation by a matrix $A$ over $\mathbb Q(\alpha_1,\ldots,\alpha_k)$
in which every non-zero subdeterminant of $A$ is in the set that consists of all integer powers of 
differences of distinct members of $\{0,1,\alpha_1,\ldots,\alpha_k\}$. 
The class of $0$-regular matroids coincides with the class of \emph{regular} matroids. 
The class of $1$-regular matroids is the class
of {\em near-regular} matroids, and is the class of matroids representable over all fields of size at
least~3~\cite{Whittle97}. Excluded minor characterisations for regular matroids are given by 
Tutte \cite{Tutte58} and for near-regular matroids by Hall, Mayhew and van Zwam~\cite{HMvZ11}. 

This paper focuses on $2$-regular and $3$-regular matroids. We prove the following:

\begin{theorem}
\label{intro1}
An excluded minor for either the class of $2$-regular or $3$-regular matroids has at most 15 elements.
\end{theorem}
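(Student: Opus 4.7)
The strategy follows the standard template for bounding excluded minors of partial-field representable classes: reduce to the $3$-connected case, show most elements are controlled by a strong stabilizer, and bound the remaining elements via a structural analysis of fragile matroids. Let $M$ be a putative excluded minor for the class of $k$-regular matroids for $k\in\{2,3\}$. Since each of these classes is minor-closed and closed under duality and $2$-sums (in the partial-field sense), a standard reduction argument shows that $M$ is $3$-connected; this is the first step.

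The second step is to locate a small $3$-connected minor $N$ of $M$ that is a strong stabilizer for the class. The abstract flags $\{U_{2,5},U_{3,5}\}$ as central, and these are the natural candidates: stabilizer theory for the relevant (Hydra-type) partial field should guarantee that any $\GF(5)$-representation of $N$ extends uniquely to a representation of every $3$-connected matroid in the class containing $N$ as a minor, mirroring the role played by $U_{2,4}$ in the Hall--Mayhew--van Zwam treatment of near-regular matroids. A splitter-type theorem supplies an $N$-minor of $M$ to start the analysis.

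The third step is the key lifting argument. For any $e\in E(M)$ such that both $M\ba e$ and $M/e$ are $3$-connected and contain an $N$-minor, the unique extensions of a fixed representation of $N$ to $M\ba e$ and to $M/e$ must agree on the common $N$-minor, and hence can be spliced into a representation of $M$, contradicting the excluded-minor status of $M$. Consequently every element of $M$ must be $N$-\emph{fragile}: at least one of $M\ba e$, $M/e$ either loses the $N$-minor or loses $3$-connectivity in a way controlled by a Seymour/Whittle splitter theorem.

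The main obstacle, and the bulk of the work, is converting fragility into the numerical bound $|E(M)|\le 15$. This requires a structural classification of the $3$-connected $\{U_{2,5},U_{3,5}\}$-fragile matroids inside the ambient representable class: such matroids should have a tightly restricted, essentially path-like, structure of bounded width. Combining this classification with the handful of elements that merely break $3$-connectivity (handled by Tutte/Bixby-style $\Delta$--$Y$ and splitter manoeuvres) and with the lifting contradiction above should force the precise bound $|E(M)|\le 15$, where the constant $15$ arises from an extremal count over the possible fragile cores together with the small boundary of non-fragile elements that the lifting argument permits.
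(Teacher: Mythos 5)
Your high-level philosophy is right — strong stabilizers, a fragility argument, and a structural classification of $\{U_{2,5},U_{3,5}\}$-fragile matroids do appear — but the central step you propose is wrong, and the actual mechanism of the proof is quite different.

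The splicing argument in your third step does not work. Having representations of $M\ba e$ and $M/e$ that agree on a common $N$-minor does \emph{not} produce a representation of $M$. Those two representations determine a candidate matrix (the column for $e$ from $M\ba e$, the row for $e$ from $M/e$), but that candidate need not be a $\mathbb{P}$-matrix, and even when it is, it need not represent $M$. If your claim were true, every excluded minor for a partial field with a strong stabilizer would automatically be fragile, and the entire subject would collapse. The obstruction is exactly what the theory of companion matrices, incriminating sets, and allowable pivots was built to handle: from the candidate matrix one extracts a small ``incriminating'' set $\{x,y,a,b\}$, and a delicate analysis of which pivots are allowable is what constrains the structure — it does not directly yield fragility of $M$ itself.

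Relatedly, the fragile object in the paper is not $M$, but $M\ba a,b$ for a carefully chosen pair. Using the Brettell--Whittle--Williams results (\cref{notriads2}), one first passes to a $\Delta$-$Y$-equivalent $M_1$ with no triads and a pair $\{a,b\}$ for which $M_1\ba a,b$ is $3$-connected with a $\{U_{2,5},U_{3,5}\}$-minor. The companion-matrix machinery (\cref{bcosw-thm,fragilecase}) shows $M_1\ba a,b$ is ``almost'' fragile, and \cref{utfutffragile} upgrades this to genuine $\{U_{2,5},U_{3,5}\}$-fragility when $|E(M)|\ge 16$. The numerical bound then comes from an entirely different argument than you sketch: one looks for a \emph{delete triple} $\{a',b',c'\}$ so that $M_1\ba a',b',c'$ is $3$-connected and fragile, and then exploits the fact that the three path-width-three matroids $M_1\ba a',b'$, $M_1\ba a',c'$, $M_1\ba b',c'$ must each have a nice guts--coguts path description. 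Counting triads, cocircuits, and the interaction between these three descriptions (\cref{deltripleprop,nodeltriplethm,nodeltripleprop}) is what forces $|E(M)|\le 15$, with a residual computer check on three specific $14$-element fragile matroids. None of this follows from the single-element lifting argument you propose.
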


This bound enables a computer search to be undertaken to find all excluded minors 
for $2$-regular matroids. This search is undertaken in \cite{BP20} and we are able to give the following
excluded-minor characterisation of $2$-regular matroids. All matroids mentioned below are described
in the appendix.

\begin{theorem}
\label{intro2}
  A matroid $M$ is $2$-regular if and only if $M$ has no minor isomorphic to
$U_{2,6}$, $U_{3,6}$, $U_{4,6}$, $P_6$,
$F_7$, $F_7^*$, $F_7^-$, $(F_7^-)^*$, $F_7^=$, $(F_7^=)^*$,
$\AG(2,3)\ba e$, $(\AG(2,3)\ba e)^*$, $(\AG(2,3)\ba e)^{\Delta Y}$, $P_8$, $P_8^-$, $P_8^=$, and $\TQ_8$.
\end{theorem}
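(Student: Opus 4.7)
The plan is to combine Theorem~\ref{intro1} with an exhaustive finite check. Since every excluded minor has at most $15$ elements, it suffices to examine matroids on at most $15$ elements; by duality we may restrict to rank at most $\lfloor 15/2\rfloor$. Such an enumeration, together with a decision procedure for $2$-regularity based on the partial-field framework and cross-ratio calculations, is carried out in~\cite{BP20}. Its output is precisely the $17$ matroids listed in the theorem.

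To corroborate this we would verify, for each matroid $N$ in the list, both that (a) $N$ itself is not $2$-regular and (b) every proper minor of $N$ is. Part~(a) amounts to showing that $N$ admits no representation by a matrix whose non-zero subdeterminants are integer powers of differences of distinct elements of $\{0,1,\alpha_1,\alpha_2\}$; this reduces to a finite cross-ratio calculation after normalising a spanning set of coordinates, and is quickest for the Fano-type matroids $F_7$, $F_7^-$, $F_7^=$ and their duals, where known non-representability results over small fields supply immediate obstructions. Part~(b) is handled by exploiting the automorphism group of $N$ together with duality to cut the work down to a small number of single-element deletions and contractions, each of which we certify by exhibiting an explicit matrix representation over the $2$-regular partial field.

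The principal obstacle in this two-step strategy is Theorem~\ref{intro1} itself; once the size bound is in hand, the argument for Theorem~\ref{intro2} becomes a finite verification rather than a new structural theorem. Within that verification, the subtlest cases are the $\Delta$-$Y$ exchange $(\AG(2,3)\ba e)^{\Delta Y}$ and the partial-field-twisted matroids $P_8^-$, $P_8^=$, and $\TQ_8$, where distinguishing the $2$-regular partial field from its close relatives — notably the near-regular partial field and partial fields yielding $\GF(4)$- or $\GF(5)$-representations — demands delicate cross-ratio bookkeeping. A secondary concern, mitigated implicitly by the structured search methodology of~\cite{BP20}, is ensuring that no excluded minor has been overlooked by the enumeration; this is an issue of algorithmic completeness rather than mathematical content, but it is the last line of defence in the proof.
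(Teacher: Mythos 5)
Your proposal matches the paper's approach: Theorem~\ref{intro2} is obtained by combining the size bound of Theorem~\ref{intro1} with the computer search of~\cite{BP20}, which enumerates $3$-connected $\mathbb{U}_2$- and $\mathbb{H}_5$-representable matroids up to $15$ elements and extracts the excluded minors; the paper offers no further argument for Theorem~\ref{intro2} beyond this delegation. One small slip in your corroborating discussion: you suggest $F_7^=$ is ``quickest'' to rule out via non-representability over small fields, but as the paper observes, $F_7^=$ lies in $\mathcal{M}_4$ and is therefore representable over every field of size at least~$4$, so excluding it genuinely requires working in the $2$-regular partial field (or equivalently tracking cross-ratios), not merely invoking finite-field obstructions.
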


It is natural to hope for an analogous result for $3$-regular matroids. A search in \cite{Brettell22}
uncovers all excluded minors for this class up to size 13. We conjecture that the list is complete. 
Note that $\Delta^{(*)}(U_{2,7})$ is a family of six matroids obtained from $U_{2,7}$ via $\Delta$-$Y$ exchange and dualising.

\begin{conjecture}
  \label{intro3}
  A matroid is $3$-regular if and only if it has no minor isomorphic to one of the following $33$ matroids:
  $$F_7, F_7^-, F_7^=, H_7,M(K_4)+e, \mathcal{W}^3+e,\Lambda_3, Q_6+e, P_6+e, U_{3,7},$$ and their duals; a matroid in $\Delta^{(*)}(U_{2,7})$; and
$\AG(2,3)\ba e$, $(\AG(2,3)\ba e)^*$, $(\AG(2,3)\ba e)^{\Delta Y}$, $P_8$, $P_8^-$, $P_8^=$, and $\TQ_8$.
\end{conjecture}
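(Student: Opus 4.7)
The plan is to combine the size bound of Theorem \ref{intro1} with the computer enumeration of \cite{Brettell22}. For the forward direction, one must verify that none of the $33$ listed matroids is $3$-regular; since $3$-regularity is equivalent to representability over the Hydra-$5$ partial field, this reduces to a finite collection of routine partial-field representability checks, essentially already carried out when these matroids were identified as excluded-minor candidates. For the reverse direction, suppose $M$ is an excluded minor for the class of $3$-regular matroids. Theorem \ref{intro1} gives $|E(M)| \leq 15$, and the enumeration in \cite{Brettell22} classifies every excluded minor on at most $13$ elements, confirming that each appears on the conjectured list. What remains is to rule out any excluded minor on $14$ or $15$ elements.

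For this step, my approach is to extend the computer search to sizes $14$ and $15$, with structural reductions to make it tractable. Standard arguments --- in the spirit of those used by Hall, Mayhew, and van Zwam \cite{HMvZ11} for near-regular matroids --- should show that any such $M$ is $3$-connected, and the requirement that $M$ avoid each of the $33$ already-known obstructions as a minor cuts the search space sharply. Within the pruned set of candidates, one tests non-representability over the Hydra-$5$ partial field and verifies minor-minimality by deleting or contracting each element in turn.

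The principal obstacle is expected to be computational: even after pruning, the number of $3$-connected candidates on $14$ or $15$ elements is likely substantial, and each Hydra-$5$ test is expensive. A complementary theoretical route would be to identify a small $3$-regular matroid $N$ whose Hydra-$5$ representations are uniquely stabilised in the sense of Whittle, such that any hypothetical excluded minor of size $14$ or $15$ must contain $N$ as a minor; stabiliser theory would then let one combine representations of single-element deletions and contractions of $M$ into a representation of $M$ itself, yielding a contradiction. Finding such a stabiliser is likely subtle, given the rich representation-theoretic behaviour of $3$-regular matroids with a $\utfutf$-minor (whose $\GF(5)$-representations form families of six) noted in the abstract; my expectation is therefore that the cleanest route to the conjecture is the computational one, with structural results used only to prune the search rather than to eliminate the final two sizes outright.
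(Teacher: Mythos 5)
The statement you are addressing is explicitly labelled a \emph{conjecture} in the paper, and the authors do not prove it. The paper establishes Theorem~\ref{intro1} (the bound $|E(M)|\le 15$), cites \cite{Brettell22} for the enumeration of excluded minors on at most $13$ elements, and then remarks that ``to resolve \cref{intro3}, we need to eliminate the possibility of excluded minors with $14$ or $15$ elements,'' offering two strategies (lowering the size bound further, or pruning a computer search) without carrying either out. Your write-up correctly reproduces this reduction: forward direction by checking the $33$ matroids fail Hydra-$5$ representability, reverse direction by combining Theorem~\ref{intro1} with \cite{Brettell22}, leaving only the $14$-- and $15$-element cases.

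The gap is precisely what you yourself flag: the search at sizes $14$ and $15$ is proposed but not performed, and the alternative stabiliser-based argument is speculative (``finding such a stabiliser is likely subtle''). Neither route is carried to completion, so this is a research plan rather than a proof. That matches the paper's own position --- the authors deliberately stop at the conjecture because the remaining computational or structural work is substantial. One small technical caution about the proposed computational route: $3$-regularity is not decidable by a single finite-field membership test, since $\mathbb{U}_3\cong\mathbb{H}_5$ is an infinite partial field. In practice one tests $\mathbb{H}_5$-representability by the fundamental-graph/cross-ratio machinery of \cite{PvZ10b,vanZwam09} (or, equivalently, by counting inequivalent $\GF(5)$-representations of a $3$-connected matroid with a $\{U_{2,5},U_{3,5}\}$-minor and checking for six), which is more delicate than a routine $\GF(q)$ rank computation; this makes the cost estimate in your ``principal obstacle'' paragraph, if anything, optimistic. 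Until the $14$- and $15$-element enumeration (or an improved bound) is actually produced and verified, the conjecture remains open, and no amount of restating the reduction closes it.
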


To resolve \cref{intro3}, we need to eliminate the possibility of excluded minors with 
14 or 15 elements. One strategy would be to do further work along the lines of that contained in this 
paper, with the goal of reducing the bound of 15. Another strategy would be to narrow the space
for a computer search by exploiting known properties of the structure of excluded minors.

The results of this paper are motivated by two problems. The first is to understand the class of 
matroids representable over all fields of size at least~4. The second is to find an explicit excluded-minor
characterisation for the class of $\GF(5)$-representable matroids. In the remainder of this introduction
we discuss the connection between our results and these problems, before outlining the approach taken to prove \cref{intro1}. 

\subsection*{Matroids representable over all fields of size at least 4} The
class of $0$-regular matroids is the class of matroids representable over all
fields, and the class of $1$-regular matroids is the class of matroids
representable over all fields of size at least~$3$ \cite{Whittle97}.  Let $\mathcal M_4$
denote the class of matroids representable over all fields of size at least~4.  One might
hope that the class of $2$-regular matroids is $\mathcal M_4$, but this is not
the case.  Rather, the class of $2$-regular matroids is properly contained in
$\mathcal M_4$.  It follows from Theorem~\ref{intro2}  that, up to duality, 
there are four excluded minors for $2$-regular matroids
that belong to $\mathcal M_4$: these are $P_8^-$, $\TQ_8$, $U_{3,6}$ and
$F_7^=$.  Nonetheless, a start has been made. Up to duality, any member of $\mathcal M_4$ that is not
$2$-regular
must contain one of these four matroids as a minor. It is possible that members of $\mathcal M_4$
containing either $P_8^-$, $\TQ_8$ or $U_{3,6}$ as minors form classes of bounded branch
width that can be described structurally, but it turns out that there are members of 
$\mathcal M_4$ of unbounded branch 
width containing $F_7^=$ as a minor.

For $n\geq 4$, let $x$ and $y$ be elements of $M(K_n)$ that are not contained in a triangle, 
that is, they correspond
to a matching in the underlying graph~$K_n$. Extend by adding a point freely to the line spanned by
$\{x,y\}$. Denote the resulting matroid by $M_n$. 
It is readily verified that $M_4\cong F_7^=$. It is also routine to see that $M_n\in \mathcal M_4$ for 
all $n\geq 4$. We now have a rich class of matroids contained in $\mathcal M_4$ that are
not $2$-regular.

All up, the news is not particularly optimistic. If one seeks an excluded-minor characterisation of 
$\mathcal M_4$, then, using current techniques, for each $N\in\{P_8^-,\TQ_8,U_{3,6},F_7^=\}$
you will need to perform
an exercise similar to the one that finds the excluded minors for 
$2$-regular matroids except with $U_{2,5}$ replaced by $N$. This will require an 
understanding of a certain class of $N$-fragile matroids. That, in itself, is likely to be 
a challenge. Even with that issue resolved, the bound obtained on the size of an excluded minor
is likely to be too large to enable a computer search to find them.

The structural approach may be more promising. If the classes containing  $P_8^-$,
$\TQ_8$ or $U_{3,6}$ truly are thin, then perhaps they can be explicitly described. 
It is possible that the class containing $F_7^=$ is not too difficult either. 
Perhaps there is a 
structure theorem analogous to the regular matroid decomposition theorem where 
the class obtained by taking minors of $M_n$ plays a role analogous to that of graphic matroids in
regular matroids.

In any case, the goal of obtaining a better understanding of $\mathcal M_4$ is no doubt a 
worthy one, and at least there are some plausible lines of attack.

\subsection*{Excluded minors for \texorpdfstring{$\GF(5)$}{GF(5)}} We prove in Lemma~\ref{3reglemma} that the class of
$3$-regular matroids coincides with the class of matroids representable over the 
Hydra-5 partial field $\mathbb H_5$. This partial field was introduced in \cite{PvZ10b}, where it is
shown that 3-connected $\mathbb H_5$-representable matroids with a $U_{2,5}$-minor have
exactly six inequivalent $\GF(5)$-representations. Resolving \cref{intro3}
would achieve the first step of a program for finding excluded minors for $\GF(5)$-representability
adumbrated in \cite{PvZ10b}. In essence, the program is as follows. 
There is a sequence of partial fields $\mathbb H_1, \mathbb H_2, \mathbb H_3, \mathbb H_4, \mathbb H_5$, where $\mathbb H_1 = \GF(5)$, and each other partial field has a homomorphism into $\GF(5)$.
For a $3$-connected matroid with a $U_{2,5}$-minor, the matroid is representable over $\mathbb H_2$ if it has at least two inequivalent representations over $\GF(5)$.
If such a matroid is representable over $\mathbb H_3$, or $\mathbb H_4$, then it has at least three, or at least four, inequivalent representations over $\GF(5)$, respectively.
If such a matroid is representable over $\mathbb H_5$, then it has six inequivalent $\GF(5)$-representations (and a matroid can have at most six inequivalent representations over $\GF(5)$~\cite{OVW96}).

Each
$\GF(5)$-representable excluded minor~$N$ for representability over $\mathbb H_5$ will be
a strong stabiliser for a  member $\mathbb P$ in $\{\mathbb H_1, \mathbb H_2, \mathbb H_3, \mathbb H_4\}$.
One then has to find the excluded minors for the $\mathbb P$-representable matroids that
contain an $N$-minor. This is a task similar to the one undertaken in this paper except that
one has to understand the $N$-fragile $\mathbb P$-representable matroids. 
This will typically be significantly more difficult than understanding the $2$- or $3$-regular 
$U_{2,5}$-fragile matroids.

It turns out that, modulo the truth of \cref{intro3}, there are, up to duality, ten excluded minors
for $\mathbb H_5$-representable matroids that are $\GF(5)$-representable. Hence the exercise
described above has to be repeated ten times. Moreover the process repeats, so that some of the
excluded minors found may well be $\GF(5)$-representable, although they will be placed further up 
in the hierarchy.

That is a massive undertaking. It is significantly more difficult than a related, but as yet unresolved,
problem. That problem is to find the excluded minors for the class of {\em dyadic} matroids.
It is known the class of dyadic matroids is the class of matroids
representable over $\GF(3)$ and $\GF(5)$. The excluded minors for
dyadic matroids can easily be deduced from a knowledge of the excluded minors of 
$\GF(5)$-representable matroids. But one would expect it to be significantly easier to find the 
excluded minors for dyadic matroids directly and, indeed, knowing the dyadic excluded minors
would in itself be a step towards  $\GF(5)$. So how difficult is this problem?

We know that any excluded minor for dyadic matroids that we do not understand must contain
either the non-Fano matroid $F_7^-$, its dual, or $P_8$, as a minor. Moreover these matroids are
strong stabilisers for the  class of dyadic matroids.  There is some hope of understanding
the dyadic $P_8$-fragile matroids, but understanding the dyadic $F_7^-$-fragile matroids 
seems much more difficult. Moreover, it is known that excluded minors for dyadic matroids
can be quite large. The computer search in \cite{BP20} has uncovered one with 16 elements.

The upshot is that current techniques are most probably inadequate for obtaining an excluded-minor
characterisation of dyadic matroids and certainly inadequate for $\GF(5)$. More optimistically,
one can pursue techniques that do not commit one to solving an $N$-fragility problem.
It is possible that real progress in the future could occur by developing such techniques.
Success in obtaining an excluded-minor characterisation of dyadic matroids would be a major
breakthrough.

\subsection*{Overview of the proof} The high-level approach of the proof Theorem~\ref{intro1}
is as follows. Let $M$ be a large excluded minor for the class of $2$-regular matroids such that $M$ 
has an $N$-minor, where $N$ is $U_{2,5}$  or $U_{3,5}$. 
By results in \cite{BWW22}, the matroid $M$ has a pair of elements $a,b$ such that $M\backslash a,b$ 
is $3$-connected and has an $N$-minor. Now, by results in \cite{BCOSW20}, 
$M\backslash a,b$ is close to being $N$-fragile. Clark et al.\  \cite{CMvZW16}
described the structure of large $2$-regular $\{U_{2,5},U_{3,5}\}$-fragile
matroids; in particular, they have path width three. 
In \cref{utfutfsec}, we recap and expand on the properties of these matroids that we require. 
It still remains to prove that $M\backslash a,b$ is itself  $\{U_{2,5},U_{3,5}\}$-fragile; 
we do this in \cref{utfutffragilesec}, using results from \cite{paper1} (presented in \cref{almostfragsec}) and \cref{utfutfsec}. Then, in \cref{fragilepropssec}, we prove 
some more properties of such a $\{U_{2,5},U_{3,5}\}$-fragile matroid $M\backslash a,b$. 
In \cref{ndtsec}, we show that if $M\backslash a,b$ is large enough, then we can use the path structure
to find a triple $\{a',b',c'\}$ such that $M\backslash a',b',c'$ is $3$-connected with an $N$-minor, 
and therefore $\{U_{2,5},U_{3,5}\}$-fragile. When there is a triple $a',b',c'$ such that 
$M\backslash a',b',c'$ is $\{U_{2,5},U_{3,5}\}$-fragile, each of the matroids $M\backslash a',b'$, 
$M\backslash a',c'$, and $M\backslash b',c'$ has path width three, and we show, in \cref{dtsec}, that we can 
use this to bound the size of $M$. Combining these results in \cref{concsec}, we complete the proof of \cref{intro1}.

\section{Preliminaries}

All unspecified terminology and notation follows Oxley \cite{Oxley11}.
When there is no ambiguity, we often denote a singleton set $\{q\}$ by $q$.
For a positive integer $n$, we denote the set $\{1,2,\dotsc,n\}$ by $[n]$.
For sets $X$ and $Y$, we say $X$ \emph{meets} $Y$ if $X \cap Y \neq \emptyset$, and $X$ \emph{avoids} $Y$ if $X \cap Y = \emptyset$.
A parallel class or series class is \emph{non-trivial} if it has size at least~$2$.
For a partition $\{X_1,X_2,\dotsc,X_m\}$ or an ordered partition $(X_1,X_2,\dotsc,X_m)$ we require that each cell $X_i$ is non-empty.

\subsection*{Segments, cosegments, and fans}

Let $M$ be a matroid.
A subset~$S$ of $E(M)$ with $|S| \ge 3$ is a \emph{segment} if every $3$-element subset of $S$ is a triangle.
We say that a segment~$S$ is an $\ell$-segment if $|S|=\ell$.
A \emph{cosegment} is a segment of $M^*$, and an $\ell$-cosegment~$S^*$ is a cosegment with $|S^*| = \ell$.
A subset~$F$ of $E(M)$ with $|F| \ge 3$ is a \emph{fan} if there is an ordering $(f_1, f_2, \dotsc, f_\ell)$ of $F$ such that
\begin{enumerate}[label=\rm(\alph*)]
  \item $\{f_1,f_2,f_3\}$ is either a triangle or a triad, and
  \item for all $i \in \seq{\ell-3}$, if $\{f_i, f_{i+1}, f_{i+2}\}$ is a triangle, then $\{f_{i+1}, f_{i+2}, f_{i+3}\}$ is a triad, whereas if $\{f_i, f_{i+1}, f_{i+2}\}$ is a triad, then $\{f_{i+1}, f_{i+2}, f_{i+3}\}$ is a triangle.
\end{enumerate}
When there is no ambiguity, we also say that the ordering $(f_1,f_2,\dotsc,f_\ell)$ is a fan.
If $F$ has a fan ordering $(f_1, f_2, \dotsc, f_\ell)$ where $\ell \geq 4$, then $f_1$ and $f_\ell$ are the \emph{ends} of $F$, and $f_2, f_3, \dotsc, f_{\ell-1}$ are the \emph{internal elements} of $F$.
We also say such a fan has \emph{size} $\ell$.
%
We say that a fan $F$ is \emph{maximal} if there is no fan that properly contains $F$.

For a rank-$r$ wheel $M(\mathcal{W}_r)$, there is a natural partition of the ground set into \emph{spoke} elements, each of which is contained in two triangles, and \emph{rim} elements, each of which is contained in two triads.
There is an analogous notion for elements in fans.
Let $F$ be a fan with ordering $(f_1, f_2, \dotsc, f_\ell)$ where $\ell \geq 4$, and let $i \in \seq{\ell}$ if $\ell \geq 5$, or $i \in \{1,4\}$ if $\ell=4$.
An element $f_i$ is a \emph{spoke element} of $F$ if $\{f_1, f_2, f_3\}$ is a triangle and $i$ is odd, or if $\{f_1, f_2, f_3\}$ is a triad and $i$ is even; otherwise $f_i$ is a \emph{rim element} of $F$.

\begin{lemma}[{\cite[Lemma~3.4]{OW00}}]
  \label{fanunique}
  Let $M$ be a $3$-connected matroid that is not isomorphic to $M(\mathcal{W}_3)$, and let $F$ be a $5$-element fan of $M$ with ordering $(f_1,f_2,\dotsc,f_5)$, where $\{f_1,f_2,f_3\}$ is a triangle.
  Then $\{f_1,f_2,f_3\}$ and $\{f_3,f_4,f_5\}$ are the only triangles of $M$ containing $f_3$, and $\{f_2,f_3,f_4\}$ is the unique triad of $M$ containing $f_3$.
\end{lemma}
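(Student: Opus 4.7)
The plan is to argue via orthogonality between circuits and cocircuits, together with the circuit and cocircuit elimination axioms, exploiting that $3$-connectedness of $M$ excludes $1$- and $2$-element circuits and cocircuits.

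I first establish the triad claim. Let $T^*$ be a triad of $M$ containing $f_3$. Orthogonality of $T^*$ with the two given triangles forces $|T^* \cap \{f_1,f_2\}| \geq 1$ and $|T^* \cap \{f_4,f_5\}| \geq 1$, so $T^* = \{f_3,x,y\}$ with $x \in \{f_1,f_2\}$ and $y \in \{f_4,f_5\}$, leaving four candidates. For $T^* \in \{\{f_1,f_3,f_4\},\{f_2,f_3,f_5\}\}$, cocircuit elimination with the given triad $\{f_2,f_3,f_4\}$ at $f_3$ yields a cocircuit inside a $3$-element set; orthogonality with one of the two triangles, together with the exclusion of small cocircuits, forces this derived cocircuit to equal the whole $3$-element set, but it then meets the other triangle in a single element, contradicting orthogonality.

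The remaining candidate $T^* = \{f_1,f_3,f_5\}$ is the main obstacle. Here the same elimination produces only the $4$-cocircuit $\{f_1,f_2,f_4,f_5\}$, which is consistent with orthogonality against both triangles. To finish this case I plan a rank analysis. The three cocircuits $\{f_2,f_3,f_4\}$, $\{f_1,f_3,f_5\}$, $\{f_1,f_2,f_4,f_5\}$ give three hyperplanes, whose ranks together with the ranks of the two fan triangles force $r(M)=3$. Writing $S = E(M) \setminus \{f_1,\ldots,f_5\}$, the complement of the $4$-cocircuit is $\{f_3\} \cup S$, which must be a rank-$2$ hyperplane. A direct argument (any triangle formed by $f_3$ with two elements of $S$ would violate orthogonality with $\{f_2,f_3,f_4\}$) combined with $3$-connectedness gives $|S| \leq 1$, so $|E(M)| \leq 6$. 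The case $|E(M)| = 5$ is immediately impossible since $\{f_3\}$ alone cannot be a rank-$2$ hyperplane, and when $|E(M)| = 6$ the remaining triangles and triads are forced, identifying $M$ with $M(K_4) \cong M(\mathcal{W}_3)$, contradicting the hypothesis.

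With triad uniqueness in hand, the triangle claim follows by an analogous but simpler argument. If $T$ is a triangle containing $f_3$, orthogonality with $\{f_2,f_3,f_4\}$ forces $T$ to contain $f_2$ or $f_4$. If $T = \{f_2,f_3,z\}$ with $z \neq f_1$, circuit elimination with $\{f_1,f_2,f_3\}$ at $f_2$ gives a circuit $C \subseteq \{f_1,f_3,z\}$, which by $3$-connectedness equals $\{f_1,f_3,z\}$; orthogonality of $C$ with the triad $\{f_2,f_3,f_4\}$ then forces $z = f_4$. The case $T = \{f_3,f_4,z\}$ is symmetric. The remaining possibility $T = \{f_2,f_3,f_4\}$ (triangle and triad simultaneously) is excluded by iterating circuit elimination: with $\{f_1,f_2,f_3\}$ at $f_2$ to deduce $\{f_1,f_3,f_4\}$ is a triangle, then with $\{f_3,f_4,f_5\}$ at $f_3$ to deduce $\{f_1,f_4,f_5\}$ is a triangle; the last meets the triad $\{f_2,f_3,f_4\}$ in only $\{f_4\}$, the required contradiction. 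The principal difficulty throughout is the $T^* = \{f_1,f_3,f_5\}$ case, where orthogonality and elimination alone are insufficient and the rank/structural argument collapsing $M$ onto $M(\mathcal{W}_3)$ is needed.
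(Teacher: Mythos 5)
The paper states \cref{fanunique} by citation to~\cite{OW00} and gives no proof, so there is no in-paper argument to compare against; what follows is a review of your attempt on its own terms. Your orthogonality and elimination arguments for the triangle claim, and for disposing of the triad candidates $\{f_1,f_3,f_4\}$ and $\{f_2,f_3,f_5\}$, are correct. The genuine gap is in the case $T^*=\{f_1,f_3,f_5\}$, precisely where you flagged the difficulty: the assertion that the ranks of the three hyperplanes ``together with the ranks of the two fan triangles force $r(M)=3$'' does not hold. Writing $S=E(M)-F$, submodularity applied to the complementary hyperplanes $H_1=\{f_1,f_5\}\cup S$ and $H_2=\{f_2,f_4\}\cup S$ gives $2\bigl(r(M)-1\bigr)\ge r\bigl(E(M)-f_3\bigr)+r(S)=r(M)+r(S)$, so $r(M)\ge r(S)+2$; and $r(\{f_3\}\cup S)=r(M)-1\le 1+r(S)$ gives $r(M)\le r(S)+2$. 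Together these yield only $r(M)=r(S)+2$, not $r(M)=3$, and your subsequent step deriving $|S|\le 1$ (no triangle $\{f_3,s_1,s_2\}$) presupposes that $\{f_3\}\cup S$ has rank $2$, i.e.\ already assumes $r(M)=3$. As written, the two inferences are circular.

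The missing ingredient is a connectivity observation, not a bare rank count. Once $\{f_2,f_3,f_4\}$ and $\{f_1,f_3,f_5\}$ are both triads, they are cocircuits of $M$ contained in $F$ that meet in a single element, so submodularity in $M^*$ gives $r^*(F)\le 2+2-1=3$; with $r(F)=3$ this yields $\lambda_M(F)\le 1$. Since $|F|=5\ge 2$, the $3$-connectedness of $M$ forces $|E(M)-F|\le 1$. From there $r(M)=3$ is immediate: if $r(M)=4$ the single extra element would be a coloop, and if instead $r^*(F)=2$ then $F$ would be a $5$-element cosegment, creating a triangle-triad inside $F$. With $|S|\le 1$ and $r(M)=3$ secured, the rest of your case analysis goes through as planned: $|E(M)|=5$ is impossible because $\{f_3\}$ cannot be a rank-$2$ hyperplane, and when $|E(M)|=6$ the three cocircuits and two triangles pin down the four $3$-point lines and three $2$-point lines of $M(K_4)\cong M(\mathcal{W}_3)$, contradicting the hypothesis. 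So the overall strategy and case split are correct, but the pivotal step must go through $\lambda_M(F)\le 1$ rather than the hyperplane-rank count you proposed.
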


\subsection*{Connectivity}

Let $M$ be a matroid and let $X \subseteq E(M)$.
The set $X$ or the partition $(X,E(M)-X)$ is \emph{$k$-separating} if $\lambda_M(X) < k$, where $\lambda_M(X) = r(X) + r(E(M)-X) - r(M)$.
A $k$-separating set $X$ or partition $(X,E(M)-X)$ is \emph{exact} if $\lambda_M(X) = k-1$.
If $X$ is $k$-separating and $|X|,|E(M)-X| \ge k$, then $X$ is a \emph{$k$-separation}.
The matroid $M$ is \emph{$k'$-connected} if $M$ has no $k$-separations for $k < k'$.
If $M$ is $2$-connected, we simply say it is \emph{connected}.
Suppose $M$ is connected.
If for every $2$-separation $(X,Y)$ of $M$ either $X$ or $Y$ is a parallel pair (or parallel class), then $M$ is \emph{$3$-connected up to parallel pairs} (or \emph{parallel classes}, respectively).
Dually, if for every $2$-separation $(X,Y)$ of $M$ either $X$ or $Y$ is a series pair (or series class), then $M$ is \emph{$3$-connected up to series pairs} (or \emph{series classes}, respectively).

We say $Z \subseteq E(M)$ is in the \emph{guts} of a $k$-separation $(X,Y)$ if $Z \subseteq \cl(X-Z) \cap \cl(Y-Z)$, and we say $Z$ is in the \emph{coguts} of $(X,Y)$ if $Z$ is in the guts of $(X,Y)$ in $M^*$.
We also say $z$ is in the guts (or the coguts) of a $k$-separation $(X,Y)$ if $\{z\}$ is in the guts (or the coguts, respectively) of $(X,Y)$.
Note that if $z$ is in the guts of $(X,Y)$, then $z \notin \cocl(X)$ and $z \notin \cocl(Y)$.

We say that a partition $(X_1,X_2,\dotsc,X_m)$ of $E(M)$ is a \emph{path of $3$-separations} if $(X_1 \cup \dotsm \cup X_i, X_{i+1} \cup \dotsm \cup X_m)$ is exactly $3$-separating for each $i \in \seq{m-1}$.
Note that $|X_1|,|X_m| \ge 2$ (and $|X_i| \ge 1$ for all $i \in [m]$). 
If $X_i$ is in the guts (or coguts) of $(X_1 \cup \dotsm \cup X_i, X_{i+1} \cup \dotsm \cup X_m)$, then we say $X_i$ is a \emph{guts set} (or \emph{coguts set}, respectively) and, for each $x \in X_i$, we say $x$ is a \emph{guts element} (or \emph{coguts element}, respectively).

A $3$-separation $(X,Y)$ of $M$ is a \textit{vertical $3$-separation} if $\min\{r(X),r(Y)\}\geq 3$. We also say that a partition $(X,\{z\},Y)$ is a \textit{vertical $3$-separation} of $M$ when both $(X\cup z,Y)$ and $(X,Y\cup z)$ are vertical $3$-separations with $z$ in the guts. We will write $(X,z,Y)$ for $(X,\{z\},Y)$.
If $(X,z,Y)$ is a vertical $3$-separation of $M$, then we say that $(X,z,Y)$ is a \emph{cyclic $3$-separation} of $M^*$.

Suppose $e \in E(M)$, and $(X,Y)$ is a partition of $M \ba e$ with $\lambda_{M \ba e}(X)=k$.
We say that $e$ \emph{blocks} $X$ if $\lambda_{M}(X) > k$.
In particular, we say $e$ blocks a series pair (or triad) $X$ of $M \ba e$ if $X$ is not a series pair (or triad, respectively) of $M$.
In any case, if $e$ blocks $X$, then $e \notin \cl_M(Y)$, so $e \in \cocl_M(X)$.
We say that $e$ \emph{fully blocks} $(X,Y)$ if both $\lambda_M(X) > k$ and $\lambda_M(X \cup e) > k$.  It is easy to see that $e$ fully blocks $(X,Y)$ if and only if $e \notin \cl_M(X) \cup \cl_M(Y)$.

A set $X \subseteq E(M)$ is \emph{fully closed} if $X$ is closed in both $M$ and $M^*$.
The \emph{full closure of $X$}, denoted $\fcl_M(X)$, is the intersection of all fully closed sets containing $X$.
The full closure can be obtained by repeatedly taking closures and coclosures until no new elements are added.

The following lemma is routine, but helpful to show a matroid is $3$-connected up to series and parallel classes.

\begin{lemma}
  \label{fclnontrivialsep}
  Let $M$ be a simple and cosimple matroid, and let $(X,Y)$ be a $2$-separation of $M$.
  Then $\fcl(X) \neq E(M)$ and $\fcl(Y) \neq E(M)$.
  Moreover, $(\fcl(X),Y-\fcl(X))$ is also a $2$-separation of $M$.
\end{lemma}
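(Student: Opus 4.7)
The plan is to first observe that, for any $Z \subseteq E(M)$, adding an element of $\cl(Z) \setminus Z$ or $\cocl(Z) \setminus Z$ to $Z$ cannot increase $\lambda$; consequently $\lambda(\fcl(X)) \le \lambda(X) \le 1$. Together with $|\fcl(X)| \ge |X| \ge 2$, the statement that $(\fcl(X), Y - \fcl(X))$ is a $2$-separation then reduces to showing $|Y - \fcl(X)| \ge 2$, which also gives $\fcl(X) \ne E(M)$; the assertion $\fcl(Y) \ne E(M)$ will follow by swapping the roles of $X$ and $Y$ in the argument.

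To prove $|E(M) - \fcl(X)| \ge 2$, I would build $\fcl(X)$ via the alternating chain $Z_0 = X$, $Z_{2i+1} = \cl(Z_{2i})$, $Z_{2i+2} = \cocl(Z_{2i+1})$, which stabilises at $\fcl(X)$, and then induct on $i$ to show $|E(M) - Z_i| \ge 2$, with base case $i=0$ immediate from $|Y| \ge 2$. For the inductive step, suppose $|E(M) - Z_i| \ge 2$ and, say, $Z_{i+1} = \cl(Z_i)$ with $|E(M) - Z_{i+1}| \le 1$ (the coclosure case is handled dually via $M^*$). Then $\cl(Z_i)$ is either $E(M)$ or $E(M) - y$ for some $y$, and in the latter case cosimplicity forces $y$ to not be a coloop; in either case $r(\cl(Z_i)) = r(M)$, so $r(Z_i) = r(M)$. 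Combined with $\lambda(Z_i) \le 1$, this gives $r(E(M) - Z_i) \le 1$, so the at least two elements of $E(M) - Z_i$ must include either a loop or a parallel pair, contradicting simplicity.

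The main point requiring care is the ``missing one element'' subcase of the inductive step: this is exactly where cosimplicity is used, to convert ``$\cl(Z_i)$ is almost all of $E(M)$'' into ``$r(Z_i) = r(M)$'', after which simplicity rules out a complement of size at least two in $E(M) - Z_i$. Once this subcase is handled, the induction and the appeal to symmetry (together with the $\lambda$-monotonicity observation from the first paragraph) package everything into the statement of the lemma.
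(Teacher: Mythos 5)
Your argument is correct. The paper states this lemma without proof (labelling it ``routine''), so there is no paper proof to compare against, but the two ingredients you use — $\lambda$-monotonicity under taking closure or coclosure (so $\lambda(\fcl(X)) \le \lambda(X) \le 1$), and the inductive rank argument exploiting simplicity and cosimplicity to keep $|E(M) - Z_i| \ge 2$ — are exactly the right ones, and the dualisation for the coclosure step is sound since $M$ simple and cosimple implies $M^*$ is too.

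One small remark: in your inductive step, the subcase $\cl(Z_i) = E(M) - y$ is actually vacuous, since $\cl(Z_i)$ is a flat and $E(M) - y$ is a flat only when $y$ is a coloop, which cosimplicity forbids; so in fact $|E(M) - \cl(Z_i)| \le 1$ forces $\cl(Z_i) = E(M)$ outright. Your phrasing (handling the nearly-full case separately) is harmless — you still reach $r(Z_i) = r(M)$ — but you could shortcut it by noting that no flat of a cosimple matroid has a one-element complement.
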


We use the following well-known results about the existence of elements that preserve connectivity.  The first we refer to as Bixby's Lemma.

\begin{lemma}[Bixby's Lemma~\cite{Bixby82}]
\label{bixby}
Let $M$ be a $3$-connected matroid, and let $e\in E(M)$. Then $M/e$ is $3$-connected up to parallel pairs or $M\del e$ is $3$-connected up to series pairs.
\end{lemma}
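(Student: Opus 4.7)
I would argue by contradiction. Suppose $M/e$ is not $3$-connected up to parallel pairs and $M\del e$ is not $3$-connected up to series pairs. Then $M/e$ has a $2$-separation $(A_1,A_2)$ in which neither side is a parallel pair, and $M\del e$ has a $2$-separation $(B_1,B_2)$ in which neither side is a series pair. Replacing $A_i$ by $\fcl_{M/e}(A_i)$ and $B_j$ by $\fcl_{M\del e}(B_j)$, and applying \cref{fclnontrivialsep}, I may assume each of $A_1,A_2,B_1,B_2$ is fully closed in the relevant minor; in particular each has size at least three.

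Next, a routine comparison of connectivity functions gives
\[
  \lambda_M(A_1) = \lambda_{M/e}(A_1) + \epsilon_1 \quad\text{and}\quad \lambda_M(B_1) = \lambda_{M\del e}(B_1) + \epsilon_2,
\]
where $\epsilon_1 = 1$ if $e \in \cl_M(A_1)$ and $0$ otherwise, and $\epsilon_2 = 1$ if $e \in \cocl_M(B_1)$ and $0$ otherwise. Since $M$ is $3$-connected and the corresponding partitions of $E(M)$ have both cells of size at least two, $\lambda_M(A_1), \lambda_M(B_1) \ge 2$, and since $\lambda_{M/e}(A_1) = \lambda_{M\del e}(B_1) = 1$ this forces $e \in \cl_M(A_1)$ and $e \in \cocl_M(B_1)$. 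By symmetry, $e \in \cl_M(A_2)$ and $e \in \cocl_M(B_2)$. Thus $(A_1,e,A_2)$ is a vertical $3$-separation of $M$, while $(B_1,e,B_2)$ is a cyclic $3$-separation of $M$.

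The main step is to uncross these two $3$-separations. Applying the submodularity of $\lambda_M$ to $A_1 \cup \{e\}$ and $B_1 \cup \{e\}$ gives
\[
  \lambda_M\bigl((A_1 \cap B_1) \cup \{e\}\bigr) + \lambda_M\bigl(A_1 \cup B_1 \cup \{e\}\bigr) \le \lambda_M(A_1 \cup \{e\}) + \lambda_M(B_1 \cup \{e\}) = 4,
\]
with three analogous inequalities for the other pairings of sides. Since $M$ is $3$-connected, any set $Z$ with $|Z|, |E(M)\setminus Z| \ge 2$ satisfies $\lambda_M(Z) \ge 2$, so at least one of the four corners $A_i \cap B_j$ has size at most one. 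A short case analysis on which corner is small, exploiting that $e$ lies in $\cl_M(A_i)$ for each $i$ but in no $\cl_M(B_j)$, then yields a $2$-separation of $M$, contradicting $3$-connectivity.

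The principal obstacle will be the bookkeeping in this uncrossing step: several subcases arise according to which corner is small and to where $e$ sits in each uncrossed set, and in the cases where a corner has size one or two one must invoke the hypothesis that no $A_i$ is a parallel pair and no $B_j$ is a series pair to rule out degenerate outcomes. This is precisely where the full-closure reduction from the first step pays off: it guarantees that every side of the original $2$-separations is substantial enough that the uncrossed corners cannot collapse into a trivial parallel or series configuration.
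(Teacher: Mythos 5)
The paper does not prove Bixby's Lemma; it cites~\cite{Bixby82} and treats it as a black box, so there is no internal proof to compare against. Your sketch follows the standard uncrossing strategy, but two steps are gapped. The preliminary full-closure reduction is both unnecessary and unsound: \cref{fclnontrivialsep} requires the ambient matroid to be simple and cosimple, whereas $M/e$ will have parallel pairs whenever $e$ lies in a triangle of $M$; moreover, one cannot make both sides of a $2$-separation fully closed at once. What you actually need is $|A_1|,|A_2|,|B_1|,|B_2|\ge 3$, and this follows directly: a $2$-separating $2$-element set in a connected matroid is a parallel or a series pair, $M/e$ has no series pairs and $M\del e$ no parallel pairs (as $M$ is $3$-connected with $|E(M)|\ge 4$), and you have assumed no side is a parallel or series pair.

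The more serious gap is in the uncrossing. The displayed submodular inequality does not imply that a corner is small: if $|(A_1\cap B_1)\cup e|\ge 2$ and $|A_2\cap B_2|\ge 2$, both $\lambda$-terms are $\ge 2$ and the inequality merely gives equality. One further step is needed. If $|A_1\cap B_1|\ge 2$ and $|A_2\cap B_2|\ge 2$, uncrossing on both diagonals gives $\lambda_M(A_2\cap B_2)=\lambda_M((A_2\cap B_2)\cup e)=2$; since $e\in\cl_M(A_1)\subseteq\cl_M\bigl(E(M)-(A_2\cap B_2)-e\bigr)$, this forces $e\in\cl_M(A_2\cap B_2)\subseteq\cl_M(B_2)$, contradicting $e\notin\cl_M(B_2)$ (which follows from $e\in\cocl_M(B_1)$). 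So at most one corner of each opposite pair has size $\ge 2$, whence two small corners share a side $A_i$ or $B_j$, and that side has size at most two --- contradicting the size bound established above. Note the final contradiction is a cardinality count, not a $2$-separation of $M$ as you assert; without this extra layer the argument does not close.
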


\begin{lemma}[see {\cite[Lemma~8.8.2]{Oxley11}}, for example]
  \label{lineconn}
  Let $M$ be a $3$-connected matroid and let $L$ be a segment of $M$ with $|L| \ge 4$. If $\ell \in L$, then $M \ba \ell$ is $3$-connected.
\end{lemma}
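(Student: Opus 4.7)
The plan is a short argument by contradiction: assume $M \ba \ell$ is not $3$-connected and promote a low-order separation of $M \ba \ell$ to one of $M$ by putting $\ell$ back on the correct side.

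Suppose $(X,Y)$ is a $k$-separation of $M \ba \ell$ with $k \in \{1,2\}$. Since $|L| \ge 4$, we have $|L \setminus \{\ell\}| \ge 3$, so by pigeonhole one side, say $X$, contains at least two elements $\ell_1,\ell_2$ of $L$. Because $L$ is a segment, $\{\ell,\ell_1,\ell_2\}$ is a triangle, so $\ell \in \cl_M(\{\ell_1,\ell_2\}) \subseteq \cl_M(X)$, giving $r_M(X \cup \ell) = r_M(X) = r_{M \ba \ell}(X)$. Since $\ell$ lies in a triangle of $M$ it is not a coloop, hence $r(M) = r(M \ba \ell)$, and clearly $r_M(Y) = r_{M \ba \ell}(Y)$. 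Combining these identities yields $\lambda_M(X \cup \ell) = \lambda_{M \ba \ell}(X) < k$.

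It remains only to check that $(X \cup \ell, Y)$ is a genuine $k$-separation of $M$, i.e.\ that both sides have at least $k$ elements. For $k=1$ this is immediate from $X,Y \neq \emptyset$; for $k=2$ we have $|Y| \ge 2$ by assumption and $|X \cup \ell| \ge 3$ since $\{\ell,\ell_1,\ell_2\} \subseteq X \cup \ell$. Either way we obtain a $k$-separation of $M$ for some $k < 3$, contradicting the $3$-connectivity of $M$. There is no real obstacle here; the only point requiring any care is recording that $\ell$ is not a coloop (so ranks agree between $M$ and $M \ba \ell$) and that the side of $(X,Y)$ with two elements of $L$ is available for the pigeonhole step, both of which follow immediately from the hypothesis $|L| \ge 4$.
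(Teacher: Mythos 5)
Your argument is correct and is essentially the textbook proof: pigeonhole two elements of $L \setminus \{\ell\}$ onto one side of an alleged $k$-separation of $M \ba \ell$ ($k<3$), observe that the resulting triangle puts $\ell$ in the closure of that side, and conclude that the separation lifts to a $k$-separation of $M$, contradicting $3$-connectivity. The paper cites this as Oxley's Lemma~8.8.2 without proof, and there is no substantive deviation between your argument and the standard one; the bookkeeping (ranks of $X$ and $Y$ unchanged, $r(M)=r(M\ba\ell)$ since $\ell$ is not a coloop, cardinality of both sides at least $k$) is handled correctly.
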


\begin{lemma}[{\cite[Lemma~3.5]{Whittle99}}]
  \label{vert3sep}
  Let $M$ be a $3$-connected matroid and let $z \in E(M)$.
  The following are equivalent:
  \begin{enumerate}
    \item $M$ has a vertical $3$-separation $(X, z, Y)$.
    \item $\si(M/z)$ is not 3-connected.
  \end{enumerate}
\end{lemma}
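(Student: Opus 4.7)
The plan is to read both implications from a single rank identity relating $\lambda_{M/z}$ to $\lambda_M$. Writing $\epsilon_X = r_M(X\cup z) - r_M(X) \in \{0,1\}$ and defining $\epsilon_Y$ analogously, a direct substitution into the connectivity function gives
\[
\lambda_M(X) = \lambda_{M/z}(X) + 1 - \epsilon_X \qquad\text{and}\qquad \lambda_M(X\cup z) = \lambda_{M/z}(X) + 1 - \epsilon_Y.
\]

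For (1) $\Rightarrow$ (2), suppose $(X,z,Y)$ is a vertical $3$-separation of $M$, so $z \in \cl_M(X) \cap \cl_M(Y)$ (giving $\epsilon_X = \epsilon_Y = 0$) and $r_M(X), r_M(Y) \geq 3$. By $3$-connectedness, $\lambda_M(X) = \lambda_M(X\cup z) = 2$, and the identity then yields $\lambda_{M/z}(X) = 1$. Thus $(X,Y)$ is a $2$-separation of $M/z$ in which each side has rank at least $2$, hence meets at least two parallel classes of $M/z$; the $2$-separation therefore descends to $\si(M/z)$, witnessing that $\si(M/z)$ is not $3$-connected.

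For (2) $\Rightarrow$ (1), I would first observe that $M/z$ is connected: any $1$-separation of $M/z$ lifts immediately to a $2$-separation of $M$, contradicting $3$-connectedness (since $M$ is $3$-connected, it has no coloops and so the two sides of the lifted partition each have size at least~$2$). Because simplification does not affect connectedness, $\si(M/z)$ is connected, and its failure of $3$-connectedness produces a $2$-separation $(X',Y')$. Lifting each side to the union of its parallel classes in $M/z$ gives a partition $(X,Y)$ of $E(M)-z$ with $\lambda_{M/z}(X) = \lambda_{\si(M/z)}(X') = 1$ and $r_{M/z}(X), r_{M/z}(Y) \geq 2$.

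Running the identity in reverse, the $3$-connectedness of $M$ combined with $|X|, |Y| \geq 2$ forces $\lambda_M(X), \lambda_M(X\cup z) \geq 2$; together with $\lambda_{M/z}(X) = 1$ this forces $\epsilon_X = \epsilon_Y = 0$. Hence $z \in \cl_M(X) \cap \cl_M(Y)$, $\lambda_M(X) = \lambda_M(X\cup z) = 2$, and $r_M(X) = r_{M/z}(X) + 1 \geq 3$ (and similarly for $Y$), so $(X,z,Y)$ is a vertical $3$-separation of $M$. The main obstacle is this backward direction: one must simultaneously control that the lifted partition is a genuine $2$-separation of $M/z$, that $z$ lies in the closures of both sides, and that each side has rank at least~$3$ in $M$. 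The $\epsilon_X, \epsilon_Y$ identity packages the last two obligations and dispatches them in one stroke via the $3$-connectedness of $M$.
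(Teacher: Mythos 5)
The paper cites this lemma from Whittle~\cite{Whittle99} without reproving it, so there is no internal proof to compare against; I will just assess your argument, which is correct in substance. The identity $\lambda_M(X)=\lambda_{M/z}(X)+1-\epsilon_X$ together with $\lambda_M(X\cup z)=\lambda_{M/z}(X)+1-\epsilon_Y$ is a clean way to run both directions from one computation, and the backward direction is handled carefully (in particular the connectedness of $M/z$ via the no-coloop observation, and the deduction $\epsilon_X=\epsilon_Y=0$ from $\lambda_M(X),\lambda_M(X\cup z)\geq 2$).

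The one step that deserves more than the word ``therefore'' is the claim in (1)$\Rightarrow$(2) that the $2$-separation $(X,Y)$ of $M/z$ descends to $\si(M/z)$: a parallel class of $M/z$ could straddle $X$ and $Y$. To close this, note that any straddling class lies in $\cl_{M/z}(X)\cap\cl_{M/z}(Y)$, whose rank is at most $1$ by submodularity (since $r(X)+r(Y)=r(M/z)+1$), so at most one parallel class $P$ straddles. Moving $P$ wholly to one side does not increase $\lambda_{M/z}$, and if moving it either way would leave the opposite side inside a single parallel class, then $E(M/z)$ would be covered by three parallel classes while $r(M/z)=r(X)+r(Y)-1\geq 3$, forcing $\si(M/z)\cong U_{3,3}$; but $U_{3,3}$ is disconnected, contradicting the connectedness of $\si(M/z)$, which you already establish for the other direction. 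With this observation inserted, the argument is complete.
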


\begin{lemma}[see {\cite[Lemma 2.12]{BS14}}, for example]
\label{fanends}
Let $M$ be a $3$-connected matroid with $|E(M)| \ge 7$.  
Suppose that $M$ has a fan $F$ of size at least~$4$, and let $f$ be an end of $F$.
\begin{enumerate}
 \item If $f$ is a spoke element, then $\co(M\del f)$ is $3$-connected and $\si(M/f)$ is not $3$-connected.
 \item If $f$ is a rim element, then $\si(M/f)$ is $3$-connected and $\co(M\del f)$ is not $3$-connected.
\end{enumerate}
\end{lemma}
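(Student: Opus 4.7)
The plan is to reduce to part (i) via duality and then to establish (i) in two substeps: first showing $\si(M/f)$ is not $3$-connected by producing a series pair after simplification, and then deducing $3$-connectivity of $\co(M\backslash f)$ from Bixby's Lemma.

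\textbf{Duality reduction.} The dual of a fan $(f_1,\dots,f_\ell)$ of $M$ is a fan of $M^*$ with the same ordering but with triangles and triads interchanged, so the spoke elements of $F$ in $M$ are exactly the rim elements of $F$ in $M^*$. Since $\si$ and $\co$ swap under duality (as do deletion and contraction), part (ii) will follow from applying part (i) to $M^*$.

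\textbf{Proof of (i).} Let $(f_1,\dots,f_\ell)$ be a fan ordering with $f=f_1$ a spoke end, so that $\{f_1,f_2,f_3\}$ is a triangle of $M$; since $\ell\geq 4$, the set $\{f_2,f_3,f_4\}$ is a triad. In $M/f_1$, the triangle forces $\{f_2,f_3\}$ to be a parallel pair, while the triad persists as a cocircuit (since $f_1\notin\{f_2,f_3,f_4\}$). Passing to $\si(M/f_1)$ deletes one of $f_2,f_3$, reducing the image of this cocircuit to a $2$-element cocircuit---that is, a series pair of $\si(M/f_1)$. Subject to checking that $|\si(M/f_1)|\geq 4$ and that this pair contains no smaller cocircuit, the series pair yields a $2$-separation, so $\si(M/f_1)$ is not $3$-connected. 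For the $3$-connectivity of $\co(M\backslash f_1)$, I would invoke Bixby's Lemma (\cref{bixby}) at $f_1$: either $M/f_1$ is $3$-connected up to parallel pairs, or $M\backslash f_1$ is $3$-connected up to series pairs. The first option would force every $2$-separation of $M/f_1$ to have a parallel class confined to one side, which collapses to a single element under simplification; thus $\si(M/f_1)$ would be $3$-connected, contradicting the preceding argument. Hence the second option holds, and $\co(M\backslash f_1)$ is $3$-connected.

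\textbf{Main obstacle.} The delicate part is confirming that $|\si(M/f_1)|\geq 4$ and that the candidate series pair really persists as a $2$-cocircuit of $\si(M/f_1)$. Both reduce to ruling out the possibility that $f_4$ is parallel to some other element in $M/f_1$; if $\{f_4,g\}$ were parallel, then $\{f_1,f_4,g\}$ would be a triangle of $M$, and orthogonality with the triad $\{f_2,f_3,f_4\}$ would force $g\in\{f_2,f_3\}$, collapsing $\{f_1,f_2,f_3,f_4\}$ onto a rank-$2$ flat and, combined with $3$-connectivity and $|E(M)|\geq 7$, yielding a contradiction. An alternative route that bypasses these bookkeeping details is to use \cref{vert3sep}: it suffices to exhibit a vertical $3$-separation $(X,f_1,Y)$ of $M$ directly, for instance by taking $X=\fcl(\{f_2,f_3\})-f_1$ (which contains the whole fan, since closure/coclosure along the alternating triangle--triad sequence absorbs each $f_i$ in turn) and $Y$ the rest, with the necessary rank conditions on $Y$ secured using $|E(M)|\geq 7$.
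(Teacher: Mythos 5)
The structure---duality reduction, a series pair after simplification, and Bixby's Lemma for the deletion side---is a reasonable plan, and the Bixby step is correct: if $M/f$ were $3$-connected up to parallel pairs, any $2$-separation of $\si(M/f)$ would lift to one of $M/f$ with a parallel-pair side, which cannot persist after simplification, so once $\si(M/f)$ is known not to be $3$-connected, the other arm of Bixby gives that $\co(M\backslash f)$ is $3$-connected.

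The gap is in the ``main obstacle'' paragraph. You assert that both required facts ``reduce to ruling out the possibility that $f_4$ is parallel to some other element in $M/f_1$''; the orthogonality argument does correctly rule this out (and thus the series pair survives simplification), but it yields only \emph{two} parallel classes of $M/f_1$---that of $f_4$ and that of $\{f_2,f_3\}$---whereas $|E(\si(M/f_1))|\ge 4$ requires four. Without four elements the cocircuit $\{f_3,f_4\}$ of $\si(M/f_1)$ does not produce a $2$-separation, since a matroid with at most three elements has none. This is a genuine issue, not bookkeeping: the rank-$3$ matroid on $\{f_1,f_2,f_3,f_4,g_1,g_2,g_3\}$ whose only nontrivial lines are $\{f_1,f_2,f_3\}$ and $\{f_1,g_1,g_2,g_3\}$ is $3$-connected, $\{f_2,f_3,f_4\}$ is a triad (the complement of the four-point line), so $(f_1,f_2,f_3,f_4)$ is a fan with spoke end $f_1$, yet $M/f_1$ has exactly three parallel classes and $\si(M/f_1)\cong U_{2,3}$, which has no $k$-separation for $k<3$ and hence is $3$-connected under the paper's definitions. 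Your \cref{vert3sep} alternative runs into the same wall: in this example $\fcl(\{f_2,f_3\})=E(M)$, so there is no $Y$ left to form a separation. A complete proof must either carry a hypothesis such as $r(M)\ge 4$ (which forces $\si(M/f_1)$ to have rank at least $3$, hence at least four elements since it is connected) or handle the rank-$3$ boundary case by a separate argument.
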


\begin{lemma}[{\cite[Lemma~1.5]{OW00}}]
\label{fanendsstrong}
  Let $M$ be $3$-connected matroid that is not a wheel or a whirl.
  Suppose $M$ has a maximal fan~$F$ of size at least~$4$, and let $f$ be an end of $F$.
  \begin{enumerate}
    \item If $f$ is a spoke element, then $M\ba f$ is $3$-connected.
    \item If $f$ is a rim element, then $M/f$ is $3$-connected.
  \end{enumerate}
\end{lemma}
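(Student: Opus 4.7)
The plan is to reduce by matroid duality to proving only part (i), since $M/f = (M^* \ba f)^*$ and dualising a fan swaps its spoke and rim elements. So I assume $f$ is the spoke end $f_1$ of the maximal fan $F = (f_1, f_2, \dotsc, f_\ell)$, with $\{f_1, f_2, f_3\}$ a triangle, and aim to show that $M \ba f_1$ is $3$-connected.

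Suppose for contradiction that $M \ba f_1$ is not $3$-connected. By \cref{fanends}(i), $\co(M \ba f_1)$ is $3$-connected, so every $2$-separation of $M \ba f_1$ has a non-trivial series class on one side; equivalently, $M$ has a triad $T^*$ through $f_1$. Orthogonality with the triangle $\{f_1, f_2, f_3\}$ forces $T^* \cap \{f_2, f_3\} \neq \emptyset$. If $T^* = \{f_1, f_2, f_3\}$, then this set is both a triangle and a triad, so $\lambda_M(\{f_1, f_2, f_3\}) = 1$; $3$-connectedness of $M$ then yields $|E(M)| \leq 4$, whence $M \cong U_{2,4}$, a whirl, contradicting the hypothesis. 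The remaining cases are $T^* = \{f_1, f_2, y\}$ or $T^* = \{f_1, f_3, y\}$ for some $y \in E(M) - \{f_1, f_2, f_3\}$.

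In the first case, the sequence $(y, f_1, f_2, f_3, \dotsc, f_\ell)$ inherits the triangle/triad alternation of $F$: the triad $\{y, f_1, f_2\}$ is followed by the triangle $\{f_1, f_2, f_3\}$, the triad $\{f_2, f_3, f_4\}$, and so on. If $y \notin F$ then this is a fan of size $\ell + 1$ properly containing $F$, contradicting maximality. If $y = f_i \in F$ with $i \geq 4$, I would apply the appropriate orientation of \cref{fanunique} to a $5$-fan containing $f_i$ as its middle element to conclude that the only triads of $M$ through $f_i$ sit inside $F$, contradicting $\{f_1, f_2, f_i\}$ being such a triad. In the second case, if $\ell \geq 5$ then \cref{fanunique} applied directly to the $5$-fan $(f_1, \dotsc, f_5)$ asserts that the unique triad of $M$ through $f_3$ is $\{f_2, f_3, f_4\}$; hence $T^* = \{f_2, f_3, f_4\}$, impossible since $f_1 \in T^*$ but $f_1 \notin \{f_2, f_3, f_4\}$. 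For small $\ell$ (in particular $\ell = 4$) where \cref{fanunique} is unavailable, I would argue directly by applying cocircuit elimination to $T^*$ and $\{f_2, f_3, f_4\}$ and using orthogonality with $\{f_1, f_2, f_3\}$, concluding in each branch that either the fan extends or $M$ is forced onto a bounded ground set as a wheel or whirl.

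The hard part is the finite case analysis when the fan has minimum size or when the new triad element $y$ lies in $F$ at an index where \cref{fanunique} does not apply cleanly, since simple fan extension no longer delivers the contradiction. In these sub-cases one must thread together the triangles and triads of $F$ via repeated orthogonality and cocircuit elimination to rule out the configuration. The analysis remains finite because the relevant structure lives on a bounded number of elements, and the assumption that $M$ is neither a wheel nor a whirl is essential: it is exactly these matroids for which the fan-extension step can fail without producing an outright contradiction.
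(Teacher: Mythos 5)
The paper does not prove this lemma; it is cited from Oxley and Wu \cite[Lemma~1.5]{OW00}, so there is no in-paper argument to compare against. Evaluating your outline on its own terms: the strategy is sound. You reduce by duality to the spoke end $f_1$, use \cref{fanends}(i) to deduce that if $M\ba f_1$ is not $3$-connected then $M$ has a triad $T^*$ through $f_1$, and constrain $T^*$ by orthogonality with the triangle $\{f_1,f_2,f_3\}$. The disposal of $T^*=\{f_1,f_2,f_3\}$ and of $T^*$ picking up a new element $y\notin F$ (so that $(y,f_1,\dotsc,f_\ell)$ extends the fan) are both correct. In fact, when $T^*=\{f_1,f_2,f_i\}$ with $f_i\in F$, orthogonality against the triangle of $F$ through $f_i$ already forces a contradiction for every $i$ with $4\le i\le\ell-1$, and for $i=\ell$ when $\ell$ is odd, without appealing to \cref{fanunique} at all; and \cref{fanunique} neatly handles $T^*=\{f_1,f_3,y\}$ once $\ell\ge5$, with the $\ell=4$ subcase following either from fan extension (if $y\notin F$) or cocircuit elimination (if $y=f_4$).

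The genuine gap is in the one case you defer, which carries the content of the lemma: $T^*=\{f_1,f_2,f_\ell\}$ with $\ell$ even, where $f_\ell$ is a rim end lying in no triangle of $F$, so orthogonality within $F$ is silent. For $\ell=4$ you can still finish by strong cocircuit elimination from $\{f_1,f_2,f_4\}$ and $\{f_2,f_3,f_4\}$, producing the triad $\{f_1,f_3,f_4\}$ and then the triangle-triad $\{f_1,f_2,f_3\}$. But for $\ell\ge6$ the same steps produce a $4$-element cocircuit $\{f_1,f_2,f_{\ell-2},f_{\ell-1}\}$ and further candidate cocircuits lying entirely inside $F$ (such as $\{f_1,f_3,f_4\}$) that violate nothing so far; the elimination process does not visibly terminate in a larger fan or an outright contradiction. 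This is precisely the configuration in which $F$ closes up cyclically as in a wheel, so the ``not a wheel or whirl'' hypothesis must enter substantively here, and your sketch never shows how it does. You also silently assume $|E(M)|\ge7$ to invoke \cref{fanends}; the smaller ground sets turn out to be vacuous under the hypotheses, but this needs to be said. As written, the hard case is a plan rather than a proof.
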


\begin{lemma}
  \label{fanmiddle}
  Let $M$ be a $3$-connected matroid, 
  and let $F$ be a $5$-element fan of $M$ with ordering $(f_1,f_2,\dotsc,f_5)$, where $\{f_2,f_3,f_4\}$ is a triangle.
  Either $\si(M/f_3)$ is $3$-connected, or there exists some $f_6 \in E(M)-F$ such that $M^* | \{f_1,f_2,f_3,f_4,f_5,f_6\} \cong M(K_4)$.
\end{lemma}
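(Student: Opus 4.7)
The plan is to apply \cref{vert3sep}: assume $\si(M/f_3)$ is not $3$-connected and produce the required $f_6$. This assumption yields a vertical $3$-separation $(X, f_3, Y)$ of $M$, so in particular $f_3 \in \cl(X) \cap \cl(Y)$. If $M \cong M(\mathcal{W}_3)$ then $M \cong M(K_4)$ and $E(M) - F$ consists of a single element which serves as $f_6$; so I may assume $M \not\cong M(\mathcal{W}_3)$, which is what is needed to apply \cref{fanunique} in $M^*$.

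The next step is to locate the fan elements within $(X,Y)$. Since $f_3 \in \cl(X)$, there is a circuit of $M$ contained in $X \cup \{f_3\}$ and containing $f_3$; orthogonality with the triad $\{f_1,f_2,f_3\}$ forces this circuit to meet $\{f_1,f_2\}$ in an element of $X$. Dually, $f_3 \in \cl(Y)$ puts an element of $\{f_1,f_2\}$ in $Y$. So $\{f_1,f_2\}$ is split across $(X,Y)$, and the same argument applied to $\{f_3,f_4,f_5\}$ splits $\{f_4,f_5\}$. After swapping the sides if needed, I take $f_2 \in X$ and $f_1 \in Y$; the sub-case $f_4 \in X$, $f_5 \in Y$ is the main case, and the symmetric alternative is handled analogously via relabeling. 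The closure witness $C \subseteq Y \cup \{f_3\}$ for $f_3 \in \cl(Y)$ must then contain both $f_1$ and $f_5$, by orthogonality with the two triads (using that $f_2, f_4 \in X$). Applying \cref{fanunique} to the fan $(f_1,\dots,f_5)$ in $M^*$ (where $\{f_1,f_2,f_3\}$ is a triangle of $M^*$) gives $\{f_2,f_3,f_4\}$ as the unique triangle of $M$ through $f_3$, so $\{f_1,f_3,f_5\}$ is not a circuit of $M$; hence $|C| \geq 4$, and I choose $f_6 \in C - \{f_1,f_3,f_5\}$, which lies in $Y - F$.

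The final and hardest step will be to verify that $M^* | \{f_1,\dots,f_6\} \cong M(K_4)$, or equivalently (by self-duality of $M(K_4)$) that $M/(E - \{f_1,\dots,f_6\}) \cong M(K_4)$. The two given triads $\{f_1,f_2,f_3\}$ and $\{f_3,f_4,f_5\}$ of $M$ correspond to two of the four triangles of the target $M(K_4)$ (read as cocircuits of $M$ in $M^*|\{f_1,\dots,f_6\}$), while the triangle $\{f_2,f_3,f_4\}$ and the circuit through $\{f_1,f_3,f_5,f_6\}$ correspond to two of its triads. My plan is to choose $(X, f_3, Y)$ so that $|Y|$ is minimum, using full-closure moves in the spirit of \cref{fclnontrivialsep} to push elements out of $Y$ whenever possible; this should force $Y = \{f_1,f_5,f_6\}$ and $C = \{f_1,f_3,f_5,f_6\}$. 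A rank computation on $M/(E - \{f_1,\dots,f_6\})$ should then deliver the two remaining triads $\{f_1,f_5,f_6\}$ and $\{f_2,f_4,f_6\}$ of $M$ (extracted from the structure on the $Y$-side of the separation together with further orthogonality with the triangle $\{f_2,f_3,f_4\}$), completing the $M(K_4)$ isomorphism. The principal difficulty lies in this verification: the minimization must actually deliver the correct form of $Y$, and ruling out obstructions to the cocircuit structure around $f_6$ will require careful orthogonality arguments together with the fanunique constraints at $f_3$.
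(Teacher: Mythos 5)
Your route via a vertical $3$-separation $(X, f_3, Y)$ is genuinely different from the paper's, which argues directly with $\si(M/f_3) \cong M/f_3 \ba f_2$: there, if $M/f_3 \ba f_2$ is cosimple but not $3$-connected one contradicts the $3$-connectivity of $M$, and if it is not cosimple then $f_2$ lies in a triad of $M$ avoiding $f_3$, which orthogonality with the triangle $\{f_2,f_3,f_4\}$ pins down as $\{f_2,f_4,f_6\}$ for some $f_6 \in E(M)-F$; a single submodularity estimate then produces the fourth triad $\{f_1,f_5,f_6\}$ and the $M(K_4)$ restriction. That route is shorter and sidesteps the case analysis you run into.

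The difficulty you flag in your final step is, in fact, a genuine gap, and the plan you sketch cannot be completed as described. If $Y$ could be shrunk to $\{f_1,f_5,f_6\}$ with $C = \{f_1,f_3,f_5,f_6\}$ a circuit of $M$, then $C$ would meet the triad $\{f_2,f_4,f_6\}$ — which your plan also aims to produce — in exactly $\{f_6\}$, contradicting orthogonality. More strongly, once $M^*|\{f_1,\dotsc,f_6\} \cong M(K_4)$ holds with triads $\{f_1,f_2,f_3\}$, $\{f_3,f_4,f_5\}$, $\{f_2,f_4,f_6\}$, $\{f_1,f_5,f_6\}$, every circuit of $M$ through $f_6$ must meet $\{f_2,f_4\}$, and since $\{f_2,f_3,f_4\}$ is the only triangle of $M$ through $f_3$, no circuit of $M$ lies in $\{f_1,f_3,f_5,f_6\}$; thus $f_3 \notin \cl(\{f_1,f_5,f_6\})$ and the $Y$-side of a vertical $3$-separation at $f_3$ can never be that triple. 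So the element you extract from $C - \{f_1,f_3,f_5\}$ is in general not the $f_6$ you need, and no amount of full-closure minimisation will deliver the stated $Y$ and $C$. A separate, smaller gap: the sub-case $f_2, f_5 \in X$ and $f_1, f_4 \in Y$ is not disposed of by relabeling — reversing the fan ordering and then swapping $X$ with $Y$ reproduces that same sub-case rather than the main one — so it would need its own argument or a proof that it cannot occur.
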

\begin{proof}
  We may assume that $M \not\cong M(\mathcal{W}_3)$, for otherwise $\si(M/f_3)$ is $3$-connected.
  Thus, by \cref{fanunique}, $\si(M/f_3) \cong M/f_3\ba f_2$.
  Suppose that $\si(M/f_3)$ is not $3$-connected, so $M/f_3\ba f_2$ has a $2$-separation $(U,V)$.
  To begin with, assume that $\si(M/f_3)$ is cosimple.
  Note that $\{f_1,f_2,f_4,f_5\}$ is a cocircuit of $M$, so $\{f_1,f_4,f_5\}$ is a triad of $M/f_3\ba f_2$.
  Without loss of generality, $|U \cap \{f_1,f_4,f_5\}| \ge 2$, and $U$ is fully closed by \cref{fclnontrivialsep}.
  So $\{f_1,f_4,f_5\} \subseteq U$.
  Now $f_2 \in \cl_{M/f_3}(U)$, so $(U \cup f_2,V)$ is a $2$-separation in $M/f_3$.
  Moreover, $f_3 \in \cocl_{M}(U \cup f_2)$, so $(U \cup \{f_2,f_3\},V)$ is a $2$-separation in $M$, contradicting that $M$ is $3$-connected.

  Now we may assume that $M /f_3 \ba f_2$ is not cosimple, so $f_2$ is in a triad~$T^*$ of $M$ that avoids $f_3$.
  By orthogonality with the triangle $\{f_2,f_3,f_4\}$, we have $f_4 \in T^*$.
  Since $M$ is $3$-connected, it follows that $T^* = \{f_2,f_4,f_6\}$ for some $f_6 \in E(M)-F$.
  Now, by submodularity,
  \begin{align*}
      r^*(\{f_1,f_5,f_6\}) &\le r^*(\{f_1,f_2,f_3,f_4,f_5,f_6\}) + r^*(E(M)-\{f_2,f_3,f_4\}) - r(M^*) \\
  &= 3 + (r(M^*)-1) - r(M^*) = 2,
  \end{align*}
  so $\{f_1,f_5,f_6\}$ is also a triad of $M$.
  It now follows that $M^* | \{f_1,f_2,f_3,f_4,f_5,f_6\} \cong M(K_4)$, as required.
\end{proof}

We also require the following lemma.
\begin{lemma}[{\cite[Lemma~2.11]{BS14}}]
  \label{gutsandcoguts}
  Let $(X, Y)$ be a $3$-separation of a $3$-connected matroid $M$. If $X \cap \cl(Y) \neq \emptyset$ and $X \cap \cocl(Y) \neq \emptyset$, then $|X \cap \cl(Y)| = 1$ and $|X \cap \cocl(Y)| = 1$.
\end{lemma}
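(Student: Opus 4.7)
The plan is to apply submodularity of the rank function to the two flats $\cl(Y)$ and $\cl(X - c)$, where $c$ is the given element of $X \cap \cocl(Y)$. This should force $r(\cl(Y) \cap \cl(X - c)) \le 1$, and since every element of $X \cap \cl(Y)$ lies in this intersection, simplicity of $M$ will yield $|X \cap \cl(Y)| \le 1$. The dual statement $|X \cap \cocl(Y)| = 1$ then follows by applying the same argument to $M^*$, since $(X,Y)$ is again a $3$-separation of $M^*$ and $\cl_{M^*}(Y) = \cocl_M(Y)$.

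The preliminary step is to show $\cl(Y) \cap \cocl(Y) \cap X = \emptyset$. An element $e$ in this set would satisfy $r(Y \cup e) = r(Y)$ and $r(X - e) = r(X) - 1$, whence $\lambda(X - e) = \lambda(X) - 1 = 1$. Because $|X - e| \ge 2$ and $|Y \cup e| \ge 4$, this is a $2$-separation, contradicting $3$-connectedness. In particular $c \notin \cl(Y)$, and each $g \in X \cap \cl(Y)$ satisfies $g \ne c$, so $g \in X - c \subseteq \cl(X - c)$. Thus $X \cap \cl(Y) \subseteq \cl(Y) \cap \cl(X - c)$.

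For the main computation, $c \in \cocl(Y)$ gives $r(X - c) = r(X) - 1$, and hence $r(\cl(X - c)) = r(X) - 1$. Since neither $\cl(Y)$ nor $\cl(X - c)$ contains $c$ but $Y \cup (X - c) = E(M) - c$, we get $\cl(Y) \cup \cl(X - c) = E(M) - c$; as $M$ is $3$-connected, $c$ is not a coloop, so $r(\cl(Y) \cup \cl(X - c)) = r(M)$. Submodularity then yields
\[
  r(\cl(Y) \cap \cl(X - c)) \le r(Y) + (r(X) - 1) - r(M) = \lambda(X) - 1 = 1.
\]
Because $M$ is simple (being $3$-connected with $|E(M)| \ge 6$), any two distinct elements of $X \cap \cl(Y)$ would span a rank-$2$ subset of $\cl(Y) \cap \cl(X-c)$, contradicting the above bound; hence $|X \cap \cl(Y)| \le 1$, and equality holds by hypothesis. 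The only slightly subtle step is using $\cl(X - c)$ in place of $\cl(X)$ as the second flat, so that the rank drop contributed by the coguts element~$c$ sharpens the submodular bound from $2$ to $1$.
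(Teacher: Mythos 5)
The paper states this lemma with a citation to \cite[Lemma~2.11]{BS14} and gives no proof of its own, so there is nothing in the paper to compare against. Your argument is correct and self-contained. The preliminary step correctly rules out an element in $X \cap \cl(Y) \cap \cocl(Y)$ by exhibiting a $2$-separation $(X-e, Y \cup e)$; the choice of $\cl(X-c)$ rather than $\cl(X)$ as the second flat is indeed the right move, since $r(\cl(X-c)) = r(X)-1$ is exactly the sharpening needed to get the bound $r(\cl(Y)\cap\cl(X-c)) \le \lambda(X)-1 = 1$; and simplicity of $M$ (from $3$-connectivity with $|E(M)|\ge 6$) then caps $|X\cap\cl(Y)|$ at one. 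The duality step is also sound: $(X,Y)$ remains a $3$-separation of the $3$-connected matroid $M^*$, $\cl_{M^*}(Y)=\cocl_M(Y)$ and $\cocl_{M^*}(Y)=\cl_M(Y)$, so both hypotheses transfer and the first half of the argument applied to $M^*$ yields $|X\cap\cocl_M(Y)|=1$.
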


\subsection*{Local connectivity}

Let $M$ be a matroid with $X,Y \subseteq E(M)$.
The \emph{local connectivity} between $X$ and $Y$, denoted $\lc_M(X,Y)$, is defined to be $\lc_M(X,Y)= r(X)+r(Y)-r(X \cup Y)$. Evidently, $\lc_M(Y,X)= \lc_M(X,Y)$. Note that if $\{X,Y\}$ is a partition of $E(M)$, then $\lc_M(X,Y)= \lambda_M(X)$.
We write $\lc$ instead of $\lc_M$ when $M$ is clear from context, and we write $\lc^*(X,Y)$ for $\lc_{M^*}(X,Y)$.
We now recall some elementary properties. 

\begin{lemma}[see {\cite[Lemma~8.2.3]{Oxley11}}, for example]
  \label{growpi}
  Let $X_1$, $X_2$, $Y_1$, and $Y_2$ be subsets of the ground set of a matroid. If $X_1 \subseteq Y_1$ and $X_2 \subseteq Y_2$, then $\lc(X_1,X_2) \le \lc(Y_1,Y_2)$.
\end{lemma}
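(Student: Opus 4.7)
The plan is to prove the inequality in two steps, extending one side at a time: first show $\lc(X_1,X_2) \le \lc(Y_1,X_2)$, and then by the symmetric argument (swapping the roles of the two pairs) show $\lc(Y_1,X_2) \le \lc(Y_1,Y_2)$. Composing these gives the result. Since the function $\lc$ is symmetric in its two arguments, it suffices to carry out the first inequality in detail.

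For the first inequality, I would unpack the definitions: we want
\[
  r(X_1) + r(X_2) - r(X_1 \cup X_2) \le r(Y_1) + r(X_2) - r(Y_1 \cup X_2),
\]
which is equivalent to $r(X_1) + r(Y_1 \cup X_2) \le r(Y_1) + r(X_1 \cup X_2)$. This is exactly what submodularity of the rank function yields when applied to the sets $Y_1$ and $X_1 \cup X_2$: we have $r(Y_1) + r(X_1 \cup X_2) \ge r(Y_1 \cup X_1 \cup X_2) + r(Y_1 \cap (X_1 \cup X_2))$. Because $X_1 \subseteq Y_1$, the union simplifies to $Y_1 \cup X_2$, and the intersection contains $X_1$, so its rank is at least $r(X_1)$. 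Combining these observations gives the desired inequality.

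With that step complete, I would simply apply the same argument to the pair $(Y_1, X_2) \subseteq (Y_1, Y_2)$, using that $X_2 \subseteq Y_2$ and the symmetry of $\lc$, to obtain $\lc(Y_1,X_2) \le \lc(Y_1,Y_2)$. Chaining the two inequalities finishes the proof.

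There is no real obstacle here; the lemma is a routine consequence of the submodular inequality, and the only mild care needed is to check that the set identities ($Y_1 \cup X_1 \cup X_2 = Y_1 \cup X_2$ and $X_1 \subseteq Y_1 \cap (X_1 \cup X_2)$) are applied correctly when invoking submodularity.
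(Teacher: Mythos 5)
Your proof is correct and is the standard submodularity argument; the paper does not give its own proof but simply cites Oxley's book, where Lemma~8.2.3 is proved in essentially the same way (one side at a time, via the submodular inequality). The set identities you flag are the right things to check, and you have checked them correctly.
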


\begin{lemma}[{\cite[Lemma~2.4(iv)]{OSW04}}]
  \label{pihelper}
  If $\{X,Y,Z\}$ is a partition of the ground set of a matroid, then $\lambda(X) + \lc(Y,Z)= \lambda(Z)+ \lc(X,Y)$.
\end{lemma}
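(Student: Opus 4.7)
The plan is to prove the identity by expanding both sides using the definitions of $\lambda$ and $\lc$ and showing that each equals the symmetric expression $r(X) + r(Y) + r(Z) - r(M)$.

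First I would use the fact that $\{X,Y,Z\}$ is a partition of $E(M)$, so $E(M) - X = Y \cup Z$ and $E(M) - Z = X \cup Y$. By definition of connectivity, this gives $\lambda(X) = r(X) + r(Y \cup Z) - r(M)$ and $\lambda(Z) = r(Z) + r(X \cup Y) - r(M)$. By definition of local connectivity, $\lc(Y,Z) = r(Y) + r(Z) - r(Y \cup Z)$ and $\lc(X,Y) = r(X) + r(Y) - r(X \cup Y)$.

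Next I would add these in pairs. Summing the expressions for $\lambda(X)$ and $\lc(Y,Z)$, the $r(Y \cup Z)$ terms cancel and we obtain
\[
  \lambda(X) + \lc(Y,Z) = r(X) + r(Y) + r(Z) - r(M).
\]
Symmetrically, summing the expressions for $\lambda(Z)$ and $\lc(X,Y)$, the $r(X \cup Y)$ terms cancel and we obtain the same quantity $r(X) + r(Y) + r(Z) - r(M)$. Equating the two gives the stated identity.

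There is no real obstacle here: the lemma is a one-line identity in the rank function, and the only thing to be careful about is that the partition hypothesis ensures $E(M) - X = Y \cup Z$ and $E(M) - Z = X \cup Y$ so that the definition of $\lambda$ applies cleanly. No matroid axioms beyond the definitions of $\lambda$ and $\lc$ are needed, and in particular submodularity is not required.
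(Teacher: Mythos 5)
Your proof is correct. The paper does not reprove this lemma (it cites it from Oxley, Semple, and Whittle, \emph{Matroid Structure Theory}, Lemma~2.4(iv)), but your argument is the standard one: both sides expand to the symmetric quantity $r(X)+r(Y)+r(Z)-r(M)$, and the partition hypothesis is used exactly where you say, to identify $E(M)-X$ with $Y\cup Z$ and $E(M)-Z$ with $X\cup Y$.
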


The next lemma is elementary.
\begin{lemma}
  \label{picircuits}
  For a matroid $M$, let $L$ and $R$ be disjoint subsets of $E(M)$.
  If $\lc(L,R) = 0$, and $C$ is a circuit contained in $L \cup R$, then either $C \subseteq L$ or $C \subseteq R$.
\end{lemma}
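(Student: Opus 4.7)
The plan is to unpack the definition of local connectivity and combine it with monotonicity (\cref{growpi}) and the basic fact that a circuit has rank one less than its size.

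First I would assume for contradiction that there is a circuit $C \subseteq L \cup R$ with both $C_L := C \cap L$ and $C_R := C \cap R$ non-empty. Since $C_L \subseteq L$ and $C_R \subseteq R$, \cref{growpi} gives $\lc(C_L, C_R) \le \lc(L,R) = 0$, and since local connectivity is non-negative, $\lc(C_L,C_R) = 0$. Unwinding the definition, this says $r(C_L) + r(C_R) = r(C_L \cup C_R) = r(C)$.

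Next I would use the fact that $C$ is a circuit to obtain $r(C) = |C|-1 = |C_L| + |C_R| - 1$, so that $r(C_L) + r(C_R) = |C_L| + |C_R| - 1$. Combined with the trivial bounds $r(C_L) \le |C_L|$ and $r(C_R) \le |C_R|$, at least one of $C_L$ or $C_R$ must be dependent; say $C_L$ is dependent. Then $C_L$ contains a circuit $C'$, and since $C_R$ is non-empty, $C' \subseteq C_L \subsetneq C$, contradicting the fact that no proper subset of a circuit is a circuit.

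I do not expect any real obstacle here: the only substantive input is the interaction between rank additivity (from $\lc = 0$) and the circuit rank deficit. The monotonicity lemma \cref{growpi} does the work of passing from $L,R$ down to $C_L,C_R$, and after that the argument is a one-line counting contradiction.
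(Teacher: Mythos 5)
Your proof is correct. The paper states \cref{picircuits} without proof, labelling it ``elementary,'' so there is no proof in the paper to compare against; your argument — pass from $(L,R)$ to $(C\cap L, C\cap R)$ by \cref{growpi}, turn $\lc = 0$ into rank additivity, and then exploit the rank deficit of a circuit to produce a proper dependent subset — is exactly the short argument one would expect, and each step is sound.
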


\begin{lemma}
  \label{pflancoguts}
  Let $M$ be a $3$-connected matroid with a path of $3$-separations $(X,Z,Y)$ such that $Z$ is a coguts set.
  Then $\lc(X,Y) \le 1$.
  Moreover, $\lc(X,Y)=1$ if and only if $|Z| = 1$.
\end{lemma}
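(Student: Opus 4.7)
My plan is to reduce the computation of $\lc(X,Y)$ to a direct application of \cref{pihelper}. The key observation is that the coguts hypothesis forces $\lc(Y,Z)=0$. Indeed, by definition $Z$ is in the coguts of the $3$-separation $(X\cup Z,Y)$, which means in particular that $Z\subseteq\cocl(X)$, or equivalently $r^*(X\cup Z)=r^*(X)$. Rewriting this identity in terms of the rank function of $M$ (using $r^*(A)=|A|-r(M)+r(E(M)-A)$) gives $r(Y\cup Z)=r(Y)+|Z|$. Since $r(Z)\le |Z|$ always holds and subadditivity combined with the previous equation forces $r(Z)\ge |Z|$, we conclude $r(Z)=|Z|$ and hence
\[
  \lc(Y,Z)=r(Y)+r(Z)-r(Y\cup Z)=0.
\]

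With this in hand I would apply \cref{pihelper} to the partition $\{X,Y,Z\}$ of $E(M)$ to obtain
\[
  \lambda(X)+\lc(Y,Z)=\lambda(Z)+\lc(X,Y).
\]
Since $(X,Z\cup Y)$ is exactly $3$-separating we have $\lambda(X)=2$, and combining with $\lc(Y,Z)=0$ yields
\[
  \lc(X,Y)=2-\lambda(Z).
\]

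Finally, I would bound $\lambda(Z)$ using the $3$-connectivity of $M$. Note that $|E(M)-Z|\ge |X|+|Y|\ge 4$. If $|Z|\ge 2$, then $(Z,E(M)-Z)$ is a partition with both sides of size at least~$2$, so $3$-connectivity forces $\lambda(Z)\ge 2$, giving $\lc(X,Y)\le 0$; combined with the trivial bound $\lc(X,Y)\ge 0$, this yields $\lc(X,Y)=0$. If $|Z|=1$, then the single element of $Z$ is neither a loop nor a coloop (as $M$ is $3$-connected with $|E(M)|\ge 5$), so $\lambda(Z)=1$ and $\lc(X,Y)=1$. Both cases together give $\lc(X,Y)\le 1$ with equality precisely when $|Z|=1$.

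There is no serious obstacle here; the only point worth highlighting is that the coguts condition dualises cleanly to $r(Y\cup Z)=r(Y)+|Z|$, which is exactly what makes $\lc(Y,Z)$ vanish, after which \cref{pihelper} and the $3$-connectivity of $M$ immediately close out both the inequality and the equality case.
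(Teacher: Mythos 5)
Your proof is correct and follows essentially the same route as the paper's: extract $r(Z)=|Z|$ and $\lc(\cdot,Z)=0$ from the coguts condition, feed it into \cref{pihelper} to get $\lc(X,Y)=2-\lambda(Z)$, and finish with $3$-connectivity. The only cosmetic difference is that you use $Z\subseteq\cocl(X)$ to conclude $\lc(Y,Z)=0$ while the paper uses the symmetric fact $\lc(X,Z)=0$, which amounts to the same application of \cref{pihelper}.
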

\begin{proof}
  Since each $z \in Z$ is a coguts element, $r(X \cup Z) = r(X) + |Z|$ and $|Z|=r(Z)$.
  So $\lc(X,Z) = 0$. Now, by \cref{pihelper},
  \begin{align*}
    \lambda(Z) &= \lambda(Y) + \lc(X,Z) - \lc(X,Y) \\
    &= 2 - \lc(X,Y).
  \end{align*}
  If $\lc(X,Y) = 2$, then $\lambda(Z)=0$, a contradiction.
  So $\lc(X,Y) \le 1$.
  Now if $\lc(X,Y) = 1$, then, as $M$ is $3$-connected, $|Z|=1$.
  On the other hand, if $|Z| = 1$, then $\lambda(Z) = 1$, so $\lc(X,Y) = 1$, as required.
\end{proof}

\begin{lemma}
  \label{pflantriad}
  Let $M$ be a $3$-connected matroid with a path of $3$-separations $(X,\{z_1\},\{z_2\},\{z_3\},Y)$ such that $z_1$ and $z_3$ are coguts elements, and $z_2$ is a guts element.
  Then $\lc(X,Y) \le 1$.
  Moreover, $\lc(X,Y)=1$ if and only if $\{z_1,z_2,z_3\}$ is a triad.
\end{lemma}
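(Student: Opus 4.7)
The plan is to mimic the proof of \cref{pflancoguts}, using \cref{pihelper} and \cref{growpi}. First, I translate the guts/coguts assumptions into rank conditions: $z_1$ being a coguts element gives $z_1 \notin \cl(X)$ and $z_1 \notin \cl(\{z_2,z_3\} \cup Y)$; similarly $z_3$ being a coguts element gives $z_3 \notin \cl(X \cup \{z_1,z_2\})$ and $z_3 \notin \cl(Y)$; and $z_2$ being a guts element gives $z_2 \in \cl(X \cup z_1) \cap \cl(z_3 \cup Y)$. Combined with the four exact $3$-separations along the path, these yield $r(X \cup \{z_1,z_2,z_3\}) = r(X)+2$ and $r(\{z_1,z_2,z_3\} \cup Y) = r(Y)+2$, and hence $r(Y) = r(M) - r(X)$.

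To bound $\lc(X,Y) \le 1$, I apply \cref{pihelper} to the partition $(\{z_1\}, X, \{z_2,z_3\} \cup Y)$ of $E(M)$. Since $M$ is $3$-connected with at least $7$ elements, $\lambda(\{z_1\}) = 1$; the exact $3$-separation $(X \cup z_1, \{z_2,z_3\} \cup Y)$ gives $\lambda(\{z_2,z_3\} \cup Y) = 2$; and $z_1 \notin \cl(X)$ gives $\lc(\{z_1\}, X) = 0$. Then \cref{pihelper} yields $\lc(X, \{z_2,z_3\} \cup Y) = 1$, and \cref{growpi}, applied with $Y \subseteq \{z_2,z_3\} \cup Y$, concludes $\lc(X,Y) \le 1$.

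For the second part, substituting $r(Y) = r(M) - r(X)$ into the definition of $\lc$ gives $\lc(X,Y) = r(M) - r(X \cup Y)$. Thus $\lc(X,Y) = 1$ if and only if $\cl(X \cup Y)$ is a hyperplane of $M$, equivalently, $E(M) - \cl(X \cup Y)$ is a cocircuit contained in $\{z_1,z_2,z_3\}$. Since $M$ is $3$-connected with $|E(M)| \ge 4$, every cocircuit of $M$ has size at least $3$, so any such cocircuit must be all of $\{z_1,z_2,z_3\}$, i.e., $\{z_1,z_2,z_3\}$ is a triad. The converse is immediate: if $\{z_1,z_2,z_3\}$ is a triad, then $X \cup Y$ is a hyperplane and $\lc(X,Y) = 1$. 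The only point requiring care is the rank bookkeeping that makes the \cref{pihelper} application return exactly $1$; otherwise the argument is routine and essentially parallels the previous lemma.
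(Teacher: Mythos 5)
Your proof is correct, but it takes a genuinely different route than the paper's. The paper applies \cref{pihelper} once to the three-part partition $(X,Y,Z)$ with $Z = \{z_1,z_2,z_3\}$: after computing $r(Z)=3$ and $\lc(Y,Z)=1$, it obtains the identity $\lambda(Z) = 3 - \lc(X,Y)$, from which both the bound $\lc(X,Y)\le 1$ (via $\lambda(Z)\ge 2$) and the triad equivalence (via $\lambda(Z)=2$ together with $r(Z)=3$) fall out simultaneously. You instead split the work: for the bound, you apply \cref{pihelper} to the different partition $(\{z_1\},X,\{z_2,z_3\}\cup Y)$ to get $\lc(X,\{z_2,z_3\}\cup Y)=1$, then invoke \cref{growpi}; for the equivalence, you convert $\lc(X,Y)$ into $r(M)-r(X\cup Y)$ using the rank bookkeeping, so that $\lc(X,Y)=1$ is precisely the condition that $\cl(X\cup Y)$ is a hyperplane, whose complementary cocircuit lies in $\{z_1,z_2,z_3\}$ and hence (by $3$-connectivity forcing cocircuits to have size at least three) must equal $\{z_1,z_2,z_3\}$. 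Both routes are sound. The paper's is more economical, since a single \cref{pihelper} identity delivers both conclusions at once and never needs the cocircuit--hyperplane duality explicitly; yours makes that duality visible, which arguably gives a cleaner conceptual reason why a triad is exactly the obstruction. One minor remark: you compute $r(X\cup Z)=r(X)+2$ but never actually use it; only $r(Y\cup Z)=r(Y)+2$ and the exact $3$-separating property of $X$ are needed for $r(Y)=r(M)-r(X)$.
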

\begin{proof}
  Let $Z = \{z_1,z_2,z_3\}$.
  If $r(Z)=2$, then $z_1 \in \cl(Y \cup \{z_2,z_3\})$, so $z_1 \notin \cocl(X)$, contradicting that $z_1$ is a coguts element.
  So $r(Z)=3$.
  Moreover, since $z_1$ and $z_3$ are coguts elements whereas $z_2$ is a guts element, $r(Y \cup Z) = r(Y)+2$.
  So $\lc(Y,Z) = 1$. 
  Now, by \cref{pihelper},
  \begin{align*}
    \lambda(Z) &= \lambda(X) + \lc(Y,Z) - \lc(X,Y) \\
    &= 3-\lc(X,Y).
  \end{align*}
  Since $|Z|=3$, we have $\lambda(Z) \ge 2$, implying $\lc(X,Y) \le 1$.
  Now if $Z$ is a triad, then $\lambda(Z) = 2$ so $\lc(X,Y) = 1$.
  On the other hand, if $\lc(X,Y) = 1$, then $\lambda(Z)=2$ in which case,
  since $r(Z)=3$, we deduce that $Z$ is a triad.
\end{proof}

\subsection*{Minors and fragility}

Let $M$ be a matroid, let $\mathcal{N}$ be a set of matroids, and let $x$ be an element of $M$.
For a matroid $N$, we say that \emph{$M$ has an $N$-minor} if $M$ has a minor isomorphic to $N$.
We say $M$ has an $\mathcal{N}$-minor if $M$ has an $N$-minor for some $N \in \mathcal{N}$.
If $M\ba x$ has an $\mathcal{N}$-minor, then $x$ is $\mathcal{N}$-\textit{deletable}.
If $M/x$ has an $\mathcal{N}$-minor, then $x$ is $\mathcal{N}$-\textit{contractible}.
If neither $M\ba x$ nor $M/x$ has an $\mathcal{N}$-minor, then $x$ is $\mathcal{N}$-\textit{essential}.
If $x$ is both $\mathcal{N}$-deletable and $\mathcal{N}$-contractible, then we say that $x$ is \textit{$\mathcal{N}$-flexible}.
A matroid $M$ is \textit{$\mathcal{N}$-fragile} if $M$ has an $\mathcal{N}$-minor, and no element of $M$ is $\mathcal{N}$-flexible
(note that sometimes this is referred to in the literature as ``strictly $\mathcal{N}$-fragile'').
For $X \subseteq E(M)$, we also say that $X$ is \emph{$\mathcal{N}$-deletable} (or \emph{$\mathcal{N}$-contractible}) when $M \del X$ (or $M / X$, respectively) has an $\mathcal{N}$-minor.
When $\mathcal{N} = \{N\}$, we use the prefix ``$N$-'' for these terms, rather than ``$\{N\}$-''.

The next lemma is well known, and the subsequent lemma is a straightforward corollary.

\begin{lemma}[see {\cite[Corollary~8.2.5]{Oxley11}}, for example]
  \label{minor3conn}
  Let $M$ be a matroid with a $2$-separation $(X,Y)$, and let $N$ be a $3$-connected minor of $M$.
  Then $|Z \cap E(N)| \le 1$ for some $Z \in \{X,Y\}$.
\end{lemma}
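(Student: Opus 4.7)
My plan is to reduce the statement to the standard monotonicity of the connectivity function under taking minors. The main claim I would establish, or invoke, is the following: if $N$ is a minor of $M$ and $(A, B)$ is a partition of $E(M)$, then $\lambda_N(A \cap E(N)) \le \lambda_M(A)$. I would prove this by induction on $|E(M) \setminus E(N)|$, with the inductive step reducing to a single deletion or contraction. The deletion case $\lambda_{M \ba e}(A - e) \le \lambda_M(A)$ is a short rank calculation, split into cases according to whether $e$ is a coloop or not; the contraction case then follows by dualising, since $\lambda$ is self-dual.

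With the monotonicity claim in hand, the lemma follows in one line. Writing $X' = X \cap E(N)$ and $Y' = Y \cap E(N)$, monotonicity gives $\lambda_N(X') \le \lambda_M(X) \le 1$, so $(X', Y')$ is a $2$-separating partition of $E(N)$. Since $N$ is $3$-connected, it has no $2$-separation, so we cannot have both $|X'| \ge 2$ and $|Y'| \ge 2$; hence some $Z \in \{X, Y\}$ satisfies $|Z \cap E(N)| \le 1$, as required.

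The only step requiring any care is the single-element monotonicity of $\lambda$, and even that is a routine rank computation; since the result is really just the observation that a $2$-separation of $M$ restricts to a $2$-separating partition of the minor, the authors short-circuit the whole argument by pointing to Corollary~8.2.5 of Oxley.
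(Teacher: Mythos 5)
Your argument is correct, and the monotonicity claim you establish — that $\lambda_{M\ba e}(A-e) \le \lambda_M(A)$ and dually $\lambda_{M/e}(A-e) \le \lambda_M(A)$ for any partition $(A,B)$ of $E(M)$, hence $\lambda_N(A \cap E(N)) \le \lambda_M(A)$ by induction — is precisely the content of the Oxley result the paper cites. The paper gives no proof of its own here, so there is nothing to compare beyond confirming that your route is the standard one and that the final deduction (a $2$-separating partition of a $3$-connected matroid must have a side of size at most one) is handled correctly.
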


\begin{lemma}
  \label{niceVertSep}
  Let $(X, z, Y)$ be a vertical $3$-separation of a $3$-connected matroid $M$, and let $N$ be a $3$-connected minor of $M/z$. Then there exists a vertical $3$-separation $(X', z, Y')$ of $M$ such that $|X' \cap E(N)| \le 1$ and $Y' \cup z$ is closed in $M$.
\end{lemma}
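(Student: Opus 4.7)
The plan is to produce $(X',z,Y')$ by closing $Y\cup z$ in $M$, after first using $M/z$ to control how $E(N)$ meets the two sides. First I would observe that because $z$ lies in the guts of $(X,z,Y)$, a direct rank calculation gives $\lambda_{M/z}(X)=\lambda_M(X\cup z)-1=1$, so $(X,Y)$ is an exact $2$-separation of $M/z$. Since $N$ is a $3$-connected minor of $M/z$, \cref{minor3conn} applied to this $2$-separation yields $|X\cap E(N)|\le 1$ or $|Y\cap E(N)|\le 1$, and after relabelling we may assume the former.

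Next I would set $Y'=\cl_M(Y\cup z)\setminus\{z\}$ and $X'=E(M)\setminus(Y'\cup\{z\})$. By construction $Y'\cup z$ is closed in $M$, and $X'\subseteq X$ gives $|X'\cap E(N)|\le 1$. It remains to show that $(X',z,Y')$ is a vertical $3$-separation of $M$. Using $z\in\cl(Y)$ we have $r(Y'\cup z)=r(Y\cup z)=r(Y)$, so $r(Y')\ge r(Y)\ge 3$ and, since closure does not increase $\lambda$, $\lambda_M(Y'\cup z)\le\lambda_M(Y\cup z)=2$.

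The main obstacle is ruling out the degenerate case $|X'|\le 1$, where $(X',z,Y')$ would fail to be a vertical $3$-separation. The case $X'=\emptyset$ is impossible since $r(Y'\cup z)=r(Y)=r(M)-r(X)+2<r(M)$. If $|X'|=1$, say $X'=\{x^*\}$, then because $M$ is $3$-connected with at least four elements, $x^*$ is neither a loop nor a coloop, so $\lambda_M(\{x^*\})=1$ forces $r(E(M)\setminus x^*)=r(M)$; but $E(M)\setminus x^*=Y'\cup z$ has rank $r(Y)<r(M)$, a contradiction. Once $|X'|,|Y'\cup z|\ge 2$, the $3$-connectivity of $M$ yields $\lambda_M(X')\ge 2$, so $\lambda_M(X')=2$ and thus $r(X')=r(X)\ge 3$. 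The element $z$ lies in the guts of $(X',z,Y')$: trivially $z\in\cl(Y)\subseteq\cl(Y')$, and $z\in\cl(X')$ because otherwise $r(X'\cup z)=r(X')+1>r(X)=r(X\cup z)\ge r(X'\cup z)$. This completes the verification.
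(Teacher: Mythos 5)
Your proof is correct. The paper omits a proof of this lemma (it is presented as routine), so there is nothing explicit to compare against, but your argument follows the expected standard route: observe that $z$ in the guts makes $(X,Y)$ a $2$-separation of $M/z$ so that \cref{minor3conn} controls how $E(N)$ meets the sides, then replace $Y$ by $\cl_M(Y\cup z)\setminus\{z\}$ and shrink $X$ accordingly. Every detail checks out: $\lambda$ does not increase under closure, the calculation $r(Y'\cup z)=r(Y)=r(M)-r(X)+2<r(M)$ rules out $X'=\emptyset$, the loop/coloop argument disposes of $|X'|=1$, and $3$-connectivity then forces $\lambda_M(X')=2$, $r(X')=r(X)\ge 3$, and $z\in\cl(X')\cap\cl(Y')$, giving a genuine vertical $3$-separation $(X',z,Y')$ with $Y'\cup z$ closed and $X'\subseteq X$, hence $|X'\cap E(N)|\le 1$.
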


The following is proved in \cite{BS14,Clark15}.

\begin{lemma}
  \label{CPL}
  Let $N$ be a $3$-connected minor of a $3$-connected matroid $M$. Let $(X, \{z\}, Y)$ be a vertical $3$-separation of $M$ such that $M / z$ has an $N$-minor with $|X \cap E(N)| \le 1$. Let $X' = X-\cl(Y)$
  and $Y' = \cl(Y) - z$.
  Then
  \begin{enumerate}
    \item each element of $X'$ is $N$-contractible; and
    \item at most one element of $\cl(X)-z$ is not $N$-deletable, and if such an element~$x$ exists, then $x \in X' \cap \cocl(Y')$ and $z \in \cl(X' - x)$.
  \end{enumerate}
\end{lemma}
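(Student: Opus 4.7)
My strategy is to fix a representation $N = M/z/A\ba B$ of $N$ as a minor of $M/z$ with $|X\cap E(N)|\le 1$, and for each candidate element $x$ show that the minor can be ``shuffled'' along the vertical $3$-separation $(X,z,Y)$ so that $x$ takes the desired role (contracted for (i), deleted for (ii)). As a preliminary simplification, \cref{niceVertSep} allows me to assume $Y \cup z$ is closed in $M$, so that $\cl(Y) = Y \cup z$ and hence $X' = X$; the conclusions for the original partition follow by reading off the result on the elements of $X$.

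For (i), take $x \in X'$. I split into three cases according to the role of $x$ in the fixed minor representation. If $x \in A$, reorganising as $M/x/((A-x)\cup z)\ba B \cong N$ exhibits $M/x$ as having an $N$-minor directly, using $z \in \cl(X)$. If $x \in B$, I use $x \notin \cl(Y)$ to produce a cocircuit $C^*$ of $M$ with $x \in C^* \subseteq X \cup \{z\}$, and then, by orthogonality with circuits witnessing the minor together with the constraint $|X \cap E(N)| \le 1$, argue that $x$ is a coloop of $M/(A\cup z)\ba(B-x)$; the standard minor-swap then moves $x$ from the deleted to the contracted side. The subtlest case is $x \in X\cap E(N)$: combining $x \notin \cl(Y)$ with the $3$-connectedness of $N$ and the slack provided by the vertical $3$-separation, one constructs a \emph{different} $N$-minor of $M$ in which $x$ is contracted rather than retained.

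For (ii), the argument is dual in flavour. A symmetric swap (using circuits rather than cocircuits, built from $y \in \cl(Y)$) shows that every element of $Y'$ is $N$-deletable, so any non-$N$-deletable element of $\cl(X)-z$ must lie in $X'$. The ``at most one'' assertion is proved by contradiction: if $x_1, x_2 \in X'$ are both non-$N$-deletable, then combining the local-connectivity machinery---particularly \cref{gutsandcoguts}---with the flexibility of the minor representation yields an $N$-minor of $M\ba x_i$ for at least one of them. The structural refinements $x \in X' \cap \cocl(Y')$ and $z \in \cl(X'-x)$ are then read off from precisely what the obstruction to the swap forces: $x \in \cocl(Y')$ is exactly the dual of the condition preventing deletion-swap, and $z \in \cl(X'-x)$ is what is required for the vertical $3$-separation to persist once $x$ is removed.

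The main obstacles are the third case in (i), where $x$ is already in the $N$-minor and a genuinely new $N$-minor of $M$ via $M/x$ must be constructed, and the uniqueness step in (ii); both rely on careful tracking of how cocircuits and circuits interact with the vertical $3$-separation, paired with the minor-swap principle that any element sitting in a cocircuit disjoint from the contracted set can be moved from deletion to contraction.
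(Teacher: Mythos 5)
The paper does not prove this lemma---it is cited from \cite{BS14,Clark15}---so I assess your argument on its own terms. There is a genuine gap in Case 2 of part (i).

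You claim that for $x \in B$, the cocircuit $C^* \subseteq X \cup \{z\}$ through $x$ forces $x$ to be a coloop of $M/(A \cup z) \ba (B-x)$, whereupon the delete-coloop-equals-contract-coloop swap finishes. This is false, and the failure is structural. One may always normalise so that $B$ is coindependent in $M/(A\cup z)$; then $r(N) = r(M/(A\cup z))$, hence $r_M(A\cup z)=r(M)-r(N)$. Since $E(N)$ is then spanning in $M/(A\cup z)$, we get $r_M(E(N)\cup A\cup z) = r(N) + r_M(A\cup z) = r(M)$, i.e.\ $E(N)\cup A\cup z$ \emph{spans} $M$. Every element of $B$, including your $x$, therefore lies in $\cl_M(E(N)\cup A\cup z)$---which is exactly the negation of ``$x$ is a coloop of $M/(A\cup z)\ba(B-x)$''. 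The swap never applies. The cocircuit $C^*$ you produce from $x\notin\cl(Y)$ is permitted to meet $z$, $A\cap X$, and $E(N)\cap X$; what the coloop condition actually demands is a cocircuit through $x$ lying entirely inside $B$, and $x\notin\cl(Y)$ does not give you one. Consequently a one-element shuffle of $B$ into $A$ cannot prove (i): one must genuinely rearrange the representation on the whole $X$-side, using the fact that $(X,Y)$ is a $2$-separation of $M/z$ with $N$ lying (essentially) in $Y$. That rearrangement is precisely what your sketches of Case 3 ($x\in E(N)$) and the ``at most one'' step of (ii) also leave undone---and it is the actual content of the result, not a finishing touch.
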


We also use the following well-known property of fragile matroids.
\begin{lemma}[see {\cite[Proposition~4.4]{MvZW10}}, for example]
  \label{genfragileconn}
  Let $\mathcal{N}$ be a non-empty set of $3$-connected matroids with $|E(N)| \ge 4$ for each $N \in \mathcal{N}$.
  If $M$ is $\mathcal{N}$-fragile, then $M$ is $3$-connected up to series and parallel classes.
\end{lemma}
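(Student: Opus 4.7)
The plan is to prove this by induction on $|E(M)|$. First, I would verify that $M$ is connected: if $M$ decomposed as a direct sum $M_1 \oplus M_2$, then any $N$-minor (being $3$-connected with at least four elements) would lie entirely in one summand, say $M_1$, and then every element of $M_2$ would be $\mathcal{N}$-flexible, since deletion and contraction of such an element both preserve the $N$-minor. This would contradict fragility.

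Next, let $(X, Y)$ be any $2$-separation of $M$ with $|X| \le |Y|$; the goal is to show $X$ is a parallel class or a series class. By \cref{minor3conn}, I can arrange (possibly by swapping $X$ and $Y$) that $|X \cap E(N)| \le 1$ for some $N \in \mathcal{N}$ that has an $N$-minor in $M$. Since $M$ is connected, $|X| \ge 2$. The base case $|X| = 2$ follows from a direct rank computation: writing $\lambda_M(X) = r(X) + r(Y) - r(M) = 1$, either $r(X) = 1$, in which case $X$ is a parallel pair, or $r(X) = 2$ and $r(Y) = r(M)-1$; in the latter case, connectivity of $M$ forces both elements of $X$ to lie outside $\cl(Y)$ (else one would be a coloop), so $Y$ is a hyperplane and $X$ is a $2$-cocircuit, hence a series pair.

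For $|X| \ge 3$, pick $e \in X \setminus E(N)$, which is nonempty because $|X \setminus E(N)| \ge |X|-1 \ge 2$. By fragility, $e$ is strictly $\mathcal{N}$-deletable or strictly $\mathcal{N}$-contractible; by matroid duality (which preserves the hypothesis and swaps series with parallel classes), I may assume $e$ is strictly deletable. A short commuting-minors argument shows $M \ba e$ is itself $\mathcal{N}$-fragile: any element flexible in $M \ba e$ would remain flexible in $M$. Since deletion does not increase connectivity, $\lambda_{M \ba e}(X - e) \le \lambda_M(X) = 1$, so $(X - e, Y)$ is at worst a $2$-separation of $M \ba e$ (noting $|X - e| \ge 2$). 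The inductive hypothesis applied to $M \ba e$ then gives that either $X - e$ or $Y$ is a series/parallel class of $M \ba e$.

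The main obstacle is the lift-back step: translating this conclusion from $M \ba e$ to $M$. If $Y$ is a series/parallel class of $M \ba e$, then the deletion of $e \notin Y$ is inert, and $Y$ remains a series/parallel class of $M$. The trickier case is when $X - e$ is a parallel class of $M \ba e$ (so also of $M$) but $e$ is not parallel to them in $M$; here one exploits that $e$ is strictly deletable — so $M/e$ has no $\mathcal{N}$-minor — together with the $2$-separation constraint $\lambda_M(X) = 1$, to force $e$ into a cocircuit with the elements of $X - e$, making $X$ a series class. The degenerate subcase $\lambda_{M \ba e}(X - e) = 0$ reduces to the disconnected situation dispatched in the first paragraph.
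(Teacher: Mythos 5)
The paper does not prove this statement itself; it is cited as a known result (\cite[Proposition~4.4]{MvZW10}), so there is no internal proof to compare against. Evaluating your attempt on its own merits: the high-level plan (show $M$ connected, reduce to $|X \cap E(N)| \le 1$ via \cref{minor3conn}, handle $|X|=2$ directly, and induct on $|E(M)|$ by removing an $e \in X \setminus E(N)$) is reasonable, but the lift-back step contains two genuine errors.

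First, the ``trickier case'' you describe is not actually a separate case, and your proposed resolution of it is wrong. If $X - e$ has rank $1$ in $M \ba e$ and $e \notin \cl_M(X-e)$, then $r_M(X) = r_M(X-e) + 1$, and a direct computation gives $\lambda_{M\ba e}(X-e) = r_M(X-e) + r_M(Y) - r(M) = \lambda_M(X) - 1 = 0$. So this situation \emph{is} the degenerate disconnected subcase; it cannot occur alongside the assumption that $(X-e,Y)$ is a $2$-separation of $M \ba e$. Moreover, your claimed conclusion there --- that $X$ becomes a series class --- is impossible: $X$ contains a parallel pair (from $X - e$), and a set that contains both a parallel pair and a coparallel pair on the same two elements is a $2$-element separator, contradicting connectivity of $M$ when $|E(M)| \ge 4$.

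Second, and more seriously, when $(X-e, Y)$ genuinely is a $2$-separation of $M \ba e$, the inductive hypothesis may return that $X - e$ is a \emph{series} class of $M \ba e$ (not a parallel class), and you do not address this case at all. In that situation one can check that $e \in \cl_M(X-e)$ is forced (since $\lambda_{M\ba e}(X-e) = 1$ requires $r_M(X-e) = r_M(X)$), and then $r^*_M(X) = |X| - r(M) + r_M(Y) = r^*_{M\ba e}(X-e) + 1 = 2$. So $X$ has corank $2$, not $1$, and is not contained in a series class of $M$; the conclusion does not lift directly, and additional fragility-based reasoning is needed to show either that this configuration cannot arise or that $Y$ must then be a series/parallel class. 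Your duality WLOG (``assume $e$ is strictly deletable'') does not dispose of this case: it normalises $e$'s behaviour but does not control whether $X - e$ is a series or parallel class of $M \ba e$. As written, the inductive step is therefore incomplete.
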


We require two more lemmas, about fans in fragile matroids.
Recall that a maximal $4$-element fan has one rim element and one spoke element at the two ends: the internal elements are not considered to be rim or spoke elements.

\begin{lemma}
  \label{fragilefanelements}
  Let $\mathcal{N}$ be a non-empty set of $3$-connected matroids, 
  let $M$ be a $\mathcal{N}$-fragile matroid, and let $F$ be a fan of $M$ of size at least~$4$.
  If $s$ is a spoke element of $F$, then $s$ is not $\mathcal{N}$-contractible,
  whereas if $t$ is a rim element of $F$, then $t$ is not $\mathcal{N}$-deletable.
\end{lemma}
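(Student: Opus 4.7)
My plan is to prove the spoke case directly, then deduce the rim case by duality. For the duality step, observe that $M$ is $\mathcal{N}$-fragile if and only if $M^*$ is $\mathcal{N}^*$-fragile (where $\mathcal{N}^*=\{N^*:N\in\mathcal{N}\}$), and that the spoke elements of $F$, viewed as a fan of $M$, are precisely the rim elements of $F$, viewed as a fan of $M^*$. Applying the spoke statement to $M^*$ with $\mathcal{N}^*$ therefore yields the rim statement for $M$ with $\mathcal{N}$.

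For the spoke case, let $s$ be a spoke element of $F$; by the definition of a fan, $s$ lies in some triangle $\{s,x,y\}$ of $M$. I will establish the stronger statement that whenever $M/s$ has an $\mathcal{N}$-minor $N$---which I may take to be simple---one also has $N\le M\ba s$. This makes $s$ simultaneously $\mathcal{N}$-contractible and $\mathcal{N}$-deletable, hence $\mathcal{N}$-flexible, contradicting fragility; so $M/s$ has no $\mathcal{N}$-minor and $s$ is not $\mathcal{N}$-contractible, as required.

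To show $N\le M\ba s$: since $\{x,y\}$ is a parallel pair in $M/s$ and $N$ is simple, without loss of generality $y\notin E(N)$, and $y$ is either contracted or deleted in forming $N$ from $M/s$. If $y$ is contracted, then because $\{s,x,y\}$ is a rank-$2$ circuit, $x$ becomes a loop in $M/\{s,y\}$, so $N\le M/\{s,x,y\}$; a short independent-set check using that $\{x,y\}$ is a basis of $M|\{s,x,y\}$ shows $M/\{s,x,y\}=M\ba s/\{x,y\}$, giving $N\le M\ba s$. If instead $y$ is deleted, I would invoke the standard ``triangle-exchange'' fact that for any triangle $\{a,b,c\}$ of $M$, the simplifications $\si(M/a)$, $\si(M/b)$, and $\si(M/c)$ are pairwise isomorphic (each collapses the triangle to a single point), so $N\le M/s$ forces $N\le M/y$. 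A case analysis on whether $s$ is deleted or contracted in forming $N$ from $M/y$---analogous to the contracted case just given, with the roles of $s$ and $y$ swapped---again yields $N\le M\ba s$.

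The main obstacle is the triangle-exchange step, and handling it correctly in the fragile setting where $M$ may carry nontrivial parallel or series classes (by \cref{genfragileconn}). The key observation is that any element parallel to $s$, $x$, or $y$ becomes a loop under the relevant contraction and is absorbed uniformly into each of the three simplifications, so their isomorphism still holds; with this in hand, the rest of the argument is routine case work.
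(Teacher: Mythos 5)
Your overall strategy is genuinely different from the paper's, and it has a gap that I don't think can be repaired along the lines you propose. The paper does not try to show that $s$ itself becomes $\mathcal{N}$-flexible; it instead uses the \emph{triad} of the fan sitting next to the triangle containing $s$. Concretely, with a subfan $(f_1,f_2,f_3,s)$ where $\{f_1,f_2,f_3\}$ is a triad and $\{f_2,f_3,s\}$ is a triangle: if $s$ were $\mathcal{N}$-contractible then $f_2,f_3$ are $\mathcal{N}$-deletable (parallel pair in $M/s$), and then $f_3$ is $\mathcal{N}$-contractible (series pair in $M\ba f_2$), so $f_3$ is $\mathcal{N}$-flexible, a contradiction. (A similar chain handles the case $(f_1,f_2,s,f_4,f_5)$.) Your proof never invokes the triad at all, and this is the structure that makes the lemma true.

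The concrete gap is in your Case B. The "triangle-exchange" fact you cite --- that for any triangle $\{a,b,c\}$ the matroids $\si(M/a)$, $\si(M/b)$, $\si(M/c)$ are pairwise isomorphic --- is false. Take a simple rank-$3$ matroid on $\{a,b,c,d,e\}$ whose only rank-$2$ flats of size $\ge 3$ are the two lines $\{a,b,c\}$ and $\{c,d,e\}$. Then $\si(M/a)\cong U_{2,3}$ (only $\{b,c\}$ becomes a parallel pair) while $\si(M/c)\cong U_{2,2}$ (both $\{a,b\}$ and $\{d,e\}$ collapse). This is not a pathology of parallel/series classes of $M$, which is what you flag as the obstacle; it is a structural failure that occurs even for simple $3$-connected $M$. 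Worse, this failure is essentially forced in your setting: an internal spoke of a fan of size at least five lies in \emph{two} triangles of the fan, so contracting it collapses two pairs, while contracting a neighbouring rim element typically collapses only one, so $\si(M/s)$ and $\si(M/y)$ will generally have different sizes. Once Case B fails, your "stronger statement" (that $s$ $\mathcal{N}$-contractible implies $N\le M\ba s$) is unsupported; and I do not see a way to salvage it, since the only reason it holds for a fragile $M$ is that the hypothesis is never satisfied, i.e.\ it is a vacuous consequence of the very lemma you are trying to prove. Your Case A and your duality reduction are both fine; to finish, replace the entire $s$-is-also-deletable strategy with the paper's argument that makes an internal element of the fan flexible via the adjacent triad.
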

\begin{proof}
  Let $(f_1,f_2,f_3,s)$ be a (not necessarily maximal) fan where $\{f_1,f_2,f_3\}$ is a triad and $\{f_2,f_3,s\}$ is a triangle, so $s$ is a spoke element, and suppose that $s$ is $\mathcal{N}$-contractible.
  Since each $N \in \mathcal{N}$ is $3$-connected, $\si(M/s)$ has an $\mathcal{N}$-minor.
  So $f_2$ and $f_3$ are $\mathcal{N}$-deletable.
  Similarly, $\co(M \ba f_2)$ has an $\mathcal{N}$-minor, so $f_3$ is $\mathcal{N}$-contractible, due to the triad $\{f_1,f_2,f_3\}$ of $M$.
  But now $f_3$ is $\mathcal{N}$-flexible, a contradiction.
  A similar argument applies if $(f_1,f_2,s,f_4,f_5)$ is a fan where $\{f_1,f_2,s\}$ and $\{s,f_4,f_5\}$ are triangles and $\{f_2,s,f_4\}$ is a triad.
  The result then follows by duality.
\end{proof}

\begin{lemma}
  \label{fragilefans}
  Let $\mathcal{N}$ be a non-empty set of $3$-connected matroids, each of which has no $4$-element fans. Let $M$ be a $\mathcal{N}$-fragile matroid, and let $F$ be a fan of $M$.
  \begin{enumerate}
    \item If $|F| \ge 5$ and $e$ is an end of $F$, then $e$ is not $\mathcal{N}$-essential.
    \item If $|F| \ge 6$ and $e \in F$, then $e$ is not $\mathcal{N}$-essential.
  \end{enumerate}
\end{lemma}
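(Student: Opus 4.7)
The plan is to reduce both parts to a case analysis on the fan elements adjacent to $e$ in a fixed $\mathcal{N}$-minor of~$M$. By duality applied simultaneously to $M$ and to $\mathcal{N}$, I may assume $e$ is a spoke element of~$F$; then \cref{fragilefanelements} shows $e$ is not $\mathcal{N}$-contractible, so it suffices to prove $e$ is $\mathcal{N}$-deletable. Supposing instead that $e$ is $\mathcal{N}$-essential, I fix a minor $N = M/C \del D$ of~$M$ that is isomorphic to a member of~$\mathcal{N}$; then $e \in E(N)$. Since $|F| \ge 5$, every internal element of~$F$ is classified as spoke or rim, and hence \cref{fragilefanelements} forces each rim element of~$F$ to lie in $E(N) \cup C$ and each spoke element to lie in $E(N) \cup D$.

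For part~(i), write $e = f_1$ with $\{f_1,f_2,f_3\}$ a triangle and consider the four cases for $(f_2,f_3)$. If $f_2,f_3 \in E(N)$, then $\{f_1,f_2,f_3\}$ is a triangle of~$N$, and analysing $f_4$ yields either a $4$-element fan of~$N$ (when $f_4 \in E(N)$, using cosimplicity of~$N$ to promote $\{f_2,f_3,f_4\}$ to a triad of~$N$) or a series pair $\{f_2,f_3\}$ of~$N$ (when $f_4 \in C$). If $f_2 \in E(N)$ and $f_3 \in D$, the triad $\{f_2,f_3,f_4\}$ becomes either a series pair $\{f_2,f_4\}$ or a coloop $\{f_2\}$ of~$N$. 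If $f_2 \in C$ and $f_3 \in E(N)$, then $\{f_1,f_3\}$ is a parallel pair of~$N$. The delicate case is $f_2 \in C,\ f_3 \in D$, handled by a \emph{swap trick}: $\{f_1,f_3\}$ is a parallel pair of $M/f_2$ and hence of~$M/C$ (since $f_1,f_3 \notin C$), so the transposition $(f_1\ f_3)$ is an automorphism of~$M/C$. Applying it to~$D$ gives $D' := (D \setminus \{f_3\}) \cup \{f_1\}$ with $M/C \del D' \cong N$, and this isomorphic copy of~$N$ is a minor of $M \del f_1$, contradicting the assumption that $f_1$ is not $\mathcal{N}$-deletable.

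For part~(ii), if $e$ is an end apply~(i); otherwise $e = f_i$ with $3 \le i \le \ell-1$, and I run the analogous case analysis on the rim neighbours $(f_{i-1},f_{i+1})$. The intermediate cases (exactly one of $f_{i\pm 1}$ in~$C$) again give a parallel or series pair of~$N$. In the case $f_{i-1},f_{i+1} \in C$ the swap trick is no longer required: because $f_i$ itself lies in the triad $\{f_{i-1},f_i,f_{i+1}\}$, contracting both neighbours makes $f_i$ a coloop of $M/\{f_{i-1},f_{i+1}\}$ and hence of~$N$, violating cosimplicity. The main case $f_{i-1},f_{i+1} \in E(N)$ needs a cascade: the triad survives in~$N$, so to avoid a $4$-element fan of~$N$ each of $f_{i-2}, f_{i+2}$ (whenever it lies in~$F$) must lie in~$D$; the hypothesis $|F| \ge 6$ then ensures that at least one of $f_{i-3}$ or $f_{i+3}$ lies in~$F$, and the triad on that side, with its middle spoke now deleted, gives a cocircuit of~$N$ of size at most~$2$, the final contradiction. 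The main obstacle is keeping the case analysis organised; the substantive new ingredient is the swap trick in part~(i) case~D, which is needed precisely because an end of~$F$ lies in no triad of~$F$, so the short coloop argument that resolves the analogous case in part~(ii) is unavailable at an end.
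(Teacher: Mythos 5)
Your approach — fixing a minor $N = M/C \ba D$ and tracking which of $C$, $D$, $E(N)$ each fan element lands in — is a genuinely different tactic from the paper's, which instead runs a chain of fragility implications. But the case analysis has a systematic flaw. At several points you contract a rim element of a triad and assert the residue is a series pair or coloop of $N$: in part~(i), case $f_2,f_3\in E(N)$, $f_4\in C$ (claimed series pair $\{f_2,f_3\}$); in case $f_2\in E(N)$, $f_3\in D$, $f_4\in C$ (claimed coloop $\{f_2\}$); and in part~(ii)'s cascade whenever $f_{i+3}\in C$ or $f_{i-3}\in C$. That step is not valid: duality sends contraction of $M$ to deletion in $M^*$, and a proper subset of a circuit of $M^*$ is independent and stays independent after deleting the omitted element, so $r^*_{M/f_4}(\{f_2,f_3\}) = r^*_M(\{f_2,f_3\}) = 2$, and $\{f_2,f_3\}$ need not be a series pair of $N$. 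Only deletion shrinks cocircuits. Your triangle-under-contraction arguments are sound precisely because the situation dualises the other way (circuits do shrink under contraction), but the triad-with-contraction step fails.

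The offending subcases happen to be vacuous, but establishing that requires exactly the chain of implications the paper uses and your proof avoided: $f_1$ being $\mathcal N$-essential forces $f_2$ not $\mathcal N$-contractible (else $\{f_1,f_3\}$ is parallel in $M/f_2$ and $f_1$ is deletable), forces $f_3$ not $\mathcal N$-deletable (else $\{f_2,f_4\}$ is series in $M\ba f_3$ and $f_2$ is contractible), forces $f_4$ not $\mathcal N$-contractible (else $\{f_3,f_5\}$ is parallel in $M/f_4$ and $f_3$ is deletable) — so $f_4\notin C$. Combined with \cref{fragilefanelements}, this makes $f_1,\dots,f_4$ all $\mathcal N$-essential, hence all in $E(N)$, yielding the $4$-element fan of $N$ immediately. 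Once this chain is in hand, the entire case analysis — and in particular the swap trick — is unnecessary, which is why the paper's proof is a few lines. Part~(ii) is then quick: $f_1,f_2,f_5,f_6$ are non-essential because each is an end of a $5$-element subfan, and $f_3,f_4$ follow from two more short fragility steps, again with no need to trace $C$ and $D$.
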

\begin{proof}
  Suppose $|F| = 5$ and let $(f_1,f_2,f_3,f_4,f_5)$ be a fan ordering of $F$.
  By duality, we may assume $f_1$ is a spoke element, so $\{f_1,f_2,f_3\}$ is a triangle.
  By \cref{fragilefanelements}, $f_1$ is not $\mathcal{N}$-contractible.
  Suppose $f_1$ is not $\mathcal{N}$-deletable.
  Since each matroid in $\mathcal{N}$ is $3$-connected, a matroid $M'$ has an $\mathcal{N}$-minor if and only if $\si(M')$ has an $\mathcal{N}$-minor (and the same holds when ``$\si(M')$'' is replaced with ``$\co(M')$'').
  If $f_2$ is $\mathcal{N}$-contractible, then, as $\{f_1,f_3\}$ is a parallel pair in $M / f_2$, the element $f_1$ is $\mathcal{N}$-deletable.
  So $f_2$ is not $\mathcal{N}$-contractible due to the triangle $\{f_1,f_2,f_3\}$.
  Similarly, due to the triad $\{f_2,f_3,f_4\}$, the element $f_3$ is not $\mathcal{N}$-deletable.
  Finally, due to the triangle $\{f_3,f_4,f_5\}$, the element $f_4$ is not $\mathcal{N}$-contractible.
  By \cref{fragilefanelements}, the elements $\{f_1,f_2,f_3,f_4\}$ are $\mathcal{N}$-essential.
  We have that $M /C \ba D \cong N$ for some $N \in \mathcal{N}$ and disjoint $C,D \subseteq E(M)$. Let $N' = M/C\ba D$.
  Now, $r_{N'}(\{f_1,f_2,f_3\}) \le 2$ and $r^*_{N'}(\{f_2,f_3,f_4\}) \le 2$, a contradiction. 

  Now suppose $|F|=6$ and let $(f_1,f_2,\dotsc,f_6)$ be a fan ordering of $F$.
  By the foregoing, $f_1$, $f_2$, $f_5$, and $f_6$ are not 
  $\mathcal{N}$-essential.
  Up to duality, we may assume that $f_1$ is a spoke element.
  Then $\{f_1,f_2,f_3\}$ is a triangle and $f_2$ is a rim element, so $f_2$ is 
  $\mathcal{N}$-contractible
  by \cref{fragilefanelements}.
  Since $\{f_1,f_3\}$ is a parallel pair in $M / f_2$, it follows that $f_3$ is 
  $\mathcal{N}$-deletable.
  By a symmetric argument, $f_4$ is 
  $\mathcal{N}$-contractible.
  The result follows.
\end{proof}

\subsection*{Path width three}

A matroid $M$ has \emph{path width at most $k$} if there exists an ordering $(e_1,e_2,\dotsc,e_{n})$ of $E(M)$ such that $\{e_1,\dotsc,e_t\}$ is $k$-separating for all $t \in \seq{n-1}$.
For a $3$-connected matroid $M$ with $|E(M)| \ge 4$ and path width at most three, $M$ does not have path width at most two, so we simply say that $M$ has \emph{path width three}.
When $M$ has path width three with respect to the ordering $(e_1,e_2,\dotsc,e_{n})$, then we say $(e_1,e_2,\dotsc,e_{n})$ is a \emph{sequential ordering} of $M$.
The next lemma is well known, and it implies that, relative to such a sequential ordering, each $e_i \in \{e_3,e_4,\dotsc,e_{n-2}\}$ is unambiguously a guts or a coguts element.

\begin{lemma}
  \label{gutsstayguts}
  Let $M$ be a $3$-connected matroid, and let $(X, e, Y)$ be a partition of $E$ such that $X$ is exactly $3$-separating. Then
  \begin{enumerate}
    \item $X \cup e$ is $3$-separating if and only if $e \in \cl(X)$ or $e \in \cocl(X)$, and
    \item $X \cup e$ is exactly $3$-separating if and only if either $e \in \cl(X) \cap \cl(Y)$ or $e \in \cocl(X) \cap \cocl(Y)$.
  \end{enumerate}
\end{lemma}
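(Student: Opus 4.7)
The proof is a direct computation from the definition of the connectivity function, followed by a short case analysis. The key identity is
\[
  \lambda(X \cup e) - \lambda(X) = \bigl[r(X \cup e) - r(X)\bigr] - \bigl[r(Y \cup e) - r(Y)\bigr],
\]
obtained by writing out $\lambda(X) = r(X) + r(Y \cup e) - r(M)$ and $\lambda(X \cup e) = r(X \cup e) + r(Y) - r(M)$. Each bracket is $0$ or $1$, with the first equal to $0$ precisely when $e \in \cl(X)$, and the second equal to $0$ precisely when $e \in \cl(Y)$. Since $\lambda(X) = 2$, the value of $\lambda(X \cup e)$ is determined by this difference.

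Next I would record the standard translation between closure and coclosure on opposite sides of the partition: $e \in \cocl(X)$ if and only if $e \notin \cl(Y)$, and dually $e \in \cocl(Y)$ if and only if $e \notin \cl(X)$. Both follow from the formula $r^*(Z) = |Z| + r(E(M) - Z) - r(M)$ applied to $Z = X$ and $Z = X \cup e$, which gives $e \in \cl_{M^*}(X)$ if and only if $r(Y) = r(Y \cup e) - 1$.

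With these tools, a case analysis on whether $e$ lies in $\cl(X)$ and in $\cl(Y)$ yields four cases with $\lambda(X \cup e) \in \{1,2,3\}$: if $e$ lies in both closures then $\lambda(X \cup e) = 2$; if in $\cl(X)$ only, then $\lambda(X \cup e) = 1$; if in $\cl(Y)$ only, then $\lambda(X \cup e) = 3$; and if in neither, then $\lambda(X \cup e) = 2$. Translating via the duality above, the first three cases are exactly the cases where $e \in \cl(X) \cup \cocl(X)$, which together give $\lambda(X \cup e) \leq 2$, establishing~(i). The cases with $\lambda(X \cup e) = 2$ are precisely $e \in \cl(X) \cap \cl(Y)$ or $e \in \cocl(X) \cap \cocl(Y)$, establishing~(ii).

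There is no genuine obstacle here; the whole argument is definitional. The only point requiring care is the dualisation step relating coclosure on one side of the partition to closure on the other, since this is what lets us express the two conditions of (i) and (ii) symmetrically in terms of $\cl$ and $\cocl$ of the same set $X$.
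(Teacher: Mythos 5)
The paper does not actually prove this lemma; it is stated as ``well known'' with no argument or citation, so there is no in-paper proof to compare against. Your approach is the standard one and the key computations are all correct: the identity for $\lambda(X\cup e)-\lambda(X)$, the fact that each bracket lies in $\{0,1\}$ with the stated closure characterisations, and the duality $e\in\cocl(X)\iff e\notin\cl(Y)$ derived from $r^*(Z)=|Z|+r(E-Z)-r(M)$ are all sound.

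However, there is a slip in the final case analysis. You list the four cases as (A) both closures, (B) $\cl(X)$ only, (C) $\cl(Y)$ only, (D) neither, with $\lambda(X\cup e)$ equal to $2,1,3,2$ respectively. You then assert that ``the first three cases are exactly the cases where $e\in\cl(X)\cup\cocl(X)$, which together give $\lambda(X\cup e)\le 2$.'' This is wrong as written: case (C) has $e\notin\cl(X)$ and, since $e\in\cl(Y)$, also $e\notin\cocl(X)$; moreover in case (C) you computed $\lambda(X\cup e)=3$, not $\le 2$. The correct statement is that cases (A), (B), and (D) are exactly those with $e\in\cl(X)\cup\cocl(X)$, and these three give $\lambda(X\cup e)\in\{2,1,2\}$. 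The single excluded case (C), $e\in\cl(Y)$ only, is where $e\notin\cl(X)\cup\cocl(X)$ and $\lambda(X\cup e)=3$. With that relabelling the argument for (i) goes through; the argument for (ii) — that $\lambda(X\cup e)=2$ precisely in cases (A) and (D), i.e.\ $e\in\cl(X)\cap\cl(Y)$ or $e\in\cocl(X)\cap\cocl(Y)$ — was already correct.
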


We say that a set $X$ in a matroid~$M$ is \emph{path generating} if $X$ is $3$-separating and $\fcl_M(X)= E(M)$.
In particular, if $M$ has path width three and $(e_1,e_2,\dotsc,e_{n})$ is a sequential ordering of $M$, then $\{e_1,e_2\}$ and $\{e_{n-1},e_n\}$ are path-generating sets.

Let $M$ be a $3$-connected matroid of path width three that has rank and corank at least~$3$ and is not a wheel or a whirl.
Let $\sigma=(e_1, e_2, \dotsc, e_n)$ be a sequential ordering of $M$.
Then $\{e_1, e_2, e_3\}$ is a triangle or a triad.
If this set is not in a larger segment, cosegment, or fan of $M$, then let $L(\sigma) = \{e_1, e_2, e_3\}$ and call $L(\sigma)$ a \emph{triangle end} or a \emph{triad end} of $M$, respectively.
If $\{e_1, e_2, e_3\}$ is contained in a $4$-segment or $4$-cosegment, then let $L(\sigma)$ be the maximal segment or cosegment (respectively) containing $\{e_1,e_2,e_3\}$, and call $L(\sigma)$ a \emph{segment end} or a \emph{cosegment end} of $M$, respectively.
Finally, if $\{e_1, e_2, e_3\}$ is contained in a fan of size at least~$4$, then take a maximal such fan $F$, let $L(\sigma)$ be the set of internal elements of the fan $F$, and call $L(\sigma)$ a \emph{fan end} of $\sigma$.
We define $R(\sigma)$ analogously.

Loosely speaking, the next lemma shows that, up to reversal, any sequential ordering of a matroid of path width three has the same pair of ends.

\begin{lemma}[{\cite[Theorem 1.3]{HOS07}}]
  \label{welldefinedends}
  Let $M$ be a $3$-connected matroid of path width three that has rank and corank at least~$3$ and is not a wheel or a whirl.
  Then there are distinct subsets $L(M)$ and $R(M)$ of $E(M)$ such that $\{L(M), R(M)\} = \{L(\sigma), R(\sigma)\}$ for every sequential ordering $\sigma$ of $E(M)$.
\end{lemma}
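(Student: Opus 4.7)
The plan is to characterize the end sets $L(\sigma)$ and $R(\sigma)$ intrinsically in terms of the matroid structure, independent of the specific sequential ordering $\sigma$, and then verify that every sequential ordering recovers exactly these two intrinsic sets.

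First, I would analyze any sequential ordering $\sigma = (e_1, e_2, \dotsc, e_n)$. Since $\{e_1,e_2\}$ is $3$-separating and $M$ is $3$-connected with $r(M), r^*(M) \ge 3$, the set $\{e_1,e_2\}$ is exactly $3$-separating, and so is $\{e_1,e_2,e_3\}$. By \cref{gutsstayguts}, $e_3 \in \cl(\{e_1,e_2\}) \cap \cl(E(M)-\{e_1,e_2,e_3\})$ or $e_3 \in \cocl(\{e_1,e_2\}) \cap \cocl(E(M)-\{e_1,e_2,e_3\})$, so $\{e_1,e_2,e_3\}$ must be a triangle or a triad. Iterating this using \cref{gutsstayguts} at each step, whether each subsequent $e_i$ is a guts or coguts element relative to the current prefix determines whether the initial triangle/triad grows into a segment, cosegment, or fan. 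This forces $L(\sigma)$ to always fall into one of the four defined categories: triangle end, triad end, segment end, cosegment end, or fan end (in the fan case, the internal elements of a maximal fan).

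Second, I would show these candidate end structures are intrinsic to $M$. Maximal segments, cosegments, and maximal fans of size at least~$4$ are determined entirely by the matroid; moreover, by \cref{fanunique} combined with the exclusion of wheels and whirls, each element belongs to at most one maximal fan of size $\ge 5$, so the internal-element set of a maximal fan end is unambiguous. I would then argue that $M$ has exactly two such ``candidate ends''. There are at least two, since $L(\sigma)$ and $R(\sigma)$ both qualify (applying the first step to $\sigma$ and to its reversal). That there are at most two follows from the path width three hypothesis: if three pairwise-disjoint candidate end structures~$A$, $B$, $C$ existed, one could combine the defining $3$-separations around each to violate the linearly ordered nature of path decompositions of width three.

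Third, I would show that $L(\sigma)$ and $R(\sigma)$ always coincide with these two canonical ends. I would argue by induction on $n = |E(M)|$, peeling off an element from the right end of $\sigma$: if the final element $e_n$ is not ``essential'' for the candidate-end structure on the right, then $M \ba e_n$ or $M / e_n$ (whichever preserves $3$-connectivity) has path width three and inherits the same end structure on the left; the inductive hypothesis then pins down $L(\sigma)$. The special cases where peeling is obstructed—such as when $L(\sigma)$ shares an element with $R(\sigma)$, or when the end structure is small—would be handled by direct analysis using \cref{fanendsstrong} and the structure of fan ends.

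The main obstacle is the rigidity argument: proving that an arbitrary sequential ordering cannot ``start'' at a third candidate end or at an interior position of a candidate end. Concretely, the delicate point is that a maximal segment/cosegment/fan of $M$ might extend in ambiguous ways without the wheel/whirl exclusion, and without the rank/corank bound one could have an entire matroid collapse into a single large segment or fan. Ruling out these pathologies via \cref{fanunique} and careful guts/coguts bookkeeping (tracking how $\fcl$ grows from an initial path-generating pair) is where the bulk of the technical work lies.
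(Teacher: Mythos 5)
The paper does not prove this statement itself; it is imported as \cite[Theorem 1.3]{HOS07}, so there is no internal proof to compare against. The remarks below assess your proposal on its own terms.

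Your overall shape — identify two intrinsic candidate ends and argue every sequential ordering starts and finishes at them — is the right idea, but the sketch contains gaps that are not routine. The claim that ``by \cref{fanunique} combined with the exclusion of wheels and whirls, each element belongs to at most one maximal fan of size $\ge 5$'' is false as stated: \cref{fanunique} excludes only $M(\mathcal{W}_3)$ and constrains the middle element of one fixed $5$-element fan; when $M$ merely \emph{contains} an $M(K_4)$ restriction or co-restriction, there are genuinely several distinct maximal $5$-element fans overlapping, a point the present paper itself flags when discussing fan ends and which is exactly why the third case of \cref{endslipperiness} is written to allow multiple maximal fans sharing the same internal-element set. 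So the well-definedness of ``the'' fan end must be argued, not read off \cref{fanunique}. Likewise, your ``at most two candidate ends'' step is asserted through a vague ``combine the $3$-separations'' heuristic; for path width three this linearity claim is precisely the content one must prove, and the sketch gives no mechanism for it. Finally, the peeling induction does not preserve the lemma's hypotheses: deleting a spoke or contracting a rim can produce a wheel or whirl, drop the rank or corank below $3$, or change the category of the surviving left end (a maximal $4$-element fan can degenerate into a bare triangle that is not contained in any larger segment, cosegment, or fan), so the induction does not close as written.

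In short, the places you yourself flag as ``where the bulk of the technical work lies'' are indeed the hard part, and the shortcuts offered for them — \cref{fanunique} for fan uniqueness, an informal three-separation-combination argument for the bound of two ends, and an unqualified peeling induction — each fail on concrete examples. The result is a genuine structure theorem whose proof in \cite{HOS07} is considerably more involved than this outline suggests.
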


\begin{lemma}[{\cite[Theorem 1.4]{HOS07}}]
  \label{endslipperiness}
  Let $M$ be a $3$-connected matroid of path width three that has rank and corank at least~$3$ and is not a wheel or a whirl.
  Let $\sigma$ and $\sigma'$ be sequential orderings of $M$ such that $L(\sigma)=L(\sigma')$ and $R(\sigma)=R(\sigma')$.
  Then
  \begin{enumerate}
    \item if $L(\sigma)$ is a triangle or a triad end of $M$, then the first three elements of $\sigma'$ are in $L(\sigma)$;\label{esi}
    \item if $L(\sigma)$ is a segment or cosegment end of $M$, then the first $|L(\sigma)| - 1$ elements of $\sigma'$ are in $L(\sigma)$; and\label{esii}
    \item if $L(\sigma)$ is a fan end of $M$, then either the first $|L(\sigma)|$ elements of $\sigma'$ are in $L(\sigma)$, or there is a maximal fan~$F$ of $M$ having $L(\sigma)$ as its set of internal elements such that the first $|L(\sigma)| +1$ elements of $\sigma'$ include $L(\sigma)$ and are contained in $F$.\label{esiii}
  \end{enumerate}
\end{lemma}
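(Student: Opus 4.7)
The plan is to use \cref{welldefinedends} to identify $L(\sigma')$ with $L(\sigma)$ up to a reversal of $\sigma'$, and then analyze what constraints this places on the first few elements of $\sigma'$. Throughout, I will lean on the fact that every initial segment $\{e_1',\dotsc,e_t'\}$ of a sequential ordering is 3-separating, together with \cref{gutsstayguts} to track when an added element is a guts or coguts element.

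For part \eqref{esi}, suppose $L(\sigma)$ is a triangle end, so $L(\sigma)=\{e_1,e_2,e_3\}$, and this set is not contained in a larger segment, cosegment, or fan. By \cref{welldefinedends}, after possibly reversing $\sigma'$ we have $L(\sigma')=L(\sigma)$. But by definition, $L(\sigma')$ is the result of maximally extending the triangle or triad on $\{e_1',e_2',e_3'\}$ into a segment, cosegment, or fan; since this maximal extension equals $L(\sigma)$ itself (a $3$-element set), we must have $\{e_1',e_2',e_3'\}=L(\sigma)$. The triad case is dual.

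For part \eqref{esii}, suppose $L(\sigma)$ is a segment end (the cosegment case is dual), and let $k=|L(\sigma)|$. The same argument as above shows $\{e_1',e_2',e_3'\}\subseteq L(\sigma)$. I will argue inductively: assume $\{e_1',\dotsc,e_t'\}\subseteq L(\sigma)$ for some $3\le t<k-1$, and show $e_{t+1}'\in L(\sigma)$. Because $L(\sigma)$ lies on a single rank-$2$ line and $M$ is $3$-connected with rank at least $3$, the $t$ elements already chosen span that line, so the remaining segment elements $L(\sigma)-\{e_1',\dotsc,e_t'\}$, of which there are at least two, all lie in the closure of $\{e_1',\dotsc,e_t'\}$. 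If $e_{t+1}'\notin L(\sigma)$, then the partition $(\{e_1',\dotsc,e_t'\}, \{e_{t+1}'\}, E(M)-\{e_1',\dotsc,e_{t+1}'\})$ would place at least two segment elements on the right side while $e_{t+1}'$ is sandwiched between them and their closure — a computation with \cref{pihelper} or a direct rank count shows this contradicts the 3-separating property of $\{e_1',\dotsc,e_{t+1}'\}$. This yields the bound of $k-1$; the last segment element genuinely can come later, because after $k-1$ segment elements have been removed, the remaining single element of the segment behaves like an ordinary guts element and can potentially be delayed.

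Part \eqref{esiii} follows the same strategy but with the alternating triangle/triad structure of a fan in place of the rank-$2$ span of a segment. Write $L(\sigma)$ as the set of internal elements of a maximal fan $F$ of size $|L(\sigma)|+2$. The key fact is that removing a proper initial segment of $F$ from one end leaves a smaller fan whose new ends include the neighbor in $F$ of the removed element; this lets one prove inductively that each $e_{t+1}'$ must lie in $F$ and be adjacent in the fan to the elements already chosen. The extra flexibility allowing the $(|L(\sigma)|+1)$-th position to be a fan end (rather than strictly an internal element) reflects the fact that at the boundary between $L(\sigma)$ and its complement, one of the two ends of $F$ can legitimately be absorbed into the initial segment via a triangle or triad with the last chosen internal element, yielding a different maximal fan with the same set of internal elements.

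The main obstacle is the fan case: one must carefully track how the alternating triangle/triad closures and coclosures propagate as elements are peeled off, and ensure that the induction does not break when the "spoke vs. rim" parity changes. The segment case and the triangle/triad case each reduce to short rank computations, but the fan case requires the extra bookkeeping captured by \cref{fanunique} and the observation that a maximal fan is uniquely determined by its set of internal elements plus a choice of orientation at each end. Given that machinery, closure and local-connectivity arguments of the flavor of \cref{gutsstayguts,pihelper,growpi} suffice to push each inductive step through.
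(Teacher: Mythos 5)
This lemma is cited from Hall, Oxley, and Semple \cite{HOS07}; the paper itself supplies no proof, so there is no in-paper argument to compare against. Evaluated on its own terms, your outlines of parts \eqref{esi} and \eqref{esii} are sound. For \eqref{esi} the key observation (which you state somewhat tersely) is that the type of end --- triangle/triad, segment/cosegment, or fan --- is intrinsic to the set $L(\sigma)$: it is characterized by whether that set is itself a maximal segment, a maximal cosegment, or the internal elements of a maximal fan. Since a triangle end lies in no larger such structure, $L(\sigma')$ must arise from $\{e_1',e_2',e_3'\}$ via the triangle/triad-end clause, which forces $\{e_1',e_2',e_3'\}=L(\sigma)$. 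For \eqref{esii}, your inductive step closes most cleanly via \cref{gutsandcoguts} rather than a direct rank count: writing $X=\{e_1',\dotsc,e_t'\}$ and $Y$ for its complement, at least two elements of $L(\sigma)$ lie in $\cl(X)\cap Y$, whence $\cocl(X)\cap Y=\emptyset$; since \cref{gutsstayguts} forces $e_{t+1}'\in\cl(X)\cup\cocl(X)$, it lies on the line.

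Part \eqref{esiii}, however, is not a proof but a plan, and the hard steps are precisely the ones you leave open. Two concrete gaps. First, you need to show at each inductive step that the newly added guts or coguts element is actually a fan element; this requires controlling exactly which triangles and triads meet the running initial segment, and \cref{fanunique} gives that control only for $5$-element fans --- moreover, you must also handle the case where $\sigma'$ begins in the \emph{middle} of $L(\sigma)$ and grows outward in both directions, which your phrase ``removing a proper initial segment of $F$ from one end'' does not cover. Second, you implicitly treat the maximal fan with internal elements $L(\sigma)$ as unique when you speak of ``a maximal fan $F$ of size $|L(\sigma)|+2$'' and then argue about $F$; the lemma's own phrasing (``there is a maximal fan $F$\dots'') allows for a choice, and this non-uniqueness (which can occur when $|L(\sigma)|$ is small) is exactly what generates the two alternatives in the conclusion, so it needs to be confronted rather than assumed away. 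A small further point: the preamble invoking \cref{welldefinedends} and ``possibly reversing $\sigma'$'' is unnecessary, since the hypothesis already fixes $L(\sigma)=L(\sigma')$ and $R(\sigma)=R(\sigma')$.
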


Let $\mathbf{P} = (P_1, P_2, \dotsc, P_n)$ be an ordered partition of a set $S$.
Then the ordered partition $\mathbf{Q} = (Q_1, Q_2, \dotsc, Q_m)$ is a \emph{concatenation} of $\mathbf{P}$ if there are indices $0 = k_0 < k_1 < \dotsm < k_m = n$ such that $Q_i = P_{k_{i-1}+1} \cup \dotsm \cup P_{k_i}$ for $i \in \{1, \dotsc,m\}$.
If $\mathbf{Q}$ is a concatenation of $\mathbf{P}$, then $\mathbf{P}$ is a \emph{refinement} of $\mathbf{Q}$.

Let $\mathbf{P} = (P_1,P_2,\dotsc,P_m)$ be an ordered partition of the ground set of a matroid~$M$ with path width three.
We say that $\mathbf{P}$ is a \emph{guts-coguts path} if $\mathbf{P}$ is a path of $3$-separations such that, for each $i \in \{2,3,\dotsc,m-1\}$, the set $P_i$ is in the guts or coguts of the $3$-separation $(P_1 \cup \dotsm \cup P_{i},P_{i+1} \cup \dotsm \cup P_m)$, and, for each $i \in \{2,3,\dotsc,m-2\}$, if $P_i$ is in the guts (respectively, the coguts), then $P_{i+1}$ is in the coguts (respectively, the guts).

  Let $\sigma=(e_1,e_2,\dotsc,e_n)$ be a sequential ordering of a $3$-connected matroid~$M$ with path width three.
  We treat $\sigma$ as a partition into singletons, in which case any concatenation of $\sigma$ is a path of $3$-separations.
  For $X \subseteq E(M)$, we say that $X$ is an \emph{initial segment} of $\sigma$ if $X = \{e_i : i \in [j]\}$ for some $j \in [n]$, and 
  $X$ is a \emph{terminal segment} of $\sigma$ if $X = \{e_i : j \le i \le n\}$ for some $j \in [n]$.
  For an initial segment $P$ and a terminal segment $P'$ of $\sigma$, where $P$ and $P'$ are disjoint and each have size at least~$2$, there is a unique concatenation $(P_1,P_2,\dotsc,P_m)$ of $\sigma$ that is a guts-coguts path with $P=P_1$ and $P' = P_m$ (where uniqueness follows from \cref{gutsstayguts}).
  We call $(P_1,P_2,\dotsc,P_m)$ the \emph{guts-coguts concatenation of $\sigma$ with ends $P$ and $P'$}.
  We also call $P$ the \emph{left end}, and $P'$ the \emph{right end}.

\subsection*{Representation theory}

A \textit{partial field} is a pair $(R, G)$, where $R$ is a commutative ring with unity, and $G$ is a subgroup of the group of units of $R$ such that $-1 \in G$. If $\mathbb{P}=(R,G)$ is a partial field, then we write $p\in \mathbb{P}$ whenever $p\in G\cup \{0\}$.

Let $\mathbb{P}=(R,G)$ be a partial field, and let $A$ be an $X\times Y$ matrix with entries from $\mathbb{P}$. Then $A$ is a $\mathbb{P}$-\textit{matrix} if every non-zero subdeterminant of $A$ is in $G$. If $X'\subseteq X$ and $Y'\subseteq Y$, then we write $A[X',Y']$ to denote the submatrix of $A$ induced by $X'$ and $Y'$.
When $X$ and $Y$ are disjoint, and $Z\subseteq X\cup Y$, we denote by $A[Z]$ the submatrix induced by $X\cap Z$ and $Y\cap Z$, and we denote by $A-Z$ the submatrix induced by $X-Z$ and $Y-Z$.
 
\begin{theorem}[{\cite[Theorem 2.8]{PvZ10b}}]
\label{pmatroid}
Let $\mathbb{P}$ be a partial field, and let $A$ be an $X\times Y$ $\mathbb{P}$-matrix, where $X$ and $Y$ are disjoint. Let
\begin{equation*}
\mathcal{B}=\{X\}\cup \{X\triangle Z : |X\cap Z|=|Y\cap Z|, \det(A[Z])\neq 0\}. 
\end{equation*}
 Then $\mathcal{B}$ is the set of bases of a matroid on $X\cup Y$.
\end{theorem}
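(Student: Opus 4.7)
The plan is to verify the basis exchange axiom for $\mathcal{B}$; the other axioms are immediate, since $X \in \mathcal{B}$ and every member of $\mathcal{B}$ has cardinality $|X|$ by construction. The key technical device is a pivoting operation. Given $x \in X$ and $y \in Y$ with $A_{xy} \neq 0$, one defines the pivoted matrix $A^{xy}$ with row index set $(X - x) \cup y$ and column index set $(Y - y) \cup x$, whose entries are given by the standard Schur-complement formulas: $(A^{xy})_{yx} = A_{xy}^{-1}$, $(A^{xy})_{y,j} = A_{xy}^{-1} A_{xj}$, $(A^{xy})_{i,x} = -A_{xy}^{-1} A_{iy}$, and $(A^{xy})_{i,j} = A_{ij} - A_{xy}^{-1} A_{iy} A_{xj}$ for $i \in X - x$ and $j \in Y - y$. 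I would first prove a \emph{pivoting lemma}: the matrix $A^{xy}$ is itself a $\mathbb{P}$-matrix, and the basis set associated with $A^{xy}$ (defined as in the theorem statement, relative to the new row labels) coincides with $\mathcal{B}$.

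The pivoting lemma rests on Jacobi's determinant identity, which expresses every subdeterminant of $A^{xy}$ as $\pm A_{xy}^{-1}$ times a suitable subdeterminant of $A$. Since $A_{xy} \in G$ and $G$ is closed under products and inverses, every nonzero subdeterminant of $A^{xy}$ lies in $G$, so $A^{xy}$ is a $\mathbb{P}$-matrix. Moreover, the same identity yields a bijection between nonzero subdeterminants of $A$ and of $A^{xy}$, which descends through the symmetric-difference recipe to a bijection between the two associated basis sets; the upshot is that these basis sets coincide.

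Granted the pivoting lemma, basis exchange is quick. Take $B_1, B_2 \in \mathcal{B}$. If $B_1 \neq X$, write $Z_1 = X \triangle B_1$; since $\det(A[Z_1]) \neq 0$, Gaussian elimination on $A[Z_1]$ furnishes a sequence of legal pivots that transforms $A$ into a $\mathbb{P}$-matrix $A'$ whose row index set is $B_1$ and whose associated basis set is still $\mathcal{B}$. Given $e \in B_1 - B_2$, set $Z = B_1 \triangle B_2$; then $\det(A'[Z]) \neq 0$, since $B_2 \in \mathcal{B}$. Under the new labelling the rows of $A'[Z]$ are indexed by $B_1 - B_2$ and the columns by $B_2 - B_1$, so cofactor expansion along the row indexed by $e$ produces some $f \in B_2 - B_1$ with $A'_{ef} \neq 0$. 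This nonzero entry alone witnesses that $(B_1 - e) \cup f \in \mathcal{B}$, completing the exchange.

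The main obstacle is establishing the pivoting lemma cleanly over an abstract partial field, where the subtlety is that Jacobi's identity must be verified in a setting whose additive and multiplicative structure is only partial. Once each subdeterminant of $A^{xy}$ is written as an explicit rational expression in entries of $A$ and recognised as $\pm A_{xy}^{-1}$ times a subdeterminant of $A$, the fact that $G$ is a multiplicative group containing $A_{xy}$ and every nonzero subdeterminant of $A$ resolves the partial-field bookkeeping, and the matroid axioms then follow from the Laplace expansion above.
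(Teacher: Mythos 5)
The paper does not contain its own proof of this statement: it is quoted verbatim as Theorem~2.8 of Pendavingh and van Zwam \cite{PvZ10b}, so there is no in-paper argument to compare against. Your sketch is correct and in fact mirrors the proof in that reference: one first shows, via the determinantal pivoting identity, that a pivot on a nonzero entry of a $\mathbb{P}$-matrix produces another $\mathbb{P}$-matrix whose basis set (read off the new reference basis) is unchanged, and then, after Gaussian elimination to make any chosen $B_1\in\mathcal{B}$ the reference basis, deduces basis exchange from the Laplace expansion of the nonvanishing $\det(A'[B_1\triangle B_2])$. One minor imprecision: the pivoting identity does not always give a subdeterminant of $A^{xy}$ as $\pm A_{xy}^{-1}$ times a subdeterminant of $A$ of the same size; depending on whether the pivot row $y$ and pivot column $x$ appear in the chosen minor, the corresponding minor of $A$ gains or loses the index $x$ (respectively $y$), so the companion subdeterminant may be one order larger or smaller. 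This makes no difference to the conclusion, since in every case a nonzero subdeterminant of $A^{xy}$ is $\pm A_{xy}^{-1}$ times some nonzero subdeterminant of $A$, hence lies in the group $G$.
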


We say that the matroid in \cref{pmatroid} is $\mathbb{P}$-\textit{representable}, and that $A$ is a $\mathbb{P}$-\textit{representation} of $M$. We write $M=M[I|A]$ if $A$ is a $\mathbb{P}$-matrix, and $M$ is the matroid whose bases are described in \cref{pmatroid}. 

Let $A$ be an $X\times Y$ $\mathbb{P}$-matrix, with $X \cap Y = \emptyset$, and let $x\in X$ and $y\in Y$ such that $A_{xy}\neq 0$. Then we define $A^{xy}$ to be the $(X\triangle \{x,y\})\times (Y\triangle \{x,y\})$ $\mathbb{P}$-matrix given by 
\begin{displaymath}
  (A^{xy})_{uv} =
\begin{cases}
    A_{xy}^{-1} \quad & \textrm{if } uv = yx\\
    A_{xy}^{-1} A_{xv} & \textrm{if } u = y, v\neq x\\
    -A_{xy}^{-1} A_{uy} & \textrm{if } v = x, u \neq y\\
    A_{uv} - A_{xy}^{-1} A_{uy} A_{xv} & \textrm{otherwise.}
\end{cases}
\end{displaymath}
We say that $A^{xy}$ is obtained from $A$ by \textit{pivoting} on $xy$.

Two $\mathbb{P}$-matrices are \textit{scaling equivalent} if one can be obtained from the other by repeatedly scaling rows and columns by non-zero elements of $\mathbb{P}$. Two $\mathbb{P}$-matrices are \textit{geometrically equivalent} if one can be obtained from the other by a sequence of the following operations: scaling rows and columns by non-zero entries of $\mathbb{P}$, permuting rows, permuting columns, and pivoting.
 
Let $\mathbb{P}$ be a partial field, and let $M$ and $N$ be matroids such that $N$ is a minor of $M$. Suppose that the ground set of $N$ is $X'\cup Y'$, where $X'$ is a basis of $N$. We say that $M$ is $\mathbb{P}$-\textit{stabilized by $N$} if, whenever $A_1$ and $A_2$ are $X\times Y$ $\mathbb{P}$-matrices, with $X'\subseteq X$ and $Y'\subseteq Y$, such that
\begin{enumerate}
 \item[(i)] $M=M[I|A_1]=M[I|A_2]$,
 \item[(ii)] $A_1[X',Y']$ is scaling equivalent to $A_2[X',Y']$, and
 \item[(iii)] $N=M[I|A_1[X',Y']]=M[I|A_2[X',Y']],$ 
\end{enumerate}
then $A_1$ is scaling equivalent to $A_2$.
If $M$ is $\mathbb{P}$-stabilized by $N$, and every $\mathbb{P}$-representation of $N$ extends to a $\mathbb{P}$-representation of $M$, then we say $M$ is \emph{strongly $\mathbb{P}$-stabilized} by $N$.

Let $\mathcal{M}$ be a class of matroids. We say that $N$ is a \textit{$\mathbb{P}$-stabilizer for $\mathcal{M}$} if, for every $3$-connected $\mathbb{P}$-representable matroid $M\in \mathcal{M}$ with an $N$-minor, $M$ is $\mathbb{P}$-stabilized by $N$.
We say that $N$ is a \textit{strong $\mathbb{P}$-stabilizer for $\mathcal{M}$} if, for every $3$-connected $\mathbb{P}$-representable matroid $M\in \mathcal{M}$ with an $N$-minor, $M$ is strongly $\mathbb{P}$-stabilized by $N$.
Here we will be primarily interested in the case where $\mathcal{M}$ is the class of $\mathbb{P}$-representable matroids for some partial field~$\mathbb{P}$, in which case, when there is no ambiguity, we simply say ``$N$ is a strong $\mathbb{P}$-stabilizer''. 

\subsection*{\texorpdfstring{$2$}{2}-regular, \texorpdfstring{$3$}{3}-regular, and \texorpdfstring{$\mathbb{H}_5$}{H5}-representable matroids}

The \emph{$2$-regular} partial field is
$$\mathbb{U}_2 = (\mathbb{Q}(\alpha_1, \alpha_2),\left<-1,\alpha_1, \alpha_2, 1-\alpha_1, 1-\alpha_2,\alpha_1-\alpha_2\right>),$$
where $\alpha_1$ and $\alpha_2$ are indeterminates.
Recall that we say a matroid is \emph{$2$-regular} if it is $\mathbb{U}_2$-representable.
Note that $\mathbb{U}_2$ is the universal partial field of $U_{2,5}$ \cite[Theorem 3.3.24]{vanZwam09}; intuitively, this means that $\mathbb{U}_2$ is the most general partial field that $U_{2,5}$ is representable over (for a formal definition, refer to \cite{PvZ10b}).
If a matroid is $2$-regular, then it is $\mathbb{F}$-representable for every field $\mathbb{F}$ of size at least~$4$~\cite[Corollary 3.1.3]{Semple98}.

The \emph{$3$-regular} partial field is:
\begin{multline*}
\mathbb{U}_3 = (\mathbb{Q}(\alpha_1,\alpha_2,\alpha_3), \\
\left<-1,\alpha_1,\alpha_2,\alpha_3,\alpha_1-1,\alpha_2-1,\alpha_3-1,\alpha_1-\alpha_2,\alpha_1-\alpha_3,\alpha_2-\alpha_3\right>),
\end{multline*}
where $\alpha_1,\alpha_2,\alpha_3$ are indeterminates, and recall that we say a matroid is \emph{$3$-regular} if it is $\mathbb{U}_3$-representable.

The \emph{Hydra-5}- partial field is
\begin{multline*}
\mathbb{H}_5 = (\mathbb{Q}(\alpha, \beta, \gamma), \\
\left<-1,\alpha, \beta, \gamma, 1-\alpha, 1-\beta, 1-\gamma, \alpha-\gamma, \gamma-\alpha\beta, 1-\gamma - (1-\alpha)\beta \right>),
\end{multline*}
where $\alpha$, $\beta$, and $\gamma$ are indeterminates.
A $3$-connected matroid with a $\utfutf$-minor is $\mathbb{H}_5$-representable if and only if it has six inequivalent representations over $\GF(5)$ \cite[Lemma~5.17]{PvZ10b}.

We next prove that the partial fields $\mathbb{U}_3$ and $\mathbb{H}_5$ are isomorphic; in particular, a matroid is $\mathbb{H}_5$-representable if and only if it is $3$-regular.
For partial fields $\mathbb{P}_1$ and $\mathbb{P}_2$, a function
$\phi : \mathbb{P}_1 \rightarrow \mathbb{P}_2$ is a \emph{homomorphism} if
\begin{enumerate}
  \item $\phi(1) = 1$,
  \item $\phi(pq) = \phi(p)\phi(q)$ for all $p, q \in \mathbb{P}_1$, and
  \item $\phi(p) +\phi(q) = \phi(p +q)$ for all $p, q \in \mathbb{P}_1$ such that $p +q \in \mathbb{P}_1$.
\end{enumerate}
The existence of a homomorphism from $\mathbb{P}_1$ to $\mathbb{P}_2$ certifies that each $\mathbb{P}_1$-representable matroid is also $\mathbb{P}_2$-representable \cite[Corollary 2.9]{PvZ10b}.

\begin{lemma}
  \label{3reglemma}
  The partial fields $\mathbb{H}_5$ and $\mathbb{U}_3$ are isomorphic.
  In particular, a matroid is $3$-regular if and only if it is $\mathbb{H}_5$-representable.
\end{lemma}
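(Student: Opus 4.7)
The plan is to prove the isomorphism by exhibiting explicit inverse homomorphisms between $\mathbb{U}_3$ and $\mathbb{H}_5$; the ``in particular'' statement then follows immediately from \cite[Corollary~2.9]{PvZ10b} applied in each direction.

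The guiding principle is that the ten generators of the unit group of $\mathbb{U}_3$ are, up to sign, precisely the pairwise differences of the five-element set $\{0,1,\alpha_1,\alpha_2,\alpha_3\}$.  So the first step is to find algebraically independent elements $u_1,u_2,u_3\in\mathbb{Q}(\alpha,\beta,\gamma)$ such that all pairwise differences of $\{0,1,u_1,u_2,u_3\}$ are units of $\mathbb{H}_5$.  Motivated by the shape of the ``awkward'' generators $\gamma-\alpha\beta$ and $1-\gamma-(1-\alpha)\beta$, the choice
\[
u_1=\frac{\alpha\beta}{\gamma},\qquad u_2=\frac{\alpha}{\gamma},\qquad u_3=\frac{\alpha(1-\gamma)}{\gamma(1-\alpha)}
\]
does the job: clearing denominators shows that $u_1-u_3$ simplifies to $-\alpha(1-\gamma-(1-\alpha)\beta)/(\gamma(1-\alpha))$ and $u_2-u_3$ to $-\alpha(\alpha-\gamma)/(\gamma(1-\alpha))$, while the other pairwise differences are immediate.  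I would then define $\phi\colon\mathbb{U}_3\to\mathbb{H}_5$ as the ring homomorphism induced by $\alpha_i\mapsto u_i$ and note that each of the ten generators of $\mathbb{U}_3$ maps to a product of generators of $\mathbb{H}_5$.

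Inverting the three equations $u_i=\phi(\alpha_i)$ for $\alpha,\beta,\gamma$ produces the candidate inverse $\psi\colon\mathbb{H}_5\to\mathbb{U}_3$ defined by
\[
\alpha\mapsto\frac{\alpha_2-\alpha_3}{1-\alpha_3},\qquad \beta\mapsto\frac{\alpha_1}{\alpha_2},\qquad \gamma\mapsto\frac{\alpha_2-\alpha_3}{\alpha_2(1-\alpha_3)}.
\]
The same type of verification shows that each of the ten generators of $\mathbb{H}_5$ is sent by $\psi$ to a unit of $\mathbb{U}_3$: in particular, $\psi(\gamma-\alpha\beta)$ simplifies to $(1-\alpha_1)(\alpha_2-\alpha_3)/(\alpha_2(1-\alpha_3))$ and $\psi(1-\gamma-(1-\alpha)\beta)$ to $-(1-\alpha_2)(\alpha_1-\alpha_3)/(\alpha_2(1-\alpha_3))$.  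A single cancellation in each case then confirms that $\phi\circ\psi$ and $\psi\circ\phi$ act as the identity on the three generators of the respective field, so $\phi$ and $\psi$ are mutually inverse partial-field isomorphisms.

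The only real obstacle is guessing the explicit substitutions, since the fully symmetric form of $\mathbb{U}_3$ looks very different from the asymmetric presentation of $\mathbb{H}_5$; once the formulas above are written down, every remaining step is a short rational-function simplification, and the transfer of representability needs nothing beyond \cite[Corollary~2.9]{PvZ10b}.
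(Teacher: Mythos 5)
Your proof is correct and follows the same strategy as the paper's: exhibit explicit mutually inverse homomorphisms between $\mathbb{U}_3$ and $\mathbb{H}_5$ by specifying images of the three transcendentals, verify that every generating unit of each partial field maps to a unit of the other, and check that the two compositions act as the identity. Your explicit substitutions differ from the paper's (which fixes $\alpha\mapsto\alpha_1$, $\gamma\mapsto\alpha_3$ and adjusts $\beta$, whereas you send all three of $\alpha,\beta,\gamma$ to nontrivial ratios), and your five-point difference-set heuristic is a pleasant way to discover such a map, but the underlying argument is the same.
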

\begin{proof}
  It is easy, but tedious, to check that $\phi : \mathbb{H}_5 \rightarrow \mathbb{U}_3$ determined by
  $$\phi(\alpha) = \alpha_1,$$
  $$\phi(\beta) = \frac{\alpha_3-\alpha_2}{\alpha_1-\alpha_2},$$
  $$\phi(\gamma) = \alpha_3$$
  is well defined, and is a homomorphism.
  In particular, observe that $\phi(\gamma-\alpha\beta)=\frac{\alpha_2(\alpha_1-\alpha_3)}{\alpha_1-\alpha_2}$, and $\phi((1-\gamma)-(1-\alpha)\beta)=\frac{(\alpha_3-\alpha_1)(\alpha_2-1)}{\alpha_1-\alpha_2}$.
  Moreover, it is also easily checked that $\phi' : \mathbb{U}_3 \rightarrow \mathbb{H}_5$ determined by
  $$\phi'(\alpha_1) = \alpha,$$
  $$\phi'(\alpha_2) = \frac{\gamma-\alpha\beta}{1-\beta},$$
  $$\phi'(\alpha_3) = \gamma$$
  is well defined, and a homomorphism.
  Clearly $\phi'(\phi(\alpha)) = \alpha$ and $\phi'(\phi(\gamma)) = \gamma$.
  Furthermore, $$\phi'(\phi(\beta)) = \frac{\gamma - \frac{\gamma-\alpha\beta}{1-\beta}}{\alpha - \frac{\gamma-\alpha\beta}{1-\beta}} = \frac{\frac{\alpha\beta-\gamma\beta}{1-\beta}}{\frac{\alpha-\gamma}{1-\beta}} =\beta.$$
  It now follows that $\phi'(\phi(x))=x$ for any $x \in \mathbb{H}_5$.
  Similarly $\phi(\phi'(x))=x$ for any $x \in \mathbb{U}_3$.
  Hence $\phi$ is a bijection with inverse $\phi'$, so the partial fields $\mathbb{H}_5$ and $\mathbb{U}_3$ are isomorphic.
\end{proof}

The next lemma is a consequence of \cite[Lemmas~5.7 and~5.25]{OSV00} when $\mathbb{P} = \mathbb{U}_2$, and is proved for $\mathbb{P} = \mathbb{H}_5$ in \cite[Lemma~7.3.16]{vanZwam09}.

\begin{lemma}[{\cite{OSV00,vanZwam09}}]
  \label{u2stabs}
  For $\mathbb{P} \in \{\mathbb{H}_5,\mathbb{U}_2\}$,
  the matroids $U_{2,5}$ and $U_{3,5}$ are
  non-binary, $3$-connected,
  strong $\mathbb{P}$-stabilizers.
\end{lemma}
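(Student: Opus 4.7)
The plan is straightforward: first observe that $U_{2,5}$ and $U_{3,5}$ are uniform on five elements with rank and corank at least~$2$, so each is $3$-connected, and each contains $U_{2,4}$ as a minor, so neither is binary. It remains to verify the two stabilizer claims. My approach would be to invoke Whittle's Stabilizer Theorem, which reduces showing that $N$ is a $\mathbb{P}$-stabilizer to checking that every $\mathbb{P}$-representable $3$-connected single-element extension or coextension of $N$ has a $\mathbb{P}$-representation that is unique, up to scaling of rows and columns, once the restriction to $N$ is fixed.

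For $N = U_{2,5}$, the matroids to check are therefore the $3$-connected single-element extensions of $U_{2,5}$ (essentially $U_{2,6}$) and its $3$-connected single-element coextensions (the rank-$3$ matroids on six elements with a $U_{2,5}$-restriction), together with the dual list for $U_{3,5}$. For each such matroid one writes down a $\mathbb{P}$-matrix with a prescribed $U_{2,5}$ or $U_{3,5}$ submatrix and checks, using the partial-field axioms, that the entries are forced up to scaling by the non-basis constraints, i.e., by the vanishing of the appropriate subdeterminants.

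For the \emph{strong} stabilizer property, one needs in addition that every $\mathbb{P}$-representation of $U_{2,5}$ or $U_{3,5}$ actually extends to a $\mathbb{P}$-representation of any $3$-connected $\mathbb{P}$-representable matroid with that minor. For $\mathbb{P} = \mathbb{U}_2$ this is essentially automatic because $\mathbb{U}_2$ is the universal partial field of $U_{2,5}$: the universal representation extends by construction, and representations over any other partial field are obtained by pulling back along a homomorphism. For $\mathbb{P} = \mathbb{H}_5$, which is also well behaved (cf.\ \cref{3reglemma}), one appeals to the explicit extension analysis in van Zwam's thesis.

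The main obstacle is the uniqueness verification underlying the Stabilizer Theorem, especially in the $\mathbb{H}_5$ case: the ground ring $\mathbb{Q}(\alpha,\beta,\gamma)$ has three transcendentals and a richer unit group than $\mathbb{U}_2$'s two-transcendental extension, so the finite case analysis for the small extensions and coextensions is more delicate and requires careful manipulation of the allowable units. These computations have already been carried out in~\cite{OSV00} for the $\mathbb{U}_2$ statement and in~\cite{vanZwam09} for the $\mathbb{H}_5$ statement, so the lemma follows directly by quoting those results.
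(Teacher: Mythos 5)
Your proposal takes the same route as the paper, which gives no proof and simply cites \cite[Lemmas~5.7 and~5.25]{OSV00} for $\mathbb{P} = \mathbb{U}_2$ and \cite[Lemma~7.3.16]{vanZwam09} for $\mathbb{P} = \mathbb{H}_5$. One small inaccuracy in your sketch: Whittle's Stabilizer Theorem requires checking $3$-connected $\mathbb{P}$-representable matroids with up to \emph{two} more elements than $N$ (so also the extension-then-coextension case, not only single-element extensions and coextensions), but since you ultimately defer to the cited computations this does not affect the conclusion.
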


\subsection*{Certifying non-representability}

Let $\mathbb{P}$ be a partial field.
Let $M$ be a matroid and let $E(M)=X \cup Y$ where $X$ and $Y$ are disjoint.
Let $A$ be an $X \times Y$ matrix with entries in $\mathbb{P}$ such that, for some distinct $a, b \in Y$, both $A-a$ and $A-b$ are $\mathbb{P}$-matrices, $M \del a=M[I|A-a]$, and $M \del b=M[I|A-b]$. 
Then we say $A$ is an $X \times Y$ \emph{companion $\mathbb{P}$-matrix} for $M$.

Let $B$ be a basis of $M$.
We write $B^*$ to denote $E(M)-B$.
Let $A$ be a $B\times B^*$ matrix with entries in $\mathbb{P}$.
A subset~$Z$ of $E(M)$ \textit{incriminates} the pair $(M, A)$ if $A[Z]$ is square and one of the following holds: 
\begin{enumerate}
 \item $\det(A[Z])\notin \mathbb{P}$,
 \item $\det(A[Z])=0$ but $B\triangle Z$ is a basis of $M$, or
 \item $\det(A[Z])\neq 0$ but $B\triangle Z$ is dependent in $M$.
\end{enumerate}
The next lemma follows immediately. 

\begin{lemma}
  Let $M$ be a matroid, let $A$ be an $X\times Y$ matrix with entries in $\mathbb{P}$, where $X$ and $Y$ are disjoint, and $X\cup Y=E(M)$. Exactly one of the following statements is true:
\begin{enumerate}
 \item $A$ is a $\mathbb{P}$-matrix and $M=M[I | A]$, or
 \item there is some $Z\subseteq X\cup Y$ that incriminates $(M, A)$.
\end{enumerate}
\end{lemma}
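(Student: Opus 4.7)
The plan is to show the two statements are mutually exclusive and jointly exhaustive by directly unpacking the definitions of ``$\mathbb{P}$-matrix'' and ``incriminates'' against the basis characterisation in \cref{pmatroid}. Throughout, I treat the implicit assumption, inherited from the definition of incriminates, that $X$ is a basis of $M$, so that both $M$ and any matroid of the form $M[I\,|\,A]$ have rank $|X|$.

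For mutual exclusion, suppose $A$ is a $\mathbb{P}$-matrix and $M = M[I\,|\,A]$, and let $Z \subseteq X \cup Y$ be such that $A[Z]$ is square. Every subdeterminant of a $\mathbb{P}$-matrix lies in $\mathbb{P}$, so condition~(i) of incrimination fails. By \cref{pmatroid}, $X \triangle Z$ is a basis of $M$ if and only if either $Z = \emptyset$ or $\det(A[Z]) \neq 0$, which simultaneously rules out (ii) and (iii).

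For exhaustivity, assume no $Z \subseteq X \cup Y$ incriminates $(M, A)$, and deduce that $A$ is a $\mathbb{P}$-matrix with $M = M[I\,|\,A]$. If $A$ were not a $\mathbb{P}$-matrix, the row-and-column index set of an offending non-zero subdeterminant would witness~(i), a contradiction. Hence $A$ is a $\mathbb{P}$-matrix, and \cref{pmatroid} produces $M[I\,|\,A]$ with basis family $\mathcal{B}=\{X\} \cup \{X \triangle Z : |X \cap Z| = |Y \cap Z|,\ \det(A[Z]) \neq 0\}$. Since $M$ and $M[I\,|\,A]$ share rank $|X|$, any basis of either matroid can be written as $X \triangle Z$ with $A[Z]$ square; the failure of (ii) and (iii) then equates membership in $\mathcal{B}(M)$ with $\det(A[Z]) \neq 0$, matching $\mathcal{B}$, so $M = M[I\,|\,A]$.

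Because this lemma is pure bookkeeping against the definitions together with \cref{pmatroid}, there is no substantive obstacle, and indeed the paper flags that it ``follows immediately''. The only care needed is to keep the trivial case $Z = \emptyset$ separate (handled by $X$ itself being in the basis family) and to note the rank-matching between $M$ and $M[I\,|\,A]$ that comes for free from the implicit basis assumption.
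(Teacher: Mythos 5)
Your proof is correct and fills in exactly the bookkeeping the paper waves at with ``follows immediately''; unpacking the definitions against \cref{pmatroid} is the only reasonable route. You are also right to flag the implicit assumption that $X$ is a basis of $M$ (inherited from the definition of an incriminating set): without it, the case where $X$ is independent but not spanning can make both alternatives fail, so that observation is genuinely necessary rather than pedantic.
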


Let $M$ be an excluded minor for the class of $\mathbb{P}$-representable matroids.  We will obtain a $B \times B^*$ companion $\mathbb{P}$-matrix $A$ for $M$ such that $\{x,y,a,b\}$ incriminates $(M,A)$ for some distinct $x,y \in B$ and $a,b \in B^*$.
In this setting, for $p \in B$ and $q \in B^*$ where $A_{pq} \neq 0$, we say that the pivot $A^{pq}$ is \emph{allowable} if $\{p,q\} \cap \{x,y,a,b\} \neq \emptyset$ and $\{x,y,a,b\} \triangle \{p,q\}$ incriminates $(M,A^{pq})$, or $\{p,q\} \cap \{x,y,a,b\} = \emptyset$ and $\{x,y,a,b\}$ incriminates $(M,A^{pq})$.
The next two lemmas describe situations where a pivot is allowable.

\begin{lemma}[{\cite[Lemma 5.10]{MvZW10}}]
  \label{allowablexyrow2}
  Let $A$ be a $B\times B^{*}$ companion $\mathbb{P}$-matrix for $M$. Suppose that $\{x,y,a,b\}$ incriminates $(M,A)$, for pairs $\{x,y\}\subseteq B$ and $\{a,b\}\subseteq B^{*}$. If $p\in \{x,y\}$, $q\in B^{*}-\{a,b\}$, and $A_{pq}\neq 0$, then 
  $A^{pq}$ is an allowable pivot.
\end{lemma}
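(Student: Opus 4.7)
The plan is to verify the lemma by direct computation using the definition of the pivot. First, I would check that $A^{pq}$ is itself a companion $\mathbb{P}$-matrix for $M$. Since $q \in B^* - \{a,b\}$, pivoting on $pq$ commutes with deletion of column $a$ or $b$: that is, $A^{pq} - a = (A-a)^{pq}$ and $A^{pq} - b = (A-b)^{pq}$. Pivoting preserves both the $\mathbb{P}$-matrix property and the represented matroid (standard for partial-field representations), and $A-a$, $A-b$ are $\mathbb{P}$-matrices representing $M\ba a$ and $M\ba b$, so $A^{pq}-a$ and $A^{pq}-b$ are $\mathbb{P}$-matrices representing $M\ba a$ and $M\ba b$. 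Hence $A^{pq}$ is a companion $\mathbb{P}$-matrix for $M$ on the basis $B' = B \triangle \{p,q\}$.

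By the symmetry between $x$ and $y$, assume $p=x$. Set $Z=\{x,y,a,b\}$ and $Z' = Z\triangle\{p,q\} = \{y,a,b,q\}$. In $A^{pq}$, the rows are indexed by $(B-x)\cup\{q\}$ and the columns by $(B^*-q)\cup\{x\}$, so $Z'\cap B' = \{y,q\}$ and $Z'\cap (B')^* = \{a,b\}$; in particular $A^{pq}[Z']$ is a $2\times 2$ submatrix. The key calculation is the standard pivot identity: expanding the four entries $A^{pq}_{ya}, A^{pq}_{yb}, A^{pq}_{qa}, A^{pq}_{qb}$ from the definition of $A^{pq}$ and simplifying yields
\[
\det\bigl(A^{pq}[Z']\bigr) \;=\; -A_{pq}^{-1}\,\det\bigl(A[Z]\bigr).
\]
Because $A_{pq}\neq 0$ appears as an entry of the $\mathbb{P}$-matrix $A-a$, we have $A_{pq}\in\mathbb{P}$ and hence $A_{pq}^{-1}$ lies in the group of units of $\mathbb{P}$. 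Consequently $\det(A^{pq}[Z'])\notin\mathbb{P}$ iff $\det(A[Z])\notin\mathbb{P}$, and likewise $\det(A^{pq}[Z'])=0$ iff $\det(A[Z])=0$.

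It remains to note the matroid-side identity $B'\triangle Z' = (B\triangle\{p,q\})\triangle(Z\triangle\{p,q\}) = B\triangle Z$, so $B'\triangle Z'$ is a basis (respectively, dependent) of $M$ if and only if $B\triangle Z$ is. Combined with the determinant comparison, whichever of the three incrimination clauses holds for $(M,A)$ and $Z$ holds for $(M,A^{pq})$ and $Z'$. Since $\{p,q\}\cap\{x,y,a,b\}=\{p\}\neq\emptyset$, this is precisely the requirement that $A^{pq}$ be an allowable pivot. The only real obstacle is the pivot-determinant computation itself: the $2\times 2$ expansion is short, but tracking signs and the re-indexing of $A^{pq}$ (rows $(B-p)\cup\{q\}$, columns $(B^*-q)\cup\{p\}$) needs care, after which everything else is bookkeeping.
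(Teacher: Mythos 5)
Your proof is correct. Note that the paper itself does not prove this lemma; it is imported verbatim as \cite[Lemma~5.10]{MvZW10}, so there is no in-paper argument to compare against. Your computation is the standard one: deleting column $a$ or $b$ commutes with the pivot because $q\notin\{a,b\}$, so $A^{pq}$ inherits the companion-matrix property on $B'=B\triangle\{p,q\}$; the $2\times 2$ determinant identity $\det(A^{pq}[Z'])=-A_{pq}^{-1}\det(A[Z])$ with $Z'=Z\triangle\{p,q\}$ holds as you state (and the overall sign is irrelevant since $-1\in G$); and $A_{pq}$ is a unit of $\mathbb P$ because it is a nonzero entry of the $\mathbb{P}$-matrix $A-a$, so multiplication by $-A_{pq}^{-1}$ preserves both membership in $G\cup\{0\}$ and vanishing. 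Together with $B'\triangle Z'=B\triangle Z$, each of the three incrimination clauses transfers from $(M,A,Z)$ to $(M,A^{pq},Z')$, and since $p\in\{x,y\}$ while $q\notin\{x,y,a,b\}$, the set $Z'$ is exactly $\{x,y,a,b\}\triangle\{p,q\}$, which is what the definition of an allowable pivot requires.
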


\begin{lemma}[{\cite[Lemma 5.11]{MvZW10}}]
  \label{allowablenonxy2}
  Let $A$ be a $B\times B^{*}$ companion $\mathbb{P}$-matrix for $M$. Suppose that $\{x,y,a,b\}$ incriminates $(M,A)$, for pairs $\{x,y\}\subseteq B$ and $\{a,b\}\subseteq B^{*}$. If $p\in B-\{x,y\}$, $q\in B^{*}-\{a,b\}$, $A_{pq}\neq 0$, and either $A_{pa}=A_{pb}=0$ or $A_{xq}=A_{yq}=0$, then 
  $A^{pq}$ is an allowable pivot.
\end{lemma}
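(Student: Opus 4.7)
My plan is to show that $A^{pq}$ is itself a companion $\mathbb{P}$-matrix for $M$ (with the same distinguished columns $a,b$) and that $\{x,y,a,b\}$ still incriminates $(M,A^{pq})$. Since $\{p,q\}\cap\{x,y,a,b\}=\emptyset$ and, in particular, $q\notin\{a,b\}$, the pivot commutes with the deletion of the column $a$ or the column $b$: $(A-a)^{pq}=A^{pq}-a$ and $(A-b)^{pq}=A^{pq}-b$. Pivoting preserves both the $\mathbb{P}$-matrix property and the represented matroid, so these are $\mathbb{P}$-matrices representing $M\backslash a$ and $M\backslash b$; hence $A^{pq}$ is a $B'\times (B')^{*}$ companion $\mathbb{P}$-matrix for $M$, where $B'=B\triangle\{p,q\}$.

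Next I would apply the pivot formula to the entries $(u,v)$ with $u\in\{x,y\}$ and $v\in\{a,b\}$: $(A^{pq})_{uv}=A_{uv}-A_{pq}^{-1}A_{uq}A_{pv}$. Under either hypothesis, the correction term vanishes---in the first because $A_{pv}=0$ for $v\in\{a,b\}$, in the second because $A_{uq}=0$ for $u\in\{x,y\}$. Thus $A^{pq}[\{x,y,a,b\}]=A[\{x,y,a,b\}]$ and, in particular, the two $2\times 2$ determinants coincide. Incrimination of type (i) transfers immediately, since the determinant remains outside $\mathbb{P}$.

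The substantive step is the basis/dependence transfer needed for types (ii) and (iii): I need that $B\triangle\{x,y,a,b\}$ is a basis of $M$ if and only if $B'\triangle\{x,y,a,b\}$ is. Setting $S=B-\{x,y,p\}$, we have $B\triangle\{x,y,a,b\}=S\cup\{p,a,b\}$ and $B'\triangle\{x,y,a,b\}=S\cup\{q,a,b\}$, so, since $S$ is independent, the question reduces to whether $\{p,a,b\}$ and $\{q,a,b\}$ are bases of the rank-$3$ matroid $M/S$. In $M/S$, the sets $\{x,y,p\}$ and $\{x,y,q\}$ are bases, so $\{x,y\}$ has rank $2$ and neither $p$ nor $q$ belongs to $\cl_{M/S}(\{x,y\})$.

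The hard part will be the small rank-$3$ analysis. If $A_{pa}=A_{pb}=0$, then the companion-matrix conditions translate to $\{x,y,a\}$ and $\{x,y,b\}$ being dependent in $M/S$, placing $a,b\in\cl_{M/S}(\{x,y\})$; either $\{a,b\}$ has rank~$2$, whence $\cl_{M/S}(\{a,b\})=\cl_{M/S}(\{x,y\})$ and both $\{p,a,b\}$ and $\{q,a,b\}$ are bases, or $\{a,b\}$ has rank at most~$1$ and neither set is a basis. If instead $A_{xq}=A_{yq}=0$, then $\{x,p,q\}$ and $\{y,p,q\}$ are dependent in $M/S$; since $\{x,p\}$, $\{x,q\}$, $\{y,p\}$, and $\{y,q\}$ are all independent, either $\{p,q\}$ is a parallel pair---so $\{p,a,b\}$ and $\{q,a,b\}$ have equal rank---or $x,y\in\cl_{M/S}(\{p,q\})$, which would place the basis $\{x,y,p\}$ inside a rank-$2$ flat, a contradiction. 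In every case, $\{p,a,b\}$ is a basis of $M/S$ exactly when $\{q,a,b\}$ is, and so the incrimination transfers from $(M,A)$ to $(M,A^{pq})$ at the set $\{x,y,a,b\}$.
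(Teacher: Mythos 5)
The paper cites this statement as Lemma~5.11 of [MvZW10] and does not reproduce a proof, so there is no in-text argument to compare against; I can therefore only evaluate correctness, and your proof is correct and complete. The identity $(A-a)^{pq}=A^{pq}-a$ (and likewise for $b$), valid because $q\notin\{a,b\}$, shows $A^{pq}$ is again a companion $\mathbb{P}$-matrix on the basis $B'=B\triangle\{p,q\}$; the pivot formula with the vanishing correction term shows $A^{pq}[\{x,y,a,b\}]=A[\{x,y,a,b\}]$ entrywise; and the rank-$3$ analysis in $M/S$ with $S=B-\{x,y,p\}$ correctly establishes that $S\cup\{p,a,b\}$ is a basis of $M$ if and only if $S\cup\{q,a,b\}$ is, after translating the zero entries into dependencies in $M/S$ and casing on the ranks of $\{a,b\}$ and $\{p,q\}$. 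These three facts together transfer each of the three incrimination conditions from $(M,A)$ to $(M,A^{pq})$ at the set $\{x,y,a,b\}$, which is exactly what allowability requires when $\{p,q\}\cap\{x,y,a,b\}=\emptyset$.
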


\subsection*{Delta-wye exchange}

Let $M$ be a matroid with a triangle $T=\{a,b,c\}$.
Consider a copy of $M(K_4)$ having $T$ as a triangle with $\{a',b',c'\}$ as the complementary triad labelled such that $\{a,b',c'\}$, $\{a',b,c'\}$ and $\{a',b',c\}$ are triangles.
Let $P_{T}(M,M(K_4))$ denote the generalised parallel connection of $M$ with this copy of $M(K_4)$ along the triangle $T$.
Let $M'$ be the matroid $P_{T}(M,M(K_4))\backslash T$ where the elements $a'$, $b'$ and $c'$ are relabelled as $a$, $b$ and $c$ respectively.
The matroid $M'$ is said to be obtained from $M$ by a \emph{\dY\ exchange} on the triangle~$T$, and is denoted $\Delta_T(M)$.
Dually, $M''$ is obtained from $M$ by a \emph{\Yd\ exchange} on the triad $T^*=\{a,b,c\}$ if $(M'')^*$ is obtained from $M^*$ by a \dY\ exchange on $T^*$.  The matroid $M''$ is denoted $\nabla_{T^*}(M)$.

We say that a matroid $M_1$ is \emph{\dY-equivalent} to a matroid $M_0$ if $M_1$ can be obtained from $M_0$ by a sequence of \dY\ and \Yd\ exchanges on coindependent triangles and independent triads, respectively.
We let $\Delta^*(M)$ denote the set of matroids that are \dY-equivalent to $M$ or $M^*$.

Oxley, Semple, and Vertigan proved that the set of excluded minors for $\mathbb{P}$-representability is closed under \dY\ exchange.
\begin{proposition}[{\cite[Theorem~1.1]{OSV00}}]
  \label{osvdelta}
  Let $\mathbb{P}$ be a partial field, and let $M$ be an excluded minor for the class of $\mathbb{P}$-representable matroids.
  If $M' \in \Delta^*(M)$, then $M'$ is an excluded minor for the class of $\mathbb{P}$-representable matroids.
\end{proposition}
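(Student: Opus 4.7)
The plan is to reduce the proposition to the following single statement, which we iterate together with duality (noting that $\mathbb{P}$-representability is closed under duality of $\mathbb{P}$-matrices): if $M$ is an excluded minor for $\mathbb{P}$-representability and $T$ is a coindependent triangle of $M$, then $M' = \Delta_T(M)$ is also an excluded minor. Since $\Delta^*(M)$ is generated under $\Delta$-$Y$ exchange on coindependent triangles, $Y$-$\Delta$ exchange on independent triads, and dualisation, this reduction suffices.

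First I would establish the key closure fact: \textbf{(A)} if $N$ is $\mathbb{P}$-representable and $T$ is a coindependent triangle of $N$, then $\Delta_T(N)$ is $\mathbb{P}$-representable. The argument is that $M(K_4)$ is regular, hence $\mathbb{P}$-representable for every partial field $\mathbb{P}$, and $T$ is a modular flat in $M(K_4)$, so the generalised parallel connection $P_T(N, M(K_4))$ is $\mathbb{P}$-representable by a standard matrix construction: one takes a $\mathbb{P}$-representation of $N$ displaying $T$ on a line and glues in the standard $M(K_4)$-representation along $T$. Deleting $T$ then yields a $\mathbb{P}$-representation of $\Delta_T(N)$. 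By duality, \textbf{(B)} if $N$ is $\mathbb{P}$-representable and $T^*$ is an independent triad of $N$, then $\nabla_{T^*}(N)$ is $\mathbb{P}$-representable.

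Next I would show that $M'$ is not $\mathbb{P}$-representable. In $M'$, after the relabelling, the set $T$ is an independent triad and $\nabla_T(M') = M$; so if $M'$ were $\mathbb{P}$-representable, then by (B) so would $M$ be, contradicting that $M$ is an excluded minor.

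The main work is to verify that every proper minor of $M'$ is $\mathbb{P}$-representable. For each $e \in E(M')$, we analyse $M' \ba e$ and $M' / e$ in two cases. \emph{Case (i):} $e \notin T$. Using that deletion/contraction commutes with the generalised parallel connection along elements outside the gluing flat, one shows $M' \ba e = \Delta_T(M \ba e)$ and $M' / e = \Delta_T(M / e)$, provided $T$ remains a coindependent triangle in the relevant minor of $M$. When $T$ is still a coindependent triangle, the minor of $M$ is proper, hence $\mathbb{P}$-representable by excluded-minor-ness, and so by (A) the corresponding minor of $M'$ is $\mathbb{P}$-representable. When $T$ fails to be a triangle or coindependent, one directly rewrites $M' \ba e$ or $M' / e$ as a matroid obtained from $M \ba e$ or $M / e$ by series/parallel simplification of the $M(K_4)$ side of the parallel connection, which again yields a $\mathbb{P}$-representable matroid. \emph{Case (ii):} $e \in T$. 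Using the identities $P_T(M, M(K_4)) / t \cong P_{T-t}(M/t, M(K_4)/t)$ for $t \in T$, together with the fact that contracting a triangle element of $M(K_4)$ produces parallel pairs, one shows that $M'/e$ and $M' \ba e$ are each (isomorphic to) minors of $M$ obtained by deleting/contracting at most two elements of $T$, and hence are proper minors of $M$, so are $\mathbb{P}$-representable.

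The main obstacle is Case (ii): tracking precisely how deletion and contraction inside the relabelled triad $T$ of $M'$ correspond to minors of $M$ through the generalised parallel connection. The bookkeeping requires careful use of the structure of $P_T(M, M(K_4))$, the relabelling inherent in the $\Delta$-$Y$ construction, and the coindependence hypothesis to ensure that all required triangles/triads of intermediate matroids have the correct independence status for the exchange identities to apply. Once these minor identifications are verified, the proposition follows by combining (A), (B), and the excluded-minor hypothesis on $M$.
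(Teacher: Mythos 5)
This proposition is stated in the paper with a citation to Oxley, Semple and Vertigan's paper (reference \cite{OSV00}), and the paper itself does not give a proof, so there is no in-paper argument to compare against. Your high-level architecture — establish that $\mathbb{P}$-representability is closed under $\Delta$-$Y$ on coindependent triangles (A), use the inverse relation $\nabla_T(\Delta_T(M)) = M$ together with (A) and its dual to conclude $M'$ is not $\mathbb{P}$-representable, and then check that every single-element deletion and contraction of $M'$ is $\mathbb{P}$-representable by relating it to a proper minor of $M$ — is essentially the structure of the OSV proof.

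However, there is a concrete problem in your Case (ii). You invoke ``the identity $P_T(M, M(K_4))/t \cong P_{T-t}(M/t, M(K_4)/t)$ for $t \in T$,'' but for $e$ in the relabelled triad of $M'$ (i.e.\ one of $a', b', c'$), the element $e$ is \emph{not} an element of $M$, so ``$M/t$'' is not even defined, and this identity does not apply. The correct tools here are instead the identities $\Delta_T(M)/a' \cong M \ba a$ (after the obvious relabelling $b' \leftrightarrow b$, $c' \leftrightarrow c$), together with the observation that in $\Delta_T(M) \ba a'$ the pair $\{b',c'\}$ is a series pair (since $\{a',b',c'\}$ is a triad of the $3$-connected matroid $M'$), so that $\Delta_T(M) \ba a' / b' \cong M \ba a \ba b$; since $\mathbb{P}$-representability is preserved under coextending a representable matroid by a series element, $\mathbb{P}$-representability of $M \ba a, b$ gives that of $\Delta_T(M) \ba a'$. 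You do flag Case (ii) as ``the main obstacle'' requiring ``careful bookkeeping,'' but the identity you actually write down would not survive that bookkeeping; you need the minor-identities stated in terms of the complementary triad, not the original triangle. A secondary gap: in Case (i), the sub-case where $T$ ceases to be a coindependent triangle of $M \ba e$ or $M/e$ is dismissed with ``one directly rewrites \dots as a matroid obtained \dots by series/parallel simplification''; this is roughly the right idea, but it is not automatic and needs to be spelled out for the argument to be complete.
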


\subsection*{Robust and strong elements, and bolstered bases}

Let $M$ be a $3$-connected matroid, let $B$ be a basis of $M$, and let $N$ be a $3$-connected minor of $M$.
Recall that we write $B^*$ to denote $E(M)-B$.
An element $e \in E(M)$ is \textit{$(N,B)$-robust} if either
\begin{enumerate}
 \item $e\in B$ and $M/e$ has an $N$-minor, or
 \item $e\in B^*$ and $M\del e$ has an $N$-minor.
\end{enumerate}

\noindent
Note that an $N$-flexible element of $M$ is clearly $(N,B)$-robust for any basis~$B$ of $M$. 
An element $e \in E(M)$ is \textit{$(N,B)$-strong} if either
\begin{enumerate}
 \item $e\in B$ and $\si(M/e)$ is $3$-connected and has an $N$-minor, or
 \item $e\in B^*$ and $\co(M\del e)$ is $3$-connected and has an $N$-minor.
\end{enumerate}

Now let $\{a,b\}$ be a pair of elements of $M$ such that $M \ba a,b$ is $3$-connected with an $N$-minor.
Let $B$ be a basis of a matroid $M \ba a,b$, and let $A$ be a $B\times B^{*}$ companion $\mathbb{P}$-matrix of $M$ such that $\{x,y,a,b\}$ incriminates $(M,A)$, for some $\{x,y\}\subseteq B$.
If either
  \begin{itemize}
    \item [(i)] $M \ba a,b$ has exactly one $(N,B)$-strong element $u$ outside of $\{x,y\}$, and $\{u,x,y\}$ is a triad of $M \ba a,b$; or
    \item [(ii)] $M \ba a,b$ has no $(N,B')$-strong elements outside of $\{x',y'\}$ for every choice of basis~$B'$ with a $B'\times (B')^{*}$ companion $\mathbb{P}$-matrix $A'$ of $M$ such that $\{x',y',a,b\}$ incriminates $(M,A')$, for some $\{x',y'\}\subseteq B'$;
  \end{itemize}
  then $B$ is a \emph{strengthened} basis.

In other words, a basis~$B$ is strengthened if $B$ is chosen such that either there is one $(N,B)$-strong element $u$ of $M \ba a,b$ outside of $\{x,y\}$, and $\{u,x,y\}$ is a triad; or there are no $(N,B)$-strong elements outside of $\{x,y\}$, and, moreover, there are no $(N,B')$-strong elements outside of $\{x',y'\}$ for any choice of basis~$B'$ with an incriminating set $\{x',y',a,b\}$ where $\{x',y'\} \subseteq B'$.

In particular, for a strengthened basis~$B$ with no $(N,B)$-strong elements, an allowable pivot cannot introduce an $(N,B)$-strong element.

Now suppose $B$ is strengthened.
We say that $B$ is \emph{bolstered} if
\begin{enumerate}
  \item when $M \ba a,b$ has no $(N,B)$-strong elements outside of $\{x,y\}$, then
    for any $B_1\times B_1^{*}$ companion $\mathbb{P}$-matrix~$A_1$ where $\{x_1,y_1,a,b\}$ incriminates $(M,A_1)$, with $\{x_1,y_1\}\subseteq B_1$ and $\{a,b\}\subseteq B_1^{*}$, the number of $(N,B)$-robust elements of $M \ba a,b$ outside of $\{x,y\}$ is at least the number of $(N,B_1)$-robust elements of $M \ba a,b$ outside of $\{x_1,y_1\}$; or
  \item when $M \ba a,b$ has an $(N,B)$-strong element $u$ of $M \ba a,b$ outside of $\{x,y\}$, then for any $B_1\times B_1^{*}$ companion $\mathbb{P}$-matrix~$A_1$ such that 
    \begin{enumerate}[label=\rm(\Roman*)]
      \item $\{x,y,a,b\}$ incriminates $(M,A_1)$, with $\{x,y\}\subseteq B_1$ and $\{a,b\}\subseteq B_1^{*}$, and
      \item $u$ is the only $(N,B_1)$-strong element of $M \ba a,b$, with $u \in B_1^*$,
    \end{enumerate}
    the number of $(N,B)$-robust elements of $M \ba a,b$ is at least the number of $(N,B_1)$-robust elements of $M \ba a,b$. 
\end{enumerate}
Loosely speaking, a strengthened basis~$B$ is bolstered if no allowable pivot increases the number of elements that are 
robust but not strong.

\section{Excluded minors are almost fragile}
\label{almostfragsec}

We now recap results that we require from \cite{BCOSW20,paper1}.
All of these results, appearing in the remainder of this section, are under the following hypotheses:
Let $\mathbb{P}$ be a partial field.
Let $M$ be an excluded minor for the class of $\mathbb{P}$-representable matroids, and let $N$ be a non-binary $3$-connected strong stabilizer for the class of $\mathbb{P}$-representable matroids, where $M$ has an $N$-minor.

The first result implies that we can essentially restrict attention to an excluded minor with no triads.  A proof appears in \cite{paper1}, but it is essentially a consequence of the main theorem proved in \cite{BWW20,BWW21,BWW22}.

\begin{lemma}[{\cite[Lemma~3.1]{paper1}}]
  \label{notriads2}
  Suppose that $|E(M)| \ge |E(N)| + 10$.
  Then there exists a matroid~$M_1 \in \Delta^*(M)$ such that $M_1$ has a pair of elements $\{a,b\}$ for which $M_1 \ba a,b$ is $3$-connected and has a $\Delta^*(N)$-minor, and $M_1$ has no triads.
\end{lemma}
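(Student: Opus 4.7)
The plan is to combine the main theorem of Brettell, Whittle, and Williams from the series \cite{BWW20,BWW21,BWW22} with Proposition~\ref{osvdelta}, which tells us that the class of excluded minors for $\mathbb{P}$-representability is closed under $\Delta$-$Y$ and $Y$-$\Delta$ exchange. The rough point is that the main theorem of \cite{BWW22} guarantees, for any excluded minor $M'$ for $\mathbb{P}$-representability with a non-binary $3$-connected strong $\mathbb{P}$-stabilizer $N'$-minor (and with $|E(M')|$ sufficiently large relative to $|E(N')|$), the existence of a pair $\{a,b\}\subseteq E(M')$ such that $M'\ba a,b$ is $3$-connected with an $N'$-minor. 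The slack of $10$ in the hypothesis $|E(M)|\ge |E(N)|+10$ is precisely what is needed to apply that theorem (to $M$ or to any $M'\in \Delta^*(M)$, since $\Delta^*$-exchange preserves the size of the ground set).

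The first step is to set up the right extremal choice. Consider the family
\[
\mathcal{F} = \{M' \in \Delta^*(M) : \text{there exists } \{a,b\}\subseteq E(M') \text{ with } M'\ba a,b \text{ $3$-connected and having a $\Delta^*(N)$-minor}\}.
\]
Applying \cite{BWW22} to $M$ itself shows $M\in \mathcal{F}$, so $\mathcal{F}$ is non-empty. Now pick $M_1\in \mathcal{F}$ with the minimum number of triads. By Proposition~\ref{osvdelta}, every member of $\mathcal{F}$ is an excluded minor for $\mathbb{P}$-representability; in particular, $M_1$ is.

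The second step is to show $M_1$ has no triads. Suppose for a contradiction that $M_1$ has a triad $T^*$. Perform a $Y$-$\Delta$ exchange to form $M_2 = \nabla_{T^*}(M_1)\in \Delta^*(M)$. Since each matroid in $\Delta^*(N)$ is still a non-binary $3$-connected strong $\mathbb{P}$-stabilizer, and $|E(M_2)|=|E(M_1)|\ge |E(N)|+10$, we may re-apply the main theorem of \cite{BWW22} to $M_2$ to obtain some pair $\{a',b'\}$ with $M_2\ba a',b'$ $3$-connected and having a $\Delta^*(N)$-minor. Thus $M_2\in \mathcal{F}$. To contradict the minimality of the triad count of $M_1$, one needs to verify that $M_2$ has strictly fewer triads than $M_1$: the exchanged triad $T^*$ has become a triangle of $M_2$, and any triad of $M_2$ distinct from this new triangle corresponds (via the standard structure of $Y$-$\Delta$ exchange and the generalised parallel connection with $M(K_4)$) to a triad of $M_1$ that is disjoint from $T^*$, so the count drops by exactly one.

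The main obstacle is that last accounting step: verifying, at the level of the generalised parallel connection with $M(K_4)$, that $Y$-$\Delta$ exchange on $T^*$ turns $T^*$ into a triangle of $M_2$ and does not secretly create new triads elsewhere. This is a direct but slightly fiddly check using orthogonality between the new triangle on the relabelled elements and any putative triad created by the exchange, together with the $3$-connectivity of $M_1$. Once this is in hand the extremal choice yields the contradiction, and hence $M_1$ has no triads, completing the proof.
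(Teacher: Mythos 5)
Your overall strategy — extremal minimisation of triad count over $\Delta^*(M)$, driven by the BWW detachable-pair theorem — is a reasonable route to the lemma, and since the paper only cites \cite{paper1} for this statement (remarking it is ``essentially a consequence'' of the BWW theorem), a direct comparison with the paper's own proof is not possible here. Your key accounting claim, that a $Y$-$\Delta$ exchange on an independent triad $T^*$ produces a matroid whose triads are precisely the triads of $M_1$ that are disjoint from $T^*$, does in fact hold, but it will not fall out of orthogonality alone as you suggest. Dualising, you must show that no triangle of $\Delta_{T^*}(M_1^*)$ can meet the new triad $T'=\{a',b',c'\}$, and this requires the flat structure of the generalised parallel connection $P_{T^*}(M_1^*,M(K_4))$ along the modular line $T^*$: for any $Z\subseteq T^*$ one has $\cl_{M(K_4)}(Z)\cap T'=\emptyset$, which rules out a would-be triangle containing exactly one new element; for two new elements, $\cl(\{a',b'\})=\{a',b'\}$ after deleting $T^*$ because the $M(K_4)$-closure is $\{a',b',c\}$; and $T'$ itself is independent. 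You should write out this check rather than wave at orthogonality, since it is what drives the strict decrease.

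The more substantive gap is the re-application of BWW to $M_2=\nabla_{T^*}(M_1)$. For this you need $M_2$ to have a $\Delta^*(N)$-minor, which you assert but never justify, and it is not automatic. The identities $\nabla_{T^*}(M_1)/T^*=M_1/T^*$ and $\nabla_{T^*}(M_1)\ba T^*=M_1\ba T^*$ immediately handle the cases where some realisation of the $\Delta^*(N)$-minor of $M_1$ either entirely contracts or entirely deletes $T^*$, but you must also deal with the realisations that split $T^*$ between the contraction set, the deletion set, and the ground set of the minor; that requires a lemma of Oxley--Semple--Vertigan type showing that $\Delta$-$Y$ exchange preserves the property of having a $\Delta^*(N)$-minor. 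Without this, there is no guarantee that $M_2\in\mathcal F$, and your extremal induction does not close. You should also confirm that the BWW theorem, as actually stated, yields a pair for $M_2$ itself rather than merely for some member of $\Delta^*(M_2)$; if it only gives the latter, the triad count of the resulting matroid is uncontrolled and the minimality argument breaks.
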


The following theorem addresses the case where $M \ba a,b$ is not $N$-fragile, and is extracted from {\cite[Theorem~6.7]{BCOSW20}}.
For item~\ref{easywin2}, the fact that the triangle is closed is established as \cite[Lemma~3.4]{paper1}.

\begin{theorem}[{\cite[Theorem~6.7(ii)(b)]{BCOSW20}}]
  \label{bcosw-thm}
  Let $a,b \in E(M)$ be a pair of elements for which $M \ba a,b$ is $3$-connected with an $N$-minor.
  Suppose $|E(M)| \ge |E(N)| + 10$, and $M \ba a,b$ is not $N$-fragile.
  Then
  \begin{enumerate}
    \item $M$ has a bolstered basis~$B$, and a $B\times B^{*}$ companion $\mathbb{P}$-matrix $A$ for which $\{x,y,a,b\}$ incriminates $(M,A)$, where $\{x,y\}\subseteq B$ and $\{a,b\}\subseteq B^{*}$, and there is an element $u \in B^*-\{a,b\}$ that is $(N,B)$-strong in $M \ba a,b$;
    \item either\label{bcoswii}
      \begin{enumerate}[label=\rm(\Roman*)]
        \item the $N$-flexible, and $(N,B)$-robust, elements of $M\del a,b$ are contained in $\{u,x,y\}$, or\label{bcoswii1}
        \item the $N$-flexible, and $(N,B)$-robust, elements of $M\del a,b$ are contained in $\{u,x,y,z\}$, where $z \in B$, and $(z,u,x,y)$ is a maximal fan of $M \del a,b$, or\label{bcoswii2}
        \item the $N$-flexible, and $(N,B)$-robust, elements of $M\del a,b$ are contained in $\{u,x,y,z,w\}$, where $z \in B$, $w \in B^*$, and $(w,z,x,u,y)$ is a maximal fan of $M \del a,b$;\label{bcoswii3}
      \end{enumerate}
    \item the unique triad in $M \ba a,b$ containing $u$ is $\{u,x,y\}$;\label{bcoswiii}
    \item $M$ has a cocircuit $\{x,y,u,a,b\}$; and
    \item for some $d\in \{a,b\}$, the set $\{d,x,y\}$ is a closed triangle.\label{easywin2}
  \end{enumerate}
\end{theorem}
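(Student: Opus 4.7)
The plan is simply to cite two existing results and explain how each is deployed. Items (i) through (iv) are a direct instance of \cite[Theorem~6.7(ii)(b)]{BCOSW20} in the branch where $M\ba a,b$ is not $N$-fragile: the hypotheses of our statement match those of that theorem, and its conclusions provide the bolstered basis $B$, the companion $\mathbb{P}$-matrix $A$ with incriminating quadruple $\{x,y,a,b\}$, the $(N,B)$-strong element $u$, the trichotomy of fan placements in~\ref{bcoswii}, the triad $\{u,x,y\}$ in~\ref{bcoswiii}, and the $5$-element cocircuit $\{x,y,u,a,b\}$. No further work is required for these items.

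Item~\ref{easywin2} — the closedness of a triangle $\{d,x,y\}$ — is the only piece of new content, supplied by \cite[Lemma~3.4]{paper1}. I would outline its argument in two stages. The first stage shows that for some $d\in\{a,b\}$, the set $\{d,x,y\}$ is a triangle of $M$. One combines the $2\times 2$ incrimination at $\{x,y\}\times\{a,b\}$ with the fact that $A-a$ and $A-b$ represent $M\ba a$ and $M\ba b$: the disagreement recorded by the incriminating quadruple must concern the interaction of $a$ and $b$ with $\{x,y\}$, and, together with the cocircuit $\{x,y,u,a,b\}$ furnished by~\ref{bcoswiii}, this forces one of $\{a,x,y\}$ or $\{b,x,y\}$ to be a dependent three-element set, hence a triangle.

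The second stage, which is the main technical step, establishes that this triangle is closed. Assume for contradiction that $e \in \cl_M(\{d,x,y\}) - \{d,x,y\}$. Using orthogonality of a triangle through $\{d,e\}$ with the triad $\{u,x,y\}$ of $M\ba a,b$ and with the $5$-element cocircuit of $M$, the element $e$ must lie in a short list of positions around $u$ and the (at most five-element) fan described in~\ref{bcoswii}. For each such position, the tactic is to perform an allowable pivot via \cref{allowablexyrow2} or \cref{allowablenonxy2} to produce a new companion $\mathbb{P}$-matrix whose incriminating pair $\{x',y'\}$ either exposes a new $(N,B')$-strong element of $M\ba a,b$ outside $\{x',y'\}$ or strictly increases the number of $(N,B')$-robust elements outside $\{x',y'\}$, contradicting the bolstered choice of $B$ in either case. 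The obstacle is precisely this case analysis; the hypothesis $|E(M)|\ge |E(N)|+10$ guarantees enough room around the incrimination for the pivot arguments to be carried out cleanly.
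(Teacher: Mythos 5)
Your proposal is correct and matches the paper exactly: the paper presents this result as a direct citation, taking items (i)--(iv) verbatim from \cite[Theorem~6.7(ii)(b)]{BCOSW20} and deferring the closedness in item (v) to \cite[Lemma~3.4]{paper1}. Your speculative outline of the pivot/bolstered-basis argument inside \cite[Lemma~3.4]{paper1} is extra context rather than part of the proof, but it is consistent with the techniques the paper uses elsewhere (e.g.\ in \cref{atmostonetri}).
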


A consequence of this theorem is that $M \ba a,b$ has no $M(K_4)$ restriction or co-restriction, as shown below.

\begin{lemma}
  \label{gadgetnotMK4}
  Let $a,b \in E(M)$ be a pair of elements for which $M \ba a,b$ is $3$-connected with an $N$-minor.
  Suppose $|E(M)| \ge |E(N)| + 10$, and $M \ba a,b$ is not $N$-fragile, so \cref{bcosw-thm} holds.
  Let $Z$ be the set of $N$-flexible elements of $M \ba a,b$.
  Then there is no set $Z'$ containing $Z$ such that either $(M\ba a,b)|Z' \cong M(K_4)$ or $(M\ba a,b)^*|Z' \cong M(K_4)$.
\end{lemma}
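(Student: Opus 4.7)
The plan is to suppose for contradiction such a $Z'$ exists and use the structural constraints from \cref{bcosw-thm} on the triangles of $M' := M \ba a, b$ near $\{u, x, y\}$. Working first with the restriction case $(M' | Z') \cong M(K_4)$, each element of $Z'$ lies in exactly two triangles of $M' | Z'$, and these are also triangles of $M'$. Since $\{u, x, y\}$ is a triad of $M'$ by \cref{bcosw-thm}(iii), orthogonality together with $3$-connectivity forces every triangle of $M'$ to intersect $\{u, x, y\}$ in $0$ or $2$ elements (intersection~$3$ would make $\{u, x, y\}$ both a triangle and a triad, forbidden in a $3$-connected matroid of this size).

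I would then proceed case-by-case according to the three subcases of \cref{bcosw-thm}(ii). In Case~(I), the absence of any $4$-element fan through $\{u, x, y\}$ rules out every triangle of $M'$ meeting $\{u, x, y\}$ in two elements, since such a triangle together with $\{u, x, y\}$ would extend to a $4$-element fan. Hence every triangle of $M'$ avoids $\{u, x, y\}$, so no element of $\{u, x, y\}$ lies in $Z'$, forcing $Z \subseteq Z' \cap \{u, x, y\} = \emptyset$ and contradicting the non-fragility of $M'$. In Case~(III), the maximal fan $(w, z, x, u, y)$ combined with \cref{fanunique} (applied dually, since $\{w, z, x\}$ is a triad) gives that the unique triangle of $M'$ through $x$ is $\{z, x, u\}$, while maximality at the $y$-end rules out any triangle $\{u, y, y'\}$. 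A short case analysis then shows that each of $u, x, y, z, w$ lies in at most one triangle of $M'$; the requirement that each element of $Z'$ lie in two triangles of $M' | Z'$ forces $Z' \cap \{u, x, y, z, w\} = \emptyset$, again giving $Z = \emptyset$.

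Case~(II) is the main obstacle. Here the fan $(z, u, x, y)$ has maximal length $4$: using simplicity (only one triangle through the pair $\{u, x\}$) and maximality at the $y$-end (no triangle $\{x, y, f\}$), the same counting shows $u, x, y \notin Z'$, so $Z \subseteq \{z\}$ and thus $z \in Z'$. But there is no a priori reason that $z$ cannot lie in additional triangles of $M'$ disjoint from $\{u, x, y\}$. To complete this case I would combine the fact that elements of $Z' \setminus \{z\}$ are not $(N, B)$-robust (by \cref{bcosw-thm}(ii)(II)) with the rigid triangle-triad incidence pattern of $M(K_4)$ to derive a contradiction---for instance, by showing that such a configuration would force some element of $Z' \setminus \{z\}$ to be $N$-flexible, contradicting $Z = \{z\}$.

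For the corestriction case $(M \ba a, b)^* | Z' \cong M(K_4)$, I would use a dual argument: $\{u, x, y\}$ now becomes a triangle of the restriction, and \cref{bcosw-thm}(iii) tells us that $u$ lies in a unique triad of $M'$, so the analogous counting with triangles and triads interchanged yields the corresponding contradictions.
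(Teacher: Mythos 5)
Your analysis of Case~(I) contains an error that breaks the argument. Nothing in \cref{bcosw-thm}\ref{bcoswii}\ref{bcoswii1} asserts the absence of a $4$-element fan through $\{u,x,y\}$: the three subcases are distinguished by where the $N$-flexible and $(N,B)$-robust elements lie, not by the presence or absence of fans, and Case~(I) is entirely compatible with a triangle of $M\ba a,b$ meeting the triad $\{u,x,y\}$ in two elements whose fourth fan element is simply neither flexible nor robust. In fact, the conclusion in Case~(I) is the opposite of what you claim: one first shows $u$ is $N$-flexible (otherwise $u$ is not $N$-contractible, so $x$ and $y$ are not $N$-deletable via the triad, so $M\ba a,b$ has no flexible elements at all, contradicting non-fragility); hence $u \in Z \subseteq Z'$. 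Since $\{u,x,y\}$ is the \emph{unique} triad containing $u$ in $M\ba a,b$, the $M(K_4)$ structure must be a restriction, not a corestriction. Then orthogonality with the triad forces both triangles of $(M\ba a,b)|Z'$ through $u$ to meet $\{u,x,y\}$ in two elements, so one contains $\{u,x\}$ and the other contains $\{u,y\}$, giving $\{u,x,y\} \subseteq Z'$. The triangle through $\{u,y\}$ already rules out Cases~(II) and~(III), so the whole argument lands in Case~(I).

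You also do not invoke \cref{bcosw-thm}\ref{easywin2}, which is the decisive tool: after identifying the $M(K_4)$ triangles $\{u,x,f\}$, $\{u,y,g\}$, $\{e,f,g\}$, $\{x,y,e\}$, the triangle $\{x,y,e\}$ contradicts the fact that $\{d,x,y\}$ is a \emph{closed} triangle of $M$ for some $d\in\{a,b\}$ (for then $e \in \cl_M(\{x,y\}) = \{d,x,y\}$, impossible). Your Case~(II) is explicitly left unfinished, and that gap does not actually need filling once you realise it cannot occur; but the Case~(I) reasoning and the missing use of the closed-triangle condition are genuine gaps that a completed proof would have to repair.
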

\begin{proof}
  Towards a contradiction, let $Z'$ be a set containing $Z$ such that $M'|Z' \cong M(K_4)$ for some $M' \in \{M\ba a,b,(M\ba a,b)^*\}$.
  Note that every element of $Z'$ is in at least $2$ triangles $T_1$ and $T_2$ of $M'$, where $r_{M'}(T_1 \cup T_2)=3$.
  If $u$ is not $N$-flexible, then $u$ is not $N$-contractible.  But this implies that $x$ and $y$ are not $N$-deletable, and it follows that $M \ba a,b$ has no $N$-flexible elements, contradicting that $M \ba a,b$ is not $N$-fragile.  So $u$ is $N$-flexible.
  Since $u \in Z \subseteq Z'$ and $\{u,x,y\}$ is the unique triad containing $u$, we have $M' = M \ba a,b$.
  Now $u$ is in two triangles of $(M \ba a,b)|Z'$ that are not both contained in a common segment.
  By orthogonality, one of these two triangles contains $\{u,x\}$, and the other contains $\{u,y\}$ (and, in particular, $x,y \in Z'$).
  Since $\{u,y\}$ is contained in a triangle, neither \ref{bcoswii2} nor \ref{bcoswii3} of \cref{bcosw-thm}\ref{bcoswii} holds.
  So \cref{bcosw-thm}\ref{bcoswii}\ref{bcoswii1} holds.
  Then there exist elements $e,f,g \in E(M \ba a,b) - \{x,y,u\}$ such that $\{u,x,f\}$, $\{u,y,g\}$, $\{e,f,g\}$ and $\{x,y,e\}$ are triangles.
  But the triangle $\{x,y,e\}$ contradicts \cref{bcosw-thm}\ref{easywin2}.
\end{proof}

The next three results are from \cite{paper1}.
\begin{lemma}[{\cite[Lemma~3.3]{paper1}}]
  \label{wmatype1again}
  Suppose $M$ has a pair of elements $\{a,b\}$ such that $M\ba a,b$ is $3$-connected with an $N$-minor, $|E(M)| \ge |E(N)| + 10$, and $M \ba a,b$ is not $N$-fragile, so \cref{bcosw-thm} holds.
  Assume \cref{bcosw-thm}\ref{easywin2} holds with $d=b$.
  Then, either
  \begin{enumerate}
    \item the $N$-flexible elements of $M \ba a,b$ are contained in $\{u,x,y\}$, or
    \item $M \ba a,x$ is $3$-connected with an $N$-minor, but is not $N$-fragile, and there are at most three $N$-flexible elements in $M \ba a,x$.
  \end{enumerate}
\end{lemma}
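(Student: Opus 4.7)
The plan is to case-split based on \cref{bcosw-thm}\ref{bcoswii}. If case~(I) holds, the $N$-flexible elements of $M \ba a, b$ lie inside $\{u, x, y\}$, giving conclusion~(1) immediately. Otherwise, case~(II) or (III) holds: $M \ba a, b$ has a maximal fan $F$ containing $\{u, x, y\}$ together with an additional flexible element~$z$ (and a further element~$w$ in case~(III)), and we aim to establish conclusion~(2).

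The driving observation is that the triangle $\{b, x, y\}$ makes $b$ and $x$ symmetric modulo $y$: in $M \ba a / y$ the pair $\{b, x\}$ is parallel, so $M \ba a, b / y$ and $M \ba a, x / y$ agree up to the label-swap $b \leftrightarrow x$. First I would show $M \ba a, x$ is $3$-connected. Assume for contradiction that $(U, V)$ is a $2$-separation of $M \ba a, x$. When $b$ and $y$ lie on the same side, say $\{b, y\} \subseteq U$, the relation $x \in \cl_{M \ba a}(\{b, y\})$ extends $(U, V)$ to a $2$-separation $(U \cup x, V)$ of $M \ba a$, and then $b \in \cl_{M \ba a}(\{x, y\})$ shrinks it to a $2$-separation $(U \cup x - b, V)$ of $M \ba a, b$, contradicting $3$-connectedness. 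When $b$ and $y$ lie on opposite sides, I would deduce that $\{b, u, y\}$ is a triad of $M \ba a, x$: this follows from the cocircuit $\{x, y, u, a, b\}$ of $M$, the absence of triads of $M$ (enabled by \cref{notriads2}), and a cocircuit-elimination argument ruling out smaller cocircuits of $M \ba a$ inside $\{b, u, x, y\}$. This triad places $y$ in $\cocl_{M \ba a, x}(U)$ (assuming $u \in U$, the other case being symmetric), so \cref{fclnontrivialsep} reduces to the previous case except when one side has size exactly~$2$; the latter is handled by noting that the closed triangle $\{b, x, y\}$ restricts which parallel pairs $\{y, e\}$ can arise in $M \ba a, x$, and orthogonality with $\{x, y, u, a, b\}$ rules out the problematic series pairs.

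For the $N$-minor and non-fragility, I would argue that $y$ is $N$-contractible in $M \ba a, b$: since $y$ is a rim end of the maximal fan~$F$, \cref{fanendsstrong} yields $3$-connectedness of $M \ba a, b / y$, and combining this with the $N$-flexibility of an element of $F$ outside $\{u, x, y\}$ guaranteed by case~(II)/(III), a trace through the fan produces an $N$-minor of $M \ba a, b / y$. The isomorphism $M \ba a, b / y \cong M \ba a, x / y$ then exhibits an $N$-minor of $M \ba a, x$, and the flexibility of $z$ (and of $w$ in case~(III)) transfers through this symmetry to show $M \ba a, x$ is not $N$-fragile. The flexibles of $M \ba a, x$ sit within an analogue of $F$ with $x$ replaced by $b$, and a case analysis guided by \cref{fragilefans} bounds the number of $N$-flexibles to at most three.

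The main obstacle I anticipate is the $3$-connectedness verification for $M \ba a, x$, particularly the small-sided separation cases, which requires a careful orthogonality and cocircuit-elimination analysis. A secondary obstacle is confirming that $y$ is $N$-contractible in $M \ba a, b$: the flexibility of $z$ does not immediately imply this, and one must trace an $N$-minor through the contraction of $y$ using the fan structure before invoking the triangle symmetry to complete the transfer to $M \ba a, x$.
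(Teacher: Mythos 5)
The high-level plan — case-split on \cref{bcosw-thm}\ref{bcoswii}, use the closed triangle $\{b,x,y\}$ to relate $M\ba a,b$ and $M\ba a,x$ via the parallel pair $\{b,x\}$ in $M\ba a/y$ — is a reasonable route. But several steps are either wrong or substantially under-argued. The clearest error is the invocation of ``the absence of triads of $M$ (enabled by \cref{notriads2}).'' The lemma does not hypothesise that $M$ has no triads; this is deliberate, since the very next result (\cref{thegrandfantasy}) does add that hypothesis while this one does not. Nor would \cref{notriads2} give you this for free: it asserts only that \emph{some} $M_1\in\Delta^*(M)$ has no triads, not that $M$ itself lacks them, and one cannot silently swap $M$ for $M_1$ while keeping the same pair $\{a,b\}$ and all of \cref{bcosw-thm}'s conclusions. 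As it happens, the claim you want ($\{b,u,y\}$ is a triad of $M\ba a,x$) is true for a much more elementary reason: $\{x,y,u,a,b\}$ is a cocircuit of $M$, so $E(M)-\{x,y,u,a,b\}$ is a hyperplane of $M$ avoiding $a,x$, hence a hyperplane of $M\ba a,x$ (neither $a$ nor $x$ is a coloop since $M$ is $3$-connected), hence $\{b,u,y\}$ is a cocircuit, hence a triad. So this particular step can be rescued, but not by the reasoning given.

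Beyond that, several key claims are only gestured at. The $3$-connectedness argument is not closed out for the size-two-sided separations (the appeal to \cref{fclnontrivialsep} also presumes cosimplicity, which is part of what needs proving). The ``transfer through the symmetry'' of $z$'s flexibility is not valid as stated: the isomorphism $M\ba a,b/y \cong M\ba a,x/y$ moves flexibility of $z$ in $M\ba a,b/y$ to flexibility in $M\ba a,x/y$, but you have not shown that $z$ being $N$-flexible in $M\ba a,b$ implies it is $N$-flexible in $M\ba a,b/y$ — in particular you would need $y$ to be $N$-contractible in $M\ba a,b\ba z$, which is a separate and nontrivial claim. The invocation of \cref{fanendsstrong} requires $M\ba a,b$ not to be a wheel or whirl, which you do not establish (the section hypothesis that $N$ is non-binary rules out wheels but not whirls). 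Finally, the bound of at most three $N$-flexible elements in $M\ba a,x$ is asserted but not argued, and you do not address the possibility that, even in case (II) or (III) of \cref{bcosw-thm}\ref{bcoswii}, the flexible elements of $M\ba a,b$ might still all lie inside $\{u,x,y\}$, in which case you should be concluding (1) rather than pursuing (2).
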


\begin{theorem}[{\cite[Theorem~3.5]{paper1}}]
  \label{thegrandfantasy}
  Suppose $M$ has a pair of elements $\{a,b\}$ such that $M\ba a,b$ is $3$-connected with an $N$-minor, $M$ has no triads, $|E(M)| \ge |E(N)| + 11$, and $M \ba a,b$ is not $N$-fragile, so \cref{bcosw-thm} holds.
  If the $N$-flexible elements of $M \ba a,b$ are contained in $\{u,x,y\}$, then, for every $b' \in B-\{x,y\}$, the element $b'$ is $N$-essential in at least one of $M \ba a,b\ba u$ and $M \ba a,b/u$.
\end{theorem}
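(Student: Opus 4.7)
I argue by contradiction. Suppose some $b' \in B - \{x, y\}$ is not $N$-essential in $M \ba a, b \ba u$ and not $N$-essential in $M \ba a, b / u$. Since $b' \in B$ and $u \in B^*$, we have $b' \notin \{u, x, y\}$, so by the hypothesis on $N$-flexible elements, $b'$ is not $N$-flexible in $M \ba a, b$. The first observation is that if $M \ba a, b \ba u \ba b'$ has an $N$-minor then so does $M \ba a, b \ba b'$, so $b'$ is $N$-deletable in $M \ba a, b$; an analogous statement holds for each of the other three ways $b'$ could be non-essential in $M \ba a, b \ba u$ or $M \ba a, b / u$. Non-flexibility of $b'$ in $M \ba a, b$ then forces the type of removability to coincide across $M \ba a, b \ba u$ and $M \ba a, b / u$, yielding two cases: either $b'$ is $N$-deletable in both (Case A), in which case $u$ becomes $N$-flexible in $M \ba a, b \ba b'$; or $b'$ is $N$-contractible in both (Case B), in which case, dually, $u$ becomes $N$-flexible in $M \ba a, b / b'$. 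By duality it suffices to treat Case A.

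In Case A, since $b'$ is $N$-deletable but not $N$-contractible in $M \ba a, b$, the element $b'$ is not $(N, B)$-robust. To obtain a contradiction, I exploit the bolstered property of $B$ via a pivot argument on the companion $\mathbb{P}$-matrix $A$. The plan is to find an allowable pivot $A^{b'q}$ for some $q \in B^* - \{a, b, u\}$ with $A_{b'q} \neq 0$, with allowability certified by \cref{allowablenonxy2} through orthogonality of the $b'$-row with the triad $\{u, x, y\}$ of $M \ba a, b$ or the cocircuit $\{u, x, y, a, b\}$ of $M$. In the resulting basis $B_1 = B \triangle \{b', q\}$, the incriminating set $\{x, y, a, b\}$ is preserved, and one checks that $u$ remains the unique $(N, B_1)$-strong element of $M \ba a, b$ outside $\{x, y\}$. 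Then $b' \in B_1^*$ is $(N, B_1)$-robust since $M \ba a, b \ba b'$ has an $N$-minor (from Case A); and since only $b'$ and $q$ change sides, a careful tally of robust elements shows a strict net increase from $B$ to $B_1$, violating the bolstered property of $B$.

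The main obstacle is the pivot construction. One must locate a suitable column $q \in B^* - \{a, b, u\}$ with $A_{b'q} \neq 0$, potentially handling by an intermediate allowable pivot the degenerate configuration in which the $b'$-row of $A$ is supported only on $\{a, b, u\}$; verify the allowability hypotheses of \cref{allowablenonxy2}; confirm that $B_1$ inherits the incriminating pair and the unique-strong-element structure required to apply the bolstered condition; and carry out the accounting on $q$ to ensure the robust count strictly increases. The orthogonality relations imposed by $\{u, x, y\}$ and the cocircuit $\{u, x, y, a, b\}$, together with the hypothesis that $M$ has no triads and $|E(M)| \geq |E(N)| + 11$, are the leverage for these verifications.
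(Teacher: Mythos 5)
This theorem is quoted from \cite{paper1} and the present paper does not reproduce its proof, so I cannot compare your argument to the authors' argument directly; I can only assess your proposal on its own.

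Your opening case analysis (the removability type must coincide between $M\ba a,b\ba u$ and $M\ba a,b/u$, since otherwise $b'$ becomes $N$-flexible in $M\ba a,b$) is correct. However, the claim that ``by duality it suffices to treat Case~A'' is not justified and in fact conceals a genuine asymmetry. In Case~B, $b'\in B$ is $N$-contractible in $M\ba a,b$, which by definition makes $b'$ an $(N,B)$-robust element; under \cref{bcosw-thm}\ref{bcoswii}\ref{bcoswii1} this is immediately contradictory, while under \ref{bcoswii2} or \ref{bcoswii3} it forces $b'=z$ (the spoke of a small fan through $\{u,x,y\}$), a very specific configuration that needs its own argument and is certainly not obtainable from Case~A by dualizing the pivot scheme. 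So Case~B is not simply dual to Case~A; it has an entirely different flavour.

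In Case~A the pivot strategy is the right kind of tool, but the proposal leaves the load-bearing verifications undone, and at least one of them is not obviously true. The bolstered comparison requires that in $B_1=B\triangle\{b',q\}$ the element $u$ remain the \emph{unique} $(N,B_1)$-strong element; since $b'$ is $N$-deletable and lands in $B_1^*$, you would need to show $\co(M\ba a,b\ba b')$ is not $3$-connected, and nothing in your outline argues this. Likewise the ``strict net increase'' needs you to rule out the scenario where $q$ itself is $(N,B)$-robust (which happens, e.g., if $q=w$ in case~\ref{bcoswii3}), since then $q$ could cease to be robust in $B_1$ and cancel the gain from $b'$. You also need to guarantee a pivot column $q\in B^*-\{a,b,u\}$ exists for which \cref{allowablenonxy2} applies, which the orthogonality remarks only gesture at. Finally, the hypotheses ``$M$ has no triads'' and ``$|E(M)|\ge |E(N)|+11$'' never do any concrete work in the sketch; these are unlikely to be gratuitous, so the absence of a place where they enter is a strong signal that essential steps are missing. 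In short, the sketch identifies plausible machinery but does not contain a proof, and parts of the plan (Case~B, uniqueness of the strong element) look like they would fail as stated rather than merely need filling in.
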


\begin{lemma}[{\cite[Lemma~4.1]{paper1}}]
  \label{subfrag3conn}
  Suppose $M$ has a pair of elements $\{a,b\}$ such that $M\ba a,b$ is $3$-connected with an $N$-minor, $|E(M)| \ge |E(N)| + 10$, and $M \ba a,b$ is not $N$-fragile, so \cref{bcosw-thm} holds.
  Suppose the $N$-flexible elements of $M \ba a,b$ are contained in $\{u,x,y\}$,
  and $M \ba a,b,u/x$ is not $N$-fragile.
  Then either $M \ba a,b,u/x/y$ or $M \ba a,b,u/x \ba y$ is $3$-connected and $N$-fragile.
\end{lemma}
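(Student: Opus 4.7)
I will set $M'' = M \ba a,b,u/x$ and first observe that $M''$ is $3$-connected and has an $N$-minor. Indeed, by \cref{bcosw-thm}\ref{bcoswiii}, $\{u,x,y\}$ is the unique triad of $M \ba a,b$ containing $u$, so since $M \ba a,b$ is $3$-connected, $M \ba a,b \ba u$ has $\{x,y\}$ as its unique nontrivial series class. Hence $M'' = \co(M \ba a,b \ba u)$, and since \cref{bcosw-thm} guarantees that $u$ is $(N, B)$-strong, this cosimplification is $3$-connected and has an $N$-minor.

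Next, I would check that both $M''/y$ and $M''\ba y$ are $N$-fragile matroids with $N$-minors. The key observation is that the only possibly $N$-flexible element of $M''$ is $y$: for any $e \in E(M'') - \{y\} = E(M \ba a,b) - \{u,x,y\}$, the hypothesis forces $e$ to not be $N$-flexible in $M \ba a,b$; and since $M''\ba e$ and $M''/e$ are minors of $M \ba a,b \ba e$ and $M \ba a,b / e$ respectively, an $N$-minor of either would make $e$ flexible in $M \ba a,b$, a contradiction. Since $M''$ is assumed not to be $N$-fragile, $y$ is therefore $N$-flexible in $M''$, so $M''/y$ and $M''\ba y$ both have $N$-minors, and the same minor argument shows that neither has an $N$-flexible element.

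By \cref{genfragileconn}, $M''/y$ and $M''\ba y$ are $3$-connected up to series and parallel classes; since $M''$ is $3$-connected, $M''\ba y$ has no parallel pairs and $M''/y$ has no series pairs. Hence $M''\ba y$ is $3$-connected if and only if $y$ lies in no triad of $M''$, and $M''/y$ is $3$-connected if and only if $y$ lies in no triangle of $M''$; it therefore suffices to rule out the case that $y$ lies in both. This structural claim is the main obstacle. Suppose $y$ lies in both a triangle $T$ and a triad $T^*$ of $M''$; since $M''$ is $3$-connected on $\ge 5$ elements, we have $T \neq T^*$, so orthogonality forces $|T \cap T^*|=2$ and $T \cup T^*$ to be a $4$-element fan $(f,y,g,e)$ of $M''$. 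Lifting to $M \ba a,b$: the triangle must come from a $4$-circuit $\{x,y,g,e\}$ of $M\ba a,b$, as a direct triangle of $M \ba a,b$ would meet the triad $\{u,x,y\}$ only in $\{y\}$, violating orthogonality; the triad lifts either to a triad $\{f,y,g\}$ of $M\ba a,b$ or to a $4$-cocircuit $\{u,f,y,g\}$. Applying cocircuit elimination to this lifted cocircuit together with $\{u,x,y\}$, and using \cref{bcosw-thm}\ref{bcoswiii} to restrict triads through $u$, produces further constrained cocircuit structure in $M\ba a,b$. The plan is to assemble the resulting circuit and cocircuit information into an $M(K_4)$-restriction or $M(K_4)$-corestriction of $M \ba a,b$ containing $\{u,x,y\}$, then invoke \cref{gadgetnotMK4} to obtain the desired contradiction.
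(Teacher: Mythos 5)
Your opening steps are correct and natural. Via \cref{bcosw-thm}\ref{bcoswiii}, $\{x,y\}$ is the unique nontrivial series class of $M\ba a,b\ba u$, so $M'':=M\ba a,b,u/x$ is (isomorphic to) $\co(M\ba a,b\ba u)$, which is $3$-connected with an $N$-minor because $u$ is $(N,B)$-strong. Your inheritance argument is right: for $e\in E(M'')\setminus\{y\}$, if both $M''\ba e$ and $M''/e$ had $N$-minors then $e$ would be $N$-flexible in $M\ba a,b$, contrary to hypothesis, so $y$ is the only possibly $N$-flexible element of $M''$; since $M''$ is not $N$-fragile, $y$ is $N$-flexible, and $M''\ba y$ and $M''/y$ are both $N$-fragile. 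With \cref{genfragileconn} and the simplicity and cosimplicity inherited from $M''$, this correctly reduces the lemma to showing that $y$ does not lie in both a triangle and a triad of $M''$.

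That last step is where the genuine gap appears: you only sketch a ``plan'' for it, and the plan cannot be carried out as described. Any $M(K_4)$-corestriction of $M\ba a,b$ containing $u$ would require two distinct triads through $u$, contradicting \cref{bcosw-thm}\ref{bcoswiii}, and (as the proof of \cref{gadgetnotMK4} itself shows) an $M(K_4)$-restriction containing $u$ forces a triangle $\{x,y,e\}$, contradicting \cref{bcosw-thm}\ref{easywin2}; so the $Z'$ your plan aims for never exists. Concretely, in the case you begin to analyse --- lifted triads $\{u,x,y\}$ and $\{f,y,g\}$ with cocircuit elimination giving a $4$-cocircuit $\{u,x,f,g\}$ --- the corestriction on $\{u,x,y,f,g\}$ is $M(K_4)$ minus an element, and no sixth element can complete it without creating a second triad through $u$. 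The structural claim is therefore left unproved.

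It does, however, have a short proof entirely inside $M''$, using only what you have already established. If $y$ lies in a triangle $T$ and a triad $T^*$ of $M''$, then $T\neq T^*$ (else $\lambda_{M''}(T)=1$, contradicting $3$-connectivity), so by orthogonality $T\cap T^*=\{y,p\}$ for some $p$; write $T=\{y,p,q\}$ and $T^*=\{y,p,q'\}$. Since $y$ is $N$-flexible, $M''/y$ has an $N$-minor and $\{p,q\}$ is a parallel pair of $M''/y$, so (as $N$ is $3$-connected) $M''/y\ba p=M''\ba p/y$ has an $N$-minor and $p$ is $N$-deletable in $M''$; dually, $M''\ba y$ has an $N$-minor with $\{p,q'\}$ a series pair, so $M''\ba y/p=M''/p\ba y$ has an $N$-minor and $p$ is $N$-contractible in $M''$. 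Thus $p\in E(M'')\setminus\{y\}$ is $N$-flexible in $M''$, contradicting your own step 2. No lifting to $M\ba a,b$ and no appeal to $M(K_4)$ is needed.
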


The next theorem is the counterpart to \cref{bcosw-thm} that addresses the case where $M \ba a,b$ is $N$-fragile.
Item~\ref{nostronginbasis} was established as \cite[Lemma~3.1]{BCOSW20}.

\begin{theorem}[{\cite[Theorem~6.7(ii)(a)]{BCOSW20}}]
  \label{fragilecase}
  Let $a,b \in E(M)$ be a pair of elements for which $M \ba a,b$ is $3$-connected with an $N$-minor.
  Suppose $|E(M)| \ge |E(N)| + 10$, and $M \ba a,b$ is $N$-fragile.
  Then
  \begin{enumerate}
    \item $M$ has a bolstered basis~$B$, and a $B\times B^{*}$ companion $\mathbb{P}$-matrix $A$ for which $\{x,y,a,b\}$ incriminates $(M,A)$, where $\{x,y\}\subseteq B$ and $\{a,b\}\subseteq B^{*}$; and
    \item $M \ba a,b$ has at most one $(N,B)$-robust element outside of $\{x,y\}$, where if such an element $u$ exists, then $u \in B^*-\{a,b\}$ is an $(N,B)$-strong element of $M \ba a,b$, and $\{u,x,y\}$ is a coclosed triad of $M\ba a,b$.
    \item if $v$ is an $(N,B_1)$-strong element of $M \ba a,b$, for some basis $B_1$ such that there exists a $B_1 \times B_1^*$ companion $\mathbb{P}$-matrix~$A_1$ of $M$ where $\{x_1, y_1, a, b\}$ incriminates $(M, A_1)$, and $\{x_1,y_1\} \subseteq B_1$ and $\{a,b\} \subseteq B_1^*$, then $v \notin B_1 - \{x_1,y_1\}$.\label{nostronginbasis}
  \end{enumerate}
\end{theorem}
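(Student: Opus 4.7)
The statement is extracted from \cite[Theorem~6.7(ii)(a) and Lemma~3.1]{BCOSW20}, so my plan is to reconstruct the argument from scratch using the standard excluded-minor machinery developed in that paper (which the excerpt has just summarised for the non-fragile case in \cref{bcosw-thm}), adapting it to the simpler fragile setting.

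First, I would produce the matrix and incriminating set in item~(1). Since $M$ is an excluded minor but $M\ba a$ and $M\ba b$ are $\mathbb{P}$-representable, and since $N$ is a strong stabilizer with $|E(N)| \ge 4$, one can take $\mathbb{P}$-representations of $M\ba a$ and $M\ba b$ on a common basis $B$ of $M \ba a,b$ (normalising using $N$). Concatenating these gives a $B \times B^*$ companion $\mathbb{P}$-matrix $A$ for $M$. Because $M$ itself is not $\mathbb{P}$-representable, some set incriminates $(M,A)$, and a short pivoting argument (using \cref{allowablexyrow2,allowablenonxy2}) refines it to $\{x,y,a,b\}$ with $\{x,y\} \subseteq B$ and $\{a,b\} \subseteq B^*$. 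Then, working in the finite poset of pairs $(B',A')$ producing such an incriminating set, pick $B$ minimising the number of $(N,B)$-strong elements outside $\{x,y\}$, subject to the triad condition of the definition (giving a strengthened basis); then maximise $(N,B)$-robust elements to obtain a bolstered basis.

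For item~(2), the plan is to combine $N$-fragility of $M\ba a,b$ with the bolstered choice of $B$. Suppose $u \notin \{x,y\}$ is $(N,B)$-robust; say $u \in B^*$ so $M \ba a,b,u$ has an $N$-minor. Fragility of $M\ba a,b$ then forces $u$ to be $N$-essential after every single contraction or deletion elsewhere. Combining this with allowable pivots through positions $(p,q)$ with $A_{pq}\neq 0$ (\cref{allowablexyrow2,allowablenonxy2}), any ``extra'' robust element would either be movable out of the incriminating rows/columns by a pivot (contradicting bolsteredness), or would itself push a flexible element into $M\ba a,b$ (contradicting fragility). This shows there is at most one such robust $u$, that it must lie in $B^* - \{a,b\}$, and that $\si(M\ba a,b / u)$ or $\co(M \ba a,b \ba u)$ inherits the $N$-minor, forcing $u$ to be $(N,B)$-strong. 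To get the triad $\{u,x,y\}$, I would exploit the cocircuit/triangle structure that typically emerges at an incriminating set (as in \cref{bcosw-thm}): orthogonality between the hypothetical triad containing $u$ and the columns of $A$ at $x,y$ forces exactly $\{u,x,y\}$; coclosure then follows because any further element in the coclosure would enlarge the triad and violate $3$-connectivity together with fragility.

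Finally, for item~(3), suppose $v \in B_1 - \{x_1,y_1\}$ is $(N,B_1)$-strong. Then $\si(M\ba a,b / v)$ is $3$-connected with an $N$-minor. Applying the same construction as for item~(1) but now with the companion matrix $A_1$ and pivoting on a nonzero entry in the row indexed by $v$, one produces an allowable pivot whose effect is to move $v$ into $B_1^*$ while keeping $\{x_1,y_1,a,b\}$ (or a suitable shift of it) incriminating. In the resulting basis, $v$ becomes a strong element of $M\ba a,b$ lying outside $\{x,y\}$ in $B^*$; comparing with item~(2) and the fragility hypothesis then forces $v$ itself to generate an $N$-flexible element in $M\ba a,b$, a contradiction. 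The main obstacle will be the triad assertion in item~(2): threading orthogonality between the unique triad through $u$ and the incriminated columns, while simultaneously ruling out by bolsteredness any rival robust element, is where the bookkeeping is delicate and where the bolstered-basis definition earns its keep.
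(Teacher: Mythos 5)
This statement is not proved in the paper at all: it is imported verbatim as a citation, Theorem~6.7(ii)(a) of \cite{BCOSW20}, with item~(3) further attributed to \cite[Lemma~3.1]{BCOSW20}. The paper neither re-proves nor sketches it, so there is no internal argument to compare your reconstruction against.

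On the merits of your sketch: the outline for item~(1) is plausible and does match the standard excluded-minor setup (two $\mathbb{P}$-representations of $M\ba a$ and $M\ba b$ glued over a common basis of $M\ba a,b$, existence of an incriminating set, a pivot reduction to a $\{x,y,a,b\}$-shape, then an extremal choice giving a strengthened and then bolstered basis). However, items~(2) and~(3) as you state them are not proofs but plans: the phrase ``any extra robust element would either be movable out of the incriminating rows/columns by a pivot\ldots or would itself push a flexible element into $M\ba a,b$'' elides exactly the case analysis that makes the argument go through in \cite{BCOSW20}. In particular, you do not explain why a second $(N,B)$-robust element outside $\{x,y\}$ cannot survive every allowable pivot, nor why a robust element in $B-\{x,y\}$ gives a contradiction (the essential content of item~(3)), nor why the relevant cocircuit of $M$ must restrict to exactly the triad $\{u,x,y\}$ in $M\ba a,b$ rather than something larger. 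You flag the triad/coclosure step yourself as the delicate bookkeeping, which is an accurate self-assessment that the proposal is incomplete there. Since the paper outsources this entirely, the right thing for you to do is cite \cite{BCOSW20} directly rather than reprove it; if you do want to reprove it, items~(2) and~(3) need to be expanded from strategy to argument.
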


The last theorem implies that $M \ba a,b$ cannot have arbitrarily large fans, as proved below.

\begin{corollary}
  \label{fragilefanscase}
  Assume that $N$ has no $4$-element fans, and let $a,b \in E(M)$ be a pair of elements for which $M \ba a,b$ is $3$-connected with an $N$-minor.
  Suppose that $|E(M)| \ge |E(N)| + 10$ and $M \ba a,b$ is $N$-fragile, so \cref{fragilecase} holds.
  Then $M \ba a,b$ has no fan with more than five elements.
  Moreover, if $(f_1,f_2,f_3,f_4,f_5)$ is a fan 
  in $M \ba a,b$, then either 
  \begin{enumerate}
    \item there is a triad $\{u,x,y\} \in \{\{f_1,f_2,f_3\},\{f_3,f_4,f_5\}\}$, where $u$ is the unique $(N,B)$-robust element outside of $\{x,y\}$, and $u \in \{f_2,f_4\}$,\label{ffcc1}
    \item $\{f_1,f_2,f_3\}$ is a triad, and $\{f_2,f_4\} = \{x,y\}$,\label{ffcc2}
    \item each element in $\{f_2,f_3,f_4\}$ is $N$-essential, or\label{ffcc3}
    \item $\{f_1,f_2,f_3\}$ is a triad, and $\si(M/f_3)$ is not $3$-connected.\label{ffcc4}
  \end{enumerate}
\end{corollary}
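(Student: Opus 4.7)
The plan is to combine \cref{fragilefans} and \cref{fragilefanelements} with the structural constraints of \cref{fragilecase} to bound the fan size and then classify the $5$-element case. A key preliminary observation I would establish up front is that if $u$ exists and lies in $F$, then $u$ must be a spoke of $F$: indeed, $u$ is $N$-deletable since $\co((M\ba a,b)\ba u)$ is $3$-connected with an $N$-minor, whereas a rim of $F$ cannot be $N$-deletable in an $N$-fragile matroid by \cref{fragilefanelements}.

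For the bound $|F|\le 5$, I would argue by contradiction, assuming $|F|\ge 6$. By \cref{fragilefans}(ii), every element of $F$ is non-$N$-essential, so by $N$-fragility each element is either $N$-deletable-only or $N$-contractible-only; by \cref{fragilefanelements}, spokes are $N$-deletable-only and rims are $N$-contractible-only. By \cref{fragilecase}(ii), the only possible $(N,B)$-robust element outside $\{x,y\}$ is $u$, so each spoke in $F-\{x,y,u\}$ must lie in $B$ (otherwise it would be an additional $(N,B)$-robust element), and each rim in $F-\{x,y,u\}$ must lie in $B^*$. Taking a maximal fan $\hat{F}\supseteq F$ and applying \cref{fanendsstrong} to an end $f$ of $\hat{F}$: a spoke-end $f$ yields $(M\ba a,b)\ba f$ $3$-connected with an $N$-minor, making $f$ itself $(N,B)$-strong unless $f\in\{x,y,u\}$, and symmetrically for a rim-end. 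Iterating from both ends exhausts $\{x,y,u\}$ within three steps, after which an allowable pivot (via \cref{allowablexyrow2} or \cref{allowablenonxy2}) shifts the next fan element into the opposite side of the basis. Provided $u$ remains robust in the new basis $B_1$, this produces more $(N,B_1)$-robust elements outside $\{x_1,y_1\}$ than $B$ has outside $\{x,y\}$, contradicting that $B$ is bolstered.

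For the $|F|=5$ classification, if all of $f_2,f_3,f_4$ are $N$-essential, condition~(iii) holds. Otherwise, \cref{fragilefanelements} pins down the direction of deletability for each non-essential internal element, and orthogonality between the cocircuit $\{u,x,y\}$ and the triangles and triads of $F$ severely constrains the positions of $u$, $x$, $y$ in relation to $F$. In the triangle-first case, combining these orthogonality relations with the basis-placement requirements above (and the fact that $u$ is a spoke when $u\in F$) rules out each non-essential scenario, forcing condition~(iii). In the triad-first case, either $\{u,x,y\}$ equals one of the two fan triads $\{f_1,f_2,f_3\}$ or $\{f_3,f_4,f_5\}$, in which case $u$ must occupy the spoke position of that triad (so $u\in\{f_2,f_4\}$ since $x,y\in B$ while $u\in B^*$), yielding condition~(i); otherwise orthogonality of $\{u,x,y\}$ with the triangle $\{f_2,f_3,f_4\}$ together with the non-essentiality of one of $f_2,f_3,f_4$ forces either $\{f_2,f_4\}=\{x,y\}$ (condition~(ii)) or, via \cref{fanmiddle} applied to the $5$-element fan, that $\si(M/f_3)$ is not $3$-connected (condition~(iv)).

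The main obstacle will be making the allowable-pivot argument in the first part fully rigorous: it requires tracking the non-zero pattern of the companion $\mathbb{P}$-matrix so that a suitable pivot is both available and allowable, and verifying that $u$ remains $(N,B_1)$-robust after the pivot so that bolsteredness is genuinely contradicted. Once the spoke/rim dichotomy and basis-placement constraints are in place, the case analysis for $|F|=5$ reduces to a straightforward combinatorial exercise using orthogonality.
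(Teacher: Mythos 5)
Your high-level strategy---spoke/rim constraints from \cref{fragilefanelements}, basis placement from \cref{fragilecase}, orthogonality with the triad $\{u,x,y\}$---is the right toolkit, and the $|F|=5$ classification into triangle-first and triad-first cases resembles the paper's. But the argument that rules out $|F|\ge 6$ has a genuine flaw, and you are missing the key auxiliary claim the paper actually relies on.

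The specific error is in the step where you assert that a spoke-end $f\notin\{x,y,u\}$ of the maximal fan ``would be $(N,B)$-strong.'' By your own basis-placement argument, such a spoke must lie in $B$. But an element of $B$ is $(N,B)$-strong only if $\si(M/e)$ is $3$-connected with an $N$-minor, i.e.\ only if it is $N$-contractible; a spoke is $N$-deletable but not $N$-contractible, so $f\in B-\{x,y\}$ being a spoke end creates no tension with \cref{fragilecase}. Hence the ``iterating from both ends exhausts $\{x,y,u\}$'' chain never starts, and the promised allowable-pivot contradiction does not materialise. You flagged this step as the ``main obstacle,'' and it is indeed a gap rather than a routine rigour issue: the argument would need to be replaced, not just tightened.

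What the paper does instead is prove a pivotal subclaim (their \cref{ffcclaim}): if $\{t_1,t_2,t_3\}$ is a triangle with $\{t_1,t_3\}\subseteq B$, $t_2$ is $N$-contractible, and $\si(M/t_2)$ is $3$-connected, then $\{t_1,t_3\}=\{x,y\}$. The proof of this subclaim is where \cref{allowablexyrow2} and \cref{allowablenonxy2} are used against bolsteredness, and it is this subclaim that kills the configurations you are trying to exclude: for instance, in a maximal fan of size $\ge 6$ the element $f_5$ has $\si(M/f_5)$ $3$-connected by \cref{fanends}, and then $\{f_4,f_6\}=\{x,y\}$ is forced, after which a second application gives an immediate contradiction. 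The paper also does not prove ``$|F|\le 5$'' as a standalone step: it runs a single case analysis on the position of the robust element $u$ in $F$ (spoke position $f_i$ with $3\le i\le\ell-2$, then $i=2$, then $i=1$, then $u\notin F-\{x,y\}$), extracting both the size bound and the five-element classification simultaneously. Your proposal would need to be restructured along these lines: isolate and prove the triangle subclaim first, then case on $u$'s position rather than trying to force a global contradiction from the fan ends.

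One smaller point in the triad-first classification: you write ``either $\{u,x,y\}$ equals one of the two fan triads \dots; otherwise \dots forces (ii) or (iv).'' This glosses over the configuration where $u$ lies in $F$ but $\{u,x,y\}$ is not one of the fan triads. That configuration is not folded into ``otherwise''; the paper shows separately (in the $i=2$ subcase with $\{f_1,f_2,f_3\}\ne\{u,x,y\}$) that it is outright impossible, using coclosedness of $\{u,x,y\}$, orthogonality, and the observation that $f_1$ would then be a robust element of $B$ outside $\{x,y\}$.
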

\begin{proof}
  By \cref{fragilecase}, $M \ba a,b$ has at most one $(N,B)$-robust element outside of $\{x,y\}$ and if such an element $u$ exists, then $u$ is an $(N,B)$-strong element of $M \ba a,b$ that is in $B^* - \{a,b\}$, and $\{u,x,y\}$ is a coclosed triad of $M \ba a,b$.
  Let $F$ be a maximal fan of $M \ba a,b$ of size at least~$5$.
  By \cref{fragilefanelements}, if $s$ is a spoke element of $F$ that is not $N$-essential, then $M \ba a,b \ba s$ has an $N$-minor;
  whereas if $t$ is a rim element of $F$ that is not $N$-essential, then $M \ba a,b / t$ has an $N$-minor.
  The only $(N,B)$-robust elements are in $\{x,y\}$ or in $\{x,y,u\}$ for an element $u \in B^*-\{a,b\}$.
  We deduce that $s \in B \cup u$ for any such spoke $s$ of $F$, and
  $t \in B^* \cup \{x,y\}$ for any such rim $t$ of $F$.

  \begin{claim}
    If $M \ba a,b$ has a triangle $\{t_1,t_2,t_3\}$ where $\{t_1,t_3\} \subseteq B$, the element $t_2$ is $N$-contractible in $M \ba a,b$, and $\si(M/t_2)$ is $3$-connected, then $\{t_1,t_3\} = \{x,y\}$.
    \label{ffcclaim}
  \end{claim}

  \begin{subproof}
    Assume $M \ba a,b$ has such a triangle $\{t_1,t_2,t_3\}$.
    As $M \ba a,b$ is $N$-fragile, $t_2$ is not $N$-deletable and, in particular, $t_2 \neq u$.
    Moreover, $t_2 \in B^*$.
    Suppose $\{t_1,t_3\}$ avoids $\{x,y\}$.
    Then, by \cref{allowablenonxy2}, a pivot on $A_{t_1t_2}$ is allowable.
    Let $B' = B \triangle \{t_1,t_2\}$.
    Now $t_2 \in B'-\{x,y\}$ and $t_2$ is an $(N, B')$-strong element, contradicting \cref{fragilecase}\ref{nostronginbasis}.
    Next suppose that $t_1 \in \{x,y\}$ but $t_3 \notin \{x,y\}$.
    Without loss of generality, let $t_1 = x$.
    First, observe that if the element $u$ exists, then by orthogonality between the triad $\{u,x,y\}$ and the triangle $\{t_1,t_2,t_3\}$, we have $t_2=u$, a contradiction.
    So $M \ba a,b$ has no $(N,B)$-strong elements.
    By \cref{allowablexyrow2}, the pivot on $A_{xt_2}$ is allowable.
    Let $B' = B \triangle \{x,t_2\}$.
    Since $t_2$ is $N$-contractible, and $\{x,t_3\}$ is a parallel pair in $M \ba a,b / t_2$, the element $x$ is $N$-deletable.
    Then $x$ is $(N, B')$-robust, whereas $t_2$ is not $(N,B)$-robust, so the number of $(N,B')$-robust elements outside of $\{t_2,y\}$ is greater than the number of $(N,B)$-robust elements outside of $\{x,y\}$, contradicting that $B$ is a bolstered basis.
    We deduce that $\{t_1,t_3\} = \{x,y\}$, as required.
  \end{subproof}

  Let $(f_1,f_2,\dotsc,f_\ell)$ be a fan ordering of $F$.
  Suppose first that $M \ba a,b$ has an $(N,B)$-robust element $u$, where $u \in F$.
  If $u$ is a rim element of $F$, then $u$ is not $N$-deletable by \cref{fragilefanelements}, contradicting that $u$ is $(N,B)$-robust.  So we may
  assume that $u$ is a spoke element $f_i$ of $F$.
  Suppose $3 \le i \le \ell-2$.
  Then $f_{i-2}$ and $f_{i+2}$ are spokes, so they are $N$-deletable by \cref{fragilefanelements,fragilefans}.
  Since $u$ is the only $(N,B)$-robust element of $M \ba a,b$ in $B^*$, we have $\{f_{i-2},f_{i+2}\} \subseteq B$.
  Let $F' = \{f_{i-2},f_{i-1},f_{i},f_{i+1},f_{i+2}\}$.
  Observe that $\{x,y\} \neq \{f_{i-1},f_{i+1}\}$, for otherwise the rank-$3$ fan $F'$ contains four elements of the basis~$B$.
  By orthogonality between the triad $\{u,x,y\}$ and the triangles $\{f_{i-2},f_{i-1},u\}$ and $\{u,f_{i+1},f_{i+2}\}$, we have $\{x,y\} \subseteq F'$.
  Now $F'$ contains distinct triads $\{f_{i-1},u,f_{i+1}\}$ and $\{u,x,y\}$, so $r^*_{M \ba a,b}(F') \le 3$.
  But $r(F') = 3$, so $\lambda_{M \ba a,b}(F') \le 1$, a contradiction.

  Next, let $i = 2$.
  Suppose $\{f_1,f_2,f_3\} = \{u,x,y\}$.
  In the case that $|F| \ge 6$, the set $\{f_4,f_5,f_6\}$ is a triangle.
  Then, by \cref{fragilefanelements,fragilefans}, $f_5$ is $N$-contractible, so $f_4$ and $f_6$ are $N$-deletable.
  Moreover, $\si(M/f_5)$ is $3$-connected by \cref{fanends}.
  So $\{f_4,f_6\} = \{x,y\}$ by \cref{ffcclaim}, a contradiction.
  Thus $|F| = 5$, and \ref{ffcc1} holds in this case.
  Now we may assume that $\{f_1,f_2,f_3\} \neq \{u,x,y\}$.
  Observe that $\{f_1,f_3\}$ does not meet $\{x,y\}$,
  for otherwise $\{u,x,y\}$ is not coclosed in $M \ba a,b$. 
  In particular, $f_3 \notin \{x,y\}$.
  So $f_4 \in \{x,y\}$, by orthogonality with the triangle $\{u,f_3,f_4\}$.
  Since $u$ is $N$-deletable in $M\ba a,b$, and $\{f_1,f_3\}$ is a series pair in $M \ba a,b \ba u$, the element $f_3$ is $N$-contractible.
  So $f_3 \in B^*$.
  Then $f_1 \in B$, since the triad $\{f_1,f_2,f_3\}$ cannot be contained in $B^*$.
  But then $f_1$ is $(N,B)$-robust by \cref{fragilefanelements,fragilefans}, and $f_1 \notin \{x,y\}$, a contradiction.

  Now let $i=1$, so $u$ is a spoke end of $F$.
  By orthogonality, $\{x,y\}$ meets $\{f_2,f_3\}$.
  If $f_3 \in \{x,y\}$, then $f_3$ is in distinct triads $\{u,x,y\}$ and $\{f_2,f_3,f_4\}$, which contradicts \cref{fanunique}.
  So $f_3 \notin \{x,y\}$.
  Without loss of generality, say $f_2 =x$.
  If $y \notin F$, then $F \cup y$ is a fan, contradicting that $F$ is maximal.
  So $y \in F$.
  Then, by orthogonality, $y=f_\ell$ is a rim end.
  Moreover, $u \in \cl(F-u)$ due to the triangle $\{u,x,f_3\}$, and $u \in \cocl_{M \ba a,b}(F-u)$ due to the triad $\{u,x,y\}$.
  But $\lambda_{M \ba a,b}(F-u) \le 2$ and so $\lambda_{M \ba a,b}(F) \le 1$, implying that $|E(M \ba a,b)-F| \le 1$.
  If $F=E(M \ba a,b)$, then it follows that $M \ba a,b$ is a rank-$\frac{\ell}{2}$ wheel or whirl.
  But $f_i \in B$ for each $i \in \{3,5,\dotsc,\ell-1\}$, and $x,y \in B$, so $|B| = \ell/2 + 1$, a contradiction.
  Now $|E(M \ba a,b)-F| = 1$, so let $E(M \ba a,b)-F = \{q\}$.
  Then $(\{u,x\}, f_3, f_4, \dotsc, f_{\ell-1},\{y,q\})$ is a path of $3$-separations, so $\{f_{\ell-1},y,q\}$ is a triangle or a triad.  But it is not a triangle, by orthogonality with the triad $\{u,x,y\}$, and it is not a triad, since $F$ is maximal, a contradiction.

  Finally, we may assume that there are no $(N,B)$-robust elements contained in $F - \{x,y\}$.
  Suppose $F$ contains a $5$-element fan $F'$ with fan ordering $(f_1',f_2',\dotsc,f_5')$ such that $\{f'_1,f'_2,f'_3\}$ is a triangle.
  Then $\{f'_3,f'_4,f'_5\}$ is also a triangle, and $f'_1$, $f'_3$, and $f'_5$ are the spoke elements.
  \Cref{fragilefanelements,fragilefans} imply that $f'_1$ and $f'_5$ are $N$-deletable in $M \ba a,b$.
  Moreover, $\si(M/f'_2)$ and $\si(M/f'_4)$ are $3$-connected by \cref{fanends}.
  If $f'_3$ is $N$-deletable, then $\{f'_1,f'_3\} = \{x,y\} = \{f'_3,f'_5\}$ by \cref{ffcclaim}, a contradiction.
  Thus $f'_3$ is $N$-essential and, in particular, $|F|=5$, by \cref{fragilefans}.
  Due to the triangles $\{f'_1,f'_2,f'_3\}$ and $\{f'_3,f'_4,f'_5\}$, and by \cref{fragilefanelements}, it follows that $f'_2$ and $f'_4$ are also $N$-essential.
  So \ref{ffcc3} holds in this case.
  We may now assume that $|F| = 5$, and when $(f_1,f_2,\dotsc,f_5)$ is a fan ordering of $F$, the set $\{f_2,f_3,f_4\}$ is a triangle.

  Next we claim that if $\{f_2,f_3,f_4\}$ contains an element that is not $N$-essential, then no element of $F$ is $N$-essential.
  Suppose $f_3$ is not $N$-essential.
  Then $f_3$ is $N$-contractible, by \cref{fragilefanelements,fragilefans}.
  Since $\{f_2,f_4\}$ is a parallel pair in $M \ba a,b / f_3$, the elements $f_2$ and $f_4$ are $N$-deletable, so no element of $F$ is $N$-essential.
  Similarly, if $f_2$ (or $f_4$) is not $N$-essential, then no element in $F$ is $N$-essential.
  This proves the claim.

  We may now assume $\{f_2,f_3,f_4\}$ contains an element that is not $N$-essential, otherwise \ref{ffcc3} holds.
  Then, by the foregoing and \cref{fragilefanelements}, $f_2$ and $f_4$ are $N$-deletable, and $f_3$ is $N$-contractible.
  So $\{f_2,f_4\} \subseteq B$, and hence $f_3 \in B^*$.
  If $\si(M/f_3)$ is not $3$-connected, then \ref{ffcc4} holds.
  Otherwise, $\si(M/f_3)$ is $3$-connected, in which case $\{f_2,f_4\} = \{x,y\}$, by \cref{ffcclaim}. So \ref{ffcc2} holds.
\end{proof}

\section{\texorpdfstring{$\utfutf$}{\{U(2,5),U(3,5)\}}-fragile matroids}
\label{utfutfsec}

In this section, we recap some known properties of $\{U_{2,5},U_{3,5}\}$-fragile matroids \cite{CMvZW16}, and prove some further structural properties of this class that have not previously been explicitly stated.
Recall that, by definition, when we say a matroid is $\utfutf$-fragile, it has an $\utfutf$-minor.

Throughout this section, we focus on $\utfutf$-fragile matroids, rather than $U_{2,5}$-fragile or $U_{3,5}$-fragile matroids.
\Cref{utfutfequiv}, which follows from the following well-known lemma, connects these classes of fragile matroids.

\begin{lemma}[see {\cite[Proposition~12.2.15]{Oxley11}}, for example]
  \label{utfutfprop}
  Let $M$ be a $3$-connected matroid with rank and corank at least~$3$.
  Then $M$ has a $U_{2,5}$-minor if and only if $M$ has a $U_{3,5}$-minor.
\end{lemma}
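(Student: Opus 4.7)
The plan is to reduce the biconditional to a single implication via matroid duality, and then to settle that implication by passing to a small $3$-connected minor where the result becomes a finite case check.

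First, since $U_{2,5}^*\cong U_{3,5}$, minor-duality gives that $M$ has a $U_{3,5}$-minor if and only if $M^*$ has a $U_{2,5}$-minor. As the hypothesis on $M$ is self-dual, it suffices to prove one direction: if $M$ has a $U_{2,5}$-minor, then $M$ has a $U_{3,5}$-minor. The converse then follows by applying this implication to $M^*$, which is likewise $3$-connected with rank and corank at least $3$.

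To prove the implication, I would use Seymour's Splitter Theorem to peel off elements of $M$ one at a time, preserving $3$-connectivity and the presence of a $U_{2,5}$-minor, until reaching a $3$-connected minor $M'$ with $|E(M')|=6$. Wheels of rank at least $3$ are graphic and hence binary, so contain no $U_{2,5}$-minor; whirls likewise have no $U_{2,5}$-minor, so the Splitter Theorem applies to $M$. By choosing to delete (rather than contract) whenever both options are available in the splitter step, one can ensure that the rank of the minor stays at least $3$ as long as possible, so that the resulting $M'$ has rank $3$ rather than rank $2$.

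Finally, I would enumerate the $3$-connected simple rank-$3$ matroids on six elements---these are $U_{3,6}$, $P_6$, $Q_6$, $\mathcal{W}^3$, and $M(K_4)$---and verify that those admitting a $U_{2,5}$-minor, namely $U_{3,6}$, $P_6$, and $Q_6$, each admit a $U_{3,5}$-minor by a single-element deletion: any element for $U_{3,6}$, any element of the unique triangle for $P_6$, and the common element of the two triangles for $Q_6$. Hence $M'$, and therefore $M$, has a $U_{3,5}$-minor. The main obstacle is executing the splitter reduction carefully to keep rank at least $3$, but this is routine; the remaining enumeration is entirely finite.
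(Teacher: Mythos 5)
Your duality reduction is sound, and your final case check of the rank-$3$ $3$-connected six-element matroids ($U_{3,6}$, $P_6$, $Q_6$ and their $U_{3,5}$-restrictions) is correct; note that the paper itself gives no proof of this lemma and simply cites Oxley's book. But the middle step has a genuine gap. Seymour's Splitter Theorem hands you a chain of $3$-connected minors from $M$ down to $U_{2,5}$; it does not let you choose whether a given step is a deletion or a contraction. The six-element matroid $N_1$ in such a chain is a $3$-connected single-element extension or coextension of $U_{2,5}$, and the extension $U_{2,6}$ is a perfectly legitimate outcome. If the chain passes through $U_{2,6}$ (and possibly $U_{2,7},\dotsc$) before the rank first climbs to $3$, your six-element case analysis never gets to run, and the assertion that you can "choose to delete so the rank stays at least $3$" is not something the Splitter Theorem justifies.

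The approach can be repaired, but it needs more than a check on six elements. One way: induct on $|E(M)|$. The Splitter Theorem gives $e$ with $M\backslash e$ or $M/e$ $3$-connected with a $U_{2,5}$-minor. If $M\backslash e$ works, then $r(M\backslash e)=r(M)\ge 3$, and $r^*(M\backslash e)\ge 3$ as well, since $r^*(M\backslash e)=2$ would force $M\backslash e\cong U_{m-2,m}$, which has no $U_{2,5}$-minor; so the inductive hypothesis applies. If $M/e$ works and $r(M/e)\ge 3$, induction applies again. The remaining case is $r(M)=3$ and $M/e\cong U_{2,m}$ with $m\ge 6$, so $e$ lies on no $3$-point line of $M$. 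Then either $M\backslash e$ contains four points in general position, which together with $e$ give a $U_{3,5}$-restriction of $M$; or every four points of $M\backslash e$ include three collinear, which forces $M\backslash e$ to have a line $L$ containing all but one point $p$, and then $(L,\{p,e\})$ is a $2$-separation of $M$, contradicting $3$-connectivity. This last rank-$3$ argument is exactly what your write-up is missing.
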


\begin{corollary}
  \label{utfutfequiv}
  Let $M$ be a $3$-connected matroid with rank and corank at least~$3$, and $|E(M)| \ge 7$.
  Then $M$ is $U_{2,5}$-fragile and $U_{3,5}$-fragile if and only if $M$ is $\utfutf$-fragile.
\end{corollary}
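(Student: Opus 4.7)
My approach uses \cref{utfutfprop} (the equivalence of $U_{2,5}$- and $U_{3,5}$-minors for $3$-connected matroids of rank and corank at least $3$) for both directions, with the reverse direction being routine and the forward direction handling a ``mixed'' case more carefully.

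For the reverse direction, if $M$ is $\utfutf$-fragile then $M$ has an $\utfutf$-minor, so \cref{utfutfprop} yields that $M$ has both a $U_{2,5}$-minor and a $U_{3,5}$-minor. A $U_{2,5}$-flexible element $e$ would have both $M\ba e$ and $M/e$ possessing a $U_{2,5}$-minor, hence an $\utfutf$-minor, making $e$ an $\utfutf$-flexible element and contradicting the $\utfutf$-fragility of $M$; the symmetric argument rules out $U_{3,5}$-flexible elements. For the forward direction, assume $M$ is both $U_{2,5}$- and $U_{3,5}$-fragile, so $M$ has both minors and hence an $\utfutf$-minor. Supposing $e$ is $\utfutf$-flexible, each of $M\ba e$ and $M/e$ has a $U_{2,5}$- or $U_{3,5}$-minor; if both have the same type, $e$ is flexible for that type, an immediate contradiction. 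Thus, up to interchanging $U_{2,5}$ and $U_{3,5}$, we may assume $M\ba e$ has a $U_{2,5}$-minor but no $U_{3,5}$-minor, while $M/e$ has a $U_{3,5}$-minor but no $U_{2,5}$-minor.

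To contradict this mixed case, I would apply Bixby's Lemma (\cref{bixby}) to conclude that at least one of $\co(M\ba e)$ and $\si(M/e)$ is $3$-connected. Consider $\co(M\ba e)$: it has the same $3$-connected minors as $M\ba e$, so it retains the $U_{2,5}$-minor and still has no $U_{3,5}$-minor. Its corank is $r^*(M)-1$, and since $M\ba e$ has a $U_{2,5}$-minor (which has corank $3$), we deduce $r^*(M)\ge 4$, giving corank at least $3$. A short analysis of the triads of $M$ through $e$, using $|E(M)|\ge 7$ and the $3$-connectivity of $M$, then forces the rank of $\co(M\ba e)$ to be at least $3$. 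Invoking \cref{utfutfprop} on $\co(M\ba e)$ then yields a $U_{3,5}$-minor, contradicting the mixed case; the other Bixby subcase is handled by the dual argument applied to $\si(M/e)$. The main obstacle is this rank (respectively corank) verification, where one must rule out the possibility that cosimplification (respectively simplification) collapses $M\ba e$ (respectively $M/e$) to a uniform rank-$2$ matroid, in which case \cref{utfutfprop} would not apply; the hypothesis $|E(M)|\ge 7$ combined with the simultaneous fragility of $U_{2,5}$ and $U_{3,5}$ does the essential work in excluding these degenerate configurations.
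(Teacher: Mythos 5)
Your argument for the ``mixed'' case diverges from the paper's in a way that creates a genuine gap. The paper fixes the labeling so that $M\ba e$ has a $U_{3,5}$-minor and $M/e$ has a $U_{2,5}$-minor, then works with $\co(M\ba e)$: its rank is automatically at least~$3$ because $U_{3,5}$ has rank~$3$, while its corank is $r^*(M\ba e) = r^*(M)-1 \ge 3$ once $r^*(M)\ge 4$ (available after possibly dualizing, since $|E(M)|\ge 7$ forces $r(M)\ge 4$ or $r^*(M)\ge 4$, and cosimplification preserves corank). You fix the \emph{opposite} labeling, so $\co(M\ba e)$ carries only a $U_{2,5}$-minor: the corank bound then comes for free, but the rank bound does not --- cosimplification reduces rank by an uncontrolled amount, so $r(\co(M\ba e))$ is not bounded below by $r(M)$ or $r^*(M)$. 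This is exactly where the proposal breaks: it is entirely consistent with your hypotheses, a priori, that $\co(M\ba e)\cong U_{2,t}$, and ruling that out is not a ``short analysis of the triads of $M$ through $e$'' --- that degenerate structure is precisely what drives the lengthy case analysis in \cref{jail-elastic}. The same difficulty reappears in the $\si(M/e)$ branch (there the corank, not the rank, is uncontrolled).

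Two further remarks. First, the paper obtains $3$-connectivity of \emph{both} $\co(M\ba e)$ and $\si(M/e)$ via \cref{genfragileconn} (each of $M\ba e$, $M/e$ is fragile, hence $3$-connected up to series/parallel classes); Bixby's Lemma only guarantees one of the two, which loses a degree of freedom you later need. Second, your reduction ``up to interchanging $U_{2,5}$ and $U_{3,5}$'' is not a symmetry of matroids: the only available symmetry is duality, and duality swaps $U_{2,5}\leftrightarrow U_{3,5}$ \emph{together with} deletion $\leftrightarrow$ contraction, which leaves the labeling of the mixed case unchanged. The robust route is the paper's: set the labeling so that $\co(M\ba e)$ inherits the $U_{3,5}$-minor (giving the rank bound for free) and let the size hypothesis supply the corank bound.
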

\begin{proof}
  Suppose $M$ is $\utfutf$-fragile.
  Then $M$ has a $\utfutf$-minor, so, by \cref{utfutfprop}, $M$ has both a $U_{2,5}$- and a $U_{3,5}$-minor.
  So clearly $M$ is $U_{2,5}$-fragile and $U_{3,5}$-fragile.

  Now let $M$ be $U_{2,5}$-fragile and $U_{3,5}$-fragile and, towards a contradiction, suppose $M$ is not $\utfutf$-fragile.
  Clearly $M$ has a $\utfutf$-minor.
  So, by duality, we may assume that, for some $e \in E(M)$, the matroid $M \ba e$ has a $U_{3,5}$-minor (but $M /e$ does not), and $M/e$ has a $U_{2,5}$-minor (but $M \ba e$ does not).
  Since $U_{2,5}$ and $U_{3,5}$ are $3$-connected, $\co(M \ba e)$ has a $U_{3,5}$-minor and $\si(M/e)$ has a $U_{2,5}$-minor.
  In particular, $\co(M \ba e)$ has rank at least~$3$, and $\si(M/e)$ has corank at least~$3$.
  Since $|E(M)| \ge 7$, the rank or corank of $M$ is at least~$4$.

  Assume without loss of generality that $M$ has corank at least~$4$.
  Then $M \ba e$ and $\co(M \ba e)$ have corank at least~$3$.
  Since $M \ba e$ has a $U_{3,5}$-minor, it is $U_{3,5}$-fragile.
  As $M$ is $3$-connected, and by \cref{genfragileconn}, $M \ba e$ is $3$-connected up to series classes.
  Now $\co(M \ba e)$ is $3$-connected and has rank and corank at least~$3$.
  Thus $\co(M \ba e)$, and hence $M \ba e$, has a $U_{2,5}$-minor by \cref{utfutfprop}.
  But $M / e$ has a $U_{2,5}$-minor, so $e$ is $U_{2,5}$-flexible in $M$, and hence $M$ is not $U_{2,5}$-fragile, a contradiction.
  We deduce that $M$ is $\utfutf$-fragile, as required.
\end{proof}

Let $(x_1,x_2,x_3)$ be an ordered subset of elements of a matroid $M$ in which $\{x_1,x_2,x_3\}$ is a triangle~$T$.
Let $W$ be a copy of the rank-$r$ wheel $M(\mathcal{W}_r)$ having a triangle $\{x_1,x_2,x_3\}$ where $x_1$ and $x_3$ are spoke elements.
Let $X \subseteq \{x_1,x_2,x_3\}$ such that $x_2 \in X$.
We say that \emph{gluing an $r$-wheel onto $(x_1,x_2,x_3)$} (with \emph{remove set} $X$) is the operation by which we obtain the matroid $P_T(M,W) \ba X$, where $P_T(M,W)$ is the generalized parallel connection of $M$ and $W$ across the triangle $T$.

The following was proved by Chun et al.~\cite{CCCMWvZ13,CCMvZ15}.
Geometric representations of the matroids 
$M_{9,9}$,
$X_8$ and $Y_8$ are given in \cref{pathdescminors-fig}.
Note that $X_8$ is self-dual.

\begin{theorem}[{\cite[Theorem~1.3 and Corollary~1.4]{CCCMWvZ13}}]
  \label{ccmwvz-result}
  Let $\mathbb{P} \in \{\mathbb{H}_5, \mathbb{U}_2\}$, and
  let $M$ be a $3$-connected $\{U_{2,5},U_{3,5}\}$-fragile $\mathbb{P}$-representable matroid.
  Then either
  \begin{enumerate}
    \item $M$ has an $\{X_8,Y_8,Y_8^*\}$-minor;\label{pathlikecase}
    \item $M$ is isomorphic to a matroid in $\{U_{2,6},U_{4,6},P_6,M_{9,9},M_{9,9}^*\}$;
    \item $M$ or $M^*$ can be obtained from $Y_8 \ba 4$ 
      by gluing a wheel to $(1,5,7)$;
    \item $M$ or $M^*$ can be obtained from $U_{2,5}$, with $E(U_{2,5})=\{e_1,e_2,e_3,e_4,e_5\}$, by gluing up to two wheels to $(e_1,e_2,e_3)$ and $(e_3,e_4,e_5)$; or
    \item $M$ or $M^*$ can be obtained from $U_{2,5}$, with $E(U_{2,5})=\{e_1,e_2,e_3,e_4,e_5\}$, by gluing up to three wheels to $(e_1,e_3,e_2)$, $(e_1,e_4,e_2)$, and $(e_1, e_5, e_2)$.
  \end{enumerate}
\end{theorem}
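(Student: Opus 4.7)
The plan is to proceed by a two-pronged strategy: first establish that any such $M$ has highly constrained, essentially path-like structure via fragility and $\mathbb{P}$-representability; then induct on $|E(M)|$, peeling elements off an end of the path and matching the resulting reductions against the listed families.

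For the structural setup, I would first show that a $3$-connected $\mathbb{P}$-representable $\utfutf$-fragile matroid $M$ has path width three. The fragility of $M$ rigidly constrains the flexibility of $\GF(5)$-representations (via \cref{u2stabs}, since $U_{2,5}$ and $U_{3,5}$ are strong stabilizers for both $\mathbb{H}_5$ and $\mathbb{U}_2$), and combined with \cref{genfragileconn} forces $M$ to be $3$-connected with a sequential ordering. I would use this to invoke \cref{welldefinedends,endslipperiness}, which give a well-defined pair of ends $L(M)$ and $R(M)$ whose nature (triangle/triad, segment/cosegment, or fan end) controls what local reductions are available.

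For the base of the induction, I would classify by direct computation (a finite computer search or careful hand analysis) all $3$-connected $\mathbb{P}$-representable $\utfutf$-fragile matroids up to a suitable threshold, say $|E(M)| \le 9$ or $10$. This handles case (ii) explicitly and also produces the ``seed'' matroids $U_{2,5}$ and $Y_8 \ba 4$ from which cases (iii)-(v) grow. The critical observation is that the small fragile matroids have very few triangles that support extension without creating a flexible element, and representability over $\mathbb{P}$ sharply restricts which triangles admit further extensions.

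For the inductive step, I would take $|E(M)|$ large, locate an end element $e$ (or a near-end element lying in a fan/segment at the end), and show that one of $M \ba e$, $M / e$, $\co(M \ba e)$, or $\si(M/e)$ is $3$-connected, $\mathbb{P}$-representable, $\utfutf$-fragile, and smaller than $M$. Applying induction, the reduced matroid lies in one of the listed families, and I would then verify that the inverse operation building $M$ from the reduction is a wheel-gluing of the prescribed type (which preserves the triangle structure along the glued triad, hence preserves both fragility at the interface and $\mathbb{P}$-representability using the generalized parallel connection). The main obstacle will be verifying that the ``gluing compatibility'' is truly tight: one must rule out any other extensions that preserve fragility and $\mathbb{P}$-representability but produce matroids outside the five families. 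This requires carefully tracking how an $\{U_{2,5}, U_{3,5}\}$-minor can sit inside $M$ relative to the end, so that each non-end element of a prospective fan is forced to be $U_{2,5}$- or $U_{3,5}$-essential; and using the incriminating-set/allowable-pivot machinery for $\mathbb{P}$-representations to force the fan to be a wheel attached at the correct triangle, rather than some alternative configuration. Once this local rigidity is established in each end-type case, assembling the global classification is routine.
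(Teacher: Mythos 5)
This result is stated in the paper purely as a citation of Chun, Clark, Mayhew, Whittle, and van Zwam \cite[Theorem~1.3 and Corollary~1.4]{CCCMWvZ13}; no proof appears in the paper, so there is no argument here to compare yours against. That said, your proposal has a foundational gap. Your opening step asserts that every $3$-connected $\mathbb{P}$-representable $\utfutf$-fragile matroid has path width three, and your induction then leans on \cref{welldefinedends,endslipperiness} to give a well-defined pair of ends from which to peel elements. But path width three is not a property of this entire class --- it is essentially the conclusion in case (i), and it is established only after one knows $M$ has an $\{X_8,Y_8,Y_8^*\}$-minor (that is what \cref{nicepathdescription} says). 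The matroids in case (v), obtained from $U_{2,5}$ by gluing three wheels onto triangles through the common pair $\{e_1,e_2\}$, have a tripod-like structure with three long fans sharing a hub; once all three fans are nontrivial such a matroid does not admit a sequential ordering, and \cref{welldefinedends,endslipperiness} no longer apply. So you cannot set up a path-ordering for a general $M$ in the class and reduce by removing an end.

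Even bracketing that, the real work in the theorem is establishing the dichotomy between case (i) and cases (ii)--(v): one must show that the presence of an $\{X_8,Y_8,Y_8^*\}$-minor forces the linear (path-width-three) structure, and that in its absence the matroid is confined to the exceptional families. That dichotomy does not fall out of fragility plus strong-stabilizer rigidity plus an inventory of small matroids. Also, the ``incriminating set / allowable pivot'' machinery you invoke is designed for excluded-minor arguments, where one has a companion matrix for a matroid that is \emph{not} $\mathbb{P}$-representable and tracks an incriminating square; it gives no leverage here, since $M$ is $\mathbb{P}$-representable by hypothesis and has no incriminating set. A correct reconstruction would instead need a splitter-theorem-style branch analysis out from small seed matroids combined with a separate argument that fragility forbids the problematic $3$-separations, rather than an induction conditioned on path width.
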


In the case that \ref{pathlikecase} holds, Clark et al.~\cite{CMvZW16} proved the following:

\begin{theorem}[\cite{CMvZW16,Clark15}]
  \label{nicepathdescription}
  Let $\mathbb{P} \in \{\mathbb{H}_5, \mathbb{U}_2\}$, and
  let $M$ be a $3$-connected $\{U_{2,5},U_{3,5}\}$-fragile $\mathbb{P}$-representable matroid with an $\{X_8,Y_8,Y_8^*\}$-minor.
  Then $M$ has path width three.
  Moreover, $M$ has a guts-coguts path $(P_1,P_2,\dotsc,P_m)$ such that
\begin{enumerate}
  \item for $i \in \{1,m\}$, the set $P_i$ is path generating, and is either a triangle, triad, $4$-segment, $4$-cosegment, or fan of size at least~$4$;\label{npd1}
  \item for $\{i,i'\} = \{1,m\}$, the set $P_i$ is maximal in the sense that there is no $P'$ with $P_i \subsetneqq P' \subseteq E(M) - P_{i'}$ such that $P'$ is a segment, cosegment or fan;
  \item for $i \in \{1,m\}$, if $P_i$ is not a fan of size at least~$4$, then either $P_i$ is a segment containing an element that is not $\utfutf$-deletable, or $P_i$ is a cosegment containing an element that is not $\utfutf$-contractible; and\label{nicepathdesciii}
  \item $|P_i| \le 3$ for each $i \in \{2,3,\dotsc,m-1\}$.
\end{enumerate}
\end{theorem}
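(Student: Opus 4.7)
The plan is to combine the structural classification of $\utfutf$-fragile $\mathbb{P}$-representable matroids from Clark et al.\ with the general theory of paths of $3$-separations in path-width-three matroids. Since \cref{ccmwvz-result} handles the sporadic cases explicitly, the matroids covered by this theorem are precisely the generic family of $\utfutf$-fragile $\mathbb{P}$-representable matroids; this family admits an inductive description starting from a seed in $\{X_8, Y_8, Y_8^*\}$ by extensions that act at one of the two ends of an existing path-like structure.

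First, I would establish that $M$ has path width three by structural induction on this construction. Direct inspection of the geometric representations of $X_8$, $Y_8$, $Y_8^*$ in \cref{pathdescminors-fig} yields an explicit sequential ordering for each base case. Every allowable extension inserts elements into $L(M)$ or $R(M)$ and replaces that end by a longer segment, cosegment, or fan, leaving the remainder of the sequential ordering intact; hence the property of having a sequential ordering propagates through the construction.

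Second, having established path width three, I would fix a sequential ordering $\sigma$ of $M$ and build the guts-coguts path as follows. Set $P_1 = L(M)$ and $P_m = R(M)$ as supplied by \cref{welldefinedends}. By \cref{gutsstayguts}, each interior element of $\sigma$ is unambiguously either a guts or a coguts element of the corresponding prefix $3$-separation; form $P_2, \dotsc, P_{m-1}$ by concatenating maximal consecutive runs of the same type. Consecutive blocks then automatically alternate in type, so $(P_1,P_2,\dotsc,P_m)$ is a guts-coguts path. Conditions~(i) and~(ii) then follow from the definitions of $L(\sigma)$ and $R(\sigma)$ together with \cref{welldefinedends}, while condition~(iv) follows from the inductive construction: each interior block is either a singleton added by a wheel-gluing step or a triangle or triad shared between adjacent fan-like pieces, all of which have size at most three.

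The main obstacle is condition~(iii). Suppose for contradiction that $P_1$ is a segment in which every element is $\utfutf$-deletable (the cosegment case is dual); the classification of ends forces $|P_1|\ge 4$. By \cref{lineconn}, $M\ba \ell$ is $3$-connected for each $\ell \in P_1$, and fragility then forbids any such $\ell$ from being $\utfutf$-contractible. Combining this with the cocircuit structure forced by the adjacent block $P_2$ and orthogonality with the segment $P_1$ should eventually contradict the survival of a $\utfutf$-minor in $M$: deleting elements of $P_1$ one by one preserves $3$-connectivity and the $\utfutf$-minor until the remaining segment becomes a triangle, at which point the maximality property used to define $L(M)$ in the reduced matroid is violated. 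This step, which couples the fragility hypothesis tightly to the end structure, is where the bulk of the detailed case analysis from \cite{CMvZW16} is invoked.
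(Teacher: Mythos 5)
The paper offers no internal proof of this statement: the sentence immediately after the theorem says it is a strengthened form of \cite[Lemma~4.1 and Theorem~4.2]{CMvZW16} that follows from \cite[Lemmas~2.21 and~2.22]{CMvZW16} and \cite[Lemma~3.3.1]{Clark15}. So there is nothing in the paper to compare your argument against, and you are attempting a from-scratch reconstruction. Your outline is in the right spirit (the inductive path-sequence description of the fragile class is indeed how \cite{CMvZW16} proceeds), but condition~(iii) --- which you identify as the main obstacle --- is exactly where you fall back on "the bulk of the detailed case analysis from \cite{CMvZW16}". That is a citation, not a proof, and it is the same citation the paper already makes more briefly.

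Two steps in your sketch would also fail as written. First, setting $P_1 = L(M)$ conflicts with condition~(i) when $L(M)$ is a fan end: by the definition in the preliminaries, a fan end consists of the \emph{internal} elements of a maximal fan~$F$, so it has $|F|-2$ elements, whereas~(i) requires $P_1$ to be the entire fan of size at least~$4$. You would need to re-expand $L(M)$ to a maximal fan, and to do so unambiguously you must rule out $M(K_4)$ restrictions and co-restrictions --- this is precisely what \cref{noMK4} later supplies, and the paper flags this subtlety in the sentence just below the theorem. Second, the assertion that ``the classification of ends forces $|P_1|\ge 4$'' once every element of the segment end $P_1$ is $\utfutf$-deletable is unsupported: a triangle end is part of the classification, and \cref{pathdesctris} only caps the number of $\utfutf$-\emph{contractible} elements of a triangle at one, so a triangle with all three elements $\utfutf$-deletable is a live possibility; your subsequent appeal to \cref{lineconn}, which requires $|L|\ge 4$, then does not apply. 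Both points are handled in the cited sources but are not supplied by your proposal.
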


\noindent
Note that the result stated here is essentially a stronger version of \cite[Lemma~4.1 and Theorem 4.2]{CMvZW16} that follows from \cite[Lemmas~2.21 and 2.22]{CMvZW16} (see also \cite[Lemma~3.3.1]{Clark15}).

We say that a guts-coguts path $(P_1,P_2,\dotsc,P_m)$ as described in \cref{nicepathdescription} is a \emph{nice path description} for $M$.

Note that a nice path description is not necessarily unique, even up to reversal.
However, a nice path description $(P_1,P_2,\dotsc,P_m)$ can be refined to a sequential ordering~$\sigma$.
By \cref{welldefinedends}, $M$ has a well-defined pair of ends $\{L(M),R(M)\} = \{L(\sigma),R(\sigma)\}$.
If both ends of $M$ are triangle or triad ends, then, by \cref{endslipperiness}\ref{esi}, $\{L(M),R(M)\} = \{P_1,P_m\}$.
If $M$ has a segment or cosegment end, $L(M)$ say, then, by \cref{endslipperiness}\ref{esii}, $P_i \subseteq L(M)$ and $|P_i| \in \{3,4\}$ for some $i \in \{1,m\}$.
In the case that $M$ has a fan end, the outcome from \cref{endslipperiness}\ref{esiii} is more complicated, partly due to the fact an $M(K_4)$ restriction has three distinct maximal $5$-element fans (see \cite[Theorem 1.6]{OW00});
however, we will see, as \cref{noMK4}, that a $3$-connected $\utfutf$-fragile matroid has no $M(K_4)$ restriction or co-restriction.

\begin{figure}[bht]
  \begin{subfigure}{0.42\textwidth}
    \centering
    \begin{tikzpicture}[rotate=180,xscale=1.1,yscale=0.8,line width=1pt]
      \tikzset{VertexStyle/.append style = {minimum height=5,minimum width=5}}
      \clip (-0.5,-4.5) rectangle (2.5,-.5);
      \draw (0,-1) -- (2,-2.5) -- (0,-4);
      \draw (0.67,-1.5) -- (0,-4);
      \draw (0.67,-3.5) -- (0,-1);


      \Vertex[L=$4$,Lpos=180,LabelOut=true,x=2,y=-2.5]{a2}
      \Vertex[L=$3$,Lpos=225,LabelOut=true,x=1.33,y=-2.0]{p1}
      \Vertex[L=$2$,Lpos=225,LabelOut=true,x=.67,y=-1.5]{a5}
      \Vertex[L=$1$,Lpos=225,LabelOut=true,x=0,y=-1]{e}
      \Vertex[L=$6$,Lpos=135,LabelOut=true,x=1.33,y=-3.0]{a4}
      \Vertex[L=$7$,Lpos=135,LabelOut=true,x=.67,y=-3.5]{p2}
      \Vertex[L=$8$,Lpos=135,LabelOut=true,x=0,y=-4]{f}

      \Vertex[L=$5$,Lpos=0,LabelOut=true,x=0.40,y=-2.5]{a3}
    \end{tikzpicture}
    \caption{$Y_8$.}
  \end{subfigure}
  \begin{subfigure}{0.42\textwidth}
    \centering
    \begin{tikzpicture}[rotate=0,yscale=0.4,xscale=0.8,line width=1pt]
      \draw (0,0) -- (4.5,0) -- (5.5,4) -- (1,4) -- (0,0);
      \draw (4.5,0) -- (5.5,-4) -- (1,-4) -- (0,0);
      \draw (1,0) -- (4,-3) -- (3,0);
      \draw (3,-2) -- (4,0);
      \draw (3,0) -- (3,3.5) -- (4,0);
      \draw (2,0) -- (3.5,2);

      \Vertex[L=$4$,Lpos=225,LabelOut=true,x=1,y=0]{a4}
      \Vertex[L=$3$,Lpos=225,LabelOut=true,x=2,y=-1]{a3}
      \Vertex[L=$2$,Lpos=225,LabelOut=true,x=3,y=-2]{a2}
      \Vertex[L=$1$,Lpos=225,LabelOut=true,x=4,y=-3]{a1}
      \Vertex[L=$5$,LabelOut=true,x=3.4,y=-1.2]{a5}
      \Vertex[L=$6$,LabelOut=true,x=3,y=3.5]{a6}
      \Vertex[L=$7$,LabelOut=true,x=3.45,y=1.9]{a7}
      \Vertex[L=$8$,Lpos=180,LabelOut=true,x=3,y=1.35]{a8}
    \end{tikzpicture}
    \caption{$X_8$.}
  \end{subfigure}
  \begin{subfigure}{0.42\textwidth}
    \centering
    \begin{tikzpicture}[rotate=0,yscale=0.4,xscale=0.8,line width=1pt]
      \draw (0,0) -- (4.5,0) -- (5.5,4) -- (1,4) -- (0,0);
      \draw (4.5,0) -- (5.5,-4) -- (1,-4) -- (0,0);
      \draw (1,0) -- (4,-3) -- (3,0);
      \draw (3,-2) -- (4,0);
      \draw (4,0) -- (2.25,3.4) -- (3,0);
      \draw (3,2) -- (1,0);

      \SetVertexNoLabel
      \Vertex[L=$3$,Lpos=225,LabelOut=true,x=2,y=-1]{a3}
      \Vertex[L=$2$,Lpos=225,LabelOut=true,x=3,y=-2]{a2}
      \Vertex[L=$1$,Lpos=225,LabelOut=true,x=4,y=-3]{a1}
      \Vertex[L=$4$,LabelOut=true,x=3.4,y=-1.2]{a5}
      \Vertex[L=$6$,Lpos=175,LabelOut=true,x=2.64,y=1.65]{b3}
      \Vertex[L=$9$,Lpos=180,LabelOut=true,x=2.25,y=3.4]{b3}
      \Vertex[L=$8$,Lpos=45,LabelOut=true,x=3,y=2]{b2}
      \Vertex[L=$7$,LabelOut=true,x=3.4,y=1.2]{b5}

      \Vertex[L=$5$,Lpos=225,LabelOut=true,x=3,y=0]{a4}
    \end{tikzpicture}
    \caption{$M_{9,9}$.}
  \end{subfigure}
  \caption{
  Geometric representations of matroids appearing in \cref{ccmwvz-result}.}
  \label{pathdescminors-fig}
\end{figure}
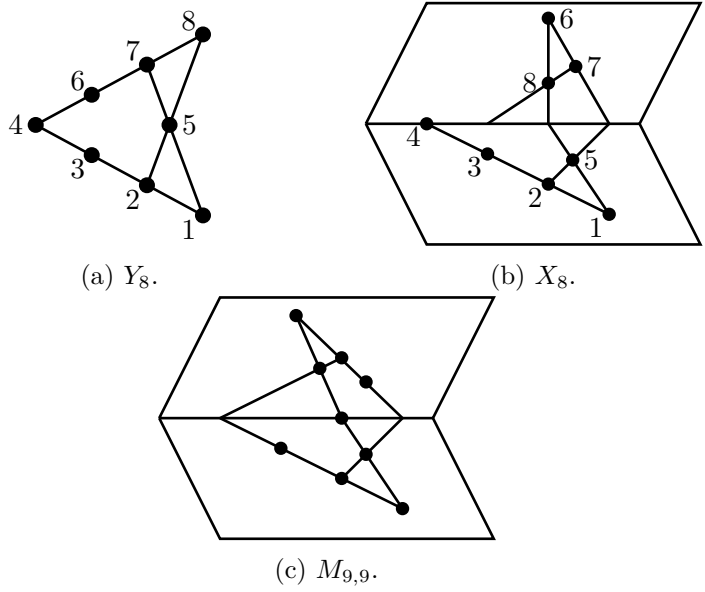

We now prove some more properties of $\utfutf$-fragile matroids with nice path descriptions.

\begin{lemma}
  \label{noessential}
  Let $M$ be a matroid with an $\{X_8,Y_8,Y_8^*\}$-minor.
  Then $M$ has no $\{U_{2,5},U_{3,5}\}$-essential elements.
\end{lemma}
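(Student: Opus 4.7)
The plan is to reduce the claim to an explicit check on $X_8$, $Y_8$, and $Y_8^*$, and then to carry out that check using the rank-$4$, $8$-element structure of these matroids.

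Fix an element $e \in E(M)$. Since $M$ has an $\{X_8,Y_8,Y_8^*\}$-minor, we can write $N = M/C \ba D$ for some $N \in \{X_8,Y_8,Y_8^*\}$ and disjoint $C,D \subseteq E(M)$. If $e \in C$, then $N = (M/e)/(C-e) \ba D$, so $M/e$ has a $\{U_{2,5},U_{3,5}\}$-minor; the case $e \in D$ is symmetric. Hence we may assume $e \in E(N)$. In this case $N \ba e$ is a minor of $M \ba e$ and $N/e$ is a minor of $M/e$, so it is enough to show that $e$ is not $\{U_{2,5},U_{3,5}\}$-essential in $N$. The lemma thus reduces to the following assertion: no element of $X_8$, $Y_8$, or $Y_8^*$ is $\{U_{2,5},U_{3,5}\}$-essential.

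By duality, it suffices to verify this for $X_8$ and $Y_8$. For each element~$f$ of $X_8$ (and of $Y_8$), one exhibits an explicit single-element deletion or contraction of $X_8$ (or $Y_8$) from which a further sequence of contractions and deletions produces a $U_{2,5}$ or $U_{3,5}$ minor. To cut down the number of cases, one can exploit the automorphism groups: in $X_8$ and $Y_8$ the $4$-segments, the spoke-type elements, and the rim-type elements lie in a small number of orbits. For instance, in $X_8$ contracting any two elements of the distinguished $4$-segment and simplifying produces a rank-$2$ matroid on $\ge 5$ elements in which the remaining four points are in general position, from which a $U_{2,5}$-restriction is read off; a dual argument (available since $X_8$ is self-dual) handles the remaining elements. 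An analogous orbit-by-orbit calculation works for $Y_8$, using both the fan structure visible in the geometric representation and the rank/corank-$4$ symmetry to locate $U_{2,5}$- and $U_{3,5}$-minors after deleting or contracting each element.

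The main obstacle is simply executing this finite case analysis correctly on the $8$-element matroids $X_8$ and $Y_8$; there is no genuine conceptual difficulty once the reduction to these two matroids is in hand, but some care is needed to select, for each element, the right sequence of operations exposing a $U_{2,5}$- or $U_{3,5}$-minor (and in particular to ensure that the target minor is not merely a uniform matroid of the wrong rank or size, but lies in $\{U_{2,5},U_{3,5}\}$).
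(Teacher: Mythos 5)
The reduction is correct and matches the paper's: it suffices to show that no element of $X_8$, $Y_8$, or $Y_8^*$ is $\{U_{2,5},U_{3,5}\}$-essential, and duality halves that work. However, your sketch of the explicit $X_8$ check fails. You claim that contracting any two elements of the $4$-segment $\{1,2,3,4\}$ and simplifying produces a rank-$2$ matroid on at least five points. This cannot happen: if $\{a,b,c,d\}$ is a rank-$2$ flat and you contract $a$ and $b$, then $c,d \in \cl(\{a,b\})$ become loops, so the simplification has at most the four remaining ground-set elements $\{5,6,7,8\}$ as parallel classes. You therefore obtain a restriction of $U_{2,4}$ at best, never $U_{2,5}$, and the self-dual version of the argument is stuck for the same reason. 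What actually works (and what the paper does) is to contract a \emph{single} element of the $4$-segment, say $3$, delete two of the now-parallel elements of $\{1,2,4\}$, and read off a $U_{3,5}$-minor; the remaining elements are handled by contracting a pair that lies \emph{outside} the segment, e.g.\ $X_8/\{5,7\}\ba y \cong U_{2,5}$ for $y\in\{2,6\}$ and $X_8/\{5,8\}\ba 1 \cong U_{2,5}$.

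Beyond this specific miscalculation, the proposal never carries out the verification for $Y_8$ at all, only gesturing at the fan structure and an ``analogous orbit-by-orbit calculation.'' Since the whole content of the lemma after the reduction is precisely this finite case check, the proof is incomplete even in the places where the sketch is not actively wrong.
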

\begin{proof}
  Observe that, as $M/C \ba D$ is isomorphic to a matroid in $\{X_8,Y_8,Y_8^*\}$ for some disjoint sets $C,D \subseteq E(M)$, it suffices to show that at least one of $N \ba z$ and $N / z$ has a $\utfutf$-minor for all $N \in \{X_8,Y_8,Y_8^*\}$ and all $z \in E(N)$.
  We show this for $N \in \{X_8,Y_8\}$; the result then follows by duality.

  Using the labelling given in \cref{pathdescminors-fig}, observe that $Y_8/3\ba \{y_1,y_2\} \cong U_{2,5}$ for every $2$-element subset $\{y_1,y_2\}$ of $\{1,2,4\}$.
  Since $Y_8 / 5 \ba \{7,8\} \cong U_{2,5}$, it follows, by symmetry, that for all $z \in E(Y_8)$, at least one of $Y_8 \ba z$ and $Y_8/z$ has a $\utfutf$-minor.

  Now $X_8 / \{5,7\} \ba y \cong U_{2,5}$ for $y \in \{2,6\}$ and $X_8 / \{5,8\} \ba 1 \cong U_{2,5}$.
  Also $X_8 / 3 \ba \{y_1,y_2\} \cong U_{3,5}$ for every $2$-element subset $\{y_1,y_2\}$ of $\{1,2,4\}$.
  Thus $X_8 \ba z$ or $X_8 / z$ has a $\utfutf$-minor, for all $z \in E(X_8)$.
\end{proof}

\begin{lemma}
  \label{pathdesctris}
  Let $M$ be a $3$-connected $\{U_{2,5},U_{3,5}\}$-fragile $\mathbb{P}$-representable matroid with $|E(M)| \ge 10$, for $\mathbb{P} \in \{\mathbb{H}_5, \mathbb{U}_2\}$, such that $M$ has an $\{X_8,Y_8,Y_8^*\}$-minor.
  \begin{enumerate}
    \item If $T$ is a triangle of $M$, then at least two elements of $T$ are $\{U_{2,5},U_{3,5}\}$-deletable.
    \item If $T^*$ is a triad of $M$, then at least two elements of $T^*$ are $\{U_{2,5},U_{3,5}\}$-contractible.
  \end{enumerate}
\end{lemma}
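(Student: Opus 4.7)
The plan is to prove the two parts by duality, so I would focus on part~(i). The proof hinges on two facts that are already available: \cref{noessential}, which tells us that under our hypotheses $M$ has no $\utfutf$-essential elements, and the definition of fragility, which says $M$ has no $\utfutf$-flexible elements. Together these force a dichotomy: every element of $E(M)$ is either $\utfutf$-deletable or $\utfutf$-contractible, but not both.

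With this dichotomy in hand, the argument for a triangle $T=\{t_1,t_2,t_3\}$ becomes very short. The plan is to suppose for contradiction that at most one element of $T$ is $\utfutf$-deletable, so at least two elements, say $t_1$ and $t_2$, are $\utfutf$-contractible. Since $t_1$ is contractible, $M/t_1$ has a $\utfutf$-minor. The key step is to observe that $\{t_2,t_3\}$ is a parallel pair in $M/t_1$, and since $U_{2,5}$ and $U_{3,5}$ are both simple, deleting one element of this parallel pair preserves the $\utfutf$-minor. Hence $M/t_1\ba t_2$ has a $\utfutf$-minor. Because deletion and contraction on disjoint elements commute, $M/t_1\ba t_2 = M\ba t_2/t_1$, so $M\ba t_2$ also has a $\utfutf$-minor, making $t_2$ both contractible and deletable. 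This contradicts fragility, so at least two elements of $T$ must be $\utfutf$-deletable.

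Part~(ii) then follows immediately by applying part~(i) to $M^*$: a triad of $M$ is a triangle of $M^*$, and $M^*$ is still $3$-connected, $\mathbb{P}$-representable, and $\utfutf$-fragile with an $\{X_8,Y_8,Y_8^*\}$-minor (using that this family is closed under duality, since $X_8$ is self-dual).

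There is no real obstacle in this proof: once the no-essential-element input from \cref{noessential} is combined with fragility, the parallel-pair argument in the contracted matroid does the rest. The hypothesis $|E(M)|\ge 10$ is not actually needed for this lemma (it is implicit in having an $\{X_8,Y_8,Y_8^*\}$-minor plus some extra elements, and is presumably included for uniformity with the surrounding lemmas).
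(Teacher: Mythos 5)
Your proof is correct and uses essentially the same argument as the paper: \cref{noessential} gives the deletable/contractible dichotomy, and the parallel-pair-after-contraction observation (together with $U_{2,5}$ and $U_{3,5}$ being $3$-connected) shows that a contractible element in a triangle forces the other two to be deletable, from which the bound follows by fragility and duality. The paper phrases this as ``if $c$ is contractible then $a,b$ are deletable, hence at most one contractible'' rather than your contrapositive, but the content is identical.
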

\begin{proof}
  Let $T=\{a,b,c\}$ be a triangle of $M$.
  By \cref{noessential}, $M$ has no $\utfutf$-essential elements.
  If $c \in T$ is $\utfutf$-contractible, say, then $a$ and $b$ are $\utfutf$-deletable, since $\{a,b\}$ is a parallel pair in $M/c$, and $U_{2,5}$ and $U_{3,5}$ are $3$-connected.
  Since $M$ is $\utfutf$-fragile, neither $a$ nor $b$ is $\utfutf$-contractible.
  So $T$ contains at most one $\utfutf$-contractible element.  The result follows by duality.
\end{proof}

\begin{lemma}
  \label{pathdescprops}
  Let $M$ be a $3$-connected $\{U_{2,5},U_{3,5}\}$-fragile $\mathbb{P}$-representable matroid with $|E(M)| \ge 10$, for $\mathbb{P} \in \{\mathbb{H}_5, \mathbb{U}_2\}$, such that $M$ has an $\{X_8,Y_8,Y_8^*\}$-minor and a nice path description $(P_1,P_2,\dotsc,P_m)$.
  For each $i \in \{2,3,\dotsc,m-1\}$,
  \begin{enumerate}
    \item if $P_i$ is a guts set and $e \in P_i$, then $e$ is 
      $\{U_{2,5},U_{3,5}\}$-deletable, and
      $\co(M \ba e)$ is $3$-connected; and
    \item if $P_i$ is a coguts set and $e \in P_i$, then $e$ is 
      $\{U_{2,5},U_{3,5}\}$-contractible, and
      $\si(M / e)$ is $3$-connected.
  \end{enumerate}
\end{lemma}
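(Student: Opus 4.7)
The plan is as follows. By duality it suffices to prove (i), since (ii) follows by applying (i) to $M^{*}$. Set $X = P_1 \cup \dotsb \cup P_{i-1}$ and $Y = P_{i+1} \cup \dotsb \cup P_m$; since $P_i$ is a guts set, $e \in \cl(X) \cap \cl(Y)$ (so that there are circuits $C_X \subseteq X \cup \{e\}$ and $C_Y \subseteq Y \cup \{e\}$, each containing $e$) and $e \notin \cocl(X) \cup \cocl(Y)$. Using $e, P_i \subseteq \cl(X) \cap \cl(Y)$, a standard rank computation gives $\lambda_{M/e}(X) = \lambda_M(X) - 1 = 1$, so $(X, Y \cup (P_i - e))$ is a 2-separation of $M/e$. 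As $|P_1|, |P_m| \ge 3$, both sides have size at least three; thus $M/e$ is not 3-connected up to parallel pairs, and Bixby's Lemma (\cref{bixby}) forces $M \ba e$ to be 3-connected up to series pairs.

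To prove that $e$ is $\utfutf$-deletable, I would argue by contradiction: assume $e$ is not $\utfutf$-deletable, so by \cref{noessential} it is $\utfutf$-contractible, and write a $\utfutf$-minor as $N = M/e/Z \ba W$. Applying \cref{minor3conn} to the 2-separation of $M/e$ above, either $|X \cap E(N)| \le 1$ or $|(Y \cup (P_i - e)) \cap E(N)| \le 1$; using the symmetric roles of $X$ and $Y$ (swapping $C_X$ for $C_Y$), assume the former, so $X - E(N) \subseteq Z \cup W$. The main idea is to use the circuit $C_X$ to rewrite $N$ as a minor of $M \ba e$: in the favourable sub-case where some circuit $C_X' \subseteq X \cup \{e\}$ with $e \in C_X'$ satisfies $C_X' - e \subseteq Z$, the element $e$ is a loop of $M/Z$ and hence $N = M/Z \ba e \ba W = M \ba e / Z \ba W$, contradicting non-deletability. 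The remaining sub-case, where every such circuit meets $W$, is to be handled by incrementally trading contractions for deletions along elements of $C_X \cap W$, reducing to the favourable case.

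For the 3-connectivity of $\co(M \ba e)$, any 2-separation $(U,V)$ of $M \ba e$ must have one side a series pair $\{u_1,u_2\}$, and the corresponding triad $\{e,u_1,u_2\}$ of $M$ has, by orthogonality with $C_X$ and $C_Y$ together with the guts-element hypothesis, one of $u_1, u_2$ in each of $X$ and $Y$. To conclude, I plan to show that $e$ is in no triangle of $M$ and that there is no triangle of the form $\{u_1, u_2, w\}$ in $M$: for the latter, such a triangle would make $(w, u_1, u_2, e)$ a 4-fan with $e$ as a rim end, contradicting the $\utfutf$-deletability of $e$ via \cref{fragilefanelements}; for the former, combining \cref{pathdesctris} with the $\utfutf$-deletability of $e$ and fragility forces any triangle through $e$ to have a second $\utfutf$-contractible element, whose position in the path description, together with the guts structure at $e$, leads to a contradiction. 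Consequently, $M \ba e$ is simple and contracting one element of each series pair produces no parallel pair, so $\co(M \ba e)$ has no 2-separation and is 3-connected.

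The principal obstacle is the minor-conversion step in the deletability argument: namely, the systematic exchange of the contraction of $e$ for a deletion when the naturally available circuit $C_X$ meets $W$. The favourable sub-case is essentially immediate, but the general case will require a careful pivoting or circuit-exchange argument, likely by induction on $|C_X \cap W|$, exploiting the fact that $X - E(N) \subseteq Z \cup W$ to ensure a suitable circuit can always be produced.
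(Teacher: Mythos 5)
Your verification that $\co(M \ba e)$ is $3$-connected follows the paper's route: you exhibit a nontrivial $2$-separation of $M/e$ with both sides of size at least three, so $\si(M/e)$ is not $3$-connected, and Bixby's Lemma (\cref{bixby}) already gives that $\co(M \ba e)$ is $3$-connected. The subsequent excursion through triangles and series pairs is therefore unnecessary.

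The deletability argument, however, has the genuine gap you flag, and it is not readily closable by ``incrementally trading contractions for deletions.'' You assume $e$ is $\utfutf$-contractible but not $\utfutf$-deletable, and try to transfer an $N$-minor from $M/e$ to $M \ba e$ via a circuit $C_X \subseteq X \cup e$. But there is no structural guarantee that an $N$-minor of $M/e$ with $|X \cap E(N)| \le 1$ is also a minor of $M \ba e$; that kind of transfer is precisely what is hard here, and the inductive trading in the unfavourable sub-case neither obviously terminates nor is guaranteed to produce a circuit disjoint from $W$. You are, in effect, trying to reprove a piece of \cref{CPL} from scratch, aimed at the wrong target element.

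The paper does not transfer the minor at all; it derives the contradiction from fragility, not from deletability of $e$. Suppose $e$ is $N$-contractible for some $N \in \utfutf$. The rank bounds $r(P_1), r(P_m) \ge 3$ (using that, in a nice path description, $P_1$ cannot be a triangle or $4$-segment when $P_2$ is a guts set) show that $(X, e, Y)$ is a vertical $3$-separation. Normalise via \cref{niceVertSep} to a vertical $3$-separation $(X_1,e,Y_1)$ with $|X_1 \cap E(N)| \le 1$ and $Y_1 \cup e$ closed. Then \cref{CPL} gives that every element of $X_1 - \cl(Y_1) = X_1$ is $N$-contractible, while all but at most one element of $\cl(X_1)-e \supseteq X_1$ is $N$-deletable. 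Since $|X_1|\ge 3$, this produces an $N$-flexible element of $M$, contradicting fragility. Hence $e$ is not $N$-contractible for any $N \in \utfutf$, and \cref{noessential} then gives $\utfutf$-deletability directly. Replace your circuit-trading step with this appeal to \cref{CPL}; the rest of your outline then closes.
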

\begin{proof}
  For some such $i$, let $e \in P_i$.
  Suppose $P_i$ is a guts set.
  Note that if $i=2$, then $P_1$ is not a triangle or $4$-segment.
  It follows that $r(P_1 \cup \dotsm \cup P_{i-1}) \ge 3$.
  By symmetry, $r(P_{i+1} \cup \dotsm \cup P_{m}) \ge 3$.
  If $e \in P_i$ is $N$-contractible, for $N \in \{U_{2,5},U_{3,5}\}$,
  then it follows from \cref{CPL} that $M$ has an element that is $N$-flexible, a contradiction.
  So each $e \in P_i$ is not $\{U_{2,5},U_{3,5}\}$-contractible.
  By \cref{noessential}, each $e \in P_i$ is $\{U_{2,5},U_{3,5}\}$-deletable.
  Moreover, $\si(M/e)$ is not $3$-connected, by \cref{vert3sep}, so $\co(M\ba e)$ is $3$-connected by Bixby's Lemma. 

  By a dual argument, if $P_i$ is a coguts set then each $e \in P_i$ is $\{U_{2,5},U_{3,5}\}$-contractible, and $\si(M/e)$ is $3$-connected.
\end{proof}

We next consider fans appearing in $\utfutf$-fragile matroids.

\begin{lemma}[{\cite[Lemma~2.22]{CMvZW16}}]
  \label{cmvzw-fans}
  Let $\mathbb{P} \in \{\mathbb{U}_2, \mathbb{H}_5\}$, and let $M$ be a $\utfutf$-fragile $\mathbb{P}$-representable matroid.
  Let $A = \{a, b, c\}$ be a coindependent triangle of $M$ such that $b$ is not $\utfutf$-deletable.
  Let $M'$ be obtained from $M$ by gluing an $r$-wheel $W$ onto $(a, b, c)$ with remove set $X \subseteq \{a, b, c\}$ such that $b \in X$.
  If $M'$ is $3$-connected, then $M'$ is a $\utfutf$-fragile $\mathbb{P}$-representable matroid.
  Moreover, $F = E(W) - X$ is a fan.
\end{lemma}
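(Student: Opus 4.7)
The plan is to verify four claims: (a) $M'$ is $\mathbb{P}$-representable, (b) $F = E(W) - X$ is a fan of $M'$, (c) $M'$ has a $\utfutf$-minor, and (d) no element of $M'$ is $\utfutf$-flexible. Label the wheel $W$ with spokes $s_0, \dotsc, s_{r-1}$ and rim elements $t_0, \dotsc, t_{r-1}$, so the triangles are $\{s_i, t_i, s_{i+1}\}$ and the triads are $\{s_i, t_{i-1}, t_i\}$ (indices mod $r$), identifying $(a, b, c) = (s_0, t_0, s_1)$. For (a), I would invoke the standard fact that the generalised parallel connection across a modular triangle preserves $\mathbb{P}$-representability: $M(\mathcal{W}_r)$ is regular and hence $\mathbb{P}$-representable, and every triangle of $W$ containing two spokes is a modular flat of $W$, so $P_T(M, W)$ and hence $M'$ is $\mathbb{P}$-representable. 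For (b), removing $b = t_0$ breaks the cyclic alternation of triangles and triads in $W$; explicitly, $(a, t_{r-1}, s_{r-1}, t_{r-2}, \dotsc, t_1, c)$ is a fan of $W \ba t_0$, shortened from an end when $a$ or $c$ is also in $X$. Since the parallel connection does not affect $E(W) - T$, $F$ remains a fan in $M'$.

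For (c), the key reduction is that $W_r / \{t_1, \dotsc, t_{r-1}\} \ba \{s_2, \dotsc, s_{r-1}\} = M(K_3)$ on $T$, so applying these operations to $P_T(M, W)$ yields $M$, and to $M'$ yields $M \ba X$. By \cref{pathdesctris}, both $a$ and $c$ are $\utfutf$-deletable in $M$ (since $b$ is not, and a triangle contains at most one non-$\utfutf$-deletable element). Since $U_{2,5}$ and $U_{3,5}$ are simple and $\{b, c\}$ is a parallel pair in $M \ba a$, a $\utfutf$-minor of $M \ba a$ can be chosen to avoid $b$, showing $M \ba \{a, b\}$ has a $\utfutf$-minor; symmetrically, so does $M \ba \{b, c\}$ (or at least one of these, and possibly $M \ba \{a, c\}$). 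For $X \subsetneq T$, extending the wheel reduction by also deleting whichever of $\{a, c\}$ is not in $X$ produces one of these as a minor of $M'$. For $X = T$, the argument is subtler, as deleting $T$ severs the direct identification between the fan and the rest of $M$; however, circuits of $P_T(M, W)$ spanning $T$ ensure $M'$ is still connected, and a careful argument using fan elements as ``substitutes'' for elements of $T$ yields the required $\utfutf$-minor.

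For (d), I split into two cases. For $e \in E(M) - T$: using the identities $M' \ba e = P_T(M \ba e, W) \ba X$ and $M' / e = P_T(M / e, W) \ba X$, if $e$ were $\utfutf$-flexible in $M'$, then applying the wheel-reduction argument of (c) to both $M \ba e$ and $M / e$ would force both to have $\utfutf$-minors, contradicting $\utfutf$-fragility of $M$. For $e \in F$: use the fan structure analogously to \cref{fragilefanelements}; if a spoke element $e$ of $F$ is $\utfutf$-contractible, then $\si(M' / e)$ has a $\utfutf$-minor in which adjacent rim elements of $F$ are identified as parallel, transferring $\utfutf$-deletability to them; iterating this along the fan and then passing through the wheel reduction to $M$ produces a $\utfutf$-flexible element of $M$, contradicting fragility. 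The main obstacle is this propagation step for $e \in F$ (and the $X = T$ case in (c)): $\utfutf$-minors that interact nontrivially with the fan require detailed orthogonality analysis and circuit-tracing through the parallel connection to ensure that any flexibility in $M'$ lifts back to flexibility in $M$, so that the fragility hypothesis can be invoked to derive the contradiction.
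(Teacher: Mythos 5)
The paper does not prove this lemma itself; it is cited verbatim from \cite[Lemma~2.22]{CMvZW16}, so there is no internal proof to compare against. Evaluating your argument on its own terms, there are several concrete errors, two of which are serious.

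First, the wheel reduction you state is incorrect: contracting all of $t_1,\dotsc,t_{r-1}$ identifies every rim vertex, so $W/\{t_1,\dotsc,t_{r-1}\}\ba\{s_2,\dotsc,s_{r-1}\}$ has $t_0$ as a loop and $\{s_0,s_1\}$ as a parallel pair, which is not $M(K_3)$ on $T$. More importantly, even with the corrected reduction (contract $t_1,\dotsc,t_{r-2}$ and delete $s_2,\dotsc,s_{r-1},t_{r-1}$, say, to recover $T\cong U_{2,3}$ and hence $M$), the resulting minor of $M'$ would be $M\ba X$, which contains $M\ba b$ as a minor; by hypothesis $b$ is \emph{not} $\utfutf$-deletable, so $M\ba X$ has no $\utfutf$-minor. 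Your plan for (c) is thus aimed at the wrong target. The point of the construction is that the rim elements of the glued wheel serve as contractible proxies for $b$: one should contract rims of $F$ (and delete the consequently parallel spokes) to drive $M'$ toward $M/b$, not toward $M\ba b$.

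Second, the assertion that ``$\{b,c\}$ is a parallel pair in $M\ba a$'' is false: deleting a triangle element does not parallelise the other two. You evidently meant that $\{a,c\}$ is a parallel pair in $M/b$, which is true when $b$ is contractible, and is exactly what makes the $M/b$ reduction work. This error compounds the previous one. Relatedly, your appeal to \cref{pathdesctris} is not valid here, since that lemma requires $M$ to be $3$-connected with at least ten elements and an $\{X_8,Y_8,Y_8^*\}$-minor, none of which are hypotheses of the present lemma. The elementary substitute is: if some element of the triangle $\{a,b,c\}$ is $\utfutf$-contractible, the other two are $\utfutf$-deletable via the parallel-pair argument, and since $b$ is not $\utfutf$-deletable, the contractible one (if it exists) must be $b$; but this leaves open the possibility that $b$ is $\utfutf$-essential, which your proof does not address.

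Finally, you acknowledge that the $X=T$ case of (c) and the propagation of flexibility through the fan in (d) are not carried out. Those are not minor loose ends: the fragility transfer from $M'$ back to $M$ is the technical heart of the lemma, requiring circuit/cocircuit bookkeeping across the generalised parallel connection. As it stands the proposal sets up the right ingredients (modular-flat parallel connection for representability, the fan structure of $W\ba t_0$, wheel reduction), but the reduction direction in (c) is pointed the wrong way, the key parallel-pair observation is misattributed to deletion rather than contraction, and the hardest parts are deferred.
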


\noindent For simplicity, when gluing a wheel $W$ with remove set $X$ as in the last lemma, we refer to $F = E(W)-X$ as the \emph{resulting fan}.

We now strengthen \cref{fragilefans} in the case that $M$ is a $\utfutf$-fragile matroid (that is, when $\mathcal{N} = \utfutf$).

\begin{lemma}
  \label{fragilefans2}
  Let $M$ be a $3$-connected $\utfutf$-fragile matroid, and let $F$ be a fan of $M$.
  \begin{enumerate}
    \item If $|F| = 4$ and $e$ is an end of $F$, then $e$ is not $\utfutf$-essential.
    \item If $|F| \ge 5$ and $e \in F$, then $e$ is not $\utfutf$-essential.
  \end{enumerate}
\end{lemma}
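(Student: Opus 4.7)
The plan is to handle part (1) directly via a contradiction involving a $U_{3,5}$-minor, and to reduce most of part (2) to part (1), leaving the internal spoke of a $5$-fan as the only case needing a separate argument. The key leverage is that $\utfutf$ contains a triangle-free matroid ($U_{3,5}$) and a triad-free matroid ($U_{2,5}$); since any $3$-connected matroid with a fan of size at least $4$ has rank and corank at least $3$, \cref{utfutfprop} guarantees such a $\utfutf$-fragile $M$ has both a $U_{2,5}$- and a $U_{3,5}$-minor.

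For part (1), order the fan as $(f_1,f_2,f_3,f_4)$ with $\{f_1,f_2,f_3\}$ a triangle and $\{f_2,f_3,f_4\}$ a triad, so $f_1$ is the spoke end and $f_4$ the rim end. As $\utfutf$ is closed under duality (which interchanges the two ends of this fan), it suffices to show $f_1$ is not $\utfutf$-essential. Suppose otherwise and let $N'=M/C'\ba D'$ be a $U_{3,5}$-minor. Essentiality forces $f_1\in E(N')$, while \cref{fragilefanelements} gives $f_2\notin D'$ and $f_3\notin C'$. Since $U_{3,5}$ has no triangle, not all of $f_1,f_2,f_3$ can be in $E(N')$, so $f_2\in C'$ or $f_3\in D'$. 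A short case analysis using the parallel pair $\{f_1,f_3\}$ in $M/f_2$, the series pair $\{f_2,f_4\}$ in $M\ba f_3$, and the fact that $U_{3,5}$ has no $2$-cocircuits and no coloops, forces both $f_2\in C'$ and $f_3\in D'$. Then $f_4$ is a coloop of $M/f_2\ba f_3$, so $N'$ is also a minor of $M\ba f_4$, making $f_4$ $\utfutf$-deletable. This contradicts \cref{fragilefanelements} applied to the rim end $f_4$.

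For part (2), by \cref{fragilefans} the ends of any fan of size at least $5$ and every element of any fan of size at least $6$ are not $\utfutf$-essential, so we may assume $|F|=5$ and $e$ is internal; replacing $F$ with a larger containing fan if necessary, we may assume $F$ is maximal. Writing the fan as $(f_1,\ldots,f_5)$ with $\{f_1,f_2,f_3\}$ a triangle (the other case is dual), the internal spoke is $f_3$ and the internal rims are $f_2,f_4$. We may assume $M$ is not a wheel (which is regular, hence not $\utfutf$-fragile) nor a whirl (which may be checked directly to have no $\utfutf$-minor), so \cref{fanendsstrong} gives that $M\ba f_1$ is $3$-connected; it is $\utfutf$-fragile since any $\utfutf$-flexible element of $M\ba f_1$ would be $\utfutf$-flexible in $M$. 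Applying part (1) to the $4$-fan $(f_2,f_3,f_4,f_5)$ of $M\ba f_1$, the rim end $f_2$ is not $\utfutf$-essential in $M\ba f_1$; combined with non-deletability of $f_2$ in $M$, the element $f_2$ is $\utfutf$-contractible in $M$, hence not essential. Symmetrically (via $M\ba f_5$), $f_4$ is not essential.

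The main obstacle is the internal spoke $f_3$, handled by a chain of forced minor operations that never touch $f_3$. Assume $f_3$ is essential. The contractibility of $f_2$ yields a $\utfutf$-minor of $M/f_2$; the parallel pair $\{f_1,f_3\}$ and non-deletability of $f_3$ force the minor to exclude $f_1$, placing a $\utfutf$-minor in $M\ba f_1/f_2$. In $M\ba f_1/f_2$, the triad $\{f_2,f_3,f_4\}$ produces the series pair $\{f_3,f_4\}$; non-contractibility of $f_3$ forces the minor to contract $f_4$, placing a $\utfutf$-minor in $M\ba f_1/f_2/f_4$. There the triangle $\{f_3,f_4,f_5\}$ produces the parallel pair $\{f_3,f_5\}$, and non-deletability of $f_3$ forces deletion of $f_5$, so $M\ba\{f_1,f_5\}/\{f_2,f_4\}$ has a $\utfutf$-minor. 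In this final matroid the successive contractions of $f_2$ and $f_4$ collapse the triad $\{f_2,f_3,f_4\}$ and make $f_3$ a coloop; since no matroid in $\utfutf$ has a coloop, $f_3$ is absent from every such minor, contradicting essentiality.
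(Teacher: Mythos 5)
Your proof has a recurring error in how a triad behaves under minors: you treat \emph{contracting} an element of the triad $\{f_2,f_3,f_4\}$ as if it produced a series pair from the remaining two, but that is what \emph{deletion} does. Cocircuits of $M/x$ are exactly the cocircuits of $M$ avoiding $x$, so a cocircuit through $x$ simply vanishes under contraction. Consequently, in part (1) the claim that $f_4$ is a coloop of $M/f_2\ba f_3$ is false: that would require a cocircuit of $M$ contained in $\{f_3,f_4\}$ and avoiding $f_2$, and a $3$-connected matroid has no $2$-cocircuits. (You would need $f_2$ \emph{deleted} to make $\{f_3,f_4\}$ a series pair.) Your case analysis in (1) is also incomplete: with $f_3\in D'$ and $f_2\in E(N')$, the series pair $\{f_2,f_4\}$ of $M\ba f_3$ only forces $f_4\notin E(N')$, and since $f_4$ is the rim end we get $f_4\in C'$; that case produces no coloop and is never ruled out. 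Your invocation of \cref{fragilefanelements} to get $f_2\notin D'$ and $f_3\notin C'$ is likewise not directly supported, since $f_2,f_3$ are internal to a $4$-fan and that lemma constrains only its spoke and rim \emph{ends}. The same deletion/contraction confusion recurs in part (2), where the assertion that $\{f_3,f_4\}$ is a series pair of $M\ba f_1/f_2$, and that $f_3$ is a coloop of $M\ba\{f_1,f_5\}/\{f_2,f_4\}$, both fail for the same reason.

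Note that in part (2) your first step already gives the contradiction, so the chain was never needed: once $f_2$ is $\utfutf$-contractible, $\{f_1,f_3\}$ is a parallel pair of $M/f_2$, so $M/f_2\ba f_3\cong M/f_2\ba f_1$ has a $\utfutf$-minor and $f_3$ is $\utfutf$-deletable, not essential. The paper avoids tracking $C'$ and $D'$ altogether by arguing at the level of essentiality. For part (1), assuming the spoke end of the $4$-fan is $U_{3,5}$-essential, it deduces that all three elements of the triangle through that end are $U_{3,5}$-essential, so they all survive in a $U_{3,5}$-minor as a set of rank at most $2$, which is impossible since $U_{3,5}$ has no dependent $3$-set. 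For part (2), dually, if the internal spoke $f_3$ is $U_{2,5}$-essential then all of $f_2,f_3,f_4$ are $U_{2,5}$-essential, so the triad survives in a $U_{2,5}$-minor with corank at most $2$, impossible in $U_{2,5}$. Your strategy of exploiting both $U_{2,5}$ and $U_{3,5}$ via \cref{utfutfprop} is exactly the paper's; the explicit minor bookkeeping is where the execution breaks.
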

\begin{proof}
  If $|F| \ge 6$, or $|F|=5$ and $e$ is an end of $F$, then the result follows from \cref{fragilefans}.

  Suppose $|F| = 4$ and let $e$ be an end of $F$.
  Since $M$ is $3$-connected and has a $\utfutf$-minor, $r(M) \ge 3$ and $r^*(M) \ge 3$.
  By \cref{utfutfprop}, $M$ has a $U_{2,5}$-minor and a $U_{3,5}$-minor.
  Let $(f_1,f_2,f_3,e)$ be a fan ordering of $F$.
  Suppose $e$ is a spoke of $F$, so $\{f_2,f_3,e\}$ is a triangle.
  By \cref{fragilefanelements}, $e$ is not $U_{3,5}$-contractible.
  Suppose $e$ is not $U_{3,5}$-deletable, so $e$ is $U_{3,5}$-essential.
  Also by \cref{fragilefanelements}, $f_2$ is not $U_{3,5}$-contractible and $f_3$ is not $U_{3,5}$-deletable.
  Moreover, $f_3$ is not $U_{3,5}$-contractible, for otherwise $e$ is $U_{3,5}$-deletable; and $f_2$ is not $U_{3,5}$-deletable, for otherwise $f_3$ is $U_{3,5}$-contractible.
  So all elements in the triangle $\{f_2,f_3,e\}$ are $U_{3,5}$-essential.
  Let $C,D \subseteq E(M)$ such that $M / C \ba D \cong U_{3,5}$.
  By the foregoing, $\{f_2,f_3,e\} \cap (C \cup D) = \emptyset$.
  But $r_{M/C \ba D}(\{f_2,f_3,e\}) \le 2$, a contradiction.
  We deduce that $e$ is $U_{3,5}$-deletable, so it is $\utfutf$-deletable.
  By a dual argument, if $e$ is a rim of $F$, then e is not $U_{2,5}$-contractible, so it is $\utfutf$-contractible.
  This proves that for an end $e$ of $F$, the element~$e$ is not $\utfutf$-essential.

  Finally, suppose $F$ is a maximal fan with $|F|=5$ where $(f_1,f_2,f_3,f_4,f_5)$ is a fan ordering of $F$.
  We use a similar argument to show that $f_3$ is not $\utfutf$-essential.
  By \cref{utfutfprop}, $M$ has a $U_{2,5}$-minor and a $U_{3,5}$-minor.
  By duality, we may assume that $\{f_2,f_3,f_4\}$ is a triad.
  By \cref{fragilefanelements}, $f_3$ is not $U_{2,5}$-contractible, and $f_2$ and $f_4$ are not $U_{2,5}$-deletable.
  Suppose $f_3$ is not $U_{2,5}$-deletable.
  Then $f_2$ and $f_4$ are not $U_{2,5}$-contractible, so they are $U_{2,5}$-essential.
  Let $C,D \subseteq E(M)$ such that $M / C \ba D \cong U_{2,5}$.
  Now $r^*_{M/C \ba D}(\{f_2,f_3,f_4\}) \le 2$, a contradiction.
  This proves that $f_3$ is not $\utfutf$-essential.
\end{proof}

\begin{lemma}
  \label{noMK4}
  Let $M$ be a $3$-connected $\utfutf$-fragile matroid.
  Then there is no set $X \subseteq E(M)$ such that $M|X \cong M(K_4)$ or $M^*|X \cong M(K_4)$.
\end{lemma}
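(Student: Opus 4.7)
The plan is to assume for contradiction that $M|X \cong M(K_4)$; the case $M^*|X \cong M(K_4)$ then reduces to this one by passing to the dual, since $\utfutf$ is closed under duality, $M(K_4)$ is self-dual, and $M^*$ is also a $3$-connected $\utfutf$-fragile matroid.

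The first ingredient is a standard triangle constraint for $\utfutf$-fragile matroids: if $T=\{a,b,c\}$ is a triangle of $M$ and $a$ is $\utfutf$-contractible, then $\{b,c\}$ is a parallel pair in $M/a$, forcing $b$ and $c$ to be $\utfutf$-deletable, and hence, by fragility, not $\utfutf$-contractible. In particular, each triangle of $M$ contains at most one $\utfutf$-contractible element. Because each of the four triangles of $M|X\cong M(K_4)$ is a triangle of $M$ and every element of $X$ lies in exactly two of them, a short case analysis reduces the possible global assignments of $\utfutf$-roles on $X$ to a handful of configurations, characterised by which ``opposite pair'' of elements (sharing no common triangle of $M(K_4)$) is $\utfutf$-contractible; in every case at most two elements of $X$ are $\utfutf$-contractible and at most one element of $X$ is $\utfutf$-essential.

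The key step is to upgrade this picture to a $4$-element fan of $M$ so that \cref{fragilefans2} applies. I would choose a $\utfutf$-deletable element $e\in X$ and analyse $\co(M\ba e)$: the two triangles of $M|X$ through $e$ leave behind two series pairs on $X-e$, and orthogonality of these pairs with the two triangles of $M|X$ not containing $e$, together with $3$-connectivity of $M$, should force at least one such series pair to lie in a triad of $M$ meeting $X$ in exactly two elements. This triad, combined with the appropriate triangle of $M|X$, gives a $4$-element fan of $M$ whose ends or internal elements lie in $X$, and \cref{fragilefans2} then yields an element of $X$ that is not $\utfutf$-essential. Combined with the constraints from the previous paragraph, this element must also be either $\utfutf$-deletable and $\utfutf$-contractible, i.e.\ $\utfutf$-flexible, contradicting $\utfutf$-fragility.

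The main obstacle is the passage from a cocircuit of $M|X$ to a genuine triad of $M$: the complementary triads of the four triangles of $M(K_4)$ are cocircuits of $M|X$, but a priori each could be properly contained in a cocircuit of $M$ picking up elements of $E(M)-X$. Ruling this out in at least one of the four cases is the technical heart of the argument and requires a careful orthogonality analysis exploiting all four triangles of $M|X$ together with the $3$-connectivity of $M$.
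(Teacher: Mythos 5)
Your strategy---constrain the $\utfutf$-roles via the triangle structure, manufacture a genuine fan of $M$, and apply \cref{fragilefans2}---is essentially the paper's, and you have correctly isolated the sticking point: a cocircuit of $M|X$ need not be a cocircuit of $M$. That gap cannot be closed, because the lemma as stated is false. Take $M = M(K_4)+e$, the free single-element extension of $M(K_4)$. Then $M$ is a simple, cosimple, rank-$3$ matroid on seven elements with no four collinear points, hence $3$-connected. Contracting the free point $e$ gives $U_{2,6}$, so $e$ is $\utfutf$-contractible but not $\utfutf$-deletable (as $M\ba e = M(K_4)$ is binary); each edge $f$ of $K_4$ is $\utfutf$-deletable, via $M\ba f/e \cong U_{2,5}$, but not $\utfutf$-contractible, since $\si(M/f)$ has only four points. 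Hence $M$ is $\utfutf$-fragile and $M|(E(M)-e)\cong M(K_4)$; yet every cocircuit of $M$ has at least four elements, because every hyperplane of $M$ contains at most three points, so $M$ has no triads and no fans of length at least four. There is simply no triad for your orthogonality analysis to locate.

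The paper's own proof contains the same unjustified step: it declares that $M$ has three $5$-element fans, but all three share the central triad $\{f_2,f_3,f_4\}$, and $M|X\cong M(K_4)$ only makes this set a cocircuit of the restriction $M|X$, not of $M$. Wherever the lemma is actually invoked in the paper---always through \cref{fanmiddle} or a fan end of a nice path description---the $M(K_4)$ co-restriction comes packaged with a genuine $5$-element fan of $M$, so the needed cocircuit is present by hypothesis and the fan-based contradiction is sound. A corrected statement of the lemma should carry that extra hypothesis; with it supplied, your argument (and the paper's) closes immediately, and the orthogonality analysis you anticipated needing is unnecessary.
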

\begin{proof}
  Suppose that $M|X \cong M(K_4)$ for some $X \subseteq E(M)$.
  Then $M$ has three $5$-element fans $F_1=(f_1,f_2,f_3,f_4,f_5)$, $F_2=(g,f_2,f_4,f_3,f_5)$, and $F_3=(g,f_4,f_2,f_3,f_1)$, where $X = \{f_1,f_2,f_3,f_4,f_5,g\}$.
  By \cref{fragilefans2}, no element in $X$ is $\utfutf$-essential.
  By \cref{fragilefanelements}, each spoke of $F_1$, $F_2$, or $F_3$ is $\utfutf$-deletable, and each rim of $F_1$, $F_2$, or $F_3$ is $\utfutf$-contractible.
  But $f_4$ is a rim of $F_1$, and a spoke of $F_2$, so it is $\utfutf$-flexible, a contradiction.
\end{proof}

For a fan with at least six elements, in a $3$-connected matroid, \cref{fanends} tells us that we can retain $3$-connectivity up to series pairs or parallel pairs when a spoke is deleted or a rim is contracted, whereas we lose $3$-connectivity if a spoke is contracted or a rim is deleted.
For the middle element of a $5$-element fan, no such guarantee can be made in general.
However, we can guarantee this for $5$-element fans appearing in $\utfutf$-fragile matroids, as shown in the next lemma. 

\begin{lemma}
  \label{noMK4conn}
  Let $M$ be a $3$-connected $\utfutf$-fragile matroid, and suppose $M$ has a $5$-element fan with ordering $(f_1,f_2,\dotsc,f_5)$, where $\{f_2,f_3,f_4\}$ is a triangle. Then $\si(M/f_3)$ is $3$-connected.
\end{lemma}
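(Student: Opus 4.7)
The plan is to apply \cref{fanmiddle} directly and use \cref{noMK4} to rule out the exceptional case. Since $M$ is $3$-connected and $(f_1,f_2,\dotsc,f_5)$ is a $5$-element fan with $\{f_2,f_3,f_4\}$ a triangle, \cref{fanmiddle} tells us that either $\si(M/f_3)$ is $3$-connected (which is exactly what we want) or there exists some element $f_6 \in E(M)-\{f_1,\dotsc,f_5\}$ such that $M^*|\{f_1,f_2,f_3,f_4,f_5,f_6\} \cong M(K_4)$.

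In the second case, the set $X = \{f_1,f_2,f_3,f_4,f_5,f_6\}$ satisfies $M^*|X \cong M(K_4)$, which immediately contradicts \cref{noMK4}, since $M$ is a $3$-connected $\utfutf$-fragile matroid. Therefore the first alternative must hold, and $\si(M/f_3)$ is $3$-connected.

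The entire argument is a two-line deduction from previously established lemmas, so there is no genuine obstacle; the substantive work was done in proving \cref{fanmiddle} (a general connectivity fact about $5$-element fans) and \cref{noMK4} (where the $\utfutf$-fragility was used to exclude any $M(K_4)$ restriction or co-restriction via the fan-element lemmas \cref{fragilefanelements} and \cref{fragilefans2}).
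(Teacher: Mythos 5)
Your proof is correct and matches the paper's own argument essentially verbatim: both proceed by contradiction, invoke \cref{fanmiddle} to produce an element $f_6$ with $M^*|\{f_1,\dotsc,f_6\} \cong M(K_4)$, and then appeal to \cref{noMK4} to rule that out.
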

\begin{proof}
  Suppose that $\si(M/f_3)$ is not $3$-connected.
  Then, by \cref{fanmiddle}, there exists some element $f_6$ such that $M^*|\{f_1,f_2,\dotsc,f_6\} \cong M(K_4)$, contradicting \cref{noMK4}.
\end{proof}

We return to $\utfutf$-fragile matroids with nice path descriptions: we next consider properties of the ends.

\begin{lemma}
  \label{pathdescendconn}
  Let $M$ be a $3$-connected $\{U_{2,5},U_{3,5}\}$-fragile $\mathbb{P}$-representable matroid with $|E(M)| \ge 10$, for $\mathbb{P} \in \{\mathbb{H}_5, \mathbb{U}_2\}$, having a nice path description $(P_1,P_2,\dotsc,P_m)$.
  Let $i \in \{1,m\}$.
  \begin{enumerate}
    \item If $P_i$ is a triangle or $4$-segment, then $\si(M / e)$ is $3$-connected for each $e \in P_i$.
    \item If $P_i$ is a triad or $4$-cosegment, then $\co(M \ba e)$ is $3$-connected for each $e \in P_i$.
  \end{enumerate}
\end{lemma}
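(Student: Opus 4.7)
My plan is to apply \cref{CPL} to the vertical $3$-separation that arises from \cref{vert3sep} in the case that $\si(M/e)$ is not $3$-connected. By duality, it suffices to prove (i). Let $P_1$ be a triangle or $4$-segment end, and fix $e \in P_1$. By \cref{nicepathdescription}(iii) and \cref{noessential}, $P_1$ contains an element $c$ that is $\utfutf$-contractible. I first prove the result when $e = c$, then reduce the remaining cases to this one.

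Suppose $\si(M/c)$ is not $3$-connected. By \cref{vert3sep}, $M$ has a vertical $3$-separation $(X, c, Y)$, with $r(X), r(Y) \ge 3$ and $c \in \cl(X) \cap \cl(Y)$. Since $M/c$ has a $\utfutf$-minor, fix some $N \in \utfutf$ and apply \cref{niceVertSep} to arrange that $Y \cup c$ is closed in $M$ and $|X \cap E(N)| \le 1$. In the notation of \cref{CPL}, the sets satisfy $X' = X - \cl(Y) = X$ and $Y' = \cl(Y) - c = Y$. By \cref{CPL}(i), every element of $X$ is $N$-contractible; and by \cref{CPL}(ii), at most one element of $\cl(X) - c$ (which contains $X$) is not $N$-deletable. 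Hence at least $|X| - 1$ elements of $X$ are both $N$-contractible and $N$-deletable, so $N$-flexible, and therefore $\utfutf$-flexible. This contradicts $\utfutf$-fragility unless $|X| \le 1$, which in turn contradicts $|X| \ge r(X) \ge 3$.

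For $e \in P_1 - c$, I reduce to the case above by exploiting the segment structure of $P_1$. When $P_1$ is a triangle, contracting $e$ makes $P_1 - e$ a parallel pair in $M/e$, and since $\cl_M(P_1) = P_1$ (as $P_1$ is not contained in a larger segment or fan), the matroid $\si(M/e)$ is isomorphic as an abstract matroid to $\si(M/c)$: both can be obtained from $M / P_1$ by re-adjoining a representative element of the collapsed line in a canonical way. The $4$-segment case is analogous: contracting any element of $P_1$ collapses the remaining three into a parallel class, and simplification again produces a matroid independent of the chosen contraction. Since $3$-connectivity is preserved under isomorphism, $\si(M/e)$ is $3$-connected. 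The main obstacle is justifying this isomorphism rigorously, particularly in the $4$-segment case where further parallel classes may arise from triangles through $e$ not contained in $P_1$; I expect this step to follow either from a representation-theoretic computation via $\mathbb{P}$-representability, or from a symmetry argument using \cref{endslipperiness} showing that the positions of $e$ and $c$ at the start of a sequential ordering are interchangeable.
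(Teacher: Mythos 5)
Your Step 1---the case $e = c$ where $c$ is $\utfutf$-contractible---is sound: applying \cref{niceVertSep} and then \cref{CPL} to a vertical $3$-separation $(X,c,Y)$ forces at least $|X|-1 \ge 2$ elements of $X$ to be $N$-flexible, contradicting fragility. This mirrors the technique the paper uses in proving \cref{pathdescprops}. But the paper's own proof of \cref{pathdescendconn} is a different, purely connectivity-based argument: it assumes (after dualizing) that $\co(M\ba e)$ is not $3$-connected, extracts via \cref{fclnontrivialsep} a $2$-separation $(X,Y)$ of $M\ba e$ with neither side a series class and with $Y$ fully closed, then uses the fact that the opposite end $P_m$ contains a $3$-element path-generating set $P_m'$ to force $E(M)-P_1 \subseteq Y$, whence $X \subseteq P_1 - e$ is contained in a series class---a contradiction. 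This argument needs no fragility machinery, no knowledge of which elements are contractible, and makes no appeal to \cref{noessential}.

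The gap in your argument is the reduction of $e\neq c$ to $e=c$. For $e \in P_1 - c$, the element $e$ is typically $\utfutf$-deletable but not $\utfutf$-contractible, so $M/e$ has no $\utfutf$-minor and \cref{CPL} (and \cref{niceVertSep}) cannot be invoked. You try to bridge this by asserting $\si(M/e) \cong \si(M/c)$, but as you concede, this is not established. In fact it is not true in general: when $P_1 = \{e,b,c\}$ is a triangle, $\si(M/e) = \si(M/e \ba c)$ and $\si(M/c) = \si(M/c \ba e)$, and $M/e\ba c$ and $M/c\ba e$ are obtained by \emph{opposite} operations on the pair $\{e,c\}$, so there is no reason for them to coincide or be isomorphic; their cosimplifications may even have different sizes if $e$ and $c$ lie in different numbers of triangles. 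Neither of your suggested repairs addresses this: $\mathbb{P}$-representability does not give an element-swapping symmetry, and \cref{endslipperiness} concerns the combinatorics of which elements can begin a sequential ordering, not an isomorphism of the matroids obtained after contraction. A secondary issue is that the very existence of a $\utfutf$-contractible element $c \in P_1$ rests on \cref{noessential}, which requires an $\{X_8,Y_8,Y_8^*\}$-minor; that hypothesis is deliberately absent from \cref{pathdescendconn}, and the paper's proof avoids needing it. As written, your argument proves (at best) a weaker statement and has an unclosed gap for the non-contractible elements of $P_1$.
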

\begin{proof}
  Suppose $P_1$ is a triad or $4$-cosegment, and $\co(M\ba e)$ is not $3$-connected for some $e \in P_1$.
  By \cref{fclnontrivialsep}, $M \ba e$ has a $2$-separation $(X,Y)$ for which $\fcl_{M \ba e}(X) \neq E(M)$ and $\fcl_{M \ba e}(Y) \neq E(M)$, and neither $X$ nor $Y$ is a series class of $M \ba e$.
  By the definition of a nice path description, $P_m$ contains a path-generating set $P_m'$ of size $3$.
  Without loss of generality, $|P_m' \cap Y| \ge 2$.
  Since $\fcl_{M \ba e}(Y) \neq E(M)$, we may assume that $Y$ is fully closed, implying $E(M)-P_1 \subseteq Y$.
  But then $X \subseteq P_1 - e$, where $P_1 - e$ is a series class in $M \ba e$, a contradiction.
\end{proof}

\begin{lemma}
  \label{pathdescends}
  Let $M$ be a $3$-connected $\utfutf$-fragile $\mathbb{P}$-representable matroid with $|E(M)| \ge 10$, for $\mathbb{P} \in \{\mathbb{H}_5, \mathbb{U}_2\}$, such that $M$ has an $\{X_8,Y_8,Y_8^*\}$-minor and a nice path description $(P_1,P_2,\dotsc,P_m)$.
  Let $i \in \{1,m\}$.
  \begin{enumerate}
    \item If $P_i$ is a segment of $M$, then $|P_i|-1$ elements of $P_i$ are $\utfutf$-deletable, and the other element is $\utfutf$-contractible.\label{pde1}
    \item If $P_i$ is a cosegment of $M$, then $|P_i|-1$ elements of $P_i$ are $\utfutf$-contractible, and the other element is $\utfutf$-deletable.\label{pde2}
    \item If $P_i$ is a fan of size at least~$4$, then each spoke of $P_i$ is $\utfutf$-deletable, and each rim of $P_i$ is $\utfutf$-contractible.\label{pde3}
  \end{enumerate}
\end{lemma}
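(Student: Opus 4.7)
My plan is to reduce each of the three parts to lemmas already established earlier in this section.

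For part (i), suppose $P_i$ is a segment, so $|P_i| \in \{3,4\}$ by \cref{nicepathdescription}\ref{npd1}. I would first show that at most one element of $P_i$ is $\utfutf$-contractible. For any triangle $T \subseteq P_i$, \cref{pathdesctris} guarantees that at least two elements of $T$ are $\utfutf$-deletable, hence (by fragility, which forbids flexible elements) at most one element of $T$ is $\utfutf$-contractible. When $|P_i|=4$, every pair of elements in $P_i$ lies in a common $3$-element subset of $P_i$, and each such subset is a triangle, so no two elements of $P_i$ can both be $\utfutf$-contractible. Next I would produce at least one such element: \cref{nicepathdescription}\ref{nicepathdesciii} gives an element $e \in P_i$ that is not $\utfutf$-deletable, and \cref{noessential} then forces $e$ to be $\utfutf$-contractible. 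Combining the two, exactly one element of $P_i$ is $\utfutf$-contractible, and by \cref{noessential} the remaining $|P_i|-1$ elements are $\utfutf$-deletable.

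Part (ii) follows by duality, since the class of $3$-connected $\utfutf$-fragile $\mathbb{P}$-representable matroids with an $\{X_8,Y_8,Y_8^*\}$-minor is closed under duality: reversing a nice path description yields a nice path description of $M^*$, cosegments of $M$ become segments of $M^*$, and $\utfutf$-deletability in $M^*$ corresponds to $\utfutf$-contractibility in $M$ (and vice versa).

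Part (iii) is essentially immediate from two earlier results. Applying \cref{fragilefanelements} with $\mathcal{N}=\utfutf$, every spoke element of $P_i$ fails to be $\utfutf$-contractible and every rim element fails to be $\utfutf$-deletable. Combined with \cref{noessential}, which tells us that no element of $M$ is $\utfutf$-essential, it follows that each spoke of $P_i$ is $\utfutf$-deletable and each rim is $\utfutf$-contractible. The only mild subtlety anywhere in the proof is ruling out two $\utfutf$-contractible elements in a $4$-segment in part (i); beyond that, each part is a direct consequence of the earlier structural lemmas, so I do not expect any substantial obstacle.
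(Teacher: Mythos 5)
Your proof is correct and follows essentially the same route as the paper: all three parts reduce to \cref{nicepathdescription}\ref{nicepathdesciii}, \cref{pathdesctris}, \cref{noessential}, and \cref{fragilefanelements}. The only cosmetic difference is in parts (i) and (ii): you bound the number of $\utfutf$-contractible elements in a segment above by one (via \cref{pathdesctris} applied to the triangles inside $P_i$) and below by one (via \cref{nicepathdescription}\ref{nicepathdesciii} and \cref{noessential}), then invoke duality for the cosegment case; the paper instead starts from the single non-deletable (resp.\ non-contractible) element guaranteed by the nice path description and propagates contractibility (resp.\ deletability) to the remaining elements via \cref{pathdesctris}, treating the segment and cosegment cases symmetrically rather than by an explicit duality appeal. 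Both are sound, and part (iii) is identical.
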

\begin{proof}
  Suppose $P_i$ is a $k$-cosegment.
  By \cref{nicepathdescription}\ref{nicepathdesciii}, $P_i$ has one element that is $\utfutf$-deletable, so, by \cref{pathdesctris}, the other $k-1$ elements are $\utfutf$-contractible as required.
  A similar argument applies when $P_i$ is a segment.
  When $P_i$ is a fan, the result follows from \cref{fragilefanelements,noessential}.
\end{proof}

The next property of $\utfutf$-fragile matroids with nice path descriptions builds on earlier results of the section.

\begin{lemma}
  \label{pathdescrank}
  Let $M$ be a $3$-connected $\{U_{2,5},U_{3,5}\}$-fragile $\mathbb{P}$-representable matroid with $|E(M)| \ge 10$, for $\mathbb{P} \in \{\mathbb{H}_5, \mathbb{U}_2\}$, 
  with an $\{X_8,Y_8,Y_8^*\}$-minor.
  Let $C$ be the set of $\{U_{2,5},U_{3,5}\}$-contractible elements and let $D$ be the set of $\{U_{2,5},U_{3,5}\}$-deletable elements of $M$.  Then $|C| = r(M)$ and $|D| = r^*(M)$.
\end{lemma}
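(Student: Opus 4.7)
The plan is to apply \cref{nicepathdescription} to obtain a nice path description $(P_1, P_2, \ldots, P_m)$ of $M$, and then compute $|C|$ and $r(M)$ piecewise along the path and verify that they agree.

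First, I would observe that, by \cref{noessential}, $M$ has no $\utfutf$-essential elements, so every element of $M$ lies in $C \cup D$. Since $M$ is $\utfutf$-fragile, $C \cap D = \emptyset$, so $(C, D)$ partitions $E(M)$ and $|C| + |D| = r(M) + r^*(M)$. It therefore suffices to prove $|C| = r(M)$; the statement $|D| = r^*(M)$ is then immediate.

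With the nice path description in hand, I would compute the rank by telescoping. For each internal index $i$ (with $2 \le i \le m-1$), the guts-coguts condition implies that $r(P_1 \cup \cdots \cup P_i) - r(P_1 \cup \cdots \cup P_{i-1})$ equals $0$ when $P_i$ is a guts set and equals $|P_i|$ when $P_i$ is a coguts set. Since $\lambda(P_m) = 2$, the final increment $r(M) - r(P_1 \cup \cdots \cup P_{m-1})$ equals $r(P_m) - 2$. This yields
\[ r(M) = r(P_1) + \sum_{\substack{2 \le i \le m-1 \\ P_i \text{ coguts}}} |P_i| + r(P_m) - 2. \]

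Next, I would compute $|C|$ piecewise. By \cref{pathdescprops}, internal guts sets contribute $0$ and internal coguts sets contribute $|P_i|$ to $|C|$. For each end $P_j$ with $j \in \{1, m\}$, the claim is that $|C \cap P_j| = r(P_j) - 1$. This I would verify by case analysis on the end type, using \cref{pathdescends}: if $P_j$ is a segment then $|C \cap P_j| = 1$ and $r(P_j) = 2$; if $P_j$ is a $k$-cosegment then $|C \cap P_j| = k-1$ and, using $\lambda(P_j) = 2$, $r^*(P_j) = 2$ (since $M^*|P_j \cong U_{2,k}$), and the identity $r(P_j) = |P_j| + \lambda(P_j) - r^*(P_j)$, we obtain $r(P_j) = k$; and if $P_j$ is a fan, then by \cref{pathdescends}(iii) the contractible elements of $P_j$ are precisely its rim elements, and a direct rank computation along the fan ordering (treating fans that begin with a triangle and fans that begin with a triad separately) shows that the rank of a fan of size $\ell$ equals one more than its number of rim elements. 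Summing contributions gives
\[ |C| = (r(P_1) - 1) + \sum_{\substack{2 \le i \le m-1 \\ P_i \text{ coguts}}} |P_i| + (r(P_m) - 1) = r(M), \]
as required.

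The whole argument is really just bookkeeping; the only mildly delicate step is the end-case verification, particularly the elementary rank calculation for fans, which needs to be carried out in both the triangle-start and triad-start cases to confirm the uniform formula $r(\text{fan}) = (\text{number of rims}) + 1$.
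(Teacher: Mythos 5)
Your approach is the same as the paper's: both decompose along the nice path description, attribute each coguts element to a rank increment and each guts element to zero increment, and then verify that each end $P_j$ contributes $r(P_j)-1$ contractible elements. The one step where your argument does not quite go through as stated is the fan case. You claim, for a fan end $P_j$, that ``the contractible elements of $P_j$ are precisely its rim elements'' and that the rank of the fan is one more than the number of rim elements, citing \cref{pathdescends}(iii). But under the paper's convention a maximal $4$-element fan $(f_1,f_2,f_3,f_4)$ has exactly \emph{one} rim element (an end) and one spoke element (the other end); the two internal elements $f_2,f_3$ are neither rim nor spoke, so \cref{pathdescends}(iii) says nothing about them. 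For a $4$-fan starting with a triad, $r(P_j)=3$ yet there is only one rim, so your formula $r=(\#\text{rims})+1$ gives $2$, which is wrong; and you cannot conclude from \cref{pathdescends}(iii) alone that $|C\cap P_j|=2$. The $4$-fan case needs a separate argument: one uses \cref{pathdesctris} to show the triangle $\{f_2,f_3,f_4\}$ contains at most one contractible element and the triad $\{f_1,f_2,f_3\}$ at least two, forcing exactly one of $f_2,f_3$ to be contractible, whence $|C\cap P_j|=2=r(P_j)-1$. The paper handles this by asserting the $4$-fan case is ``easily checked'' rather than subsuming it into a rim-count formula. Everything else in your argument is correct and matches the paper.
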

\begin{proof}
  By \cref{nicepathdescription}, $M$ has a nice path description $(P_1,P_2,\dotsc,P_m)$.
  Since this is a path of $3$-separations, it is easily seen that if $M$ has $h$ coguts elements in $P_2 \cup \dotsm \cup P_{m-1}$, then $r(M) = r(P_1) +h+ r(P_m)-2$.  Each of the $h$ coguts elements are $\utfutf$-contractible, by \cref{pathdescprops}, whereas each guts element is $\utfutf$-deletable.

  It remains to show that an end, $P_1$ say, has exactly $r(P_1)-1$ elements that are $\utfutf$-contractible.
  If $P_1$ is a $k$-cosegment, then $k \in \{3,4\}$ and $r(P_1)=k$, and this follows from \cref{pathdescends}\ref{pde2}.
  On the other hand, if $P_1$ is a segment, then $r(P_1)=2$ and $P_1$ has exactly one $\utfutf$-contractible element, by \cref{pathdescends}\ref{pde1}.
  If $P_1$ is a fan of size at least~$5$ having $t$ rim elements, then $r(P_1) = t+1$.
  By \cref{pathdescends}\ref{pde3}, each rim element is $\utfutf$-contractible and each spoke element is $\utfutf$-deletable.
  It is also easily checked that if $P_1$ is a $4$-element fan then $r(P_1)=3$ and $P_1$ has exactly two $\utfutf$-contractible elements.

  We deduce that there are exactly $r(M)=r(P_1)+h+r(P_m)-2$ elements that are $\utfutf$-contractible.  Since $M$ has no $\utfutf$-essential elements, by \cref{noessential}, the result follows.
\end{proof}

We also require the following \lcnamecref{keepfragilelabels} that ensures elements can be removed while retaining a nice path description.

\begin{lemma}
  \label{keepfragilelabels}
  Let $M$ be a $3$-connected $\utfutf$-fragile $\mathbb{P}$-representable matroid with $|E(M)| \ge 10$, for $\mathbb{P} \in \{\mathbb{H}_5, \mathbb{U}_2\}$, such that $M$ has an $\{X_8,Y_8,Y_8^*\}$-minor and a nice path description $(P_1,P_2,\dotsc,P_m)$.
  \begin{enumerate}
    \item If $a \in P_1$ and $b \in P_m$ are $\utfutf$-deletable, then $M \ba a,b$ is $\utfutf$-fragile and has no $\utfutf$-essential elements.\label{kfl1}
    \item If $e \in P_1 \cup P_m$ is $\utfutf$-contractible, then $M / e$ is $\utfutf$-fragile and has no $\utfutf$-essential elements.\label{kfl2}
  \end{enumerate}
\end{lemma}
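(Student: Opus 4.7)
The plan is to prove both parts by a common reduction. Let $M'$ denote the reduced matroid ($M\ba a,b$ for part (i), and $M/e$ for part (ii)). I would first observe that the $\utfutf$-fragility of $M'$ is automatic once $M'$ has a $\utfutf$-minor: any $\utfutf$-flexible element $f$ of $M'$ is also $\utfutf$-flexible in $M$, because $M'\ba f$ and $M'/f$ are minors of $M\ba f$ and $M/f$ respectively, contradicting the $\utfutf$-fragility of $M$. So the entire content of the lemma lies in showing that $M'$ has a $\utfutf$-minor and no $\utfutf$-essential elements.

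Both conclusions would follow from a single structural claim: for every $f\in E(M')$, there exist disjoint sets $C^{\star},D^{\star}\subseteq E(M)$ with $M/C^{\star}\ba D^{\star}\in\utfutf$ such that $\{a,b\}\subseteq D^{\star}$ in part (i) or $e\in C^{\star}$ in part (ii), and such that $f\in D^{\star}$ if $f$ is $\utfutf$-deletable in $M$ or $f\in C^{\star}$ if $f$ is $\utfutf$-contractible (by \cref{noessential} every $f$ is one of these). Ranging $f$ over $E(M')$ then gives the no-essentials conclusion, and applying the claim to any one $f$ witnesses a $\utfutf$-minor of $M'$.

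To establish this claim I would use the nice path description $(P_1,\ldots,P_m)$ of $M$ and build $C^{\star}$ and $D^{\star}$ cell by cell. For each internal $P_i$ with $2\le i\le m-1$, \cref{pathdescprops} pins down the role of every element of $P_i$: the elements of a guts cell must go into $D^{\star}$, and those of a coguts cell into $C^{\star}$. For $P_1$ and $P_m$, \cref{pathdescends,pathdescrank} classify which elements are $\utfutf$-deletable and which are $\utfutf$-contractible, with the split depending on whether the end is a segment, cosegment, or fan. I would assign the elements of the ends accordingly, respecting the prescribed locations of $a,b,e,f$, and leave precisely five survivors whose restriction, after the chosen contractions, is a $U_{2,5}$ or $U_{3,5}$.

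The main obstacle will be the case analysis on the end types: triangles, triads, $4$-segments, $4$-cosegments, and fans of various sizes, multiplied by the various positions of $a$, $b$ (or $e$), and $f$ inside $P_1\cup P_m$. The fan case is the most delicate, as the spoke/rim alternation together with \cref{fragilefans2,noMK4,noMK4conn} needs to be invoked to control which survivors yield a valid $\utfutf$-minor. One must also verify that when $f$ lies in the same end as $a$ (or $b$, or $e$), the end still contains enough deletable and contractible elements to play the required roles; the hypothesis $|E(M)|\ge 10$ ensures the path is long enough for the required removals to be absorbed without disrupting the other end or destroying the $\utfutf$-minor.
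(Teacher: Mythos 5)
Your observation that $\utfutf$-fragility is automatic once $M'$ has a $\utfutf$-minor---because any flexible element of $M'$ would lift to a flexible element of $M$---is correct and cleanly isolates the content of the lemma. However, the remainder of the proposal has a genuine gap.

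The structural claim you reduce to---that for every $f\in E(M')$ you can build disjoint $C^\star,D^\star$ with $M/C^\star\ba D^\star\in\utfutf$ by assigning guts elements to $D^\star$, coguts elements to $C^\star$, and then choosing five survivors in $P_1\cup P_m$---is asserted but not established, and it is precisely where the difficulty lies. \cref{pathdescprops} tells you each individual guts element is $\utfutf$-deletable and each coguts element is $\utfutf$-contractible, but it does not tell you that there is a \emph{single} $\utfutf$-minor realized by simultaneously deleting all guts elements and contracting all coguts elements while retaining five prescribed end elements; that is a far stronger global claim. You flag the end-type case analysis as the main obstacle, but the harder issue is prior to the case analysis: it is not clear that your cell-by-cell assignment yields a $\utfutf$-minor at all, especially when the prescribed positions of $a,b,e,f$ force you to deviate from the ``canonical'' choice. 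Moreover, for part (ii), contracting $e$ collapses the cosegment or fan $P_1$ nontrivially, and the resulting matroid need not even be $3$-connected; the argument has to work at that level of generality.

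The paper's actual proof sidesteps this entirely by invoking the \emph{path sequence} machinery of Clark et al.\ (the terminology behind \cref{nicepathdescription}): since $a$ and $b$ sit at the ends of the path sequence for $M$, the path sequence truncates to one for $M\ba a,b$, which by itself certifies both fragility and the existence of an $\{X_8,Y_8,Y_8^*,M_{8,6}\}$-minor. From there \cref{noessential} disposes of most cases, and a small residual check handles the possibility that only an $M_{8,6}$-minor survives. This last subtlety is a second thing your proposal does not anticipate: after deleting $a,b$ or contracting $e$, the guaranteed minor can degenerate to $M_{8,6}$, which is outside the scope of \cref{noessential}, so the no-essentials claim needs a separate (albeit routine) verification in that case. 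To make your approach rigorous you would effectively have to re-derive the path-sequence truncation argument, so citing it directly is the shorter route.
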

\begin{proof}
  We sketch the proof only.  Consider \ref{kfl1}.
  Using the terminology of \cite{CMvZW16}, $M$ has a path sequence from which we can obtain a path sequence for $M \ba a,b$, since $a$ and $b$ are at the ends.
  This latter path sequence certifies that $M \ba a,b$ is $\utfutf$-fragile, and that $M \ba a,b$ has an $\{X_8,Y_8,Y_8^*,M_{8,6}\}$-minor, by \cite[Lemma~6.1]{CMvZW16}.
  By \cref{noessential}, if $M\ba a,b$ has an $\{X_8,Y_8,Y_8^*\}$-minor, then $M \ba a,b$ has no $\utfutf$-essential elements.
  Using a similar approach as in the proof of \cref{noessential}, it is easily checked that $M \ba a,b$ has no $\utfutf$-essential elements when $M \ba a,b$ has only an $M_{8,6}$-minor.
  A similar argument also applies for \ref{kfl2}.
\end{proof}

Note that the previous \lcnamecref{keepfragilelabels} implies that after deleting the pair $\{a,b\}$ or contracting $e$, each element in the resulting matroid is $\utfutf$-deletable (or $\utfutf$-contractible) if and only if it was
$\utfutf$-deletable (or $\utfutf$-contractible) in $M$; otherwise, $M$ would have $\utfutf$-flexible elements.

\section{Excluded minors are almost \texorpdfstring{$\utfutf$}{\{U(2,5),U(3,5)\}}-fragile}
\label{utfutffragilesec}

Suppose that $M$ is an excluded minor for the class of $\mathbb{P}$-representable matroids, where $\mathbb{P} \in \{\mathbb{H}_5,\mathbb{U}_2\}$, and $M \ba a,b$ is a $3$-connected 
matroid with a $\utfutf$-minor, for some distinct $a,b \in E(M)$.
In this section, we show that if $|E(M)| \ge 16$, then $M \ba a,b$ is $\utfutf$-fragile.

\begin{lemma}
  \label{noessential2}
  For $\mathbb{P} \in \{\mathbb{H}_5,\mathbb{U}_2\}$ and $N \in \utfutf$,
  let $M$ be a $3$-connected $\utfutf$-fragile $\mathbb{P}$-representable matroid.
  If $M$ has an $\{X_8,Y_8,Y_8^*\}$-minor and $|E(M)| \ge 9$, 
  then $M$ has no $N$-essential elements.
\end{lemma}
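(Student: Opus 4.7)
The plan is to argue by contradiction. Suppose $z \in E(M)$ is $N$-essential for some $N \in \utfutf$, and write $N'$ for the other matroid in $\utfutf$. By \cref{noessential}, $z$ is not $\utfutf$-essential, so $M \ba z$ or $M/z$ has a $\utfutf$-minor; as $z$ is $N$-essential, this must in fact be an $N'$-minor.

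Consider first the case where $M \ba z$ has an $N'$-minor. The strategy is to apply \cref{utfutfprop} to $\co(M \ba z)$. Since $N'$ is $3$-connected, $\co(M \ba z)$ inherits an $N'$-minor. Its corank equals $r^*(M) - 1$; by replacing $M$ with $M^*$ if necessary (which preserves the hypotheses while swapping the roles of $U_{2,5}$ and $U_{3,5}$ in $\utfutf$), we may assume $r^*(M) \ge 4$, so that this corank is at least~$3$. Its rank is at most $r(M)$ and at least $r(N') \ge 2$; one verifies rank at least~$3$ using the structure of $M \ba z$ coming from the path description. Provided $\co(M \ba z)$ is $3$-connected with rank and corank at least~$3$, \cref{utfutfprop} then shows that it also has an $N$-minor, so $M \ba z$ does, contradicting the $N$-essentiality of $z$. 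The symmetric case, in which $M/z$ has an $N'$-minor, is handled dually via $\si(M/z)$.

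It remains to establish that the relevant (co)simplification is $3$-connected. Take a nice path description $(P_1, \ldots, P_m)$ of $M$ from \cref{nicepathdescription}, and consider where $z$ lies. If $z$ lies in a middle guts part $P_i$ (so $2 \le i \le m-1$), then $\co(M \ba z)$ is $3$-connected by \cref{pathdescprops}; symmetrically, for $z$ in a middle coguts part, $\si(M / z)$ is $3$-connected. For $z$ in an end $P_1$ or $P_m$, invoke \cref{pathdescendconn} for triangle, triad, segment, or cosegment ends, and for fan ends combine \cref{fanendsstrong,fanmiddle,noMK4,noMK4conn} to rule out the configurations in which $3$-connectivity would fail.

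The main obstacle is the case analysis for ends of the path description, especially fan ends, where several structural results from \cref{utfutfsec} must be carefully combined to conclude $3$-connectivity of the (co)simplification, exploiting in particular the absence of any $M(K_4)$-restriction or $M(K_4)$-corestriction. Further care is needed when $r(M) = 3$ or $r^*(M) = 3$, where duality is used to ensure the rank and corank hypotheses of \cref{utfutfprop} both hold.
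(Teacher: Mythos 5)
Your approach is genuinely different from the paper's. The paper applies Seymour's Splitter Theorem to descend to a $3$-connected $\mathbb{P}$-representable $\utfutf$-fragile minor with an $\{X_8,Y_8,Y_8^*\}$-minor on exactly nine elements, then finishes by a finite check against the catalogue of such nine-element matroids. Your plan is a direct structural argument: if $z$ is $N$-essential then by \cref{noessential} it is $N'$-deletable or $N'$-contractible, and you hope to bootstrap from the $N'$-minor to an $N$-minor via \cref{utfutfprop} applied to $\co(M\ba z)$ or $\si(M/z)$. The idea is natural --- it is exactly the mechanism behind \cref{utfutfequiv} and recurs in \cref{jail-elastic} --- but as written it has two genuine gaps.

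First, the rank/corank bound. Duality only lets you normalise one side: since $|E(M)|\ge 9$ you can arrange $r^*(M)\ge 4$ so that $r^*(\co(M\ba z))\ge 3$, but the rank of $\co(M\ba z)$ is not thereby controlled. The cheap bound is $r(\co(M\ba z))\ge r(N')$, which is $3$ when $N'=U_{3,5}$ but only $2$ when $N'=U_{2,5}$. If you then try to dualise again to fix this, the roles of delete/contract swap and you land back in the same asymmetry on the other operation. The paper's proof of \cref{jail-elastic} shows that the case $r(\co(M\ba e))=2$ is genuinely live and has to be handled separately (there $\co(M\ba e)\cong U_{2,t}$), so "one verifies rank at least~$3$" is not something you can defer.

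Second, $3$-connectivity of the (co)simplification at the ends. For $z$ in a middle guts or coguts cell, \cref{pathdescprops} gives you exactly the $3$-connectivity you need, matched to the right operation. But \cref{pathdescendconn} gives only one direction for each kind of end: $\si(M/e)$ is $3$-connected for triangle/$4$-segment ends, and $\co(M\ba e)$ is $3$-connected for triad/$4$-cosegment ends. The configuration you actually have to worry about is a $\utfutf$-deletable $z$ in a triangle (or $4$-segment) end --- there $M\ba z$ has the $N'$-minor, so you need $\co(M\ba z)$ $3$-connected, and that is precisely the direction \cref{pathdescendconn} is silent on. For a $4$-segment end you can recover it from \cref{lineconn} together with an orthogonality argument showing $z$ lies in no triad; but for a triangle end there is no analogous cited result, and Bixby's Lemma only tells you that \emph{at least one} of $\co(M\ba z)$, $\si(M/z)$ is nice, which is already the one you do not need. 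So the end-of-path case analysis is not complete.

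Both gaps cluster exactly at small ranks and small path descriptions, which is where the paper's finite base-case check is doing the real work. Closing them along your lines would essentially require reproducing the small-case analysis that the paper instead delegates to a catalogue.
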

\begin{proof}
  Note that for any $M$ satisfying the hypotheses of the lemma, $M$ has a $\utfutf$-minor, so $M$ is not a wheel or a whirl.
  Towards a contradiction, suppose that $M$ has an $N$-essential element.
  If $|E(M)| > 9$, then, by Seymour's Splitter Theorem, $M$ has a $3$-connected $\mathbb{P}$-representable $\utfutf$-fragile minor $M'$, with $|E(M')| = |E(M)|-1$, such that $M'$ has an $\{X_8,Y_8,Y_8^*\}$-minor.  Note that $M'$ also has an $N$-essential element.
  It follows that there exists a $3$-connected $\mathbb{P}$-representable $\utfutf$-fragile matroid $M''$, having an $\{X_8,Y_8,Y_8^*\}$-minor, such that $|E(M'')|=9$ and $M''$ has an $N$-essential element.
  The $3$-connected $\mathbb{P}$-representable $\utfutf$-fragile matroids on nine elements are given in \cite[Figure~8]{CCCMWvZ13}.
  It can be readily checked that for each such matroid having an $\{X_8,Y_8,Y_8^*\}$-minor, the matroid has no $N$-essential elements.
  So no such matroid $M''$ exists, a contradiction.
\end{proof}

\begin{lemma}
  \label{utfutfessentialrank4}
  For $\mathbb{P} \in \{\mathbb{H}_5,\mathbb{U}_2\}$ and $N \in \utfutf$,
  let $M$ be a $3$-connected $\utfutf$-fragile $\mathbb{P}$-representable matroid with 
  an $N$-essential
  element and rank at most four.  Then $|E(M)| \le 9$.
\end{lemma}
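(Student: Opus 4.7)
The plan is by contradiction: suppose $|E(M)| \ge 10$. Since $M$ has an $N$-essential element and $|E(M)| \ge 9$, the contrapositive of \cref{noessential2} tells us that $M$ has no $\{X_8,Y_8,Y_8^*\}$-minor. Hence \cref{ccmwvz-result} applies, placing $M$ in one of cases (ii)--(v) of that theorem.

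Case (ii) is immediate: each of $U_{2,6}$, $U_{4,6}$, $P_6$, $M_{9,9}$, and $M_{9,9}^*$ has at most $9$ elements, contradicting $|E(M)| \ge 10$.

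For cases (iii)--(v), the strategy is to leverage the rank bound $r(M)\le 4$ to restrict the wheel-gluings. Each $r$-wheel glued to a triangle via the generalized parallel connection contributes $r-2 \ge 1$ to the rank of the base matroid, since the triangle is a rank-$2$ modular flat. In case (iii), the base $Y_8\ba 4$ already has rank $4$, so either no wheel is attached (forcing $M = Y_8\ba 4$ with $|E(M)|=7$) or we are in the dual situation where $M^*$ is the gluing; in the latter, the relation $r(M)=|E(M)|-r(M^*)\le 4$ combined with the size formula for the construction forces $|E(M)|\le 9$. Cases (iv) and (v) are handled in the same spirit: the base $U_{2,5}$ has rank $2$, so the rank bound $r(M)\le 4$ (or, when $M^*$ is the gluing, its dual analogue) permits at most two $3$-wheels or one $4$-wheel to be glued, each contributing only a controlled number of new elements and keeping $|E(M)|\le 9$.

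The main obstacle is the combinatorial bookkeeping across these gluing subcases: each wheel has a remove set $X$ whose size affects the element count, and care is needed to verify in every admissible $3$-connected configuration (for each choice of $|X|$, and for each of the possible distributions of wheel sizes across the gluing triangles in cases (iv) and (v)) that the resulting matroid has at most $9$ elements. Working through these subcases systematically yields the contradiction and completes the proof.
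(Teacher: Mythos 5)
Your proposal follows the same route as the paper's proof: invoke \cref{noessential2} to rule out an $\{X_8,Y_8,Y_8^*\}$-minor, then apply \cref{ccmwvz-result} and use the rank bound $r(M)\le 4$ to control the wheel-gluings. The paper itself says ``the fact that $r(M)\le 4$ forces $|E(M)|\le 9$; we omit the details,'' so your sketch of the case analysis is attempting to fill in exactly the gap the paper leaves open.

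However, there is a concrete factual error in your treatment of case (iii). You claim that $Y_8 \ba 4$ ``already has rank $4$,'' but $Y_8$ has rank $3$: this follows from the proof of \cref{noessential}, where $Y_8/5\ba\{7,8\}\cong U_{2,5}$, so contracting one element yields a rank-$2$ matroid. Hence $Y_8\ba 4$ has rank $3$, and your conclusion ``either no wheel is attached (forcing $M=Y_8\ba 4$)'' is not justified: one can glue a $3$-wheel and still have $r(M)=4$. The bound $|E(M)|\le 9$ does in fact survive this correction ($|E(M)|=7+3-|X|\le 9$ with $|X|\ge 1$), but the reasoning as written is wrong.

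More significantly, the bookkeeping you acknowledge as ``the main obstacle'' is genuinely nontrivial, particularly when $M^*$ rather than $M$ is the gluing. A naive count in the dual situation does not immediately produce $|E(M)|\le 9$: e.g., in case (iv) the constraint $r(M)=|E(M^*)|-r(M^*)\le 4$ alone permits gluings whose total element count exceeds $9$ unless one also exploits $3$-connectivity and fragility of the construction (cf.\ \cref{cmvzw-fans}) to rule out the offending parameter choices. Your sketch does not engage with these constraints, so the claim that ``working through these subcases systematically yields the contradiction'' is unsubstantiated in precisely the part that is not routine. Since the paper also omits these details, your proposal is no less complete than the published proof, but the missing argument is where the real content lies, and at least one of the steps you do spell out is incorrect.
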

\begin{proof}
  By \cref{noessential2}, either $|E(M)| \le 8$ or $M$ has no $\{X_8,Y_8,Y_8^*\}$-minor.  So we may assume $M$ has no $\{X_8,Y_8,Y_8^*\}$-minor.
  By \cref{ccmwvz-result}, we may also assume that $M$ or $M^*$ can be obtained by gluing wheels to $U_{2,5}$ or $Y_8 \ba 4$.
  In this case, the fact that $r(M) \le 4$ forces $|E(M)| \le 9$; we omit the details.
\end{proof}

\begin{lemma}
  \label{hopeful}
  For $\mathbb{P} \in \{\mathbb{H}_5,\mathbb{U}_2\}$ and $N \in \utfutf$,
  let $M$ be a $3$-connected $\utfutf$-fragile $\mathbb{P}$-representable matroid with at least three $N$-essential elements.
  Then either $|E(M)| \le 7$ or $M \in \{Y_8,Y_8^*\}$.
\end{lemma}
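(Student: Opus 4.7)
The plan is to combine \cref{noessential2}, which rules out $N$-essential elements whenever $M$ has an $\{X_8,Y_8,Y_8^*\}$-minor and $|E(M)| \ge 9$, with the structure theorem \cref{ccmwvz-result} to reduce the claim to a finite verification. Arguing by contradiction, I would assume $|E(M)| \ge 8$ and $M \notin \{Y_8, Y_8^*\}$, and aim to show that $M$ has at most two $N$-essential elements.

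First I would dispose of the case in which $M$ has an $\{X_8,Y_8,Y_8^*\}$-minor. By \cref{noessential2} this case forces $|E(M)| = 8$, so $M$ itself is in $\{X_8, Y_8, Y_8^*\}$, and our exclusion of $Y_8$ and $Y_8^*$ leaves $M = X_8$; a direct check in $X_8$ then shows that fewer than three elements are $N$-essential for each $N \in \utfutf$. In the remaining case $M$ has no $\{X_8,Y_8,Y_8^*\}$-minor, so \cref{ccmwvz-result} places $M$ into one of cases~(ii)--(v). Case~(ii) produces only $U_{2,6}, U_{4,6}, P_6$ (each with six elements, excluded by $|E(M)| \ge 8$) and $M_{9,9}, M_{9,9}^*$, which are handled directly. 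For cases~(iii)--(v), $M$ or $M^*$ is obtained from the seven-element core $Y_8 \ba 4$ or the five-element core $U_{2,5}$ by gluing one or more rank-$r$ wheels, and \cref{cmvzw-fans} guarantees that each gluing produces a fan $F$ with $|F| = 2r - |X|$ in the resulting matroid.

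When $|E(M)|$ is sufficiently large at least one of these fans has size at least~$5$. By \cref{fragilefans2}, no element of such a fan is $\utfutf$-essential, and I would upgrade this to show no fan element is $N$-essential by applying \cref{utfutfprop} to an appropriate simplification or cosimplification of $M \ba e$ or $M / e$: once this auxiliary matroid is $3$-connected with rank and corank at least~$3$, the existence of a $U_{2,5}$-minor is equivalent to the existence of a $U_{3,5}$-minor, so $\utfutf$-deletability (or $\utfutf$-contractibility) of $e$ implies $N$-deletability (respectively $N$-contractibility). The remaining candidates for $N$-essential elements are confined to the bounded-size core, and a finite case analysis over the glued structures completes the argument.

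The main obstacle is this last upgrading step: not-$\utfutf$-essential and not-$N$-essential are not equivalent in general, since $M \ba e$ might, for instance, possess a $U_{3,5}$-minor but no $U_{2,5}$-minor. Verifying that the fan structure in cases~(iii)--(v) forces enough rank and corank on the relevant simplification or cosimplification to invoke \cref{utfutfprop} is the key technical point; once it is in hand, the fan elements are eliminated as candidates and the finite check over the small cores and the specific matroids $X_8$, $M_{9,9}$, $M_{9,9}^*$ is routine.
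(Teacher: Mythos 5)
Your outline uses the same structural ingredients as the paper—\cref{noessential2} plus \cref{ccmwvz-result}, with a direct check of $X_8$, $M_{9,9}$, $M_{9,9}^*$—but the route through the "gluing wheels" cases (iii)--(v) is genuinely different, and it has a gap precisely at the point you flag yourself.

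You propose to eliminate large-fan elements as $N$-essential candidates by upgrading "not $\utfutf$-essential" (from \cref{fragilefans2}) to "not $N$-essential" via \cref{utfutfprop}, then to do a finite check over what remains. There are two problems with this. First, the upgrade step is left open: you would need to verify that the relevant simplification or cosimplification of $M \ba e$ or $M / e$ retains rank and corank at least three, which is exactly the sort of case analysis that Lemmas~\ref{jail-elastic} and~\ref{jail-nonelastic} undertake in a different setting, and nothing in the fan structure immediately guarantees it. Second, even granting the upgrade, the "remaining candidates" are not confined to the bounded-size core: the glued matroid can have several fans of size $3$ or $4$, whose elements \cref{fragilefans2} does not touch, and the $N$-essentiality of a core element depends on the whole matroid $M$, whose size is unbounded through the wheel parameters—so the proposed "finite case analysis" is not, as stated, over a finite family.

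The paper sidesteps both issues with a minor-minimality argument. One takes a minor-minimal $3$-connected $\utfutf$-fragile $\mathbb{P}$-representable matroid $M'$ with at least three $N$-essential elements, obtainable by gluing wheels, with $|E(M')| \ge 8$. If $M'$ has a maximal fan $F$ of length at least $4$, contracting a rim end or deleting a spoke end produces (by \cref{fanendsstrong}) a $3$-connected minor $M''$; by \cref{fragilefanelements} and \cref{fragilefans2}, $M''$ retains a $\utfutf$-minor and is still $\utfutf$-fragile, and since $N$-essentiality is preserved under passing to minors that keep the element, $M''$ has at least as many $N$-essential elements as $M'$. This contradicts minimality, so $M'$ has no fan of length at least $4$, which forces $M' \cong M_{9,18}$ (after an $8$-element check of $M_{8,1}, M_{8,3}, M_{8,5}, M_{8,6}$); a direct check of $M_{9,18}$ then closes the argument. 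No upgrading from $\utfutf$-essential to $N$-essential is ever needed, and the check really is over a finite list. To make your route work you would have to carry out the rank/corank verification and then in effect prove a minimality reduction anyway, so the paper's approach is both different and strictly more economical.
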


\begin{proof}
  First, assume that $M$ has no $\{X_8,Y_8,Y_8^*\}$-minor.
  Then, by \cref{ccmwvz-result}, either $M \in \{M_{9,9},M_{9,9}^*\}$, $M$ or $M^*$ can be obtained by gluing wheels to $U_{2,5}$ or $Y_8 \ba 4$, or $|E(M)| \le 7$.
  It is easily checked that $M_{9,9}$ has no $U_{2,5}$- or $U_{3,5}$-essential elements, so the former is not possible.

  Assume that $M$ or $M^*$ can be obtained by gluing wheels to $U_{2,5}$ or $Y_8 \ba 4$.
  We claim that $|E(M)| \le 7$ in this case.
  Suppose not; then there exists some minor-minimal matroid $M'$ such that $M'$ can be obtained by gluing wheels to $U_{2,5}$ or $Y_8 \ba 4$, $|E(M')| \ge 8$, and $M'$ is a $3$-connected $\utfutf$-fragile $\mathbb{P}$-representable matroid with at least three $N$-essential elements.
  First, we observe that if $|E(M')|=8$, then $M'$ or $(M')^*$ is isomorphic to one of the four matroids referred to in \cite{CCCMWvZ13,CMvZW16} as $M_{8,1}$, $M_{8,3}$, $M_{8,5}$, and $M_{8,6}$ (in particular, $M' \cong M_{8,6}$ in the case that a wheel was glued to $Y_8 \ba 4$), but these matroids have no $U_{2,5}$- or $U_{3,5}$-essential elements.
  So $|E(M')| \ge 9$.

  Suppose that $M'$ has a maximal fan~$F$ of length at least~$4$.
  By contracting a rim end, or deleting a spoke end, of $F$, we obtain a $3$-connected minor~$M''$ of $M'$, by \cref{fanendsstrong}, with $|E(M'')| \ge 8$. 
  By \cref{fragilefanelements,fragilefans2}, $M''$ has a $\utfutf$-minor, so this matroid is still $\utfutf$-fragile, and $M''$ has at least as many $N$-essential elements as $M'$.
  But this shows $M'$ is not minor-minimal, a contradiction.
  So $M'$ has no fans of length at least~$4$.
  It follows that $M'$ can be obtained from $U_{2,5}$ by gluing three wheels so that the resulting fans each have length three: this matroid is referred to as $M_{9,18}$ in \cite{CCCMWvZ13,CMvZW16}.
  But it is easily checked that $M_{9,18}$ has no $U_{2,5}$- or $U_{3,5}$-essential elements.
  We deduce that $|E(M)| \le 7$ in the case that $M$ or $M^*$ can be obtained by gluing wheels to $U_{2,5}$ or $Y_8 \ba 4$.

  We may now assume that $M$ has an $\{X_8,Y_8,Y_8^*\}$-minor.
  Then $M \in \{X_8,Y_8,Y_8^*\}$, for otherwise $M$ has no $N$-essential elements, by \cref{noessential2}.
  It is readily checked that $X_8$ has exactly one $U_{2,5}$-essential element, and exactly one $U_{3,5}$-essential element, whereas $Y_8$ has three $U_{2,5}$-essential elements.  So $M \in \{Y_8,Y_8^*\}$.
\end{proof}

\begin{lemma}
  \label{utfutf3essentialv2}
  For $\mathbb{P} \in \{\mathbb{H}_5,\mathbb{U}_2\}$ and $N \in \utfutf$,
  let $M$ be a $3$-connected $\utfutf$-fragile $\mathbb{P}$-representable matroid with two $N$-essential elements.
  Then either $|E(M)| \le 8$, or $M$ or $M^*$ can be obtained from $U_{2,5}$ by gluing a single wheel such that the resulting fan has at least five elements.
\end{lemma}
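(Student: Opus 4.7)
The plan is to mirror the general strategy of the proof of \cref{hopeful}. I argue by contradiction: assume $M$ is a $3$-connected $\utfutf$-fragile $\mathbb{P}$-representable matroid with two $N$-essential elements, $|E(M)| \ge 9$, and such that neither $M$ nor $M^*$ is obtained from $U_{2,5}$ by gluing a single wheel yielding a fan of at least five elements.

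Since $M$ has an $N$-essential element and $|E(M)| \ge 9$, \cref{noessential2} forbids an $\{X_8, Y_8, Y_8^*\}$-minor in $M$. Invoking \cref{ccmwvz-result}, $M$ (or $M^*$) must therefore be one of the sporadic matroids in $\{U_{2,6}, U_{4,6}, P_6, M_{9,9}, M_{9,9}^*\}$, or be obtained by gluing wheels to $Y_8 \ba 4$ or to $U_{2,5}$. Among the sporadic matroids, only $M_{9,9}$ and its dual have $|E| \ge 9$, and these have no $N$-essential elements (as observed in the proof of \cref{hopeful}). Hence $M$ (or $M^*$) arises from a gluing construction in items (iii)--(v) of \cref{ccmwvz-result}, and by assumption it is not obtained by gluing a single wheel to $U_{2,5}$.

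Next, I take $M$ to be a minor-minimal such counterexample. If $M$ has a maximal fan of length at least four, I contract a rim end or delete a spoke end (via \cref{fanendsstrong}) to obtain a $3$-connected minor $M''$ with $|E(M'')| = |E(M)|-1$. By \cref{fragilefanelements,fragilefans2}, $M''$ still has a $\utfutf$-minor and is $\utfutf$-fragile, and the two $N$-essential elements of $M$ remain $N$-essential in $M''$ (since they survive as elements of a minor that still has an $N$-minor). As long as $|E(M'')| \ge 9$ and $M''$ is not of the excepted single-wheel form, minor-minimality of $M$ is contradicted. The borderline case $|E(M)| = 9$ (where $|E(M'')| = 8$ and minor-minimality alone does not give a contradiction) is handled by direct inspection of the $9$-element matroids arising from the multi-wheel or $Y_8 \ba 4$ gluing constructions of \cref{ccmwvz-result}, verifying that none of them has two $N$-essential elements.

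After this fan reduction, the counterexample has no fan of length at least four, so every glued wheel must be $M(K_4)$ with remove set of size three (yielding a fan of size three). The only matroid with $|E| \ge 9$ arising from the surviving gluing configurations is $M_{9,18}$ (obtained from $U_{2,5}$ by gluing three $M(K_4)$'s with full remove sets) and its dual, and these have no $N$-essential elements, yielding the final contradiction. The main obstacle is ensuring that the reduced matroid $M''$ remains outside the exceptional single-wheel form; this requires checking that when $M$ is obtained by gluing two or more wheels to $U_{2,5}$ (or one wheel to $Y_8 \ba 4$), shortening one fan by a fan-end removal does not collapse the construction to a single-wheel gluing to $U_{2,5}$, which follows because the remaining wheel-gluings (or the $Y_8 \ba 4$ core) persist in $M''$.
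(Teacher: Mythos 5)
Your proof mirrors the paper's: use \cref{noessential2} to exclude an $\{X_8,Y_8,Y_8^*\}$-minor, apply \cref{ccmwvz-result}, rule out $M_{9,9}$, and then run a minor-minimality argument with fan reduction via \cref{fanendsstrong} and \cref{fragilefans2}, treating the $9$-element base case by direct inspection. The paper frames the minimality over the narrower class of matroids obtained by gluing a wheel to $Y_8 \ba 4$ or at least two wheels to $U_{2,5}$ — so the fan reduction automatically stays within the class and the concern you flag in the final paragraph does not arise — but the substance of the argument is the same.
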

\begin{proof}
  By \cref{noessential2}, either $|E(M)| \le 8$ or $M$ has no $\{X_8,Y_8,Y_8^*\}$-minor; so we may assume that $M$ has no $\{X_8,Y_8,Y_8^*\}$-minor.
  We first apply \cref{ccmwvz-result}, deducing that
  either $M \in \{M_{9,9},M_{9,9}^*\}$, $M$ or $M^*$ can be obtained by gluing wheels to $U_{2,5}$ or $Y_8 \ba 4$, or $|E(M)| \le 8$.
  It is easy to check that if $M \in \{M_{9,9},M_{9,9}^*\}$, then $M$ has no $N$-essential elements, so the former is not possible.

  Assume that $M$ or $M^*$ can be obtained by gluing a wheel to $Y_8 \ba 4$, or gluing at least two wheels to $U_{2,5}$.
  We claim that $|E(M)| \le 8$.
  Suppose not; then there exists some minor-minimal matroid $M'$ such that $M'$ can be obtained by gluing a wheel to $Y_8 \ba 4$, or gluing at least two wheels to $U_{2,5}$; $|E(M')| \ge 9$; and $M'$ is a $3$-connected $\utfutf$-fragile $\mathbb{P}$-representable matroid with at least two $N$-essential elements.
  First, we observe that if $|E(M')|=9$, then $M'$ or $(M')^*$ is isomorphic to one of the matroids referred to in \cite{CCCMWvZ13,CMvZW16} as $M_{9,1}$, $M_{9,2}$, $M_{9,7}$, $M_{9,15}$, and $M_{9,18}$ (in particular, $M' \cong M_{9,7}$ in the case that a wheel was glued to $Y_8 \ba 4$).
  But these matroids have at most one $N$-essential element (if such an element exists, it is $\utfutf$-essential).
  So $|E(M')| \ge 10$.

  Suppose that $M'$ has a maximal fan~$F$ of length at least~$4$.
  By contracting a rim end, or deleting a spoke end, of $F$, we obtain a $3$-connected minor~$M''$ of $M'$, by \cref{fanendsstrong}, with $|E(M'')| \ge 9$.
  By \cref{fragilefanelements,fragilefans2}, $M''$ has a $\utfutf$-minor, so this matroid is still $\utfutf$-fragile, and $M''$ has at least as many $N$-essential elements as $M'$.
  But this shows $M'$ is not minor-minimal, a contradiction.
  So $M'$ has no fans of length at least~$4$.
  But then $|E(M')| \le 9$, a contradiction.
  We deduce that $|E(M)| \le 8$ in the case that $M$ or $M^*$ can be obtained by gluing a wheel to $Y_8 \ba 4$, or gluing at least two wheels to $U_{2,5}$.

  Now, the only remaining possibility, when $|E(M)| \ge 9$, is that $M$ or $M^*$ can be obtained by gluing a single wheel to $U_{2,5}$, as required.
\end{proof}

Next we work towards \cref{jail}, which describes some properties of matroids that are $N$-fragile, for $N \in \utfutf$, but not $\utfutf$-fragile.

We require a definition.
Let $M$ be a $3$-connected matroid having the $3$-connected matroid $N$ as a minor.
For $e \in E(M)$, we say $e$ is \emph{$N$-elastic} if $e$ is $N$-flexible and both $\si(M/e)$ and $\co(M \ba e)$ are $3$-connected.

To prove \cref{jail}, we consider two cases: first, when $M$ has a $U_{2,5}$-elastic element, in \cref{jail-elastic}; and then when $M$ has no $U_{2,5}$-elastic elements, in \cref{jail-nonelastic}.

\begin{lemma}
  \label{jail-elastic}
  For $\mathbb{P} \in \{\mathbb{H}_5,\mathbb{U}_2\}$, 
  let $M$ be a $3$-connected $\mathbb{P}$-representable $U_{3,5}$-fragile matroid that is not $\utfutf$-fragile, where $r(M) \ge 4$ and $r^*(M) \ge 4$.
  If $M$ has a $U_{2,5}$-elastic element,
  then $|E(M)| \le 9$, and $M$ has at most two $U_{3,5}$-essential elements.
\end{lemma}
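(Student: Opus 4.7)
The plan is a case analysis on the $U_{3,5}$-status of $e$. Since $M$ is $U_{3,5}$-fragile, the $U_{2,5}$-elastic element $e$ cannot be $U_{3,5}$-flexible, so $e$ is $U_{3,5}$-essential, $U_{3,5}$-contractible (but not deletable), or $U_{3,5}$-deletable (but not contractible). In the first and third cases, $\si(M/e)$ is $3$-connected by $U_{2,5}$-elasticity, has a $U_{2,5}$-minor but no $U_{3,5}$-minor, and has rank $r(M)-1\ge 3$ and corank at least $3$ (since a $U_{2,5}$-minor forces corank at least $3$); by \cref{utfutfprop} this yields a $U_{3,5}$-minor of $\si(M/e)$, a contradiction. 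So $e$ must be $U_{3,5}$-contractible and not $U_{3,5}$-deletable.

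The same reasoning applied to $\co(M\ba e)$ gives that $\co(M\ba e)$ is $3$-connected with a $U_{2,5}$-minor but no $U_{3,5}$-minor (the latter because $M\ba e$ has no $U_{3,5}$-minor by our case). \Cref{utfutfprop} then forces its rank to be at most $2$, and since it is $3$-connected of rank~$2$ with a $U_{2,5}$-minor, $\co(M\ba e)\cong U_{2,n}$ for some $n\ge 5$. A rank and size count, using that $M$ is $3$-connected (so $M\ba e$ has no coloops, by the usual cocircuit-elimination argument) and that every non-trivial series class of $M\ba e$ arises from a triad of $M$ through $e$, then yields $r^{*}(M)=n-1$ and $|E(M)| = r(M) + n - 1 = r(M)+r^{*}(M)$.

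The crux is to bound $r(M) + r^{*}(M)$ by $9$. For this I would combine three ingredients: (i) the cocircuit structure of $M\ba e$ as a series extension of $U_{2,n}$, which forces triads of $M$ through $e$ (and further orthogonality constraints by cocircuit elimination when a series class has size at least $3$); (ii) the $3$-connectivity and cosimplicity of $\si(M/e)$, which, together with $r^{*}(\si(M/e))\ge 3$ (required for the $U_{2,5}$-minor), upper-bounds the number of non-trivial parallel classes of $M/e$, i.e., the number of triangles of $M$ through $e$; and (iii) the inheritance of $U_{3,5}$-fragility by $\si(M/e)$, which is itself $3$-connected and $\mathbb{P}$-representable with both a $U_{2,5}$- and a $U_{3,5}$-minor. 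Orthogonality between the many triads and triangles through $e$, combined with $\mathbb{P}$-representability and the fact that a triangle-triad of $M$ would be $2$-separating (impossible for $|E(M)|\ge 6$), should force $r(M)\le 5$ and $r(M)+n\le 10$, giving $|E(M)|\le 9$. The main obstacle is orchestrating these constraints tightly enough to reach this sharp bound.

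Finally, with $|E(M)|\le 9$ in hand, the bound of at most two $U_{3,5}$-essential elements should follow by inspecting the handful of surviving structures: the element $e$ itself is $U_{3,5}$-contractible and hence not essential, and the elements of $M\ba e$ lying outside the non-trivial series classes of $M\ba e$ can be shown to be $U_{3,5}$-deletable (they are precisely the $U_{2,n}$-representatives), so only elements in the non-trivial series classes are candidates to be $U_{3,5}$-essential, and fragility together with orthogonality caps their number at two. As a cross-check, in cases where $\si(M/e)$ turns out to be $\utfutf$-fragile, applying \cref{hopeful} or \cref{utfutfessentialrank4} to $\si(M/e)$ yields the same bound.
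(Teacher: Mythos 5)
Your first step matches the paper exactly: from $U_{2,5}$-elasticity and $U_{3,5}$-fragility, together with \cref{utfutfprop}, one deduces that $r(\co(M\ba e))=2$ and hence $\co(M\ba e)\cong U_{2,t}$ for some $t\ge 5$. (You phrase it as a case analysis on the $U_{3,5}$-status of $e$, concluding $e$ must be $U_{3,5}$-contractible and not deletable; the paper instead derives $U_{3,5}$-flexibility of $e$ directly from the hypothesis $r(\co(M\ba e))\ge 3$. These are the same argument reorganised.) Your bookkeeping $|E(M)|=r(M)+t-1$ is also correct.

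However, there is a genuine gap at what you call ``the crux'', and you acknowledge it yourself. The paper's proof does not obtain $|E(M)|\le 9$ by any soft counting argument; it proceeds through a lengthy structural case analysis. Two ingredients you do not supply are essential. First, the bound $t\le 6$: this comes directly from $\mathbb{P}$-representability of $M$ (since $U_{2,7}$ is not $\mathbb{H}_5$- or $\mathbb{U}_2$-representable), and without it the size of $\co(M\ba e)$ is unbounded. You mention $\mathbb{P}$-representability only in passing, not as the source of this bound. Second, the paper must show that for elements $f$ in non-trivial series classes of $M\ba e$, at least one of $\si(M/f)$ and $\co(M\ba f)$ is $3$-connected with rank and corank at least $3$ (else $f$ becomes $U_{3,5}$-flexible via \cref{utfutfprop} applied to $M/f$ or $M\ba f$); establishing this requires the detailed \lcnamecref{presimpcase1}s \cref{presimpcase1,presimpcase2,simpcase,precosimpcase,cosimpcase} about vertical and cyclic $3$-separations, and the bound then emerges from a case-by-case analysis of the number and sizes of non-trivial series classes of $M\ba e$ (one class; two classes of size two; two with one of size at least three; three or more), with several subcases ending in a contradiction rather than a bound. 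Your ``ingredients'' (i)--(iii) gesture at the right orthogonality and connectivity phenomena, but ``orchestrating these constraints'' is precisely where the proof lives; as written, this is a plan, not a proof.

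Your proposed cross-check via \cref{hopeful} or \cref{utfutfessentialrank4} also does not go through as stated, because those lemmas require the matroid to be $\utfutf$-fragile, whereas $\si(M/e)$ is only known to be $U_{3,5}$-fragile here; nothing in your argument rules out $\si(M/e)$ having a $U_{2,5}$-flexible element, so $\utfutf$-fragility of $\si(M/e)$ is not automatic. Finally, the claim about at most two $U_{3,5}$-essential elements is not a consequence of the size bound alone; in the paper it is verified separately in each surviving configuration (for example, by exhibiting enough $U_{3,5}$-deletable or $U_{3,5}$-contractible elements explicitly).
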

\begin{proof}
  Let $e$ be a $U_{2,5}$-elastic element of $M$, so $e$ is $U_{2,5}$-flexible, and both $\si(M/e)$ and $\co(M \ba e)$ are $3$-connected.
\begin{claim}
  \label{coranktwo}
  $r(\co(M \ba e))=2$.
\end{claim}
\begin{subproof}
  If $r(\co(M \ba e)) \ge 3$, then, as $\co(M \ba e)$ has a $U_{2,5}$-minor, \cref{utfutfprop} implies it also has a $U_{3,5}$-minor.
  Moreover, $\si(M / e)$ has a $U_{2,5}$-minor, and this matroid has rank at least~$3$ (since $r(M) \ge 4$) and corank at least~$3$ (since it has a $U_{2,5}$-minor).  So, by \cref{utfutfprop}, $M/e$ has a $U_{3,5}$-minor.
  But then $e$ is $U_{3,5}$-flexible, a contradiction.
\end{subproof}

  Now, by \cref{coranktwo}, and since $M \ba e$ has a $U_{2,5}$-minor, $\co(M \ba e) \cong U_{2,t}$ for some $t \ge 5$.
  Therefore, the union of three series classes of $M \ba e$ is a circuit.
  Since $M$ is $\mathbb{P}$-representable, $t \le 6$.

  We work towards showing, in \cref{simpcase,cosimpcase}, that when an element $f$ is in a non-trivial series class of $M \ba e$, then, except in some particular situations, both $\si(M/f)$ and $\co(M \ba f)$ are $3$-connected and have rank and corank at least~$3$.

  \begin{claim}
    \label{presimpcase1}
  Let $f \in E(M\ba e)$ where $f$ is in a non-trivial series class~$S$ of $M \ba e$ with $|S| \ge 3$.  Then $\si(M/f)$ is $3$-connected.
  \end{claim}
  \begin{subproof}
    Suppose $\si(M/f)$ is not $3$-connected.
    Then $M$ has a vertical $3$-separation $(X,f,Y)$.
    In particular, $f \notin \cocl(X)$ and $f \notin \cocl(Y)$.
    Let $f'$ and $f''$ be distinct elements in $S-f$.
    Since $M$ is $3$-connected, $\{e,f',f''\}$ is a triad of $M$.
    We may assume that at least two elements of this triad are contained in $X$, say.
    But then $f \in \cocl(X)$, a contradiction.
  \end{subproof}

  \begin{claim}
    \label{presimpcase2}
    Suppose $M \ba e$ has at least two non-trivial series classes, and $\{f,f'\}$ is a series class of $M \ba e$.
    If $\si(M/f)$ is not $3$-connected, then $M$ has a vertical $3$-separation $(X,f,Y)$ such that $e \in X$ and $f' \in Y$, and either
    \begin{enumerate}[label=\rm(\Roman*)]
      \item every non-trivial series class of $M \ba e$ distinct from $\{f,f'\}$ is contained in $X$, or\label{psc2i}
      \item $M \ba e$ has precisely two non-trivial series classes, $\{f,f'\}$ and $G$, and $X = (G-g) \cup e$ for some $g \in G$.\label{psc2ii}
    \end{enumerate}
  \end{claim}
  \begin{subproof}
    Suppose $\si(M/f)$ is not $3$-connected, so $M$ has a vertical $3$-separation $(X,f,Y)$.
    Since $f \notin \cocl(X)$ and $f \notin \cocl(Y)$, we may assume that $e \in X$ and $f' \in Y$. 
    Let $G$ be a non-trivial series class of $M \ba e$ distinct from $\{f,f'\}$, with distinct elements $g,g' \in G$.
    If $\{g,g'\} \subseteq Y$, then $e \in \cocl(Y)$, so $f \in \cocl(Y)$, a contradiction.

    Now suppose $g \in Y$ and $G-g \subseteq X$.
    It suffices to prove that when \ref{psc2ii} does not hold, then $(X \cup g, f, Y-g)$ is a vertical $3$-separation.
    Observe that $g \in \cocl(X)-X$ and $f \in \cl(X)-X$.
    By \cref{gutsandcoguts}, $\cl(X) = X \cup f$ and $\cocl(X) = X \cup g$.
    Hence, $G$ and $\{f,f'\}$ are the only non-trivial series classes of $M \ba e$ not contained in $X$.
    Recall that the union of three series classes of $M \ba e$ is a circuit.
    If $X$ contains two series classes of $M \ba e$, then, as $G-g \subseteq X$, we have $g \in \cl(X)$, a contradiction.
    In particular, if $M \ba e$ has at least four non-trivial series classes, then two are contained in $X$, a contradiction.
    So $M \ba e$ has at most three non-trivial series classes, 
    and $Y$ contains at least two trivial series classes.
    In particular, $|Y| \ge 4$.

    If $X$ does not contain any series class of $M \ba e$, then $X = (G-g) \cup e$ and \ref{psc2ii} holds.
    So assume $X$ contains a series class $S$ of $M \ba e$.
    Let $C^*$ be a cocircuit contained in $Y$, and suppose $g \in C^*$.
    If $f' \notin C^*$, then the circuit $S \cup G \cup \{f,f'\}$ intersects $C^*$ in a single element $g$, contradicting orthogonality.
    Similarly, if $C^*$ avoids some element $y \in Y-\{f',g\}$, this violates orthogonality with the circuit $S \cup G \cup y$.
    So $C^*=Y$.
    Now $r(Y)=4$ and $r^*(Y)=|Y|-1$, so $\lambda(Y) = 3$, a contradiction.
    Thus $g \notin C^*$.
    It now follows that $(X \cup g, f, Y-g)$ is a vertical $3$-separation, as required.
  \end{subproof}

  \begin{claim}
    \label{simpcase}
    Let $f \in E(M\ba e)$ where $f$ is in a non-trivial series class~$S$ of $M \ba e$.
    Suppose either $|S| \ge 3$, or $M \ba e$ has at least three non-trivial series classes.
    Then $\si(M/f)$ is $3$-connected and has rank and corank at least~$3$.
  \end{claim}
  \begin{subproof}
    Suppose $\si(M/f)$ is not $3$-connected.
    By \cref{presimpcase1}, $|S| = 2$, so $M \ba e$ has at least three non-trivial series classes.
    Let $S = \{f,f'\}$.
    By \cref{presimpcase2}, $M$ has a vertical $3$-separation $(X,f,Y)$ such that $f' \in Y$, and $X$ contains $e$ and every non-trivial series class of $M \ba e$ distinct from $S$. 
    Note, in particular, that there are at least two non-trivial series classes contained in $X$.
    The set $Y$ consists of $f'$ and a subset of the elements in trivial series classes of $M \ba e$, with $|Y| \ge 3$.
    Hence $X \cup f$ spans $E(M)$, contradicting that $(X,f,Y)$ is a vertical $3$-separation.
    We deduce that $\si(M/f)$ is $3$-connected.

    Clearly 
    $r(\si(M/f)) \ge 3$, since $r(M) \ge 4$.
    We claim that $r^*(\si(M/f)) \ge 3$.
    Suppose $f$ is in a triangle~$T$ of $M$.
    If $T$ is also a triangle of $M \ba e$, then $T$ contains $S$ by orthogonality, implying $\co(M \ba e)$ is not $3$-connected, a contradiction.
    So $e \in T$.
    But then, for every series pair $\{g,g'\}$ of $M \ba e$, the set $\{e,g,g'\}$ is a triad that meets $T$.
    By orthogonality, $T$ contains an element of each series pair of $M \ba e$.
    In the case that $M \ba e$ has at least three non-trivial series classes, we deduce that $f$ is not in a triangle, so $r^*(M/f) \ge 4$.
    Otherwise, when $|S| \ge 3$, it follows that $f$ is in at most one triangle, so $r^*(\si(M/f)) \ge r^*(M/f) - 1 \ge 3$.
  \end{subproof}

  Recall that $\co(M \ba e) \cong U_{2,t}$ for $t \in \{5,6\}$.
  \begin{claim}
    \label{precosimpcase}
    Let $f \in E(M \ba e)$ where $f$ is in a non-trivial series class~$S$ of $M \ba e$.
    Suppose $\co(M \ba f)$ is not $3$-connected. 
    Then $M$ has a cyclic $3$-separation $(X,f,Y)$ such that
    \begin{enumerate}[label=\rm(\Roman*)]
      \item $e \in X$,
      \item $X \cup f$ is coclosed, 
      \item $Y$ is the union of at least $t-2$ trivial series classes of $M \ba e$, and\label{pcsc3}
      \item there is a circuit~$C$ such that $(S-f) \cup e \subseteq C \subseteq X$.\label{pcsc4}
    \end{enumerate}
  \end{claim}
  \begin{subproof}
    Clearly $M$ has a cyclic $3$-separation $(X,f,Y)$ where, without loss of generality, $e \in X$ and $X \cup f$ is coclosed.  It remains to show \ref{pcsc3} and \ref{pcsc4} hold.
  First, observe that as $e \in X$ and $X \cup f$ is coclosed, for a series class $S'$ of $M \ba e$ distinct from $S$, either $S' \subseteq X$ or $S' \subseteq Y$.
  Similarly, either $S -f \subseteq X$ or $S -f \subseteq Y$.
  
  The set $Y$ contains a circuit~$C_Y$.
  Since $C_Y$ is also a circuit of $M \ba e$, if $C_Y$ meets a series class~$S'$ of $M \ba e$, then, by orthogonality, $C_Y$ contains $S'$.
  Recall that the union of three series classes of $M \ba e$ is a circuit.
  It follows that $C_Y$ is the union of three series classes of $M \ba e$; in particular, $C_Y$ avoids $S$.
  If $S -f \subseteq Y$, then $f \in \cl(Y)$ so $f \notin \cocl(X)$, a contradiction.
  So $S -f \subseteq X$.
  If there are series classes $S'$ and $S''$ of $M \ba e$ contained in $X$, such that $S$, $S'$ and $S''$ are distinct, then $f \in \cl(X)$ and thus $f \notin \cocl(Y)$, a contradiction.
  So either $X-e = S -f$, or $X-e = S' \cup (S - f)$ for some series class $S'$ of $M \ba e$.

  There is also a circuit~$C_X$ contained in $X$.
  If $e \notin C_X$, then $C_X$ is properly contained in the union of three series classes of $M \ba e$, a contradiction.
  So $e \in C_X$. 
  Then $e \notin \cocl(Y)$, but $e$ blocks each non-trivial series class of $M \ba e$.
  Thus $Y$ is the union of trivial serial classes of $M \ba e$, of which there are at least $t-2$, so \ref{pcsc3} holds.
  Moreover, $S-f \subseteq C_X$ by orthogonality. So \ref{pcsc4} holds, thus proving the claim.
  \end{subproof}

  \begin{claim}
    \label{cosimpcase}
    Let $S$ be a non-trivial series class of $M \ba e$.
    \begin{enumerate}[label=\rm(\Roman*)]
      \item If $M \ba e$ has at least three non-trivial series classes, then, for each $f \in S$, the matroid $\co(M\ba f)$ is $3$-connected.\label{csc1}
      \item If $M \ba e$ has precisely two non-trivial series classes, then there exists some $S' \subseteq S$ with $|S'| \ge |S|-1$ such that, for each $f \in S'$, the matroid $\co(M\ba f)$ is $3$-connected.\label{csc2}
    \end{enumerate}
    Moreover, if $\co(M \ba f)$ is $3$-connected for some $f \in S$, then $r(\co(M \ba f)) \ge 3$ and $r^*(\co(M \ba f)) \ge 3$.
  \end{claim}
  \begin{subproof}
  Let $f \in S$.
  Suppose $\co(M \ba f)$ is not $3$-connected. 
  Then $M$ has a cyclic $3$-separation $(X,f,Y)$ as described in \cref{precosimpcase}.
  In particular, \cref{precosimpcase}\ref{pcsc3} implies that when $M \ba e$ has at least three non-trivial series classes, $\co(M \ba f)$ is $3$-connected for each $f \in S$, proving \ref{csc1}.

  For \ref{csc2}, assume $M \ba e$ has precisely two non-trivial series classes.
  By the foregoing, $Y$ is the union of the $t-2$ trivial series classes of $M \ba e$, and there is a circuit~$C$ such that $(S-f) \cup e \subseteq C \subseteq X$.
  Let $f' \in S-f$ and suppose $\co(M \ba f')$ is not $3$-connected.
  Then, by another application of \cref{precosimpcase}, $M$ has a cyclic $3$-separation $(X', f', Y')$ where $Y'=Y$. 
  Since $Y' = Y$, we have $C \subseteq X \subseteq X' \cup f'$.
  But $f' \in C$, so $f' \in \cl(X')$, and hence $f' \notin \cocl(Y')$, a contradiction.
  This proves that for $S' = S-f$, each $f' \in S'$ has the property that $\co(M \ba f')$ is $3$-connected.
  
  Henceforth, let $f \in S$ such that $\co(M \ba f)$ is $3$-connected, and suppose $M \ba e$ has at least two non-trivial series classes.
  Clearly $r^*(\co(M \ba f)) \ge 3$, since $r^*(M) \ge 4$.
  It remains to show that $r(\co(M \ba f)) \ge 3$.
  Let $(S_1,S_2,\dotsc,S_t)$ be a partition of $E(M \ba e)$ into series classes, with $f \in S_1$.
  Recall that $t \ge 5$ and, for distinct $i,j,k \in [t]$, the set $S_i \cup S_j \cup S_k$ is a circuit.
  Suppose $f$ is in a triad $T^*$ of $M$ that is not contained in $S_1 \cup e$.
  Without loss of generality, $S_2 \cap T^* \neq \emptyset$.
  Let $h \in \{1,2\}$ and $i \in \{3,4,5\}$, so $C_{h,i} = (S_h \cup S_3 \cup S_4 \cup S_5) - S_i$ is a circuit.
  By orthogonality with $C_{2,i}$ for each such $i$, we have $|S_2 \cap T^*| = 2$.
  Similarly, orthogonality with $C_{1,i}$ for each $i$ implies that $|S_1 \cap T^*| = 2$, a contradiction.
  So the only triads of $M$ containing $f$ are contained in $S_1 \cup e$.
  Since $M \ba e$ has at least one non-trivial series classes other than $S_1$, we have $r(\co(M \ba f)) \ge r(\co(M \ba e)) + 1 = 3$.
  \end{subproof}

  Recall that $\co(M \ba e) \cong U_{2,t}$ for $t \in \{5,6\}$.
  Assume first that $M \ba e$ has precisely one non-trivial series class $S$.
  Then $S \cup e$ is a cosegment of $M$.
  Let $G = S \cup e$ and $L = E(M) - G$.
  Note that $M / L \cong U_{|G|-2,|G|}$ and $M|L \cong U_{2,|L|}$; moreover, $\co(M \ba e) \cong U_{2,|L|+1}$, so $|L| \in \{4,5\}$.
  Suppose that $|G| \ge 5$.
  Then each element $\ell \in L$ is $U_{3,5}$-contractible.
  Moreover, $L-\ell$ is a non-trivial parallel class in $M / \ell$, implying each $\ell \in L$ is $U_{3,5}$-deletable.
  So each $\ell \in L$ is $U_{3,5}$-flexible, a contradiction.
  Hence $|G| \le 4$.
  Now $|E(M)| = |L| + |G| \le 9$.
  Suppose $|E(M)|=9$. Then $|L|=5$ and $|G|=4$, in which case $r(M)=4$ and $r^*(M) = 5$.
  Each $s \in S$ is $U_{2,5}$-contractible so, by \cref{simpcase,utfutfprop}, $s$ is also $U_{3,5}$-contractible; and each $\ell \in L$ is $U_{2,5}$-deletable so, by \cref{lineconn}, $M \ba \ell$ is $3$-connected, and hence $\ell$ is $U_{3,5}$-deletable.
  So $M$ has at most one $U_{3,5}$-essential element, and thus the \lcnamecref{jail-elastic} holds in this case.

  Now suppose $|E(M)|=8$.  Then, since $r^*(M) \ge 4$, we have $|L|=|G|=4$.
  Consider $M/e$.  
  Since $M/e$ has a $U_{2,5}$-minor, there exist distinct $s,s' \in S$ such that $\{s,s',\ell\}$ is independent for each $\ell \in L$.
  Thus, $\{s,s',e,\ell\}$ is independent in $M$ for each $\ell \in L$.
  Let $\{s''\} = S-\{s,s'\}$ and note that $S \cup \ell$ is independent in $M$ for each $\ell \in L$.
  Choose $\ell' \in L$ so that $\{s,s'',e,\ell'\}$ is a circuit, or if no such circuit exists, then choose $\ell' \in L-\ell$ arbitrarily.
%
  Then $M /s \ba \ell' \cong P_6$ where $L-\ell'$ is the unique triangle in this matroid.
  It follows that each $\ell \in L$ is $U_{3,5}$-deletable.
  By orthogonality, each circuit~$C$ that meets $G$ has $|C \cap G| \ge 3$.
  Thus, by the foregoing, there exists $\ell \in L$ such that $G' \cup \ell$ is independent for all $G' \subseteq G$ with $|G'|=3$.
  Then $\si(M/\ell) \cong U_{3,5}$.
  So $\ell$ is $U_{3,5}$-flexible, a contradiction.

  Assume next that $M \ba e$ has precisely two non-trivial series classes, each of size two.
  Then $|E(M)| = t + 3 \le 9$.
  Let $S_1$ and $S_2$ be the two series pairs of $M \ba e$. 
  Suppose $|E(M)|=9$, so 
  $r(M) = 4$ and $r^*(M) = 5$.
  Since $\co(M \ba e) \cong U_{2,6}$, each element in $E(M)-e$ is $U_{2,5}$-deletable.
  By \cref{cosimpcase,utfutfprop}, for each $i \in \{1,2\}$ there exists $s_i \in S_i$ such that $M \ba s_i$ has a $U_{3,5}$-minor.
  Now $M / s_1$, say, also has a $U_{2,5}$-minor.
  If $s_1$ is in a triangle of $M$, then, by orthogonality, this triangle is contained in $S_1 \cup S_2 \cup e$, in which case $S_1 \cup S_2 \cup e$ is a $5$-element fan. It follows that $s_1$ is in at most one triangle, so 
  $\si(M/s_1)$ has rank and corank at least~$3$.
  If $\si(M/s_1)$ is $3$-connected, then, by \cref{utfutfprop}, $M / s_1$ has a $U_{3,5}$-minor, so $s_1$ is $U_{3,5}$-flexible, a contradiction.
  So $\si(M / s_1)$ is not $3$-connected.
  Then, by \cref{presimpcase2},  $(S_2 \cup e, s_1, Y)$ is a vertical $3$-separation.
  Similarly, $(S_1 \cup e, s_2, Y)$ is a vertical $3$-separation.
  So there is a circuit contained in $S_1 \cup \{e,s_2\}$, and a circuit contained in $S_2 \cup \{e,s_1\}$.  If these circuits are distinct, then, by circuit elimination, there is a circuit contained in $S_1 \cup S_2$, a contradiction.
  So $\{s_1,e,s_2\}$ is a triangle and $S_1 \cup S_2 \cup e$ is a $5$-element fan.
  Now, it is easily verified that $M$ has no $U_{3,5}$-essential elements, and thus the \lcnamecref{jail-elastic} holds in this case.

  Now suppose $|E(M)| = 8$. 
  Let $E(M)-(S_1 \cup S_2 \cup e) = \{s_3,s_4,s_5\}$.
  Each $s \in S_1 \cup S_2$ is $U_{2,5}$-contractible.
  First, suppose there is a circuit $\{s_1,e,s_2\}$, where $S_i = \{s_i,s_i'\}$ for $i \in \{1,2\}$.
  Then $S_1 \cup S_2 \cup e$ is a $5$-element fan, and, by \cref{fragilefanelements,fragilefans}, $s_i'$ is $U_{3,5}$-contractible, for $i \in \{1,2\}$.
  By \cref{fanmiddle}, $\si(M/e)$ is $3$-connected, since $\{s_1,s_2,s_\ell\}$ is not a triad for any $\ell \in \{3,4,5\}$, by orthogonality.
  Note that $\{s_1,e,s_2\}$ is the unique triangle containing $e$, so $\si(M/e)$ has rank and corank three.
  Since $e$ is $U_{2,5}$-contractible, $e$ is also $U_{3,5}$-contractible by \cref{utfutfprop}.
  Since $s_i$ is in a parallel pair in $M / e$, it follows that $s_i$ is $U_{3,5}$-deletable, for $i \in \{1,2\}$.
  Similarly, $s_i'$ is $U_{3,5}$-contractible for $i \in \{1,2\}$.
  Now $\co(M \ba e) \cong M \ba e / s_1',s_2' \cong U_{2,5}$.
  If $M / s_1',s_2' \cong U_{2,6}$, then each element in $\{s_3,s_4,s_5\}$ is $U_{3,5}$-deletable, so the \lcnamecref{jail-elastic} holds.
  Otherwise, it follows that there is a circuit $\{e,s_1',s_2',s_\ell\}$, for some $\ell \in \{3,4,5\}$.
  But then $s_\ell$ is $U_{3,5}$-deletable, and again the \lcnamecref{jail-elastic} holds.

  Now suppose there is no circuit of the form $\{s_1,e,s_2\}$ for $s_1 \in S_1$ and $s_2 \in S_2$.
  Let $S_1 = \{s_1,s_1'\}$ and $S_2 = \{s_2,s_2'\}$, and let $\{i,j\} = \{1,2\}$.
  First, observe that if $\{s_i,e\} \cup S_j$ is a circuit, then it is readily checked that $M / s_j' \ba s_j \cong M / s_j \ba s_j' \cong U_{3,5}$, so $s_j$ is $U_{3,5}$-flexible, a contradiction.
  By \cref{presimpcase2}, either $\si(M/s_i)$ is $3$-connected, or there is a vertical $3$-separation $(X, s_i, Y)$ such that $s_i' \in Y$ and $S_{j} \cup e \subseteq X$.
  In the latter case $r(Y)=3$, so, by closing $Y \cup s_i$, we may assume that $X = S_j \cup e$ and $Y = \{s_i',s_3,s_4,s_5\}$.
  Then $s_i$ is in a circuit contained in $\{s_i,e\} \cup S_j$.
  Such a circuit must contain $e$, so $\{s_i,e\} \cup S_j$ is a circuit, a contradiction.
  So $\si(M/s_i)$ is $3$-connected.
  Similarly, $\si(M/s)$ is $3$-connected for all $s \in S_1 \cup S_2$.
  Moreover, $\si(M/s)$ has rank and corank at least~$3$, so, by \cref{utfutfprop}, $s$ is $U_{3,5}$-contractible.
  If there exists some $\ell \in \{3,4,5\}$ such that $s_\ell$ is not in a $4$-element circuit with $e$ that meets $S_1$ and $S_2$, then $M/s_\ell \cong Q_6$, in which case $s_\ell$ is $U_{3,5}$-contractible, and $s_{\ell'}$ is $U_{3,5}$-deletable for each $\ell' \in \{3,4,5\} - \ell$.
  Then $M$ has no $U_{3,5}$-essential elements, so the \lcnamecref{jail-elastic} holds.
  So for each $\ell \in \{3,4,5\}$, there is a $4$-element circuit containing $\{s_\ell,e\}$ that meets $S_1$ and $S_2$.
  Note that no two of these three circuits intersects $S_1 \cup S_2$ in the same pair of elements, for otherwise $r(M)=3$.
  So, without loss of generality, $\{s_1,s_2,s_3,e\}$, $\{s_1,s_2',s_4,e\}$, and $\{s_1',s_2,s_5,e\}$ are circuits.
  Now $M/e$ has triangles $\{s_1,s_2,s_3\}$, $\{s_1,s_2',s_4\}$, $\{s_1',s_2,s_5\}$, and $\{s_3,s_4,s_5\}$.
  Since $M/e$ is a $7$-element rank-$3$ matroid with a $U_{2,5}$-minor, it has some element that is not in two distinct triangles.
  It follows that $M/e$ has precisely the four aforementioned triangles.
  But now $M/e \ba s_1 \ba s_5 \cong U_{3,5}$, so $s_1$ is $U_{3,5}$-flexible, a contradiction.

  We may now assume that $M \ba e$ has at least two non-trivial series classes where, if there are precisely two non-trivial series classes, then one has size at least~$3$.
  First, assume that $\co(M \ba e) \cong U_{2,6}$.
  Let $S$ be a non-trivial series class of $M \ba e$ where, if there are only two such series classes, then $|S| \ge 3$.
  By \cref{cosimpcase}, there exists some $f \in S$ such that $\co(M \ba f)$ is $3$-connected and has rank and corank at least~$3$.
  Now $\co(M \ba (S \cup e)) \cong U_{2,5}$, so $\co(M \ba f)$ has a $U_{2,5}$-minor.
  By \cref{utfutfprop}, $\co(M \ba f)$, and hence $M \ba f$, has a $U_{3,5}$-minor.
  Moreover, since $\co(M \ba e)$ has a $U_{2,5}$-minor, $M/f$ has a $U_{2,5}$-minor.
  By \cref{simpcase,utfutfprop}, $M/f$ has a $U_{3,5}$-minor, so $f$ is $U_{3,5}$-flexible, a contradiction.

  We may now assume that $\co(M \ba e) \cong U_{2,5}$.
  Let $(S_1,S_2,\dotsc,S_5)$ be a partition of $E(M \ba e)$ into series classes where, for some $h \in \{2,3,4,5\}$, we have $|S_i| \ge 2$ if and only if $i \in [h]$, and, in the case that $h=2$, we have $|S_2| \ge 3$.
  For $i \in [5]-[h]$, let $S_i = \{s_i\}$.

  \begin{claim}
    \label{noparallelcases}
    Let $s_i \in S_i$ for $i \in [h]$.
    Then $M / (\bigcup_{i \in [h]} S_i-s_i)$ is loopless and has a single parallel pair, which contains $e$.
  \end{claim}
  \begin{subproof}
  The matroid $M' = M / (\bigcup_{i \in [h]} S_i - s_i)$ has rank two, and $M' \ba e \cong U_{2,5}$, so either $M' \cong U_{2,6}$, or $e$ is a loop in $M'$, or $M'$ has a single parallel pair, which contains $e$.
  For $i \in [h]$, let $S_i^- = S_i-s_i$.
  Firstly, suppose that $M' \cong U_{2,6}$.
  Then $e \notin \cl_M(\bigcup_{i \in [h]} S_i^-)$, and it follows that $M / (\bigcup_{i \in [h-1]} S_i^-) \ba S_h \cong U_{2,5}$.
  By \cref{cosimpcase}, there exists $f \in S_h$ such that $\co(M \ba f)$ is $3$-connected with rank and corank at least~$3$.
  Since $M \ba f$ has a $U_{2,5}$-minor, \cref{utfutfprop} implies that $M \ba f$ has a $U_{3,5}$-minor.
  Moreover, by \cref{simpcase,utfutfprop}, $M/f$ has a $U_{3,5}$-minor, so $f$ is $U_{3,5}$-flexible, a contradiction.

  Now we may assume that $e$ is a loop in $M'$.
  Let $T = \bigcup_{i \in [h]} S_i^-$.
  Then there is a circuit~$C$ contained in $T \cup e$.
  Note that, by orthogonality, if $C$ meets $S_i^-$ for some $i \in [h]$, then $S_i^- \subseteq C$.
  Since $|C| \ge 3$, either $C$ meets some $S_i^-$ where $|S_i^-| \ge 2$, or $C$ meets $S_i^-$ and $S_j^-$ for distinct $i,j \in [h]$.
  Thus, in the case that 
  $h=2$, we have $S_2^- \subseteq C$.
  For any $c \in C$, the matroid $M \ba c$ has a $U_{2,5}$-minor, since $\co(M \ba e) \cong M \ba e / T \cong M \ba c / e / (T-c) \cong U_{2,5}$.
  If $h \ge 3$, then, by \cref{cosimpcase}\ref{csc1}, $\co(M \ba c)$ is $3$-connected and has rank and corank at least~$3$, for each $c \in C$.
  Otherwise, when $h=2$, the circuit $C$ contains $S_2^-$ with $|S_2^-| \ge 2$, and by \cref{cosimpcase}\ref{csc2} there exists some $c \in S_2^-$ such that $\co(M \ba c)$ is $3$-connected and has rank and corank at least~$3$.
  In either case, $M \ba c$ has a $U_{3,5}$-minor by \cref{utfutfprop}.
  Moreover, by \cref{simpcase,utfutfprop}, $M / c$ has a $U_{3,5}$-minor.
  So $c$ is $U_{3,5}$-flexible, a contradiction.
\end{subproof}

  By \cref{noparallelcases}, we may now assume, for every choice of $s_i$'s, that $\{e,s_j\}$ is a parallel pair in $M / (\bigcup_{i \in [h]} S_i - s_i)$, for some $j \in [5]$.
  So $M \ba s_j$ has a $U_{2,5}$-minor.

  Assume next that $M \ba e$ has precisely two non-trivial series classes $S_1$ and $S_2$, with $|S_1| = 2$ and $|S_2|=3$.
  We will show that this case is contradictory.
  Let $S_1 = \{s_1,s_1'\}$ and $S_2 = \{s_2,s_2',s_2''\}$.
  Since $|S_1 \cup S_2 \cup e|=6$ and $r(M)=5$, the set $S_1 \cup S_2 \cup e$ contains a circuit~$C$.
  Since $\co(M \ba e) \cong U_{2,5}$, it follows that $e \in C$.
  Then, by orthogonality, $|C| \ge 4$.
  By \cref{noparallelcases}, $|C| \neq 4$.
  Suppose $S_1 \cup S_2 \cup e$ is a circuit.
  It now follows from \cref{noparallelcases} and circuit elimination that, without loss of generality, $\{s_1,s_2,s_2',e,s_3\}$, $\{s_1,s_2,s_2'',e,s_4\}$, and $\{s_1,s_2',s_2'',e,s_5\}$ are circuits; and $\{s_1',s_2,s_2',e,s_5\}$, $\{s_1',s_2,s_2'',e,s_3\}$, and $\{s_1',s_2',s_2'',e,s_4\}$ are circuits.
  But then $M/e$ has no $U_{2,5}$-minor, a contradiction.
  Next suppose $S_2 \cup \{s_1,e\}$ is a circuit. 
  Then each element in $S_2$ is $U_{2,5}$-flexible.
  By \cref{simpcase,cosimpcase,utfutfprop}, there is some $f \in S_2$ that is $U_{3,5}$-flexible, a contradiction.

  Now, up to labels, we may assume that $S_1 \cup \{s_2',s_2'',e\}$ is a circuit.
  Then each element in $S_1$ is $U_{2,5}$-flexible.
  By \cref{cosimpcase,utfutfprop}, we may assume, up to labels, that $s_1$ is $U_{3,5}$-deletable.
  We claim that $s_1$ is also $U_{3,5}$-contractible.
  Consider $M / s_1 / s_2'$.
  This is a rank-$3$ matroid with rank-$2$ sets $\{s_1',s_3,s_4,s_5\}$ and $\{s_1',s_2'',e\}$.
  Moreover, by \cref{noparallelcases}, $M$ has a circuit contained in $\{s_1,s_2,s_2',e,q\}$, where $q \in \{s_1',s_2'',s_3,s_4,s_5\}$.
  This circuit is distinct from the circuit $\{s_1,s_1',s_2',s_2'',e\}$.
  By circuit elimination, and since no circuit is contained in $S_1 \cup S_2$, it follows that $q \in \{s_3,s_4,s_5\}$.
  Without loss of generality, $\{s_2,e,s_3\}$ is a circuit of $M / s_1/s_2'$.
  It now follows that $M / s_1 / s_2' \ba s_1' \ba s_3 \cong U_{3,5}$.
  So $s_1$ is $U_{3,5}$-flexible, a contradiction.

  Recall that when $h=2$, we may assume that $|S_2| \ge 3$.
  By the foregoing, we may now also assume, in this case, that $|S_1|+|S_2| \ge 6$.

  \begin{claim}
    \label{parallelgoodcase}
    Let $s_i \in S_i$ for $i \in [h]$.
    Then $M / (\bigcup_{i \in [h]} S_i-s_i)$ has a parallel pair $\{e,s_j\}$, for some $j \in [5]-[h]$.
  \end{claim}
  \begin{subproof}
    Suppose $M / (\bigcup_{i \in [h]} S_i-s_i)$ has a parallel pair $\{e,s_j\}$ for $j \in [h]$.
  To begin with, assume also that $h \ge 3$.
  Then \cref{cosimpcase}\ref{csc1} implies that $\co(M \ba s_j)$ is $3$-connected with rank and corank at least~$3$.
  Thus, by \cref{utfutfprop}, $M \ba s_j$ has a $U_{3,5}$-minor.
  Moreover, by \cref{simpcase,utfutfprop}, $M/s_j$ has a $U_{3,5}$-minor, so $s_j$ is $U_{3,5}$-flexible, a contradiction.
  So $h=2$.  In particular, $j \in \{1,2\}$.

  Suppose that $|S_j| \ge 3$.
  Let $S_i^- = S_i - s_i$ for $i \in \{1,2\}$, and let $\{j,j'\} = \{1,2\}$.
  Then $S_{j'} \cup S_j \cup e$ contains a circuit~$C$ of $M$, with $\{e,s_j\} \subseteq C$.
  Suppose $\co(M \ba s_j)$ is not $3$-connected.
  Then, by \cref{precosimpcase}, there exists a circuit~$C'$ such that $S_j^- \cup e \subseteq C' \subseteq S_{j'} \cup S_j^- \cup e$.
  Note that $s_j \in C-C'$, so $C \neq C'$, and $e \in C \cap C'$.
  By circuit elimination, there is a circuit contained in $(C \cup C') - e \subseteq S_{j'} \cup S_j$.
  By orthogonality, $S_{j'} \cup S_j$ is a circuit.
  But then $\co(M \ba e)$ contains a parallel pair, a contradiction.
  So $\co(M \ba s_j)$ is $3$-connected; moreover, this matroid has rank and corank at least~$3$, by \cref{cosimpcase}.
  Now $s_j$ is $U_{2,5}$-deletable, so, by \cref{utfutfprop}, $M \ba s_j$ has a $U_{3,5}$-minor.
  Moreover, by \cref{simpcase,utfutfprop}, $M/s_j$ has a $U_{3,5}$-minor, so $s_j$ is $U_{3,5}$-flexible, a contradiction.
  We deduce that $|S_j| = 2$. 
  In particular, $j=1$, as $|S_2| \ge 3$.  Since $|S_1| + |S_2| \ge 6$, we have $|S_2| \ge 4$.
  Now, as $S_2 \cup e$ is a coclosed cosegment, $M.(S_2 \cup e) \cong U_{|S_2|-1,|S_2|+1}$. 
  Since $|S_2| \ge 4$, the element $s_3$ is $U_{3,5}$-contractible.
  Moreover, $s_3$ is a loop in $M.(S_2 \cup \{e,s_3\})$, so it is $U_{3,5}$-flexible, a contradiction.
\end{subproof}

  By \cref{parallelgoodcase} we may now assume, for every choice of $s_i$'s, that $\{e,s_j\}$ is a parallel pair in $M / (\bigcup_{i \in [h]} S_i-s_i)$ for some $j \in [5] - [h]$ (in other words, for some $j$ such that $\{s_j\}$ is a series class of $M \ba e$).
  Then there is a circuit~$C$ such that $\{e,s_j\} \subseteq C \subseteq \{e,s_j\} \cup (\bigcup_{i \in [h]} S_i-s_i)$.
  By orthogonality, $C= \{e,s_j\} \cup (\bigcup_{i \in [h]} S_i-s_i)$.
  Note that $r(M) = 2 +\Sigma_{i \in [h]} (|S_i| - 1) = |C|$.
  So $r(C) = r(M)-1$.
  Moreover, any proper superset $D$ of $C$ contains at least two series classes of $M \ba e$, in which case $D$ spans $E(M)$.
  So $C$ is a circuit-hyperplane and $E(M)-C$ is a cocircuit.

  Suppose that $M \ba e$ has precisely one trivial series class, so $j=5$ for every choice of $s_i$'s.
  Then $\{e,s_5\} \cup \left(\bigcup_{i \in [4]} S_i - s_i\right)$ is a circuit and $C^*=\{s_1,s_2,s_3,s_4\}$ is a cocircuit.
  But $\{e,s_5\} \cup \left(\bigcup_{i \in [3]} S_i-s_i\right) \cup (S_4-s_4')$ is also a circuit for $s_4' \in S_4-s_4$, and this circuit intersects $C^*$ in a single element, $s_4$, contradicting orthogonality.

  Next suppose that $M \ba e$ has precisely two trivial series classes, $\{s_4\}$ and $\{s_5\}$.
  Suppose $|S_1|=|S_2|=|S_3|=2$, so $|E(M)| = 9$.
  Then it is readily checked that $M / S_i \ba s_4,s_5 \cong U_{3,5}$, for $i \in [3]$.
  So $M$ has at most one $U_{3,5}$-essential element, and thus the \lcnamecref{jail-elastic} holds in this case.

  We may now assume that $|S_3| \ge 3$, say.
  Let $s_3,s_3',s_3''$ be distinct elements in $S_3$.
  The set $C_1=\{e,s_j\} \cup \left(\bigcup_{i \in [3]} S_i - s_i\right)$ is a circuit and $C_1^*=\{s_1,s_2,s_3,s_{j'}\}$ is a cocircuit, for some $\{j,j'\} = \{4,5\}$.
  Also, $C_2=\{e,s_k\} \cup \left(\bigcup_{i \in [2]} S_i - s_i\right) \cup (S_3-s_3')$ is a circuit and $C^*_2=\{s_1,s_2,s_3',s_{k'}\}$ is a cocircuit, for some $\{k,k'\} = \{4,5\}$.
  By orthogonality between $C_1$ and $C^*_2$, we have $j=k'$, so $j'=k$.
  Furthermore, $C_3=\{e,s_\ell\} \cup \left(\bigcup_{i \in [2]} S_i - s_i\right) \cup (S_3-s_3'')$ is a circuit and $C^*_3=\{s_1,s_2,s_3'',s_{\ell'}\}$ is a cocircuit, for some $\{\ell,\ell'\} = \{4,5\}$.
  By orthogonality between $C_1$ and $C^*_3$, we have $j=\ell'$, 
  but by orthogonality between $C_2$ and $C^*_3$, we have $k=\ell'$, so $j=k$, a contradiction.

  Now suppose that $M \ba e$ has three trivial series classes, so $M \ba e$ has precisely two non-trivial series classes, $S_1$ and $S_2$, and $|S_2| \ge 3$.
  If $S_2$, say, has size at least~$4$, then $S_2 \cup e$ is a coclosed cosegment of size at least~$5$, and it follows that any $f \in S_1$ is $U_{3,5}$-flexible.
  Recall also that $|S_1| + |S_2| \ge 6$.
  So we may assume that $|S_1| = 3$ and $|S_2| = 3$.
  Let $S_1 = \{t_1,t_2,t_3\}$ and $S_2 = \{u_1,u_2,u_3\}$.
  As before, for $i,j \in [3]$, the set $C_{i,j}=\{e,w_{i,j}\} \cup (S_1 - t_i) \cup (S_2 - u_j)$ is a circuit and $C_{i,j}^*=\{t_i,u_j\} \cup (\{s_3,s_4,s_5\}-w_{i,j})$ is a cocircuit, with $\{w_{i,1},w_{i,2},w_{i,3}\} = \{3,4,5\}$ for $i \in [3]$ and $\{w_{1,j},w_{2,j},w_{3,j}\} = \{3,4,5\}$ for $j \in [3]$.
  Without loss of generality, 
  $w_{i,j}=s_{((i+j) \bmod 3)+3}$.
  As $M / e$ has a $U_{2,5}$-minor and $r(M)=6$, there exists a $3$-element independent set $C \subseteq E(M/e)$ such that $M/(C \cup e)$ has a $U_{2,5}$-minor.
  Suppose that $C \subseteq E(M/e) - S_2$.
  Then $(E(M/e)-S_2) - C$ is a parallel class of size three in $M/(C \cup e)$, in which case $|E(\co(M/(C \cup e)))| \le 4$, implying $M / (C \cup e)$ has no $U_{2,5}$-minor.
  So $C$ meets $S_2$ and, similarly, $C$ meets $S_1$.
  Let $C = (S_1 - t_i) \cup u_j$.
  Then $M / (C \cup e)$ has two distinct parallel pairs, due to the circuits $C_{i,j'}$ for $j' \in [3]-j$, so again $M / (C \cup e)$ has no $U_{2,5}$-minor.
  Now we may assume that $|C \cap S_1| = 1$ and $|C \cap S_2| = 1$.
  Without loss of generality let $C=\{t_1,u_1,s_4\}$.
  Then $\{s_3,s_5\}$ and $C_{2,2}-C=\{t_3,u_3\}$ are parallel pairs in $M/(C \cup e)$, so again $M / (C \cup e)$ has no $U_{2,5}$-minor.
  We deduce that $M/e$ has no $U_{2,5}$-minor, a contradiction.
\end{proof}

\begin{lemma}
  \label{jail-nonelastic}
  For $\mathbb{P} \in \{\mathbb{H}_5,\mathbb{U}_2\}$, 
  let $M$ be a $3$-connected $\mathbb{P}$-representable $U_{3,5}$-fragile matroid that is not $\utfutf$-fragile, where $r(M) \ge 4$ and $r^*(M) \ge 4$.
  Suppose $M$ has no $U_{2,5}$-elastic elements, and let $F$ be the set of $U_{2,5}$-flexible elements of $M$.
  Then one of the following holds:
  \begin{enumerate}
    \item $|F| \in \{3,4,5\}$, the set $F$ is a fan that is contained in a $5$-element fan $F'$, and there exists an element $g$ such that either $M|(F' \cup g)$ or $M^*|(F' \cup g)$ is isomorphic to $M(K_4)$.
      Moreover, if $|F|=3$, then $F$ is the set of internal elements of $F'$.
    \item $|F| \in \{4,5\}$ but there is no $4$- or $5$-element maximal fan that contains $F$. 
    \item $|F|\ge 6$.
  \end{enumerate}
\end{lemma}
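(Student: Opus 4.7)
The plan is to analyse the set $F$ of $U_{2,5}$-flexible elements via the vertical and cyclic $3$-separations each such element must admit, and then to do a case analysis on $|F|$ and the structure around $F$. For each $e \in F$, since $e$ is $U_{2,5}$-flexible but not $U_{2,5}$-elastic, at least one of $\si(M/e)$ and $\co(M\ba e)$ fails to be $3$-connected. By \cref{vert3sep} and its dual, this yields a vertical $3$-separation $(X,e,Y)$ or a cyclic $3$-separation with $e$ in the guts or coguts, which will be the main structural constraint throughout.

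If $|F| \ge 6$, conclusion (iii) holds immediately, so I would assume $|F| \le 5$ and need to distinguish conclusions (i) and (ii), while also ruling out cases such as $|F| \in \{1,2\}$, and $|F|=3$ with $F$ not forming the internal elements of a $5$-element fan. When $F$ is contained in a $5$-element fan $F'$ with ordering $(f_1, \dots, f_5)$ such that the middle element $f_3$ lies in $F$, I would apply \cref{fanmiddle} directly: if $\si(M/f_3)$ is not $3$-connected, the lemma produces an element $g \in E(M) - F'$ with $M^*|(F' \cup g) \cong M(K_4)$; the case where $\co(M\ba f_3)$ is not $3$-connected is handled by the dual argument on $M^*$, yielding an element $g$ with $M|(F' \cup g) \cong M(K_4)$. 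When only end elements of $F'$ lie in $F$, or when $F$ is contained in a maximal $4$-element fan, I would combine the end-of-fan information (via \cref{fanends}) with the vertical and cyclic $3$-separations to extend to a $5$-element fan whose middle element lies in $F$, and then apply \cref{fanmiddle} as above.

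The main obstacle will be to rule out the small-$F$ configurations in which $F$ is not contained in a $4$- or $5$-element maximal fan, particularly when $|F| \le 3$. Here the approach is to invoke \cref{CPL} on the vertical $3$-separation $(X,e,Y)$ to produce further $U_{2,5}$-contractible elements in $X - \cl(Y)$ and $U_{2,5}$-deletable elements within $\cl(X) - e$, and then to leverage the $U_{3,5}$-fragility of $M$ (no element of $M$ is $U_{3,5}$-flexible) to convert some of these into additional $U_{2,5}$-flexible elements adjacent to $e$ through triangles or triads. Iterating this propagation should either force $|F|$ to grow past three or expose a fan structure of length at least four surrounding $F$, placing the configuration into case (i) or (ii). The delicate part is coordinating the $3$-separations obtained from different elements of $F$ against the $3$-connectivity of $M$ without arriving at an inconsistency with the lemma's hypotheses.
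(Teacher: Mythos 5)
Your outline identifies the right tools -- vertical and cyclic $3$-separations for non-elastic flexible elements, \cref{CPL}, and \cref{fanmiddle} -- and the correct high-level split on $|F|$, but the proposal leaves the hardest part of the proof as an unresolved sketch rather than an argument, and there are a couple of concrete missteps.

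First, you never establish that $F$ is nonempty. The hypothesis gives you that $M$ is not $\utfutf$-fragile, so some element $e$ is $\utfutf$-flexible; getting from that to $e$ being $U_{2,5}$-flexible requires an argument (rank/corank bounds together with \cref{utfutfprop}, ruling out the degenerate case $r(\co(M\ba e)) = 2 = r^*(\si(M/e))$ via a $3$-connectivity count). Without this step, $F$ could a priori be empty and none of your later analysis launches. Second, you mention "ruling out cases such as $|F| \in \{1,2\}$" but give no mechanism. The paper's proof proves a sharp lower bound $|F| \ge 3$: for any non-elastic $U_{2,5}$-flexible $e$, one takes a vertical $3$-separation $(X,e,Y)$ with $|X \cap E(N)| \le 1$ and $Y \cup e$ closed (via \cref{niceVertSep}), and then \cref{CPL} shows at most one element of $X$ fails to be flexible, so $|X| \ge 3$ gives at least two more flexible elements. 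Your proposal gestures at this CPL step but does not carry it through to the lower bound.

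Third and most importantly, the derivation of the $5$-element fan structure in cases $|F| = 3,4,5$ is precisely where the work lies, and "iterating this propagation should either force $|F|$ to grow past three or expose a fan structure" does not do it. In the paper, when $|F|=3$, CPL forces $|X_i|=3$ for the vertical $3$-separations at two distinct elements $e_1,e_2$ of $F$, and then a careful argument (using the unique non-flexible $f_i \in X_i \cap \cocl(Y_i)$) exhibits each $X_i \cup e_i$ as a $4$-element fan and glues the two fans into a $5$-element fan $(f_2,e_1,e_3,e_2,f_1)$; only then is \cref{fanmiddle} applicable. For $|F|=4,5$ the analysis is more delicate still (one must rule out $|X_i|=4$ and various degenerate overlaps). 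You propose "extending" $F$ to a suitable $5$-element fan when $F$ sits in a maximal $4$-element fan, but by maximality that is impossible; the actual dichotomy when $|F|=4$ is between $F$ sitting inside an $M(K_4)$ (co)restriction and there being no $4$- or $5$-element maximal fan containing $F$ at all, and showing this requires examining the $3$-separations directly rather than a fan-extension argument. Finally, the claim that "the case where $\co(M\ba f_3)$ is not $3$-connected is handled by the dual argument" is not quite right: in several of the cases the paper shows one of the two failures of $3$-connectedness at $f_3$ leads to a contradiction (often via orthogonality with the already-established triads), so that only the other alternative can occur, and then \cref{fanmiddle} (possibly in $M^*$) is applied. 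Treating the two cases as symmetric by dualizing loses information and will not close the argument.
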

\begin{proof}
  We start by proving the following claim:
  \begin{claim}
    \label{flexstart}
    If $e$ is a $\utfutf$-flexible element of $M$, then $e$ is $U_{2,5}$-flexible.
  \end{claim}
  \begin{subproof}
  Let $e$ be a $\utfutf$-flexible element of $M$.
  Clearly the claim holds if $e$ is $U_{3,5}$-essential, so assume otherwise.
  
  Suppose $e$ is $U_{3,5}$-deletable 
  and $U_{2,5}$-contractible.
  Then $\co(M \ba e)$ has a $U_{3,5}$-minor, so $r(\co(M \ba e)) \ge 3$.
  Since $r^*(M) \ge 4$, we have $r^*(\co(M \ba e)) \ge 3$.
  Now $\co(M \ba e)$ is a $3$-connected matroid with rank and corank at least~$3$, and having a $U_{3,5}$-minor.
  Hence, by \cref{utfutfprop}, $\co(M \ba e)$ has a $U_{2,5}$-minor.
  Then $e$ is $U_{2,5}$-flexible, as claimed.

  Suppose now that $e$ is $U_{3,5}$-contractible 
  and $U_{2,5}$-deletable.
  Since $r(M) \ge 4$, we have $r(\si(M / e)) \ge 3$.
  If $r^*(\si(M / e)) \ge 3$, then $\si(M / e)$ has both a $U_{2,5}$- and a $U_{3,5}$-minor, by \cref{utfutfprop}, in which case $e$ is $U_{2,5}$-flexible, as required.
  Similarly, as $r^*(M \ba e) \ge 3$, we have $r(\co(M \ba e)) = 2$, otherwise, by \cref{utfutfprop}, $e$ is $U_{3,5}$-flexible, a contradiction.
  So $r(\co(M \ba e)) = 2$ and we may assume that $r^*(\si(M / e)) = 2$.
  In particular, in $M$, the element~$e$ is in at least two distinct triangles, and at least two distinct triads.
  If $e$ is in a $4$-element segment $L$ of $M$, then each triad containing $e$ is contained in $L$, by orthogonality.  But then $M$ has a triangle-triad, contradicting that $M$ is $3$-connected.
  It follows that $e$ is in triangles $T_1$ and $T_2$, with $r(T_1 \cup T_2) = 3$; and, similarly, $e$ is in triads $T_1^*$ and $T_2^*$ with $r^*(T_1^* \cup T_2^*) = 3$. By orthogonality, $T_1 \cup T_2 = T_1^* \cup T_2^*$.
  But then $\lambda(T_1 \cup T_2) = 1$, so, as $M$ is $3$-connected, $|E(M)| \le 6$, a contradiction.
  Hence $e$ is $U_{2,5}$-flexible, as claimed.
\end{subproof}

  Since $M$ is not $\utfutf$-fragile, there exists an element~$e$ that is $\utfutf$-flexible.
  By \cref{flexstart}, $e$ is $U_{2,5}$-flexible.
\begin{claim}
  \label{f0}
  For $(M_0,N_0) \in \{(M,U_{3,5}), (M^*,U_{2,5})\}$, the matroid $M_0$ has at least three $N_0^*$-flexible elements.
\end{claim}
\begin{subproof}
  By hypothesis, $e$ is not $U_{2,5}$-elastic in $M$.
  Now, for some $(M_0,N_0) \in \{(M,U_{3,5}), (M^*,U_{2,5})\}$, the matroid $M_0$ is $N_0$-fragile and has an $N_0^*$-flexible element $e$ such that $\si(M/e)$ is not $3$-connected.
  By \cref{vert3sep}, $M_0$ has a vertical $3$-separation $(X,e,Y)$.
  By \cref{niceVertSep} we may assume that $|X \cap E(N_0^*)| \le 1$ and $Y \cup e$ is closed.
  By \cref{CPL}, at most one element of $X$ is not $N_0^*$-flexible so, as $|X| \ge 3$, the set $X$ contains at least two $N_0^*$-flexible elements.
  So $M_0$ has at least three $N_0^*$-flexible elements, as $e$ is also $N_0^*$-flexible.
  The claim follows by duality.
\end{subproof}

  Now, for some $(M_1,N_1) \in 
  \{(M,U_{3,5}), (M^*,U_{2,5})\}$, the matroid $M_1$ is $N_1$-fragile and has two $N_1^*$-flexible elements $e_1$ and $e_2$ such that $\si(M/e_i)$ is not $3$-connected for $i \in \{1,2\}$.
  By \cref{vert3sep}, $M_1$ has vertical $3$-separations $(X_i,e_i,Y_i)$ for $i \in \{1,2\}$, with $Y_i \cup e_i$ closed and $|X_i \cap E(N_1^*)| \le 1$.

  \begin{claim}
    \label{f3}
    If $Z$ is the set of $N_1^*$-flexible elements of $M_1$, and $|Z|=3$, then $Z$ is a triangle that is contained in a $5$-element fan $F'$, and there exists an element $g$ such that $M_1^*|(F' \cup g) \cong M(K_4)$.
  \end{claim}
  \begin{subproof}
  Suppose that $M_1$ has precisely three $N_1^*$-flexible elements.
  Let $i \in \{1,2\}$.
  Then, as $|X_i \cup e_i| \ge 4$ and at most one element in $X_i \cup e_i$ is not $N_1^*$-flexible, by \cref{CPL}, 
  $|X_i|=3$.
  Moreover, there exists $f_i \in X_i \cap \cocl(Y_i)$ that is not $N_1^*$-flexible, and $e_i \in \cl(X_i-f_i)$.
  Then $(X_i-f_i) \cup e_i$ is a triangle and $X_i$ is a triad, so $X_i \cup e_i$ is a $4$-element fan where $e_i$ is a spoke end and $f_i$ is a rim end.
  Now the $N_1^*$-flexible elements form a triangle $\{e_1,e_2,e_3\}$, for some element $e_3$.
  Let $F'$ be the fan with ordering $(f_2,e_1,e_3,e_2,f_1)$, noting that $f_1 \neq f_2$ follows from the fact that $M_1$ is $3$-connected.
  Since $e_3$ is $N_1^*$-flexible but $M_1$ has no $N_1^*$-elastic elements, at least one of $\si(M / e_3)$ or $\co(M \ba e_3)$ is not $3$-connected.

  Suppose $\co(M \ba e_3)$ is not $3$-connected.
  Thus, there exists a cyclic $3$-separation $(X_3,e_3,Y_3)$ with $Y_3 \cup e_3$ coclosed and $|X_3 \cap E(N_1^*)| \le 1$.
  By \cref{CPL}, at most one element of $X_3$ is not $N_1^*$-flexible.
  As $M_1$ has three $N_1^*$-flexible elements, $|X_3| = 3$, so $X_3$ is a triangle.
  But $\{e_1,e_2\} \subseteq X_3$, so $\{e_1,e_2\}$ is contained in a triangle distinct from $\{e_1,e_2,e_3\}$, contradicting orthogonality with the triads $\{f_2,e_1,e_3\}$ and $\{e_3,e_2,f_1\}$.
  Thus $\si(M / e_3)$ is not $3$-connected.
  Then, by \cref{fanmiddle}, there exists an element $g \in E(M_1)-F'$ such that $M_1^*|(F' \cup g) \cong M(K_4)$.
  Letting $F = F' \cup g$, the claim follows.
\end{subproof}

  \begin{claim}
    \label{f4}
    Let $Z$ be the set of $N_1^*$-flexible elements of $M_1$, and suppose $|Z|=4$.
    Then either $Z$ is a fan contained in a set $F$ such that $M_1^*|F$ is isomorphic to $M(K_4)$, or
    there is no $4$- or $5$-element maximal fan that contains $Z$.
  \end{claim}
  \begin{subproof}
  Let $i \in \{1,2\}$.
  Then, as at most one element in $X_i \cup e_i$ is not $N_1^*$-flexible, by \cref{CPL}, we have $|X_i| \in \{3,4\}$.
  First, assume $X_1$ and $X_2$ both contain only two $N_1^*$-flexible elements.
  Then, by \cref{CPL}, for each $i \in \{1,2\}$, we have $|X_i| = 3$, there exists $f_i \in X_i \cap \cocl(Y_i)$ that is not $N_1^*$-flexible, and $e_i \in \cl(X_i-f_i)$.
  Thus $(X_i-f_i) \cup e_i$ is a triangle and $X_i$ is a triad, so $X_i \cup e_i$ is a $4$-element fan where $e_i$ is a spoke end and $f_i$ is a rim end.
  Let $X_i' = (X_i-f_i) \cup e_i$ for $i \in \{1,2\}$.
  Since $X'_1 \cup X'_2 \subseteq Z$ and $|Z|=4$, we have $|X'_1 \cap X'_2| \ge 2$.
  But if $|X'_1 \cap X'_2|=2$, then $X'_1 \cup X'_2$ is a $4$-element segment, contradicting orthogonality with the triad $X_1$.
  So $X'_1 = X'_2$, and this set is a triangle of $N_1^*$-flexible elements.
  Now $X'_1 = \{e_1,e_2,e_3\}$ for some element $e_3$.
  Let $F'$ be the fan with ordering $(f_2,e_1,e_3,e_2,f_1)$,
  where $f_1 \neq f_2$ follows from the fact that $M_1$ is $3$-connected.

  Let $Z-\{e_1,e_2,e_3\} = \{e_4\}$.
  Since $e_3$ is $N_1^*$-flexible but $M_1$ has no $N_1^*$-elastic elements, at least one of $\si(M / e_3)$ or $\co(M \ba e_3)$ is not $3$-connected.
  Suppose
  $\co(M \ba e_3)$ is not $3$-connected.
%
  Thus, there exists a cyclic $3$-separation $(X_3,e_3,Y_3)$ with $Y_3 \cup e_3$ coclosed and $|X_3 \cap E(N_1^*)| \le 1$.
  As at most one element of $X_3 \cup e_3$ is not $N_1^*$-flexible, by \cref{CPL}, we have $|X_3 \cap Z| \in \{2,3\}$, so $|X_3| \in \{3,4\}$.
  If $|X_3| = 3$, then $X_3$ is a triangle that contains at least two elements of $\{e_1,e_2,e_4\}$, in which case it follows from orthogonality that either $X_3=\{e_1,f_2,e_4\}$ or $X_3=\{f_1,e_2,e_4\}$, so $Z$ is contained in a $6$-element fan and the claim holds.
  So we may assume that $|X_3| = 4$, in which case $X_3 = \{e_1,e_2,e_4,p\}$ for some element $p$.
  Since $p$ is not $N_1^*$-flexible, we have 
  $p \in \cl(Y_3)$ and $e_3 \in \cocl(\{e_1,e_2,e_4\})$ by \cref{CPL}.
  Now $\{e_1,e_2,e_4\}$ is $3$-separating, and $e_3$ is in the closure and coclosure of this set, so $\lambda(\{e_1,e_2,e_3,e_4\})=1$, a contradiction.
  So we may assume that $\si(M / e_3)$ is not $3$-connected.
  Then, by \cref{fanmiddle}, there exists an element $g \in E(M_1)-F'$ such that $M_1^*|(F' \cup g) \cong M(K_4)$.
  If $g = e_4$, then the claim holds with $F = F' \cup e_4$.
  So we may assume that $g \neq e_4$.

  Suppose, for a contradiction, that $Z$ is contained in a fan $F$ with $|F|\le 5$.
  By \cref{fanunique}, the unique triangle containing $e_3$ is $\{e_1,e_2,e_3\}$, and $\{e_1,e_2,g\}$ is the unique triad containing $\{e_1,e_2\}$ by orthogonality.
  Thus, if $e_3$ is a spoke end of $F$, then $(e_3,e_i,e_j,g,e_4)$ is a fan ordering of $F$ for some $\{i,j\}=\{1,2\}$.
  But then $\{e_j,g,e_4\}$ is a triangle, contradicting orthogonality.
  So $e_3$ is not a spoke end of $F$.
  As the unique triads containing $e_3$ are $\{e_3,e_1,f_2\}$ and $\{e_3,e_2,f_1\}$, by \cref{fanunique}, this implies that $f_i \in F$ for some $i \in \{1,2\}$.
  Then, without loss of generality, $F=\{e_1,e_2,e_3,f_1,e_4\}$, so either $e_4$ is in a triangle with $f_1$, 
  or $e_4$ is in a triad with $e_1$. 
  But $\{e_4,f_1\}$ is not contained in a triangle, by orthogonality.
  So, by orthogonality again, either $\{e_4,e_1,e_2\}$ or $\{e_4,e_1,e_3\}$ is a triad.
  In the former case, $\{e_1,e_2,g,e_4\}$ is a cosegment, and in the latter case $\{e_1,f_2,e_3,e_4\}$ is a cosegment; both contradict orthogonality with the triangle $\{e_1,e_2,e_3\}$.

  Now we may assume that $X_1$ contains precisely three $N_1^*$-flexible elements.
  Suppose $|X_1|=4$.  Then, by \cref{CPL}, there is a unique element $f_1 \in X_1$ that is not $N_1^*$-flexible, and, letting $X_1' = X_1 - f_1$ and $Y_1' = Y_1 \cup f_1$, there is a path of $3$-separations $(X_1', e_1, Y_1')$.
  If $r(X_1') = 2$, then $Z=X_1' \cup e_1$ is a segment, so there is no fan that contains $Z$.  So we may assume that $r(X_1') = 3$, in which case $(X_1', e_1, Y_1')$ is a vertical $3$-separation such that $Y_1' \cup e_1$ is closed, and $|X_1' \cap E(N_1^*)| \le 1$.
  Thus, by replacing $(X_1,e_1,Y_1)$ with $(X_1',e_1,Y_1')$ if necessary, we may assume that $|X_1| = 3$.
  By a similar argument, we may assume that $|X_2| = 3$.

  If each element of $X_2$ is $N_1^*$-flexible, then $X_1 \cup e_1 = X_2 \cup e_2$.  But then, as $X_1$ and $X_2$ are distinct triads, $X_1 \cup e_1$ is a cosegment, contradicting that $e_1$ is a guts element.
  So $X_2$ has two $N_1^*$-flexible elements, in which case, as before, there is a unique element $f_2 \in X_2$ that is not $N_1^*$-flexible, and $X_2 \cup e_2$ is a $4$-element fan where $e_2$ is a spoke end and $f_2$ is a rim end.
  Thus $e_2$ is in a triangle that contains $e_1$ and is contained in $X_1 \cup e_1$; we choose $e_3$ and $e_4$ so that this triangle is $\{e_1,e_2,e_3\}$, and $X_1 = \{e_2,e_3,e_4\}$.
  Note that
  $X_1$ is an independent triad, 
  $X_1 \cup e_1$ is a $4$-element fan where $e_1$ is a spoke end,
  and $X_2 = \{f_2,e_1,e_3\}$.
  Thus $(e_4,e_2,e_3,e_1,f_2)$ is an ordering of a $5$-element fan~$F'$, where 
  $\{e_1,e_2,e_3,e_4\}$ is the set of $N_1^*$-flexible elements in $F'$.

  As $e_3$ is $N_1^*$-flexible but not $N_1^*$-elastic, either $\co(M_1 \ba e_3)$ or $\si(M_1/e_3)$ is not $3$-connected.
  If $\si(M_1 / e_3)$ is not $3$-connected, then, by \cref{fanmiddle}, there exists an element $g$ such that $M_1^*|(F' \cup g) \cong M(K_4)$, so \ref{f4} holds.
  So we may assume that $\co(M_1 \ba e_3)$ is not $3$-connected.
  Then there exists a cyclic $3$-separation $(X_3,e_3,Y_3)$ with $Y_3 \cup e_3$ coclosed and $|X_3 \cap E(N_1^*)| \le 1$.
  By \cref{CPL}, at most one element of $X_3$ is not $N_1^*$-flexible, so $|X_3| \in \{3,4\}$.
  We may assume that $|X_3|=3$ (by the same argument used earlier for $X_1$ and $X_2$),
  in which case $X_3$ is a triangle that contains at least two elements of $\{e_1,e_2,e_4\}$.
  By orthogonality, $X_3 = \{e_4,e_2,x\}$ for some $x \notin F'$.
  But then $F' \cup x$ is a $6$-element fan, so \ref{f4} holds.
\end{subproof}

  \begin{claim}
    \label{f5}
    Let $F$ be the set of $N_1^*$-flexible elements of $M_1$, and suppose $|F|=5$.
    If $F$ is a maximal fan, then there exists an element $g \in E(M_1)-F$ such that either $M_1|(F \cup g)$ or $M_1^*|(F \cup g)$ is isomorphic to $M(K_4)$.
  \end{claim}
  \begin{subproof}
  Suppose $F$ forms a maximal fan with fan ordering $(f_1,f_2,\dotsc,f_5)$.
  For some $(M_2,N_2) \in \{(M_1,N_1), (M_1^*,N_1^*)\} = \{(M,U_{3,5}), (M^*,U_{2,5})\}$, the elements $f_1$ and $f_5$ are spoke ends of $F$, the matroid $M_2$ is $N_2$-fragile, and $F$ is the set of $N_2^*$-flexible elements in $M_2$.
  As $f_3$ is $N_2^*$-flexible but not $N_2^*$-elastic, at least one of $\si(M_2/f_3)$ or $\co(M_2 \ba f_3)$ is not $3$-connected.
  Suppose $\si(M_2/f_3)$ is not $3$-connected.
  Then there exists a vertical $3$-separation $(X,f_3,Y)$ with with $Y \cup f_3$ closed and $|X \cap E(N_2^*)| \le 1$.
  By \cref{CPL}, at most one element of $X$ is not $N_2^*$-flexible, so $|X| \in \{3,4\}$.

  Suppose $|X|=3$, in which case $X$ is a triad that contains at least two elements of $F - f_3$.
  If $X \subseteq F-f_3$, then $X$ intersects one of the triangles $\{f_1,f_2,f_3\}$ or $\{f_3,f_4,f_5\}$ in a single element, contradicting orthogonality.
  So, by orthogonality, $X \cap F$ is either $\{f_1,f_2\}$ or $\{f_4,f_5\}$.
  But then $F \cup X$ is a $6$-element fan, contradicting that the fan $F$ is maximal.
  Now $|X| = 4$.
  If each element of $X$ is $N_2^*$-flexible, then $X = F-f_3$.
  But then $f_3 \in \cocl(X)$, so $f_3 \notin \cl(Y)$, a contradiction.
  So there is an element $x \in X-F$ that is not $N_2^*$-flexible.
  Then, by \cref{CPL}, $x \in \cocl(Y)$. 
  It follows that $X-x$ is $3$-separating.
  But $X-x \subseteq F-f_3$, so $X-x$ is not a triad, by orthogonality, and $X - x$ is not a triangle, as $r(F)=3$, a contradiction.

  We deduce that $\co(M_2 \ba f_3)$ is not $3$-connected.
  Then, by \cref{fanmiddle}, there exists an element $g$ such that $M_2|(F \cup g) \cong M(K_4)$.
\end{subproof}
It is easily seen that if there is a fan $F \subseteq X \subsetneqq E(M_1)$ such that $M_1|X \cong M(K_4)$, then $|F| \le 5$.
The \lcnamecref{jail-nonelastic} now follows from this fact and \cref{f0,f3,f4,f5}.
\end{proof}

\begin{proposition}
  \label{jail}
  Let $\mathbb{P} \in \{\mathbb{H}_5,\mathbb{U}_2\}$.
  Suppose $M$ is a $3$-connected $\mathbb{P}$-representable $U_{3,5}$-fragile matroid that is not $\utfutf$-fragile, where $r(M) \ge 4$ and $r^*(M) \ge 4$.
  Let $F$ be the set of $U_{2,5}$-flexible elements of $M$.
  Then one of the following holds:
  \begin{enumerate}
    \item $|E(M)| \le 9$, and $M$ has at most two $U_{3,5}$-essential elements.\label{9eltfewess}
    \item $|F| \ge 4$ and $F$ is not contained in a maximal fan of size at most five.\label{othercase}
    \item $|F| \in \{3,4,5\}$, the set $F$ is a fan that is contained in a $5$-element fan $F'$, and there exists an element $g$ such that either $M|(F' \cup g)$ or $M^*|(F' \cup g)$ is isomorphic to $M(K_4)$. Moreover, $F$ is the set of internal elements of $F'$ when $|F|=3$.\label{annoyingextracase}
  \end{enumerate}
\end{proposition}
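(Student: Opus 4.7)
The plan is to combine the two preceding lemmas (Lemmas~\ref{jail-elastic} and~\ref{jail-nonelastic}) by splitting on whether or not $M$ has a $U_{2,5}$-elastic element. In the elastic case, Lemma~\ref{jail-elastic} directly gives the bound $|E(M)| \le 9$ together with the bound on $U_{3,5}$-essential elements, which is exactly outcome~\ref{9eltfewess}. So the substantive work is to show that the three outcomes listed in Lemma~\ref{jail-nonelastic} collapse into outcomes~\ref{othercase} and~\ref{annoyingextracase} of the proposition.

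Assume $M$ has no $U_{2,5}$-elastic element and let $F$ be the set of $U_{2,5}$-flexible elements. I would unpack Lemma~\ref{jail-nonelastic} case-by-case. The first outcome of that lemma matches outcome~\ref{annoyingextracase} of the proposition verbatim (fan $F$ of size $3$, $4$, or $5$ contained in a $5$-element fan $F'$ with an $M(K_4)$ restriction or co-restriction, and $F$ being the internal elements when $|F|=3$). The second outcome of the lemma gives $|F|\in\{4,5\}$ with $F$ not contained in any $4$- or $5$-element maximal fan; a maximal fan of size at most~$5$ has size exactly $4$ or $5$ (since fans have size $\ge 3$ and a maximal fan of size $3$ inside a $3$-connected matroid of corank $\ge 4$ would be absorbed), so this immediately yields outcome~\ref{othercase}. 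The third outcome $|F|\ge 6$ likewise yields outcome~\ref{othercase}, since a fan containing $F$ must have size at least~$6$ and therefore $F$ cannot be contained in a maximal fan of size at most five.

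The only subtle point to record is the mild rephrasing between the second outcome of Lemma~\ref{jail-nonelastic} (``no $4$- or $5$-element maximal fan contains $F$'') and clause~\ref{othercase} of the proposition (``$F$ is not contained in a maximal fan of size at most five''); this is where I would pause, but the equivalence is routine: any maximal fan containing $F$ has size at least $|F|\ge 4$, so ``size at most five'' reduces to ``size $4$ or $5$''. No independent calculations or new connectivity arguments are needed beyond what the two lemmas already supply. I do not expect a main obstacle here: the proof is essentially a bookkeeping argument that verifies the case analysis of the two lemmas exhausts the trichotomy in the proposition's statement.
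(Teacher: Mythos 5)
Your proof is correct and takes essentially the same approach as the paper's, which also splits on whether $M$ has a $U_{2,5}$-elastic element and cites \cref{jail-elastic,jail-nonelastic} for the respective cases; the paper simply treats the rephrasing between \cref{jail-nonelastic} and clause~\ref{othercase} as immediate rather than spelling it out.
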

\begin{proof}
  If $M$ has a $U_{2,5}$-elastic element, then \ref{9eltfewess} holds by \cref{jail-elastic}.
  Otherwise, $M$ has no $U_{2,5}$-elastic elements, and \ref{othercase} or \ref{annoyingextracase} holds by \cref{jail-nonelastic}.
\end{proof}

The next lemma was verified by computer.  Note that computational techniques for efficiently enumerating $3$-connected $\mathbb{P}$-representable matroids, for a partial field $\mathbb{P}$, are described in \cite{BP20}.

\begin{lemma}
  \label{maxsized}
  For $\mathbb{P} \in \{\mathbb{H}_5,\mathbb{U}_2\}$,
  suppose $M$ is a $3$-connected $\mathbb{P}$-representable matroid with $r(M) \le 3$. Then $|E(M)| \le 12$.
  Moreover, 
  \begin{enumerate}
    \item if $|E(M)| =9$, then $M$ has no $U_{2,5}$-essential elements, at most three $U_{3,5}$-essential elements, and at least six $U_{3,5}$-deletable elements; and\label{ms1}
    \item if $|E(M)| \in \{10,11,12\}$, then $M$ has at least six $U_{2,5}$-flexible elements.\label{ms2}
  \end{enumerate}
\end{lemma}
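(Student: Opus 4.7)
The proof is computational, relying on the enumeration techniques for $\mathbb{P}$-representable matroids developed in \cite{BP20}. The plan is to enumerate, up to isomorphism, all $3$-connected $\mathbb{P}$-representable matroids of rank at most $3$ for each $\mathbb{P} \in \{\mathbb{H}_5, \mathbb{U}_2\}$, and then to verify directly from the enumeration that no such matroid exceeds $12$ elements and that the stated tallies of essential, flexible, and deletable elements hold at each relevant size.

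For rank at most $2$ the classification is trivial: a $3$-connected matroid of rank at most~$1$ has at most one element, while a $3$-connected rank-$2$ matroid is uniform, and the representability constraint forces $|E(M)| \le 5$ when $\mathbb{P} = \mathbb{U}_2$ (via $\GF(4)$) and $|E(M)| \le 6$ when $\mathbb{P} = \mathbb{H}_5$ (via $\GF(5)$). These cases never reach the sizes treated in \ref{ms1} and \ref{ms2}, so all the work sits in rank~$3$.

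Since $\mathbb{U}_2$-representable matroids are representable over every field of size at least~$4$, a rank-$3$ such matroid corresponds to a sub-configuration of $\mathrm{PG}(2,4)$; similarly for $\mathbb{H}_5$ and $\mathrm{PG}(2,5)$. Combined with $3$-connectivity and the multiplicative cross-ratio constraints cut out by the partial field, the search space is substantially restricted. The enumeration proceeds incrementally, starting from a small seed matroid such as $U_{3,4}$ and extending by a single element at a time, filtering at each step for $\mathbb{P}$-representability, $3$-connectivity, and canonicity under isomorphism; this is sound by Seymour's Splitter Theorem, which guarantees that every $3$-connected $\mathbb{P}$-representable rank-$3$ matroid of size at least~$5$ is obtained as a $3$-connected single-element extension of some smaller such matroid in the class. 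For each enumerated matroid $M$ of size in $\{9,10,11,12\}$, and for each $e \in E(M)$, the computation determines whether $M \ba e$ and $M / e$ contain $U_{2,5}$- and $U_{3,5}$-minors, classifying $e$ as essential, deletable, contractible, or flexible in each of the two senses. The bound $|E(M)| \le 12$ is established by verifying that the enumeration produces no matroid of size $13$; the numerical claims in \ref{ms1} and \ref{ms2} are then read off from the tally of element classifications for the matroids of sizes $9$ through $12$.

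The main obstacle is practical rather than conceptual: a naive search through rank-$3$ configurations over $\GF(5)$ would be wasteful, and the work relies on the partial-field arithmetic together with the canonical-form pruning of \cite{BP20}, which are designed for precisely this kind of exhaustive enumeration and comfortably handle the rank-$3$ range needed here. Beyond that, the verification is mechanical and the statement follows by inspection of the computer-generated list.
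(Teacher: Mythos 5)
Your proposal matches the paper's approach exactly: the lemma is established by a computer search, using the enumeration techniques of~\cite{BP20} to generate all $3$-connected $\mathbb{P}$-representable matroids of rank at most~$3$ and then checking each one directly. The extra detail you supply---dispatching ranks~$1$ and~$2$ by hand, incremental single-element extension justified by the Splitter Theorem, canonical-form pruning, and per-element classification of minors---is precisely what that computation involves.
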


\begin{theorem}
  \label{utfutffragile}
  Let $M$ be an excluded minor for the class of $\mathbb{P}$-representable matroids where $\mathbb{P} \in \{\mathbb{H}_5,\mathbb{U}_2\}$.
  Suppose $|E(M)| \ge 16$, and there are distinct elements $a,b \in E(M)$ such that $M \ba a,b$ is $3$-connected and has a $\utfutf$-minor.
  Then $M \ba a,b$ is a $\utfutf$-fragile matroid with rank and corank at least~$4$.
\end{theorem}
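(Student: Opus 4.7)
The plan is to establish the rank and corank bounds first, and then prove $\utfutf$-fragility by contradiction.

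For rank and corank at least four: since $|E(M \ba a,b)| \ge 14 > 12$, \cref{maxsized} forces $r(M \ba a,b) \ge 4$ and $r^*(M \ba a,b) \ge 4$.

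For $\utfutf$-fragility I proceed by contradiction. Suppose $M \ba a,b$ is not $\utfutf$-fragile. Since $r(M \ba a,b)$ and $r^*(M \ba a,b)$ are at least four and $|E(M \ba a,b)| \ge 14 \ge 7$, \cref{utfutfequiv} yields that $M \ba a,b$ is not $N$-fragile for some $N \in \utfutf$. Because both the set $\utfutf$ and the class of $\mathbb{P}$-representable matroids are closed under duality, I may take $N = U_{3,5}$, replacing $M$ by $M^*$ if necessary. With $|E(M)| \ge 16 = |E(N)| + 11$, \cref{bcosw-thm} then supplies a bolstered basis $B$, a companion $\mathbb{P}$-matrix $A$ with incriminating set $\{x,y,a,b\}$, an $(N,B)$-strong element $u$, the subcase description \ref{bcoswii1}--\ref{bcoswii3} for where the $N$-flexible elements lie, and a closed triangle $\{d,x,y\}$ for some $d \in \{a,b\}$.

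In subcases \ref{bcoswii2} and \ref{bcoswii3} of \cref{bcosw-thm}, I would appeal to \cref{wmatype1again} to swap $b$ with $x$ and work with $M \ba a,x$ in place of $M \ba a,b$. Iterating this shift reduces us to the situation where the $N$-flexible elements of the current deletion-pair matroid lie in $\{u,x,y\}$. At this point, \cref{thegrandfantasy} forces every element $b' \in B \ba \{x,y\}$ to be $N$-essential in at least one of $M \ba a,b,u$ and $M \ba a,b/u$, and \cref{subfrag3conn} produces a $3$-connected $N$-fragile minor $M'$ of $M \ba a,b$ with $|E(M')| \ge |E(M)|-4 \ge 12$ and rank and corank at least three. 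In parallel, \cref{gadgetnotMK4} rules out any $M(K_4)$ restriction or co-restriction of $M \ba a,b$ containing the $N$-flexibles.

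If $M'$ is $\utfutf$-fragile, the nice path description of \cref{nicepathdescription} combined with the rank accounting of \cref{pathdescrank} and the end data of \cref{pathdescprops,pathdescends} produces precise numerical constraints; these, together with the bolstered-basis data and the closed triangle $\{d,x,y\}$, should force $|E(M)| < 16$. If instead $M'$ is $U_{3,5}$-fragile but not $\utfutf$-fragile, then since $|E(M')| > 9$ with rank and corank at least four, \cref{jail} gives a highly restricted structure: either a maximal fan-like set of $U_{2,5}$-flexibles as in subcase \ref{othercase}, or an $M(K_4)$-type configuration as in subcase \ref{annoyingextracase}; I would show that both possibilities contradict the constraints already imposed on $M$, with subcase \ref{9eltfewess} excluded by $|E(M')| > 9$.

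The principal obstacle will be the intricate case analysis required to reconcile the incriminating-set and bolstered-basis data of \cref{bcosw-thm} with the path-width-three decomposition of the nearby $\utfutf$-fragile minor, and to track how $\{u,x,y,a,b\}$ sits relative to the path ends, so that no configuration consistent with $|E(M)| \ge 16$ survives all the imposed structural constraints.
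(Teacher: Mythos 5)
Your overall skeleton is close to the paper's: get rank and corank $\ge 4$ from \cref{maxsized}, apply \cref{utfutfequiv}, invoke \cref{bcosw-thm} and \cref{wmatype1again} to localize the $N$-flexible elements to $\{u,x,y\}$, use \cref{thegrandfantasy} and \cref{subfrag3conn}, and close with a minor that is $\utfutf$-fragile. However, there are two concrete problems.

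First, the step ``I may take $N = U_{3,5}$, replacing $M$ by $M^*$ if necessary'' fails. Dualizing sends $M \ba a,b$ to $M^*/a,b$, so the pair $\{a,b\}$ becomes a \emph{contraction} pair in $M^*$, not a deletion pair. But \cref{bcosw-thm} (and \cref{fragilecase}, \cref{wmatype1again}, etc.) are formulated for a pair $a,b$ with $M \ba a,b$ $3$-connected. You have no guarantee that $M^*$ possesses a deletion pair satisfying the hypotheses, and the theorem you are proving is specifically about the given pair $a,b$ in $M$, so substituting a different pair would not help. The paper avoids this by a different route: before ever choosing $N$ freely, it shows that $M \ba a,b$ is \emph{neither} $U_{2,5}$-fragile nor $U_{3,5}$-fragile, by applying \cref{jail} directly to $M \ba a,b$ (or its dual as a single matroid) and seeing that the outcomes of \cref{jail} contradict \cref{bcosw-thm}\ref{bcoswii} and \cref{gadgetnotMK4}. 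Only after that does it pick $N \in \utfutf$, and it later needs to apply \cref{bcosw-thm} with \emph{both} $N$ and $N^*$, which requires $M \ba a,b$ to be non-fragile on both sides. Your plan never establishes this.

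Second, the closing step is not there yet. You write that the nice path description, rank accounting, and end data ``should force $|E(M)| < 16$,'' but the paper's proof of this theorem does not touch nice path descriptions at all --- those are used only from \cref{fragilepropssec} onwards. The actual mechanism is a count of $N$-essential elements: from \cref{thegrandfantasy}, every $b' \in B-\{x,y\}$ is $N$-essential in $M \ba a,b \ba u$ or $M \ba a,b / u$; internal claims show, using \cref{hopeful} and \cref{utfutf3essentialv2}, that $M \ba a,b / u$ has at most two $N$-essential elements and $M \ba a,b \ba u$ at most one; pigeonhole then gives $r(M) \le 5$, the structure also forces $r(M) \ge 5$, and finally \cref{utfutfessentialrank4} bounds the size of a rank-$4$ fragile minor with an $N$-essential element by $9$, giving $|E(M)| \le 14$, a contradiction. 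Without these essential-element bounds the argument does not close, and a path-description analysis would not substitute for them here.
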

\begin{proof}
  Clearly $r(M) \ge 3$ and $r^*(M \ba a,b) \ge 3$, for otherwise $M \ba a,b$ has a $U_{2,7}$- or $U_{5,7}$-minor, so is not $\mathbb{P}$-representable.
  So, by \cref{utfutfprop}, $M \ba a,b$ has both a $U_{2,5}$-minor and a $U_{3,5}$-minor.
  Moreover, if $M \ba a,b$ has rank or corank three, then it has at most 12 elements, by \cref{maxsized}, so $|E(M)| \le 14$, a contradiction.
  So $r(M) \ge 4$ and $r^*(M \ba a,b) \ge 4$.

  Towards a contradiction, assume that $M \ba a,b$ is not $\utfutf$-fragile.
  By \cref{utfutfequiv}, $M \ba a,b$ is not $N$-fragile for some $N \in \utfutf$.
  Suppose $M \ba a,b$ is $N^*$-fragile but not $N$-fragile.
  Let $F$ be the set of $N$-flexible elements of $M \ba a,b$.
  Then, by \cref{jail}, either $|F| \ge 4$ and $F$ is not contained in a maximal fan of size at most five, or $F$ is contained in a set $F'$ such that either $M|F'$ or $M^*|F'$ is isomorphic to $M(K_4)$.
  By \cref{bcosw-thm}\ref{bcoswii}, if $|F| \ge 4$, then $F$ is contained in a maximal fan of size at most five, so the former does not hold.
  Moreover, $F$ is not contained in an $M(K_4)$ restriction or co-restriction,
  by \cref{gadgetnotMK4}, so the latter does not hold.
  We deduce that $M \ba a,b$ is neither $U_{2,5}$-fragile nor $U_{3,5}$-fragile.

  Let $N \in \utfutf$.
  By \cref{bcosw-thm}, $M$ has a basis~$B$ with $x,y \in B$, where $\{b,x,y\}$ is a triangle up to switching the labels of $a$ and $b$, and there is an $(N,B)$-strong element $u \in B^* - \{a,b\}$.
  By \cref{wmatype1again}, we may assume, up to switching the labels of $b$ and $x$, that the $N$-flexible 
  elements of $M \ba a,b$ are contained in the set $\{u,x,y\}$.
  (We note that due to ``switching labels'' in this way, in order to avoid cumbersome notation, the elements henceforth referred to as $a$ and $b$ may not be the same as those given in the statement of the theorem, as we work towards a contradiction.)
%
%
  Recall, by \cref{bcosw-thm}\ref{bcoswiii}, that $\{u,x,y\}$ is the unique triad containing $u$ in $M \ba a,b$.
  Thus $\{x,y\}$ is a series pair in $M \ba a,b,u$.
  Note also that $u$ is $N$-flexible in $M \ba a,b$, for otherwise $M \ba a,b$ is $N$-fragile. 
  Moreover, by applying \cref{bcosw-thm} with the minor~$N^*$, the matroid $M \ba a,b$ has at most five $N^*$-flexible elements and, in the case that $M \ba a,b$ has five $N^*$-flexible elements, they form a $5$-element maximal fan of $M \ba a,b$.

  \begin{claim}
    \label{wmautfutffrag}
    For some $M'' \in \{M \ba a,b \ba u / x,\, M \ba a,b \ba u / x \ba y,\, M \ba a,b \ba u / x / y \}$, the matroid $M''$ is $3$-connected, $\utfutf$-fragile, and has rank and corank at least~$4$.
  \end{claim}
  \begin{subproof}
    Let $M' = M \ba a,b \ba u$.
    By \cref{subfrag3conn}, we can choose $M'' \in \{M' / x,\, M' / x \ba y,\, M' / x / y \}$ such that $M''$ is $3$-connected and $N$-fragile.
    Since $|E(M)| \ge 15$, we have $|E(M'')| \ge 10$.
    If $M''$ has rank or corank at most three, then, by \cref{maxsized}\ref{ms2}, $M''$ has at least six $N'$-flexible elements for some $N' \in \utfutf$. But then $M \ba a,b$ has six $N'$-flexible elements, a contradiction.
    So $M''$ has rank and corank at least~$4$.

    It remains to show that $M''$ is $\utfutf$-fragile.
    Suppose not.
    Then, by \cref{utfutfequiv}, $M''$ is not $N^*$-fragile.
    Let $F$ be the $N^*$-flexible elements of $M''$.
    By \cref{jail} and since $|E(M'')| \ge 10$,
    either $|F| \ge 4$, 
    or $F$ is a triangle or triad of $M''$ that are the internal elements of a $5$-element fan~$F'$ such that either $M''|(F' \cup g)$ or $(M'')^*|(F' \cup g)$ is isomorphic to $M(K_4)$ for some element $g \in E(M'')-F'$.
    Note that $F \subseteq E(M \ba a,b)-\{u,x\}$, and the elements in $F$ are also $N^*$-flexible in $M \ba a,b$.
    By \cref{bcosw-thm}\ref{bcoswii}, there are at most three $N^*$-flexible elements in $E(M \ba a,b)-\{u,x\}$, where in the case there are precisely three, 
    $(y,u,x,z,w)$ is a maximal fan of $M \ba a,b$, and the $N^*$-flexible elements of $M''$ are $\{y,z,w\}$.
    It follows that $F=\{y,z,w\}$ is a triad in $M''$.
    Now \cref{jail}\ref{annoyingextracase} holds, and $F$ is contained in a $5$-element fan~$F'$ such that $M''|(F' \cup g) \cong M(K_4)$ for some element $g$.
    It follows that $\{z,w\}$ is contained in a triangle in $M''$, which, by orthogonality, is also a triangle in $M \ba a,b$, contradicting that the fan $(y,u,x,z,w)$ in $M \ba a,b$ is maximal.
    From this contradiction we deduce that $M''$ is $\utfutf$-fragile.
  \end{subproof}

  \begin{claim}
    \label{maylemma6}
    Let $M' \in \{M \ba a,b \ba u, M \ba a,b / u\}$.
    Then $M'$ has at most two $N$-essential elements.
  \end{claim}
  \begin{subproof}
    Let $M' = M \ba a,b \ba u$ and suppose $M'$ has at least three $N$-essential elements.
    By \cref{wmautfutffrag}, we can choose $M'' \in \{M' / x,\, M' / x \ba y,\, M' / x / y \}$ such that $M''$ is $3$-connected and $\utfutf$-fragile.
    Note that $M''$ also has at least three $N$-essential elements.
    By \cref{hopeful}, $|E(M'')| \le 8$, so $|E(M)| \le 13$, a contradiction.
    Henceforth, we may assume that $M \ba a,b \ba u$ has at most two $N$-essential elements. 

    Let $M' = M \ba a,b / u$ and suppose $M'$ has at least three $N$-essential elements.
    We claim that
    for some $M'' \in \{M',\, M' \ba x,\, M' \ba y,\, M' \ba x,y\}$, the matroid $M''$ is $N$-fragile and $3$-connected up to series classes.
    Since the $N$-flexible elements of $M \ba a,b$ are contained in $\{u,x,y\}$, there is certainly some $M_0'' \in \{M',\, M' \ba x,\, M' \ba x,y\}$ that is $N$-fragile and $3$-connected up to series and parallel classes, by \cref{genfragileconn}.
    Suppose $M_0''$ has a parallel pair.
    Then $u$ is in a triangle of $M \ba a,b$.
    By orthogonality, this triangle meets $\{x,y\}$, so each parallel pair of $M_0''$ meets $\{x,y\}$.
    In particular, if $M_0''$ has a parallel pair, then $M_0'' \neq M' \ba x,y$.
    If $y$ is in a parallel pair of $M_0''$, then $y$ is $N$-deletable, and so $M_0'' \ba y$ is $N$-fragile.
    If $x$ is in a parallel pair of $M_0''$, then $M_0'' = M'$ and $M_0'' \ba x$ is $N$-fragile.
    Now, for some $M'' \in \{M_0'', M_0'' \ba x,\, M_0'' \ba y\}$, the matroid $M''$ is $N$-fragile and $3$-connected up to series classes.

    Since $\{u,x,y\}$ is the unique triad of $M \ba a,b$ containing $u$, there is no triad of $M'$ containing $\{x,y\}$, and hence $x,y \notin S$ for each series class $S$ of $M''$.
    By orthogonality with the triad $\{u,x,y\}$ of $M \ba a,b$, either $x$ or $y$ is in the fundamental circuit $C(u,B)$ of $u$ with respect to $B$.
    Without loss of generality, say $x \in C(u,B)$; then $B-x$ is a basis of $M\ba a,b /u$.
    Moreover, in this matroid $y$ is not in a series pair, and $\{x,y\}$ is not contained in a triad, so there exists some $q \in B^* - \{a,b,u\}$ such that $B' = (B-\{x,y\}) \cup q$ is a basis of $M \ba a,b / u$, and also of $M''$.
    Suppose either $M''$ has two non-trivial series classes, or a series class of size at least~$3$.  Since $|S \cap B'| \ge |S|-1$ for each series class of $M''$, there exists some $b'_1 \in B'-q = B-\{x,y\}$ that is $N$-contractible in $M''$.
    But then $b'_1 \in B-\{x,y\}$ is also $N$-contractible in $M \ba a,b$, contradicting that all the $(N,B)$-robust elements are in $\{u,x,y\}$.
    So $M''$ has at most one non-trivial series class, and this series class has size two.
    In particular, $|E(M'')| \le |E(\co(M''))| +1$.

    Suppose $\co(M'')$ is not $\utfutf$-fragile.
    Then, by \cref{utfutfequiv}, $\co(M'')$ is not $N^*$-fragile.
    Let $F$ be the set of $N^*$-flexible elements of $\co(M'')$.
    Then, by \cref{jail}, either $|E(\co(M''))| \le 9$; the matroid $\co(M'')$ has rank or corank three; 
    $|F| \ge 4$ and $F$ is not contained in a maximal fan of size at most five; or $|F|=3$ and $F$ is the set of internal elements of a $5$-element fan~$F'$ such that either $\co(M'')|(F' \cup g)$ or $(\co(M''))^*|(F' \cup g)$ is isomorphic to $M(K_4)$ for some element $g$.

    Consider the latter two cases.
    Note that $F \subseteq E(M \ba a,b)-\{u\}$, and the elements in $F$ are also $N^*$-flexible in $M \ba a,b$.
    By \cref{bcosw-thm}\ref{bcoswii}, there are at most four $N^*$-flexible elements in $E(M \ba a,b)-\{u\}$, and these elements are contained in a maximal fan of size four or five, with fan ordering $(y,x,u,z)$ or $(y,u,x,z,w)$ respectively.
    In particular, $u$ is in a triangle $\{u,x,z\}$ in $M \ba a,b$.
    Since $M''$ is simple, it follows that $M'' \in \{M' \ba x, M' \ba x,y\}$.
    Then $F \subseteq E(M \ba a,b)-\{u,x\}$, and there are at most three $N^*$-flexible elements in $E(M \ba a,b)-\{u,x\}$ (by \cref{bcosw-thm}\ref{bcoswii} again), so $F=\{y,z,w\}$ and $M'' = M'\ba x$.
    But then $\{z,w\}$ is a series pair in $M''$, so $F \nsubseteq E(\co(M''))$, a contradiction.

    Now suppose $r(\co(M'')) = 3$ or $r^*(\co(M'')) = 3$.
    If $|E(\co(M''))| \le 8$, then $|E(M)| \le 8 + 1 + 5 = 14$, a contradiction.
    By \cref{maxsized}\ref{ms2}, $|E(\co(M''))| \le 12$, and if $|E(\co(M''))| \in \{10,11,12\}$, then $\co(M'')$ has at least six $N'$-flexible elements for some $N' \in \utfutf$, in which case $M \ba a,b$ has six $N'$-flexible elements, a contradiction.
    So $|E(\co(M''))| =9$.
    By \cref{maxsized}\ref{ms1}, and since $M''$ has at least three $N$-essential elements, either $\co(M'')$ has rank three, three $N$-essential elements, and the other six elements are $N$-deletable, where $N \cong U_{3,5}$; or $\co(M'')$ has corank three, three $N$-essential elements, and the other six elements are $N$-contractible, where $N \cong U_{2,5}$.
    If $M''$ has no series pairs, then $|E(M)| \le 14$, a contradiction.
    So let $\{s,s'\}$ be the unique series pair of $M''$, and, without loss of generality, $\co(M'') = M'' / s'$.
    Consider the case where $r(\co(M''))=3$ and $N \cong U_{3,5}$.
    If $s$ is $N$-deletable in $\co(M'')$, then $s$ is $N$-flexible in $M''$, contradicting that $M''$ is $N$-fragile.
    Otherwise, $s$ is $N$-essential in $\co(M'')$, but $s$ and $s'$ are $N$-contractible in $M''$, in which case $M''$ has at most two $N$-essential elements, a contradiction.
    Now consider the case where $r^*(\co(M''))=3$ and $N \cong U_{2,5}$.
    Then $M''$ has rank~$7$, and has $B' = (B-\{x,y\}) \cup q$ as a basis, so there are at least five elements of $\co(M'')$ in $B-\{x,y\}$.
    As $\co(M'')$ has six $N$-contractible elements, and $|E(\co(M''))| = 9$, there is some $b_1' \in B-\{x,y\}$ that is $N$-contractible in $\co(M'')$.
    But then $b_1'$ is also $N$-contractible in $M \ba a,b$, contradicting that all the $(N,B)$-robust elements are in $\{u,x,y\}$.

    Finally, suppose $\co(M'')$ has rank and corank at least~$4$, but $|E(\co(M''))| \le 9$.
    Then \cref{jail}\ref{9eltfewess} holds, so $\co(M'')$ has at most two $N$-essential elements.
    But, as $M'$ has at least three $N$-essential elements, so does $\co(M'')$, a contradiction.

    We deduce that $\co(M'')$ is $\utfutf$-fragile.
    Then, as $\co(M'')$ has at least three $N$-essential elements, \cref{hopeful} implies that $|E(\co(M''))| \le 8$.
    But now $|E(M)| \le |E(\co(M''))| + 1 + 5 = 14$, a contradiction.
  \end{subproof}

  \begin{claim}
    \label{mayclaim}
    $M \ba a,b \ba u$ has at most one $N$-essential element.
  \end{claim}
  \begin{subproof}
    By \cref{wmautfutffrag}, we can choose some $$M'' \in \{M \ba a,b,u / x,\, M \ba a,b,u / x \ba y,\, M \ba a,b,u / x / y \}$$ such that $M''$ is $3$-connected and $\utfutf$-fragile.

    Towards a contradiction, suppose $M \ba a,b \ba u$ has two $N$-essential elements.
    By \cref{utfutf3essentialv2}, either $M''$ or $(M'')^*$ can be obtained from $U_{2,5}$ by gluing a wheel.
    In either case, the resulting fan~$F$ has $|F| \ge 8$, since $|E(M'')| \ge 10$.
    By \cref{fragilefanelements,fragilefans}, $F$ has at least four elements that are $N$-deletable in $M''$.
    Let $d$ be an $N$-deletable element of $M''$.
    If $d \notin B$, then $d$ is $(N,B)$-robust in $M''$ and hence also in $M \ba a,b$.
    But $u$ is the only $(N,B)$-robust element of $M\ba a,b$ that is not in $B$, and $u \notin E(M'')$, so this is contradictory.
    So each $N$-deletable element of $M''$ is in $B$.
    In particular, $F \cap B$ has at least four elements that are $N$-deletable in $M''$.
    As $x \notin E(M'')$, at least three of these are in $B - \{x,y\}$.
    So $M''$, and hence $M \ba a,b \ba u$, has at least three elements in $B-\{x,y\}$ that are not $N$-essential.
    Therefore, $M \ba a,b / u$ has at least three $N$-essential elements, contradicting \cref{maylemma6}.
    We deduce that $M \ba a,b \ba u$ has at most one $N$-essential element.
\end{subproof}

  By \cref{maylemma6}, $M \ba a,b / u$ has at most two $N$-essential elements, and, by \cref{mayclaim}, $M \ba a,b \ba u$ has at most one $N$-essential element. 
  By \cref{thegrandfantasy}, for every $b' \in B-\{x,y\}$, the element $b'$ is $N$-essential in either $M \ba a,b \ba u$ or $M \ba a,b / u$.

  Suppose $r(M) \ge 6$.
  Then $|B-\{x,y\}| \ge 4$, so either $M \ba a,b \ba u$ has at least two $N$-essential elements, or $M \ba a,b / u$ has at least three $N$-essential elements, a contradiction.
  So $r(M) \le 5$.
  Moreover, \cref{wmautfutffrag} implies that $r(M) \ge 5$.
  So $r(M)=5$, $M \ba a,b / u$ has precisely two $N$-essential elements, and $M \ba a,b \ba u$ has precisely one $N$-essential element. 
  Let $M''$ be the matroid given by \cref{wmautfutffrag}.
  Then $r(M'') = 4$ and $M''$ has an $N$-essential element, so $|E(M'')| \le 9$ by \cref{utfutfessentialrank4}, implying $|E(M)| \le 14$, a contradiction.
\end{proof}

\section{Fragile matroids appearing in an excluded minor}
\label{fragilepropssec}

Suppose that $M$ is an excluded minor for the class of $\mathbb{P}$-representable matroids, where $\mathbb{P} \in \{\mathbb{H}_5,\mathbb{U}_2\}$, and $M \ba a,b$ is a $3$-connected 
matroid with a $\utfutf$-minor, for some distinct $a,b \in E(M)$.
By \cref{utfutffragile}, if $|E(M)| \ge 16$, then $M \ba a,b$ is $\utfutf$-fragile.
In this section we consider further properties of such a $\utfutf$-fragile matroid $M \ba a,b$.

We work under the following hypotheses throughout this section.
Let $M$ be an excluded minor for the class of $\mathbb{P}$-representable matroids where $\mathbb{P} \in \{\mathbb{H}_5,\mathbb{U}_2\}$.
Let $M\ba a,b$ be a $3$-connected $\utfutf$-fragile matroid with rank and corank at least~$4$, for distinct $a,b \in E(M)$.
Let $N \in \utfutf$; then $N$ is a non-binary $3$-connected strong $\mathbb{P}$-stabilizer by \cref{u2stabs}, and $M \ba a,b$ is $N$-fragile by \cref{utfutfequiv}.
By \cref{fragilecase}, there exists a bolstered basis~$B$ for $M$ and a $B \times B^*$ companion $\mathbb{P}$-matrix~$A$ for which $\{x,y,a,b\}$ incriminates $(M,A)$ where $\{x,y\} \subseteq B$ and $\{a,b\} \subseteq B^*$, and $M\del a,b$ has at most one $(N,B)$-robust element outside of $\{x,y\}$, where if such an element $u$ exists, then $u\in B^{*}-\{a,b\}$ is an $(N,B)$-strong element of $M\del a,b$, and $\{u,x,y\}$ is a coclosed triad of $M\del a,b$.

\begin{lemma}
  \label{lemmaC}
  Suppose that $|E(M)| \ge 15$.  Then
  \begin{enumerate}[label=\rm(\Roman*)]
    \item $M\ba a,b$ has an $\{X_8,Y_8,Y_8^*\}$-minor,
    \item $M\ba a,b$ has a nice path description, and
    \item for all $e \in E(M \ba a,b)$, exactly one of $M \ba a,b\ba e$ and $M \ba a,b/e$ has a $\utfutf$-minor.
  \end{enumerate}
\end{lemma}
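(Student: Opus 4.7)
The plan is to prove the three assertions in turn, with part (I) requiring the main work and parts (II) and (III) following as short consequences.

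For part (I), I will apply \cref{ccmwvz-result} to classify the $3$-connected $\mathbb{P}$-representable $\utfutf$-fragile matroid $M \ba a,b$. If $M \ba a,b$ falls into case~(1) then we are done, so suppose $M \ba a,b$ falls into one of cases (2)--(5). Case~(2) gives matroids of at most $9$ elements, which is incompatible with the hypothesis $|E(M \ba a,b)| \geq 13$. For cases (3)--(5), $M \ba a,b$ is obtained from $Y_8 \ba 4$ or $U_{2,5}$ by gluing one or more wheels; I will use \cref{cmvzw-fans} to locate the resulting fans, and then push the hypotheses $r(M \ba a,b) \geq 4$, $r^*(M \ba a,b) \geq 4$, together with the structural information that $\{u,x,y\}$ is a coclosed triad of $M \ba a,b$ and that $u$ is $(N,B)$-strong, to derive a contradiction in each subcase. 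In the cases involving multiple glued wheels (the two-wheel subcase of~(4) and the multi-wheel subcases of~(5)) the rank and corank grow in a restricted way and the constraint $|E(M\ba a,b)| \ge 13$ forces large fans whose triad patterns, combined with \cref{fragilefans2} applied to $\{u,x,y\}$, are incompatible with the required coclosed triad. For the single-wheel subcases the argument is tighter and will require carefully analysing where the triad $\{u,x,y\}$ sits relative to the fan and the base, ruling it out using orthogonality with the triangles of the base matroid $Y_8 \ba 4$ or $U_{2,5}$.

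Given part (I), part (II) is immediate from \cref{nicepathdescription} applied to $M \ba a,b$: the hypotheses $|E(M\ba a,b)| \geq 10$, $3$-connectivity, $\utfutf$-fragility, $\mathbb{P}$-representability, and the existence of an $\{X_8,Y_8,Y_8^*\}$-minor are all in hand. For part (III), note that since $M \ba a,b$ is $\utfutf$-fragile, no element is $\utfutf$-flexible, so for every $e \in E(M \ba a,b)$ at most one of $M \ba a,b \ba e$ and $M \ba a,b / e$ has a $\utfutf$-minor. By (I) and \cref{noessential}, $M \ba a,b$ has no $\utfutf$-essential elements, so for every $e$ at least one of $M \ba a,b \ba e$ and $M \ba a,b / e$ has a $\utfutf$-minor. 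Combining these yields exactly one, as required.

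The main obstacle will be the case analysis for part~(I), and in particular the single-wheel subcases of (3)--(5), where the glued wheel can be arbitrarily long and so neither the size hypothesis nor the rank/corank hypothesis alone suffices. The argument there must use the specific structural information provided by the coclosed triad $\{u,x,y\}$ with $u$ being $(N,B)$-strong, together with orthogonality and the classification of fans in $\utfutf$-fragile matroids (\cref{fragilefans2,fragilefanelements}), to pinpoint an obstruction.
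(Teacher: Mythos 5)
Your overall strategy matches the paper's — apply \cref{ccmwvz-result} to $M\ba a,b$, dispose of case~(i) immediately, and rule out the remaining cases — and your handling of parts (II) and (III) given (I) is correct. But there is a genuine gap in the heart of part~(I): you correctly identify the main difficulty (a glued wheel, and hence the resulting fan, can a priori be arbitrarily long, so neither $|E(M\ba a,b)| \ge 13$ nor the rank/corank bounds can directly close cases (iii)--(v)), but you do not actually solve it. You gesture at using the $(N,B)$-strong element $u$, the coclosed triad $\{u,x,y\}$, orthogonality, and \cref{fragilefans2,fragilefanelements}, yet those two lemmas alone do not bound fan length; they only classify which fan elements are deletable/contractible and show that sufficiently interior elements are not essential. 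The missing ingredient — and the engine of the paper's proof — is \cref{fragilefanscase}, which uses the full bolstered-basis/excluded-minor machinery (at most one $(N,B)$-robust element outside $\{x,y\}$) to show that every fan of $M\ba a,b$ has at most five elements. Once that bound is in hand, cases (iii) and (iv) are killed by a direct element count, and the paper's treatment of case (v) still needs an additional argument: it shows at most one of the up-to-three fans can have size five (the one containing $\{x,y\}$, again by \cref{fragilefanscase}), that the fan sizes all have the same parity, and combines this with \cref{noMK4conn} and a short \lcnamecref{noessential}-style argument to bound the total size.

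So: your plan is sound in outline, but the step you defer as "the main obstacle" is precisely the step that must be supplied, and it is not supplied by the tools you cite. You should either invoke \cref{fragilefanscase} directly or reproduce its bolstered-basis argument; without one of those, the case analysis for (iii)--(v) cannot be completed.
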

\begin{proof}
  We can begin by applying \cref{ccmwvz-result} to $M \ba a,b$.
  If (i) holds, then the \lcnamecref{lemmaC} holds by \cref{nicepathdescription,noessential}.
  So one of (iii)--(v) holds, and $M\ba a,b$ or $(M \ba a,b)^*$ can be obtained by gluing up to three wheels to $U_{2,5}$ or $Y_8 \ba 4$.
  Each glued wheel corresponds to a fan of $M \ba a,b$, by \cref{cmvzw-fans}.
  Each of these fans has at most five elements, by \cref{fragilefanscase}.
  So if (iii) holds, then $|E(M \ba a,b)| \le 9$, a contradiction.
  Similarly, if (iv) holds, then $|E(M \ba a,b)| \le 10$, a contradiction.

  So we may assume that (v) of \cref{ccmwvz-result} holds.
  Then $M \ba a,b$ or its dual can be obtained from $U_{2,5}$ with ground set $\{x_1,x_2,x_3,x_4,x_5\}$ by gluing wheels to $(x_1,x_3,x_2)$, $(x_1,x_4,x_2)$, and $(x_1,x_5,x_2)$, and each of the resulting fans has size at most five. 
  Let $F$ be one of these fans of $M \ba a,b$ with $|F| = 5$.
  By \cref{noMK4conn}, if $(f_1,f_2,\dotsc,f_5)$ is a fan ordering of $F$, then $\si(M/f_3)$ is $3$-connected.

  Now suppose $F = (f_1,f_2,f_3,f_4,f_5)$ and $F' = (f'_1,f'_2,f'_3,f'_4,f'_5)$ are distinct $5$-element fans obtained by gluing wheels.  We claim that 
  $\{f_2,f_3,f_4\}$ and $\{f'_2,f'_3,f'_4\}$ each contain at least one element that is not $N$-essential.
  Without loss of generality, $F$ is the fan obtained by gluing a wheel to $(x_1,x_3,x_2)$, and $F'$ is the fan obtained by gluing a wheel to $(x_1,x_4,x_2)$.
  Let $F'' = E(M) -(F \cup F')$.
  There is a $\utfutf$-fragile minor~$M'$ of $M\ba a,b$, obtained by deleting or contracting elements of $F''$, such that $M'$ can be obtained from $U_{2,5}$, with ground set $\{x_1,x_2,\dotsc,x_5\}$, by gluing one wheel to $(x_1,x_3,x_2)$ and gluing a second wheel to $(x_1,x_4,x_2)$.
  If at most one of $x_1$ and $x_2$ is in the remove set when gluing these two wheels, then each fan has at most two $\utfutf$-essential elements.
  So we may assume that both $x_1$ and $x_2$ are removed as part of the operation of gluing these two wheels.
  By contracting $f_1$ and $f_5$ from $M'$, we obtain a $\utfutf$-fragile matroid where each element of $F'$ is not $\utfutf$-essential; whereas by contracting $f'_1$ and $f'_5$ we obtain a $\utfutf$-fragile matroid where each element of $F$ is not $\utfutf$-essential.
  This proves the claim.

  Now, by \cref{fragilefanscase}, each fan of $M \ba a,b$ has at most five elements, and if a fan has size five, then it contains $\{x,y\}$.
  So at most one of the three fans has size five.
  Observe that the size of each of these three fans of $M \ba a,b$ has the same parity, due to how the wheels are glued to $U_{2,5}$.
  Thus, if $M \ba a,b$ has a $5$-element fan, then $|E(M \ba a,b)| \le 11$; whereas if each fan of $M \ba a,b$ has size at most four, then $|E(M \ba a,b)| \le 12$.
  Either case is contradictory, so this completes the proof.
\end{proof}

\begin{lemma}
  \label{atmostonetri}
  Suppose that $|E(M)| \ge 15$.
  Then $M \ba a,b$ has at most one triangle, and if such a triangle~$T$ exists, then 
  \begin{enumerate}
    \item $M \ba a,b$ has an $(N,B)$-robust element $u \in B^*$ that is in a coclosed triad $\{u,x,y\}$,
    \item $T$ contains $u$ and either $x$ or $y$, and
    \item $T \cup \{x,y\}$ is a $4$-element fan.
  \end{enumerate}
\end{lemma}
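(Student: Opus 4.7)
The plan is to combine the nice path description of $M\ba a,b$ from \cref{lemmaC} with the bolstered-basis constraints on $(N,B)$-robust elements. First, by \cref{lemmaC}, $M\ba a,b$ has a nice path description and an $\{X_8,Y_8,Y_8^*\}$-minor; by \cref{noessential} and $\utfutf$-fragility, each element of $M\ba a,b$ is exclusively $\utfutf$-deletable or $\utfutf$-contractible. A preliminary observation is that $\utfutf$-deletable coincides with $N$-deletable (and dually for contractibility): for $\utfutf$-deletable $e$, the cosimplification $\co(M\ba a,b\ba e)$ is $3$-connected with rank and corank at least $3$ (using $r(M\ba a,b),r^*(M\ba a,b)\ge 4$), so by \cref{utfutfprop} it has both a $U_{2,5}$- and a $U_{3,5}$-minor. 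Consequently, the bolstered-basis hypothesis forces the $\utfutf$-contractible elements of $B$ to lie in $\{x,y\}$ and the $\utfutf$-deletable elements of $B^*-\{a,b\}$ to lie in $\{u\}$ (empty if $u$ does not exist).

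Now let $T=\{t_1,t_2,t_3\}$ be a triangle of $M\ba a,b$. By \cref{pathdesctris}, at least two elements of $T$ are $\utfutf$-deletable; since $T\not\subseteq B$, at most one of them is in $B^*$, and if so it equals $u$. Suppose first that $u\notin T$. Then $t_1,t_2\in B-\{x,y\}$ are $\utfutf$-deletable and $t_3\in B^*-\{a,b,u\}$ is $\utfutf$-contractible. Since $T$ is the fundamental circuit $C_M(t_3,B)$, the column $A[\cdot,t_3]$ has non-zero entries in $B$ exactly at $\{t_1,t_2\}$, so $A_{xt_3}=A_{yt_3}=0$ and \cref{allowablenonxy2} makes the pivot on $A_{t_1t_3}$ allowable. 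In the resulting basis $B'=(B-t_1)\cup t_3$, both $t_1\in(B')^*-\{a,b\}$ (as an $N$-deletable element) and $t_3\in B'-\{x,y\}$ (as an $N$-contractible element) become $(N,B')$-robust. When $u$ does not exist, this directly contradicts the bolstered condition, since $B$ has zero $(N,B)$-robust elements outside $\{x,y\}$ while $B'$ has at least two. When $u$ does exist, a careful use of the nice path description is needed to verify that neither $t_1$ nor $t_3$ becomes $(N,B')$-strong, so that the bolstered comparison again yields a contradiction.

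In the remaining case $u\in T$, orthogonality of $T$ with the coclosed triad $\{u,x,y\}$ forces $|T\cap\{u,x,y\}|\ge 2$, so $T\cap\{x,y\}\ne\emptyset$. Writing $T=\{t,u,z\}$ with $z\in\{x,y\}$, the element $t$ is $\utfutf$-deletable and, since only $u$ is $\utfutf$-deletable in $B^*-\{a,b\}$, it lies in $B-\{x,y\}$. The triangle $T$ and the triad $\{u,x,y\}$ share the pair $\{u,z\}$, so $T\cup\{x,y\}=\{t,u,z,z'\}$ (with $\{z'\}=\{x,y\}-z$) is a $4$-element fan ordered $(t,u,z,z')$, establishing conclusions (i)--(iii).

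For uniqueness, suppose $T$ and $T'$ are two distinct triangles. By the previous two paragraphs each must contain $u$ and have the form $\{t,u,z\}$ with $z\in\{x,y\}$. If $T$ and $T'$ meet $\{x,y\}$ at the same element $z$, then $T\cup T'\subseteq\cl(\{u,z\})$ is a $4$-segment, and the triangle $\{t,t',z\}$ meets the triad $\{u,x,y\}$ in exactly $\{z\}$, violating orthogonality. Otherwise $T=\{t,u,x\}$ and $T'=\{t',u,y\}$, producing two overlapping $4$-fans sharing the triad $\{u,x,y\}$; a further orthogonality-and-submodularity argument then yields an $M(K_4)$-restriction or co-restriction on an appropriate set (possibly extended by a single element forced by orthogonality), contradicting \cref{noMK4}. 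The main obstacle is the subcase in the second paragraph where $u$ exists, as one must rule out the possibility that the pivot on $(t_1,t_3)$ introduces a new $(N,B')$-strong element, which requires the precise location of $t_1,t_2,t_3$ within the nice path description.
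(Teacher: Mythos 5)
Your overall strategy mirrors the paper's: case-split on how the triangle interacts with $u$ and $\{x,y\}$, then use allowable pivots and the bolstered-basis condition to contradict. But there are several concrete gaps.

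First, in the case $u \notin T$ you assert without justification that $t_1,t_2 \in B-\{x,y\}$. This does not follow: $u \notin T$ only forces the unique $B^*$-element $t_3$ of $T$ to be $\utfutf$-contractible; the two $B$-elements $t_1,t_2$ can certainly include $x$ or $y$. In fact, when $u$ exists, orthogonality with the coclosed triad $\{u,x,y\}$ (with $u\notin T$) forces $T\cap\{x,y\}\in\{\emptyset,\{x,y\}\}$ — so $\{x,y\}\subseteq T$ is genuinely possible and needs its own argument (the paper handles it by observing $\co(M\ba a,b\ba u)\cong \co(M\ba a,b\ba u/x)$ would contain the parallel pair $\{y,p\}$). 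When $u$ does not exist, $|T\cap\{x,y\}|=1$ is also possible, and your pivot via \cref{allowablenonxy2} is not available (since $A_{xt_3}\neq 0$); the paper uses \cref{allowablexyrow2} there. So your paragraph two silently assumes $T\cap\{x,y\}=\emptyset$ and leaves two whole subcases unhandled.

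Second, the sentence ``When $u$ does exist, a careful use of the nice path description is needed \dots'' is not a small deferred verification — it is the core technical work of the lemma. The paper first uses one allowable pivot, plus \cref{pathdescprops}, \cref{pathdescendconn}, \cref{fanends}, \cref{noMK4conn} and \cref{fragilecase}(iii), to force $\gamma=t_3$ to be the internal element of a maximal $4$-element fan ($P_1$ or $P_1\cup T$); only then does a second pivot on $A_{\beta\gamma}$ produce two elements that are $(N,B'')$-robust but not $(N,B'')$-strong, so that the bolstered inequality actually fails. You cannot reach the contradiction by ``verifying neither $t_1$ nor $t_3$ is strong'' alone; the fan structure is what guarantees $\co(M\ba\beta)\cong\si(M/\gamma)$ fails $3$-connectivity simultaneously, and without it the bolstered comparison does not close.

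Third, your uniqueness argument is also incomplete at the key point: you reduce to two $4$-fans sharing the triad $\{u,x,y\}$ and then wave at ``a further orthogonality-and-submodularity argument'' to produce an $M(K_4)$ (co)restriction. This is not automatic — the configuration $(t,x,u,y,t')$ is a genuine $5$-element fan, and there is no sixth point forced by orthogonality alone; what actually kills it in this setting is that this fan ordering satisfies none of the four alternatives of \cref{fragilefanscase} (the middle triple $\{x,u,y\}$ cannot be $N$-essential and the outer triples are triangles, not triads). Invoking that corollary would be cleaner and correct; the $M(K_4)$ claim as stated is not justified.

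Finally, two smaller points: your preliminary observation needs a bit more care because $\co(M\ba a,b\ba e)$ need not be $3$-connected if $e$ lies in a triangle — you want $\si(\co(\cdot))$ and a rank/corank check — and in the $u\in T$ case you claim $t\in B-\{x,y\}$ is $\utfutf$-deletable, but $t$ could just as well be the single contractible element of $T$ sitting in $B^*$; fortunately the fan conclusion is insensitive to this, so that one is harmless.
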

\begin{proof}
  Let $T$ be a triangle of $M'=M \ba a,b$.
  By \cref{lemmaC,pathdesctris},
  the triangle~$T$ either consists of three $N$-deletable elements, or two $N$-deletable elements and an $N$-contractible element.
  In the former case, there exists an element in $T \cap B^*$, as $r(T) = 2$, and this element is $(N,B)$-robust. 
  So $M'$ has an $(N,B)$-robust element $u$ outside of $\{x,y\}$, with $u \in T$.
  Now $u$ is in a triad $\{u,x,y\}$.
  By orthogonality, one of $x$ and $y$ is in $T$, and the other is not, since $M'$ is $3$-connected.
  In particular, $T \cup \{x,y\}$ is a $4$-element fan, as required.

  We may now assume that $T$ consists of two elements that are $N$-deletable in $M'$, and one that is $N$-contractible.
  First, suppose $\{x,y\} \subseteq T$.
  Let $T= \{x,y,p\}$.  Then $p \in B^*$.
  Consider the case when $M$ has an $(N,B)$-robust element~$u$.
  Note that $p \neq u$, since $\{u,x,y\}$ is a triad and $M$ is $3$-connected.
  Moreover, $\co(M' \ba u)$ is $3$-connected, but this matroid is isomorphic to $\co(M' \ba u/x)$, which has a parallel pair $\{y,p\}$, a contradiction.
  So $M'$ has no $(N,B)$-robust elements.
  Now, by the definition of a bolstered basis, no allowable pivot can introduce an $(N,B)$-robust element.
  If $p$ is $N$-deletable, then it is $(N,B)$-robust, a contradiction.
  So without loss of generality we may assume that $p$ is the $N$-contractible element of $T$; thus $x$ and $y$ are $N$-deletable.
  But a pivot on $A_{xp}$ 
  is allowable, by \cref{allowablexyrow2}, where $\{p,y,a,b\}$ incriminates $(M,A^{xp})$, and $x$ is an $(N,B\triangle \{x,p\})$-robust element outside of $\{p,y\}$, contradicting that $B$ is bolstered.

  Next, suppose $|\{x,y\} \cap T| = 1$.
  Without loss of generality, $x \in T$ and $y \notin T$.
  Suppose $M'$ has no $(N,B)$-robust elements.
  There is at least one $N$-deletable element in $T-x$.  Let $q$ be such an element; then $q \in B$.  Now $T=\{x,p,q\}$ where $p \in B^*$ and $p$ is $N$-contractible, since $M'$ has no $(N,B)$-robust elements, so $x$ is $N$-deletable.
  By \cref{allowablexyrow2}, a pivot on $A_{xp}$ is allowable, where $\{p,y,a,b\}$ incriminates $(M,A^{xp})$.
  But now $x$ is an $(N,B \triangle \{x,p\})$-robust element outside of $\{p,y\}$, which contradicts that $B$ is a bolstered basis.
  So we may assume that $M'$ has an $(N,B)$-robust element~$u$, in which case $\{u,x,y\}$ is a triad.
  By orthogonality, $u \in T$, so let $T = \{x,u,q\}$.
  Then $\{x,y,u,q\}$ is a $4$-element fan as required.

  Finally, suppose $x,y \notin T$.
  Recall that if $M'$ has an $(N,B)$-robust element~$u$ outside of $\{x,y\}$, then $\{u,x,y\}$ is a triad.
  Thus, if such a $u$ exists, then by orthogonality $u \notin T$.
  So $T$ does not contain an $(N,B)$-robust element.
  Thus, the two $N$-deletable elements of $T$ are in $B$ and the $N$-contractible element is in $B^*$.
  Let $\alpha,\beta \in B$ be the $N$-deletable elements of $T$ and let $\gamma \in B^*$ be the $N$-contractible element of $T$.

  By \cref{lemmaC}, $M'$ has a nice path description $(P_1,P_2,\dotsc,P_m)$.
  We claim that either $P_1$ or $P_1 \cup T$ is a maximal $4$-element fan with $\gamma$ as an internal element.
  By \cref{allowablenonxy2}, a pivot on $A_{\alpha\gamma}$ is allowable, after which $\alpha$ and $\gamma$ become $(N,B')$-robust elements, where $B' =B\triangle \{\alpha,\gamma\}$, with $\alpha \in (B')^*$ and $\gamma \in B'$.
  Now $\gamma \in P_i$, for some $i \in \seq{m}$, where,
  by \cref{pathdescprops}, either $P_i$ is a coguts set, or $i \in \{1,m\}$.
  In the former case, $\si(M' / \gamma)$ is $3$-connected, again by \cref{pathdescprops}, in which case $\gamma$ is an $(N,B')$-strong element in $B'-\{x,y\}$, contradicting \cref{fragilecase}\ref{nostronginbasis}.
  So, without loss of generality, $\gamma \in P_1$.
  If $P_1$ is a triangle or $4$-segment, then $\si(M' / \gamma)$ is $3$-connected by \cref{pathdescendconn}, so again $\gamma$ is an $(N,B')$-strong element in $B'-\{x,y\}$, a contradiction to \cref{fragilecase}\ref{nostronginbasis}.
  If $P_1$ is a $4$-cosegment, then by orthogonality $T \subseteq P_1$, so $T$ is a triangle-triad, contradicting that $M'$ is $3$-connected.
  Suppose $P_1$ is a fan of size at least~$4$.
  Since $\gamma$ is $N$-contractible, \cref{fragilefanelements} implies that $\gamma$ is not a spoke element of the fan~$P_1$.
  If $\gamma$ is a rim element, then $\si(M' / \gamma)$ is $3$-connected by \cref{fanends,noMK4conn}, a contradiction to \cref{fragilecase}\ref{nostronginbasis}.
  So $\gamma$ is an internal element of $P_1$, where $P_1$ is a maximal $4$-element fan, as claimed.
  Finally, if $P_1$ is a triad, then $F=P_1 \cup T$ is a $4$-element fan, by orthogonality.
  As in the previous case, $\gamma$ is not a spoke or a rim element of $F$, so $F$ is a maximal $4$-element fan with $\gamma$ as an internal element.

  Now let $F = P_1 \cup T$ if $P_1$ is a triad, otherwise let $F = P_1$; in either case, $F$ is a maximal $4$-element fan with $\gamma$ as an internal element.
  By orthogonality and the maximality of $F$, we have $T \subseteq F$.
  So, without loss of generality, $F$ has a fan ordering $(\alpha, \gamma, \beta, \delta)$, where $P_1 - T = \{\delta\}$.
  Note that the only triangle containing $\gamma$ is $T$, and the only triad containing $\beta$ is $\{\gamma,\beta,\delta\}$, by orthogonality and the maximality of $F$.
  Thus $\co(M \ba \beta) \cong M \ba \beta / \gamma \cong \si(M / \gamma)$.
  By \cref{allowablenonxy2}, a pivot on $A_{\beta\gamma}$ is allowable, after which $\beta$ and $\gamma$ become $(N,B'')$-robust elements, where $B'' =B\triangle \{\beta,\gamma\}$, with $\beta \in (B'')^*$ and $\gamma \in B''$.
  Recall that $\si(M/\gamma)$ is not $3$-connected, by \cref{fragilecase}\ref{nostronginbasis}, so $\co(M \ba \beta)$ is not $3$-connected.
  Thus both $\beta$ and $\gamma$ are $(N,B'')$-robust but not $(N,B'')$-strong.
  As neither $\beta$ nor $\gamma$ is $(N,B)$-robust, this contradicts that $B$ is a bolstered basis.
\end{proof}

\begin{lemma}
  \label{rkcorkbounds}
  Suppose that $|E(M)| \ge 15$.  Then 
  \begin{enumerate}
    \item $r(M\ba a,b) \le r^*(M \ba a,b) + 2$ and
    \item $r^*(M\ba a,b) \le r(M\ba a,b) + 1$.
  \end{enumerate}
  Moreover, if $M \ba a,b$ has an $(N,B)$-robust element outside of $\{x,y\}$, then $r(M\ba a,b) \le r^*(M \ba a,b) + 1$.
\end{lemma}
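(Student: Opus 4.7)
The plan is to read off the bounds by identifying, for each element of $B$ and of $B^{*}-\{a,b\}$, whether it is $\utfutf$-contractible or $\utfutf$-deletable, and then comparing the resulting counts with $r(M\ba a,b)$ and $r^{*}(M\ba a,b)$ via \cref{pathdescrank}.

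First I would note that $B$ is a basis of $M\ba a,b$: since $a,b \in B^{*}$, neither $a$ nor $b$ is a coloop of $M$, so $r(M\ba a,b)=r(M)=|B|$, and $B \subseteq E(M\ba a,b)$ is independent of the right size. Because $|E(M)| \ge 15$, \cref{lemmaC} gives that $M\ba a,b$ has an $\{X_8,Y_8,Y_8^{*}\}$-minor, and then \cref{noessential,pathdescrank} imply that the sets $C$ of $\utfutf$-contractible and $D$ of $\utfutf$-deletable elements partition $E(M\ba a,b)$, with $|C|=r(M\ba a,b)$ and $|D|=r^{*}(M\ba a,b)$.

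The main observation is that the location of the $(N,B)$-robust elements forces an imbalance between $B$ and $B^{*}$. If some $e \in B-\{x,y\}$ were $\utfutf$-contractible then, by \cref{utfutfprop,utfutfequiv}, $e$ would be $N$-contractible, and hence an $(N,B)$-robust element of $M\ba a,b$ outside $\{x,y\}$; this contradicts \cref{fragilecase}, since the only possible $(N,B)$-robust element outside $\{x,y\}$ is the element $u$, which lies in $B^{*}$. Thus $B-\{x,y\} \subseteq D$, and combined with $|D|=r^{*}(M\ba a,b)$ this yields $r(M\ba a,b)-2 \le r^{*}(M\ba a,b)$. The dual argument shows that no element of $B^{*}-\{a,b,u\}$ can be $\utfutf$-deletable, so $B^{*}-\{a,b,u\} \subseteq C$; using $|C|=r(M\ba a,b)$ this gives $r^{*}(M\ba a,b)-1 \le r(M\ba a,b)$ when $u$ exists, and $r^{*}(M\ba a,b) \le r(M\ba a,b)$ otherwise.

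Finally, when $u$ exists it is itself $(N,B)$-robust and lies in $B^{*}$, so it is $N$-deletable and belongs to $D$; adding this to the count from $B-\{x,y\}$ gives $|D| \ge r(M\ba a,b)-1$, which sharpens the first bound to $r(M\ba a,b) \le r^{*}(M\ba a,b)+1$. The argument is essentially a bookkeeping exercise and I do not anticipate a real obstacle; the only point that needs care is the translation between $\utfutf$-(de)letability and $N$-(de)letability for $N \in \utfutf$, which is supplied by \cref{utfutfprop,utfutfequiv} given the rank and corank hypotheses on $M\ba a,b$.
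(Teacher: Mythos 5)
Your proof is correct and essentially the same as the paper's: both equate $|C| = r(M\ba a,b)$ and $|D| = r^*(M\ba a,b)$ via \cref{pathdescrank}, and then use \cref{fragilecase} to locate the only possible $(N,B)$-robust element outside $\{x,y\}$ in $B^*$, which constrains which elements of $B$ and $B^*$ can be $\utfutf$-contractible or $\utfutf$-deletable. The only difference is framing --- you derive the inclusions $B-\{x,y\} \subseteq D$ and $B^*-\{a,b,u\} \subseteq C$, while the paper bounds $|C|$ and $|D|$ from above --- but this is the same counting argument, and the translation step you flag (that $\utfutf$-(co)deletability matches $N$-(co)deletability here) is indeed the same point the paper relies on implicitly via \cref{lemmaC} and \cref{noessential2}.
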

\begin{proof}
  By \cref{lemmaC}, every element of $M \ba a,b$ is $N$-deletable or $N$-contractible (but not both).
  Let $r = r(M \ba a,b)$ and $r^* = r^*(M \ba a,b)$, and let $C$ and $D$ be the set of $N$-contractible and $N$-deletable elements of $M \ba a,b$ respectively.
  Recall that $M \ba a,b$ has at most one $(N,B)$-robust element outside of $\{x,y\}$, and if this element exists it is in $B^*$.
  So each of the $r-2$ elements of $B-\{x,y\}$ are $N$-deletable, $x$ and $y$ might be $N$-deletable, and at most one element in $B^*$ is $N$-deletable.  In total, $|D| \le r+1$. 
  On the other hand, all of the $r^*$ elements of $B^*$ are $N$-contractible when $M\ba a,b$ has no $(N,B)$-robust elements outside of $\{x,y\}$, but none of the elements in $B-\{x,y\}$ are $N$-contractible.  So $|C| \le r^*+2$.
  If $M \ba a,b$ has an $(N,B)$-robust element outside of $\{x,y\}$, then $|C| \le r^*+1$.
  By \cref{pathdescrank}, $r(M\ba a,b)=|C|$, so $r = |C| \le r^*+2$; and $r^*(M\ba a,b) =|D|$, so $r^* = |D| \le r+1$, as required.
\end{proof}

The next two lemmas are used to simplify the arguments in \cref{ndtsec}.

\begin{lemma}
  \label{oneendisatriad}
  Suppose that $|E(M)| \ge 15$ and $M \ba a,b$ has a nice path description $(P_1,P_2,\dotsc,P_m)$.
  Then either $P_1$ or $P_m$ is a coclosed triad.
\end{lemma}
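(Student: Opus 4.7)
The plan is to argue by contradiction: suppose that neither $P_1$ nor $P_m$ is a coclosed triad, and derive a contradiction.

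First, by \cref{atmostonetri}, $M\ba a,b$ has at most one triangle, so neither end can be a $4$-segment (which would contain four triangles). In the setup of this section, \cref{fragilecase} furnishes the coclosed triad $\{u,x,y\}$ of $M\ba a,b$ whenever an $(N,B)$-robust element $u$ outside $\{x,y\}$ exists. Under \cref{atmostonetri}, if no such $u$ exists then $M\ba a,b$ has no triangle at all, so neither end is a triangle or a fan of size at least~$4$. Thus, under the contradictory assumption, each end is a triangle, a non-coclosed triad, a $4$-cosegment, or a fan of size at least~$4$, with the further restriction in the no-$u$ case.

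In the main case (the element $u$ exists) I would show that $\{u,x,y\}$ must equal $P_1$ or $P_m$. The key is to locate $u$ in the path description. If $u$ lies in a middle block $P_i$, then since $u$ is $\utfutf$-deletable and not $\utfutf$-contractible, \cref{pathdescprops} forces $P_i$ to be a guts set; I would then use orthogonality between the triad $\{u,x,y\}$ and the circuits provided by the guts structure to force $x$ and $y$ into adjacent positions on the path, and then derive a contradiction with either the coclosedness of the triad or with the rank formula underlying \cref{pathdescrank}. If instead $u$ lies in an end, then the coclosedness of $\{u,x,y\}$ rules out the $4$-cosegment case (since that $4$-cosegment would put an extra element into $\cocl(\{u,x,y\})$), \cref{atmostonetri} together with \cref{pathdescends,fragilefanelements} and the maximality clause of the nice path description handles the triangle and fan cases, and the assumption on non-coclosedness rules out the triad case immediately.

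In the remaining case (no $u$ exists), both ends are non-coclosed triads or $4$-cosegments. A non-coclosed triad end $P_1$ is contained in a $4$-cosegment $P_1 \cup e$, and by the maximality condition of \cref{nicepathdescription} this forces $e \in P_m$. I would use \cref{rkcorkbounds} in its no-$u$ form $r(M\ba a,b) \le r^*(M\ba a,b)+2$ and $r^*(M\ba a,b) \le r(M\ba a,b)+1$, together with the guts/coguts counting along a nice path description (as in the proof of \cref{pathdescrank}), to derive numerical inequalities that cannot be satisfied when both ends are $4$-cosegments, or when both are non-coclosed triads each extending into a $4$-cosegment on the opposite end.

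The main obstacle will be the fan-end subcase in the main case: the interplay between the triad $\{u,x,y\}$, the spoke/rim structure constrained by \cref{fragilefanelements}, the absence of an $M(K_4)$ restriction guaranteed by \cref{noMK4}, and the maximality of the fan as an end requires a careful orthogonality-based argument to show that $\{u,x,y\}$ must in fact be the end rather than merely be contained in, or be adjacent to, it.
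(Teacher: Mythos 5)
Your plan takes a genuinely different route from the paper. The paper does not split on whether the $(N,B)$-robust element $u$ exists; instead its key observation is that any $4$-cosegment end must meet $\{x,y\}$ (via \cref{pathdescends} and the fact that at most two of the three contractible elements in the cosegment can lie in $B^*$), and it then eliminates the pairings (both $4$-cosegments; $4$-cosegment with non-coclosed triad; triangle; fan) one by one. Your strategy of locating $\{u,x,y\}$ in the path and showing it coincides with an end is appealing, but it has several gaps.

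The most serious gap is in the no-$u$ case. You propose to close out the "both ends are $4$-cosegments or non-coclosed triads" possibility purely via \cref{rkcorkbounds} together with the guts/coguts counting from \cref{pathdescrank}. That does not work: when both ends are $4$-cosegments and there is no robust element, the bounds $r^*(M\ba a,b) \le r(M\ba a,b) \le r^*(M\ba a,b)+2$ give $|Q| \le |G|-2$ and $|G| \le |Q|+4$, which is compatible with $|E(M\ba a,b)| \in \{13,14\}$. The paper's actual argument here is a pivot argument: it finds $q \in B^*$ outside the two triads $T_1^*,T_m^*$ with $A_{xq}=A_{yq}=0$, applies \cref{allowablenonxy2}, and uses the bolstered-basis condition to derive the contradiction. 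Pure counting cannot replace this. A second gap is in the $u$-in-a-$4$-cosegment-end subcase: your stated reasoning ("that $4$-cosegment would put an extra element into $\cocl(\{u,x,y\})$") only works when $|\{u,x,y\} \cap P_1| \ge 2$; the case where only $u$ lies in the cosegment requires a cocircuit-elimination argument and is not covered by your sentence. Finally, the $u$-in-a-middle-guts-block subcase is described only at the level of "use orthogonality and the rank formula" -- you would need to be much more concrete here, since $\{u,x,y\}$ need not be concentrated near one position of the path, and showing that a coclosed triad $3$-separating set cannot be internal to a nice path description is not immediate.
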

\begin{proof}
  Let $i \in \{1,m\}$ and suppose that $P_i$ is a $4$-cosegment.
  We claim that $P_i \cap \{x,y\} \neq \emptyset$.
  By \cref{pathdescends}, there is some $e \in P_i$ that is $\utfutf$-deletable, and each element in $P_i-e$ is $\utfutf$-contractible.
  Recall that $M \ba a,b$ has at most one $(N,B)$-robust element outside of $\{x,y\}$, where if such an element $u$ exists, then $u \in B^*$ and $\{u,x,y\}$ is a triad of $M \ba a,b$.
  Since $r^*_{M \ba a,b}(P_i-e)=2$, we have $|(P_i-e) \cap B^*| \le 2$, so there is an $(N,B)$-robust element in $(P_i-e) \cap B$.
  So $P_i \cap \{x,y\} \neq \emptyset$ as claimed.

  Towards a contradiction, suppose that $P_1$ and $P_m$ are both $4$-cosegments.
  Then, without loss of generality, $x \in P_1$ and $y \in P_m$.
  Let $e_1 \in P_1$ and $e_m \in P_m$ be $\utfutf$-deletable elements, so,
  letting $T_1^* = P_1-e_1$ and $T_m^* = P_m-e_m$, each element in $T_1^* \cup T_m^*$ is $\utfutf$-contractible.
  Note that $x \in T^*_1$ and $y \in T^*_m$, and $(T_1^* \cup T_m^*)-\{x,y\} \subseteq B^*$.
  Let $Z = E(M \ba a,b)-(T_1^* \cup T_m^*)$.
  Then $Z \cap B = B-\{x,y\}$ and, as $r(Z) \leq r(M \ba a,b)-2$, the set $B-\{x,y\}$ spans $Z$.
  Suppose $M \ba a,b$ has an $(N,B)$-robust element $u \in Z$.  Then $\{u,x,y\}$ is a triad, so $r^*(T_1^* \cup T_2^* \cup u) \le 4$.  But $|(T_1^* \cup T_2^* \cup u) \cap B^*| \ge 5$, a contradiction.
  So $M \ba a,b$ has no $(N,B)$-robust elements.
  If $r^*(M \ba a,b)\le 4$, then, by \cref{rkcorkbounds}, $r(M) \le 6$, so $|E(M)| \le 12$, a contradiction.
  So we may assume $r^*(M \ba a,b) > 4$.
  Thus, there exists an element $q \in B^* \cap Z$ such that $q$ is not $(N,B)$-robust, and $A_{xq} = A_{yq} = 0$.
  Since $q$ is not a loop, there exists an element $p \in B-\{x,y\}$ such that $A_{pq} \neq 0$.
  Now $A^{pq}$ is an allowable pivot, by \cref{allowablenonxy2}, and $q$ is $N$-contractible in $B' = B \triangle \{p,q\}$.
  So $q$ is $(N,B')$-robust, but $M \ba a,b$ has no $(N,B)$-robust elements, contradicting that $B$ is a bolstered basis.
  We deduce that $P_1$ and $P_m$ are not both $4$-cosegments.

  Next, suppose that $P_1$ is a $4$-cosegment, and $P_m$ is a triad that is not coclosed.  Then, by definition, there is an element $p_1 \in P_1$ such that $P_m \cup p_1$ is a $4$-cosegment.
  The $4$-cosegments $P_1$ and $P_m \cup p_1$ each have a unique $\utfutf$-deletable element, whereas the other elements are $\utfutf$-contractible; and contain at most two elements in $B^*$, so at least two elements in $B$.
  Since $M \ba a,b$ has at most one $(N,B)$-robust element, it follows that $p_1$ is $\utfutf$-contractible.
  Moreover, $r^*_{M \ba a,b}(P_1 \cup P_m) =3$, so $|(P_1 \cup P_m) \cap B^*| \le 3$ and $|(P_1 \cup P_m) \cap B| \ge 4$, implying that $p_1 \in B^*$.
  Now, $P_1 - p_1$ and $P_m$ each contain two elements of $B$, at least one of which is $N$-contractible, and therefore $(N,B)$-robust.  So we may assume that $x \in P_1-p_1$ and $y \in P_m$.
  Note that $P_2$ and $P_{m-1}$ are guts sets and $m$ is odd.
  Let $i \in \{2,4,\dotsc,m-1\}$, so that $P_i$ is a guts set.
  Since $\lc^*(P_1,P_m)=1$, it follows from the duals of \cref{growpi,pflancoguts} that $|P_i| = 1$.
  Hence $m \ge 5$.
  Now consider the coguts set $P_3$.  By \cref{pathdescprops}, each $e \in P_3$ is $\utfutf$-contractible, so, as $e \notin \{x,y\}$, we have $e \in B^*$.
  Thus, if $|P_3| \ge 2$, then $r^*_{M \ba a,b}(P_1 \cup P_2 \cup P_3) = 3$ but $|(P_1 \cup P_2 \cup P_3) \cap B^*| \ge 4$, a contradiction.
  So $|P_3| = 1$.
  Now $P_2 \cup P_3 \cup P_4$ is a triangle, by the duals of \cref{growpi,pflantriad}.
  But as $\{x,y\} \subseteq P_1 \cup P_m$, this contradicts \cref{atmostonetri}.
  By symmetry, we deduce that if $P_i$ is a $4$-cosegment and $P_{j}$ is a triad for $\{i,j\} = \{1,m\}$, then $P_{j}$ is coclosed.
  
  By \cref{atmostonetri} and \cref{nicepathdescription}\ref{npd1}, 
  neither $P_1$ nor $P_m$ is a triangle.
  Let $\{i,j\}=\{1,m\}$ and suppose that $P_i$ is a fan of size at least~$4$; then, by \cref{atmostonetri} again, $\{x,y\} \subseteq P_i$, so $P_j$ is a (coclosed) triad.
\end{proof}

\begin{lemma}
  \label{niceends}
  Suppose that $|E(M)| \ge 15$ and $M \ba a,b$ has a nice path description $(P_1,P_2,\dotsc,P_m)$. Let $i \in \{1,m\}$.  Then $P_i$ is either a coclosed cosegment, or a maximal fan.
\end{lemma}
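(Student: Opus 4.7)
The plan is to argue by case analysis on the type of end $P_i$ supplied by Theorem~\ref{nicepathdescription}(i): $P_i$ is a triangle, a triad, a $4$-segment, a $4$-cosegment, or a fan of size at least four. Lemma~\ref{oneendisatriad} already guarantees that at least one of $P_1, P_m$ is a coclosed triad (hence a coclosed cosegment), so it suffices to take $i \in \{1,m\}$ arbitrary and write $\{i,i'\} = \{1,m\}$; I may assume when convenient that $P_{i'}$ is the coclosed triad from Lemma~\ref{oneendisatriad}. The goal reduces to ruling out triangle and $4$-segment ends, showing that triad and $4$-cosegment ends are coclosed, and showing that fan ends are maximal in $M\ba a,b$ (not merely in $E(M\ba a,b)-P_{i'}$).

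First I would dispose of the easy cases. A $4$-segment contains $\binom{4}{3}=4$ triangles, immediately contradicting Lemma~\ref{atmostonetri}. Suppose $P_i$ is a triangle end. By Lemma~\ref{atmostonetri}, $P_i$ is the unique triangle $T$ of $M\ba a,b$, the triad $\{u,x,y\}$ exists and is coclosed, $T$ contains $u$ together with exactly one of $x, y$, and $T\cup\{x,y\}$ is a $4$-element fan. For $P_i$ to be a triangle end rather than a fan end in the nice path description, the $4$-fan $T\cup\{x,y\}$ cannot lie inside $E(M\ba a,b)-P_{i'}$, which forces the unique element of $\{x,y\}-T$ to lie in $P_{i'}$. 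Combining this with the coclosed triad $P_{i'}$ supplied by Lemma~\ref{oneendisatriad}, and using orthogonality among $T$, the triad $\{u,x,y\}$, and the triad $P_{i'}$, should yield a contradiction; the point is that coclosedness of $P_{i'}$ prevents the triad $\{u,x,y\}$ from reaching into $P_{i'}$ through a single shared element without forcing extra cocircuit structure.

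For the remaining non-coclosed and non-maximal cases, the unifying principle is that any larger cosegment or fan properly containing $P_i$ must, by the nice-path-description maximality clause in Theorem~\ref{nicepathdescription}, extend using an element of $P_{i'}$. If $P_i$ is a non-coclosed triad (respectively, $4$-cosegment), it extends to a $4$-cosegment (respectively, $5$-cosegment) via some $e \in P_{i'}$; every $3$-subset of $P_i \cup e$ is then a triad, so several elements of $P_i$ lie in $\mathrm{cocl}(P_{i'} \cup e)$, contradicting coclosedness of $P_{i'}$ or producing orthogonality violations with $\{u,x,y\}$. If $P_i$ is a non-maximal fan of size at least four, it extends via some $e \in P_{i'}$; examining whether $e$ is a spoke or rim end of the extended fan, and invoking orthogonality against $\{u,x,y\}$ and against the coclosed triad $P_{i'}$, again gives a contradiction.

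The main obstacle is executing the orthogonality and coclosedness arguments cleanly while tracking the positions of $x$, $y$, $u$, and any extending element $e$ across $P_i$ and $P_{i'}$. The triangle-end case is the most delicate, since one must pin down exactly how $\{x,y\}$ splits between the two ends and exploit the interaction of the unique triangle with the coclosed triads $\{u,x,y\}$ and $P_{i'}$; the non-maximal fan case is comparable, as it requires controlling the extended fan's alternating triangle/triad structure against the tightly prescribed coclosed triad at the opposite end.
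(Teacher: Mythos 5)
Your high-level plan — apply Lemma~\ref{oneendisatriad} to get a coclosed triad end $P_{i'}$, dispose of triangle and $4$-segment ends via Lemma~\ref{atmostonetri}, and then argue that a failure of coclosedness or fan-maximality in $P_i$ forces an extending element $e$ into $P_{i'}$ by Theorem~\ref{nicepathdescription}(ii) — matches the paper's up to that point. But the step where you extract a contradiction from $e \in P_{i'}$ has a genuine gap.

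You assert that the triads inside the enlarged cosegment $P_i \cup e$ place ``several elements of $P_i$'' in $\cocl(P_{i'} \cup e)$, contradicting coclosedness of $P_{i'}$. This does not follow: since $e \in P_{i'}$, we have $\cocl(P_{i'} \cup e) = \cocl(P_{i'}) = P_{i'}$, and every triad contained in $P_i \cup e$ meets $P_i$ in at least two elements, so none of them can certify $p \in \cocl(P_{i'})$ for some $p \in P_i$. No contradiction with coclosedness of $P_{i'}$ arises. The fan case has the same difficulty: ``orthogonality against $\{u,x,y\}$'' names no circuit/cocircuit pair that actually meets in a single element, and the triad $\{u,x,y\}$ need not interact with $P_i \cup e$.

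The paper's mechanism is different and, crucially, the violation of the maximality clause is produced at the \emph{other} end. Writing $P_{i'} = P_1$ and $P_i = P_m$: since $P_1$ is a triad on one side of a $3$-separation, $r^*(E(M\ba a,b)-P_1) = r^*(M\ba a,b)$, so $P_1 \subseteq \cocl(E(M\ba a,b)-P_1)$ and hence, by Lemma~\ref{gutsandcoguts}, $P_1 \cap \cl(E(M\ba a,b)-P_1) = \emptyset$. In particular $p_1 \notin \cl(P_m)$, which eliminates the spoke-end (closure) extension of a fan; so the extending element satisfies $p_1 \in P_1 \cap \cocl(P_m)$. One then checks that $(P_1 - p_1, P_2, \ldots, P_{m-1}, \{p_1\}, P_m)$ is a path of $3$-separations, which forces $r\bigl((P_1 \cup P_2) - p_1\bigr) = 2$, i.e.\ $P_2 \subseteq \cl(P_1 - p_1)$. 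Thus every $p_2 \in P_2$ makes a triangle with the two elements of $P_1 - p_1$, and $P_1 \cup p_2$ is a $4$-element fan with $p_2 \notin P_m$, violating Theorem~\ref{nicepathdescription}(ii) for $P_1$ rather than for $P_m$. Your proposal aims at the right target but at the wrong end, and it needs the cocircuit to be propagated across the path of $3$-separations rather than exploited locally by orthogonality.
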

\begin{proof}
  By \cref{oneendisatriad}, we may assume that $P_1$ is a coclosed triad.
  Then $P_2$ is a guts set.  Moreover, for any $p_1 \in P_1$, we have $p_1 \notin \cl(P_m)$, so $P_m$ is closed.

  Suppose $P_m$ is a cosegment that is not coclosed, or $P_m$ is a fan that is not maximal.
  In either case, there is some $p_1 \in P_1 \cap \cocl(P_m)$.
  It follows that $(P_1-p_1,P_2,\dotsc,P_{m-1},\{p_1\},P_m)$ is a path of $3$-separations.
  Hence $p_2 \in \cl(P_1-p_1)$ for each $p_2 \in P_2$, implying $P_1 \cup p_2$ is a fan.
  This contradicts the definition of a nice path description, so we deduce that if $P_m$ is a cosegment, then it is coclosed, and if $P_m$ is a fan, then it is maximal.
\end{proof}



\section{The delete-triple case}
\label{dtsec}

We work under the following assumptions throughout this section.
  Let $M$ be an excluded minor for the class of $\mathbb{P}$-representable matroids where $\mathbb{P} \in \{\mathbb{H}_5,\mathbb{U}_2\}$, and $M$ has no triads.
  Suppose also that $|E(M)| \ge 16$.
  Note that, by \cref{utfutffragile}, for any pair $\{a,b\} \subseteq E(M)$ such that $M \ba a,b$ is $3$-connected with a $\utfutf$-minor, the matroid $M \ba a,b$ is $\utfutf$-fragile.

We say that a triple $\{a,b,c\} \subseteq E(M)$ is a \emph{delete triple} for $M$ if $M \ba a,b,c$ is $3$-connected with a 
$\utfutf$-minor.
In this section, we prove \cref{deltripleprop}, which says that, under the above assumptions, $M$ has no delete triples.

\begin{lemma}
  \label{specialdeletetriples}
  If $M$ has a delete triple, then it has some delete triple $\{a,b,c\}$ such that $M \ba a,b,c$ has no triangles. 
\end{lemma}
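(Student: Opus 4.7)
The plan is to argue by contradiction, using a minimality argument. Among all delete triples of $M$, choose $\{a,b,c\}$ so that the number of triangles of $M\ba a,b,c$ is as small as possible; assume this number is positive, and let $T=\{t_1,t_2,t_3\}$ be a triangle of $M\ba a,b,c$. The goal is to construct a delete triple whose complement has strictly fewer triangles of $M$, contradicting the choice of $\{a,b,c\}$. The natural candidate is obtained by replacing one element of $\{a,b,c\}$ with a suitably chosen element of $T$.

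The first step is to locate an element $t\in T$ that is $\utfutf$-deletable in $M\ba a,b,c$. Since $M\ba a,b,c$ is $3$-connected with $|E(M\ba a,b,c)|\ge 13$, its rank and corank are at least $3$, so by \cref{utfutfprop} it has both a $U_{2,5}$- and a $U_{3,5}$-minor. As $U_{3,5}$ contains no triangle, every $U_{3,5}$-minor of $M\ba a,b,c$ is obtained by a contraction or deletion that meets $T$, so some $t_i\in T$ is either $\utfutf$-deletable or $\utfutf$-contractible. If no element of $T$ is $\utfutf$-deletable and some $t_i\in T$ is $\utfutf$-contractible, then after contracting $t_i$ the set $T-t_i$ is a parallel pair; deleting the appropriate element of this pair while retaining the $U_{3,5}$-minor produces a $\utfutf$-deletable element of $T$, a contradiction. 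Thus some $t\in T$ is $\utfutf$-deletable, and in particular $M\ba a,b,c,t$ has a $\utfutf$-minor, so any matroid of the form $M\ba(\{a,b,c,t\}-\{e\})$ inherits this minor.

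The main obstacle is the second step: showing that for some $e\in\{a,b,c\}$, the matroid $M\ba(\{a,b,c,t\}-\{e\})$ is $3$-connected. This is where the hypothesis that $M$ has no triads is used crucially: every triad of $M\ba a,b,c$ is the non-empty restriction of a cocircuit of $M$ of size at least four that meets $\{a,b,c\}$, and by orthogonality with $T$ must meet $T$ in at least two elements. From this one can describe precisely how cocircuits of $M$ near $T$ interact with $\{a,b,c\}$, and argue that any $2$-separation $(X,Y)$ of $M\ba(\{a,b,c,t\}-\{e\})$ would force, via submodularity and the $3$-connectivity of $M\ba a,b,c$, a forbidden configuration in at least one of the three choices of $e$. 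Finally, triangles of $M$ disjoint from $\{a,b,c,t\}-\{e\}$ that meet $e$ can be bounded by a further orthogonality argument (using that $M$ has no triads and applying Lemma~\ref{gutsandcoguts}-type constraints to the nearby structure), so for an appropriate choice of $e$ the triangle $T$ is destroyed and no compensating triangles through $e$ appear, giving the required strict decrease and the desired contradiction.
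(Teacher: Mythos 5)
Your proposal diverges substantially from the paper's argument, and the two key steps you flag as "the main obstacle" are genuinely hand-waved rather than carried out. The gap is real, not cosmetic.

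Your step~1 (locating a $\utfutf$-deletable $t\in T$) is sound, though it can be derived more directly from \cref{pathdesctris} together with the fact that $M\ba a,b,c$ is $\utfutf$-fragile. The problem is step~2. You propose to replace some $e\in\{a,b,c\}$ by $t$ and argue that (i) $M\ba(\{a,b,c,t\}-\{e\})$ is $3$-connected for some $e$, and (ii) the resulting matroid has strictly fewer triangles. Neither claim is established. For~(i), you gesture at submodularity and "forbidden configurations," but there is no argument ruling out that all three of the matroids $M\ba b,c,t$, $M\ba a,c,t$, and $M\ba a,b,t$ fail to be $3$-connected. For~(ii), you must control the triangles of $M$ that pass through the returned element $e$ and avoid $\{a,b,c,t\}-\{e\}$; you assert this can be "bounded by a further orthogonality argument," but give no argument, and this is precisely where the difficulty lies — reinstating $e$ can create a new triangle just as easily as deleting $t$ destroys the old one. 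Moreover, the minimality framing is something of a red herring: once you know (as the paper does, via \cref{atmostonetri}) that $M\ba a,b,c$ has at most one triangle, minimizing the triangle count is equivalent to directly exhibiting a delete triple with none, so the induction gives you nothing.

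The paper's actual proof does not swap within $T$ at all. It uses \cref{atmostonetri} and \cref{fragilefanscase} to pin down the structure precisely: $M\ba a,b$ has exactly one triangle $T$, $T\cup\{x,y\}$ is a $4$-element fan $F$ containing the robust element $u$, and $F$ lies in a fan of size at most~$5$. If $F$ is a maximal $4$-element fan, deleting its spoke end gives the required triangle-free delete triple via \cref{fanendsstrong}; deleting an element cannot create a new triangle, so no counting argument is needed. If instead $F$ sits inside a $5$-element fan, the hypothesis that $M$ has no triads is used to replace $a$ (which blocks the triad $\{u,x,y\}$) by $e$, yielding a different pair deletion $M\ba b,e$ whose unique triangle lies in a $4$-element fan; iterating once more either produces the desired delete triple or forces an $M(K_4)$ co-restriction, contradicting \cref{noMK4}. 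Your approach does not access any of this fan structure, and without it I do not see how to complete either of the two unproved steps.
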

\begin{proof}
  Suppose $\{a,b,e\}$ is a delete triple for $M$ but $M \ba a,b,e$ has at least one triangle.
  Observe that $M \ba a,b$ is $\utfutf$-fragile and has rank and corank at least~$4$, by \cref{utfutffragile}, and this matroid has at least one triangle.
  Since $M \ba a,b,e$ has a $\utfutf$-minor, $M \ba a,b,e$ is also $\utfutf$-fragile.
  Let $N \in \utfutf$ such that $M\ba a,b$ has an $N$-minor.
  By \cref{fragilecase}, there exists a basis~$B$ for $M$ and a $B \times B^*$ companion $\mathbb{P}$-matrix~$A$ for which $\{x,y,a,b\}$ incriminates $(M,A)$ where $\{x,y\} \subseteq B$ and $\{a,b\} \subseteq B^*$. 
  By \cref{atmostonetri}, $M \ba a,b$ has exactly one triangle~$T$, there is a unique $(N,B)$-robust element $u \in T \cap B^*$, and $T \cup \{x,y\}$ is a $4$-element fan~$F$.
  If the fan~$F$ is maximal, then there is a spoke end $c$ of $F$. 
  Now $c$ is not $\utfutf$-contractible in $M \ba a,b$, by \cref{fragilefanelements}, so it is $\utfutf$-deletable, by \cref{lemmaC},
  and $M \ba a,b,c$ is $3$-connected, by \cref{fanendsstrong}.
  So $\{a,b,c\}$ is a delete triple such that $M \ba a,b,c$ has no triangles, as required.

  Now we may assume that $F$ is properly contained in a fan $F'$.  By \cref{fragilefanscase}, $|F'| =5$, and, by \cref{atmostonetri}, we may assume up to swapping $x$ and $y$ that $F'$ has an ordering $(x,u,y,f_4,f_5)$ where $\{x,u,y\}$ is a triad.
  Note that $e \notin F'$, since each element of $F'$ is in a triad of $M \ba a,b$ but $M \ba a,b,e$ is $3$-connected.
  So the triangle $\{u,y,f_4\}$ of $M \ba a,b$ is also a triangle of $M \ba a,b,e$.
  Moreover, by \cref{fanunique}, $e$ is not in the coclosure of either of the triads of $F'$, so $F'$ is also a fan of $M \ba a,b,e$.
  Again by \cref{fanunique}, the only triads containing $y$ in $M \ba a,b,e$ are $\{u,x,y\}$ and $\{y,f_4,f_5\}$.

  Since $M$ has no triads, either $a$ or $b$ blocks the triad $\{u,x,y\}$ of $M \ba a,b$.
  Without loss of generality, say $a$ blocks $\{u,x,y\}$.
  Then, $\{u,x,y\}$ is not a triad in $M \ba b,e$.
  Furthermore, as $\{u,y,f_4\}$ is a triangle in $M \ba a,b,e$, it is also a triangle in $M \ba b,e$.
  By \cref{utfutffragile,atmostonetri}, this is the unique triangle in $M \ba b,e$, and it is contained in a $4$-element fan.  
  Applying the argument from the first paragraph, if this $4$-element fan is maximal, then there is a delete triple, $\{b,e,c'\}$ say, such that $M \ba b,e,c'$ has no triangles, as required.
  So we may assume that $M \ba b,e$ has a $5$-element fan $F_2'$ whose internal elements are $\{u,y,f_4\}$.
  Now $y$ is in at least one triad of $M \ba b,e$.
  Since $y$ is in exactly two triads of $M \ba a,b,e$, at least one of which is blocked by $a$, the unique triad of $M \ba b,e$ containing $y$ is $\{y,f_4,f_5\}$.
  So 
  $f_4$ is a rim element of $F_2'$, and 
  $\{u,f_4\}$ is contained in a triad $T^*$ of $M \ba b,e$.
  Since $M \ba a,b,e$ is $3$-connected, $T^*$ is also a triad of $M \ba a,b,e$.
  Then it follows that $T^* = \{u,f_4,q\}$ for some $q \in E(M \ba a,b) - \{e,u,x,f_4,f_5\}$.
  Now $(f_5,y,f_4,u,q)$ is a fan ordering of $F_2'$, which is a fan in $M \ba b,e$ and $M \ba a,b,e$.
  Note that $x,q,f_5 \in \cocl_{M \ba a,b,c}(\{u,y,f_4\})$, and it follows that $\{x,q,f_5\}$ is also a triad of $M \ba a,b,c$.
  Thus $(M\ba a,b,c)^*|(F' \cup q) \cong M(K_4)$. 
  But $M \ba a,b,c$ is $\utfutf$-fragile, so this contradicts \cref{noMK4}.
\end{proof}

We say that a delete triple $\{a,b,c\}$ for $M$ is \emph{special} if $M \ba a,b,c$ has no triangles.

The next lemma is straightforward, but important for the arguments that follow.
\begin{lemma}
  \label{pathbasics}
  Let $M'$ be a $3$-connected matroid with $x \in E(M')$.
  Suppose $M' \ba x$ is $3$-connected, and both $M'$ and $M'\ba x$ have path width three.
  Let $(e_1,e_2,\dotsc,e_n)$ be a sequential ordering of $M'$, with $x = e_i$ for some $i \in \seq{n}$.
  Then $\sigma=(e_1,\dotsc,e_{i-1},e_{i+1},\dotsc,e_n)$ is a sequential ordering of $M' \ba x$.
  Moreover, any triad of $M' \ba x$ contained in either $\{e_1,\dotsc,e_{i-1}\}$ or $\{e_{i+1},\dotsc,e_n\}$ is not blocked by $x$.
\end{lemma}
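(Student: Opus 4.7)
The plan is to establish both parts of \cref{pathbasics} via rank computations, using \cref{gutsstayguts} and cocircuit elimination to handle the second claim.

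First, I would verify that $\sigma$ is a sequential ordering of $M' \ba x$. Since $M' \ba x$ is $3$-connected, $x$ is not a coloop of $M'$, so $r(M' \ba x) = r(M')$ and ranks in $M' \ba x$ agree with those in $M'$ on subsets of $E(M')-x$. A quick computation then gives $\lambda_{M' \ba x}(Z) \le \lambda_{M'}(Z)$ for every $Z \subseteq E(M')-x$, since the complement of $Z$ in $M' \ba x$ is smaller by one element than in $M'$. Applying this bound to initial segments of $\sigma$ of the form $\{e_1,\dotsc,e_j\}$ with $j < i$ directly shows they are $3$-separating in $M' \ba x$; the longer initial segments of $\sigma$ are handled by the same bound applied to their (smaller) complements $\{e_{k+1},\dotsc,e_n\}$, together with the self-complementary nature of $\lambda$. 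Combined with the hypothesis that $M' \ba x$ has path width three, $\sigma$ is a sequential ordering of $M' \ba x$.

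For the ``moreover'' statement, I would argue by contradiction. Let $T^*$ be a triad of $M' \ba x$ contained in $X := \{e_1,\dotsc,e_{i-1}\}$ and suppose $x$ blocks $T^*$. Then $T^*$ is a cocircuit of $M' \ba x$ but not of $M'$, which forces $T^* \cup x$ to be a $4$-element cocircuit of $M'$. Hence $x \in \cocl_{M'}(T^*) \subseteq \cocl_{M'}(X)$, equivalently $x \notin \cl_{M'}(Y)$, where $Y := \{e_{i+1},\dotsc,e_n\}$. The rank identity
\[
  \lambda_{M' \ba x}(X) = \lambda_{M'}(X) - \bigl(r_{M'}(Y \cup x) - r_{M'}(Y)\bigr) = 2 - 1 = 1
\]
then exhibits $(X,Y)$ as a $2$-separating partition of $M' \ba x$ with $|X| \ge |T^*| = 3$; provided $|Y| \ge 2$, this immediately contradicts the $3$-connectedness of $M' \ba x$. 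A symmetric argument handles triads contained in $\{e_{i+1},\dotsc,e_n\}$.

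The main obstacle is the degenerate regime where $|Y| \le 1$ (or symmetrically $|X| \le 1$), in which the rank argument alone does not produce a non-trivial $2$-separation. In this regime I would exploit that $X \cup x$ is exactly $3$-separating in $M'$ and apply \cref{gutsstayguts}: together with $x \in \cocl_{M'}(X)$, this forces $x \in \cocl_{M'}(Y)$, yielding an auxiliary cocircuit $C^* \subseteq Y \cup x$ through $x$. Since $|C^*| \le |Y|+1 \le 2$, this contradicts the $3$-connectedness of $M'$, which has no coloops or series pairs because $|E(M')| \ge 5$. In the non-degenerate regime one could alternatively obtain the contradiction via cocircuit elimination between $T^* \cup x$ and such a $C^*$, producing a cocircuit of $M' \ba x$ meeting both $X$ and $Y$.
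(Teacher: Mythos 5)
Your first part tracks the paper's proof essentially verbatim: both bound $\lambda_{M'\ba x}$ of an initial segment above by $\lambda_{M'}$ of the corresponding segment of $M'$ via rank subadditivity, and use $3$-connectivity of $M'\ba x$ for the matching lower bound.

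For the ``moreover'' statement you take a genuinely different route. The paper splits on whether $r^*_{M'}(\{e_1,\dotsc,e_{i-1}\})$ and $r^*_{M'}(\{e_{i+1},\dotsc,e_n\})$ are both at least~$3$; when they are, it appeals to the dual of \cref{vert3sep} (if $x$ were a coguts element, $(X,x,Y)$ would be a cyclic $3$-separation, so $\co(M'\ba x)=M'\ba x$ would not be $3$-connected), and the remaining cases are dispatched from the low-corank side directly. You instead read off the identity $\lambda_{M'\ba x}(X)=\lambda_{M'}(X)-1=1$, which is shorter, more elementary, and does not need \cref{vert3sep} at all --- but it only produces a bona fide $2$-separation of $M'\ba x$ when $|X|,|Y|\ge 2$.

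That leaves a genuine gap in your degenerate case. You assert there that ``$X\cup x$ is exactly $3$-separating in $M'$'', but this is false precisely when $|Y|\le 1$: since $\lambda_{M'}(X\cup x)=\lambda_{M'}(Y)$, a set of size at most one in a $3$-connected matroid on at least four elements has $\lambda\le 1$, never $2$; and when $|Y|=0$ even the premise of \cref{gutsstayguts} fails, as then $\lambda_{M'}(X)=1$. So \cref{gutsstayguts}(2) cannot be used to infer $x\in\cocl_{M'}(Y)$ from $x\in\cocl_{M'}(X)$, the promised short cocircuit $C^*\subseteq Y\cup x$ never appears, and nothing in your argument rules out $x\in\cocl_{M'}(X)$ in this regime. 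You need a different argument here. The paper sidesteps the issue by splitting on the coranks $r^*_{M'}$ of the two sides rather than on their cardinalities: on the small-corank side, $x$ cannot lie in its coclosure without creating a coloop or series pair in the cosimple matroid $M'$, after which $x$ is forced into the closure of the other side by the sequential ordering.
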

\begin{proof}
  For $j \in \seq{n-1}$, let $X_j = \{e_1,e_2,\dotsc,e_j\}$ and $Y_j = \{e_{j+1},e_{j+2},\dotsc,e_n\}$, and $X'_j = X_j - x$ and $Y'_j = Y_j - x$.
  Suppose $|X'_j|,|Y'_j|\ge 2$ for some $j \in \seq{n-1}$.
  To show that $\sigma$ is a sequential ordering of $M' \ba x$, it suffices to show that $\lambda_{M' \ba x}(X'_j) = 2$.
  Since $M' \ba x$ is $3$-connected, $\lambda_{M' \ba x}(X'_j) \ge 2$.
  Moreover, $\lambda_{M' \ba x}(X'_j) = r(X'_j) + r(Y'_j) - r(M' \ba x) \le r(X_j) + r(Y_j) - r(M') = \lambda_{M'}(X_j) = 2$, as required.

  Now suppose that $\{e_1,\dotsc,e_{i-1}\}$ contains a triad $T^*$ of $M' \ba x$.
  It remains to prove that $T^*$ is not blocked by $x$.
  First, assume that $r^*_{M'}(\{e_1,\dotsc,e_{i-1}\}) \ge 3$ and $r^*_{M'}(\{e_{i+1},\dotsc,e_n\}) \ge 3$.
  Then, by the dual of \cref{vert3sep}, $x$ is not a coguts element, since $M' \ba x$ is $3$-connected.
  So $x$ is a guts element, in which case $x \in \cl(\{e_{i+1},\dotsc,e_n\}) \subseteq \cl(E(M' \ba x) - T^*)$, implying that $x$ does not block $T^*$.
  So we may assume that $r^*_{M'}(\{e_1,\dotsc,e_{i-1}\}) \le 2$ or $r^*_{M'}(\{e_{i+1},\dotsc,e_n\}) \le 2$.
  In the former case, $x \notin \cocl_{M'}(\{e_1,\dotsc,e_{i-1}\})$, for otherwise $M \ba x$ is not $3$-connected, so $x \in \cl(\{e_{i+1},\dotsc,e_n\})$ by orthogonality.
  In the latter case, $x \notin \cocl_{M'}(\{e_{i+1},\dotsc,e_n\})$, similarly, and  thus, as $(e_1,\dotsc,e_n)$ is a sequential ordering of $M'$, we must have $x \in \cl(\{e_{i+1},\dotsc,e_n\})$.
  Since $x \in \cl(\{e_{i+1},\dotsc,e_n\})$ in either case, $x$ does not block $T^*$.
\end{proof}

We come to the main result of this section.  For ease of reference, we restate the section assumptions.
\begin{theorem}
  \label{deltripleprop}
  Let $M$ be an excluded minor for the class of $\mathbb{P}$-representable matroids where $\mathbb{P} \in \{\mathbb{H}_5,\mathbb{U}_2\}$, and $M$ has no triads.
  Suppose $|E(M)| \ge 16$.
  Then $M$ has no delete triples.
\end{theorem}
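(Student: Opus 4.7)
The plan is to assume, for contradiction, that $M$ has a delete triple and derive a contradiction to $|E(M)| \ge 16$. By \cref{specialdeletetriples}, we may assume $\{a,b,c\}$ is a \emph{special} delete triple, so $M \ba a,b,c$ is $3$-connected, has a $\utfutf$-minor, and has no triangles. First I would show that each of the three pairwise deletions $M \ba a,b$, $M \ba a,c$, $M \ba b,c$ is itself $3$-connected. Since $M$ is $3$-connected and has no triads, each pairwise deletion is simple (any parallel pair would be a circuit in $M$) and cosimple (any series pair would extend to a $4$-cocircuit of $M$, which would leave a coloop in $M \ba a,b,c$, contradicting $3$-connectivity). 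Thus any $2$-separation $(X,Y)$ of say $M \ba a,b$ must have $|X \cap \{c\}| = 1$ and $|X| \le 2$ by $3$-connectivity of $M \ba a,b,c$, which is ruled out by simplicity and cosimplicity.

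With $3$-connectivity in hand, \cref{utfutffragile} makes each of $M \ba a,b$, $M \ba a,c$, $M \ba b,c$ a $\utfutf$-fragile matroid with rank and corank at least~$4$. Then \cref{lemmaC} yields, for each, an $\{X_8,Y_8,Y_8^*\}$-minor and a nice path description (\cref{nicepathdescription}), so each has path width three. Moreover, since $M \ba a,b,c$ has no triangles, every triangle of $M \ba a,b$ contains $c$; combined with \cref{atmostonetri}, $M \ba a,b$ has at most one triangle, and if such a triangle exists it contains $c$ together with one element of the pair $\{x_{ab},y_{ab}\}$ in the coclosed triad $\{u_{ab},x_{ab},y_{ab}\}$. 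Analogous statements hold for $M \ba a,c$ and $M \ba b,c$.

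Next I would analyze $M \ba a,b,c$ itself. By \cref{pathbasics}, deleting $c$ from a sequential ordering of $M \ba a,b$ yields one of $M \ba a,b,c$, so $M \ba a,b,c$ has path width three. Since $M \ba a,b,c$ has no triangles and any fan of size at least~$4$ contains a triangle, $M \ba a,b,c$ has no fans of size at least~$4$, and a nice path description of $M \ba a,b,c$ exists with both ends being coclosed triads or $4$-cosegments by \cref{niceends} and \cref{oneendisatriad} applied to each pairwise deletion. Now I would use \cref{welldefinedends} and \cref{endslipperiness} to compare the well-defined ends $L(M \ba a,b,c), R(M \ba a,b,c)$ with those of the three pairwise deletions, noting that inserting $c$, $b$, or $a$ back into the sequential ordering can only enlarge an end by a bounded amount (a fan extension, a segment extension, or a guts/coguts insertion). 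Together with the constraints from \cref{niceends}, \cref{oneendisatriad}, and \cref{atmostonetri} on each of the three pairwise deletions -- each forcing $\{x,y\}$ near an end and at most one triangle containing the third element -- this should force the interior (guts-coguts sequence) of $M \ba a,b,c$ to be short, and the ends to have bounded cardinality, yielding $|E(M \ba a,b,c)| \le 12$, say, and hence $|E(M)| \le 15$, a contradiction.

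The main obstacle is step three: executing the detailed case analysis on the nice path descriptions of the three pairwise deletions so as to prove an effective upper bound on $|E(M \ba a,b,c)|$. One must simultaneously track the locations of $a$, $b$, and $c$ in the sequential orderings (each of them can only lie near an end or in a controlled interior position, because the \emph{same} underlying matroid $M \ba a,b,c$ must arise by deletion), and rule out the longer path descriptions that would otherwise be permitted by each description in isolation. I expect the argument to split on which of triad or $4$-cosegment appears at each end of $M \ba a,b,c$, and within each case to use orthogonality together with the triads $\{u,x,y\}$ supplied by \cref{fragilecase} for each pair to force element identifications and thereby collapse the path description to constant length.
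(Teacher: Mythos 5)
Your proposal gets the right setup — reducing to a special delete triple via \cref{specialdeletetriples}, applying \cref{utfutffragile} and \cref{lemmaC} to the three pairwise deletions, and working with the path-width-three structure of $M\ba a,b,c$ — but the step you yourself flag as ``the main obstacle'' is where the entire content of the argument lives, and the mechanism you gesture at (``inserting $c$, $b$, or $a$ back \ldots\ can only enlarge an end by a bounded amount'') does not bound the interior of the guts-coguts sequence at all. A $\utfutf$-fragile matroid of path width three can have arbitrarily many interior cells; controlling the ends alone gives no size bound.

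What the paper's proof actually does, and what is missing from your sketch, is the following chain.  First it shows that both ends $L$ and $R$ of $M\ba a,b,c$'s path description must be $4$-cosegments, not merely triads (Claim~\crefrange{only4cosegs}{only4cosegs}); you allow the ends to be triads, which undermines the next step.  Second, since $M$ has no triads, every triad of $M\ba a,b,c$ is blocked by some element of $\{a,b,c\}$, and Claim~\cref{unblockedtriad} shows each of $a,b,c$ can fail to block at least one of the four triads inside each $4$-cosegment end; a pigeonhole argument (Claim~\cref{applyphp}) then forces one element, say $c$, to block triads in \emph{both} $L$ and $R$.  Placing $c$ into a sequential ordering of $M\ba a,b$ forces $c$ to sit in the interior with $L$ and $R$ wedged on either side, which by \cref{pathbasics} and \cref{endslipperiness} pins down the local-connectivity relation $\lc^*(P_1,P_m)\geq 1$ in the appropriate concatenation.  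This, via \cref{pflancoguts} and \cref{pflantriad}, forces every guts cell to be a singleton and (because $M\ba a,b,c$ has no triangles) almost every coguts cell to have size at least two.  Finally, the rank–corank inequality of \cref{rkcorkbounds} — which you never invoke — converts these cell-size constraints into a bound on the number of cells and hence on $|E(M\ba a,b)|$.  None of the three lemmas \cref{rkcorkbounds}, \cref{pflancoguts}, \cref{pflantriad} appear in your proposal, yet they are exactly what turns ``the interior should be short'' into an actual inequality.  Without the end-type argument, the pigeonhole step, and the rank–corank bound, there is no derivation of a numerical bound on $|E(M)|$, so the proposal has a genuine gap rather than merely leaving routine details to the reader.
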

\begin{proof}
  Towards a contradiction, suppose $M$ has a delete triple. 
By \cref{specialdeletetriples}, we may assume that $M$ has a special delete triple $\{a,b,c\}$.
Then, by \cref{utfutffragile}, each of $M \ba a,b$, $M \ba a,c$, and $M \ba b,c$ is $\utfutf$-fragile.
By \cref{lemmaC}, each of these matroids has an $\{X_8,Y_8,Y_8^*\}$-minor, a nice path description, and no $\utfutf$-essential elements.
Since $M \ba a,b$ is $\utfutf$-fragile, and $c$ is $\utfutf$-deletable in this matroid, $M \ba a,b,c$ is also $\utfutf$-fragile.

\begin{claim}
  \label{mabcfragile}
  $M \ba a,b,c$ has path width three and no $\utfutf$-essential elements.
\end{claim}
\begin{proof}
  The matroid $M \ba a,b,c$ is a $3$-connected $\utfutf$-fragile $\mathbb{P}$-representable matroid, with $|E(M \ba a,b,c)| \ge 12$.
  Moreover, $M \ba a,b,c$ has no triangles, so it has no fans of size at least~$4$.
  Thus, by \cref{ccmwvz-result}, $M \ba a,b,c$ has an $\{X_8,Y_8,Y_8^*\}$-minor.
  Now $M \ba a,b,c$ has path width three, by \cref{nicepathdescription}, and no $\utfutf$-essential elements, by \cref{noessential}.
\end{proof}

Let $L$ and $R$ be the ends of a sequential ordering of $M \ba a,b,c$.
Note that, by \cref{welldefinedends}, for every sequential ordering $\sigma$ of $M \ba a,b,c$, we have $\{L,R\} = \{L(\sigma),R(\sigma)\}$.
Since $M \ba a,b,c$ has no triangles, each of $L$ and $R$ is either a triad or a $4$-cosegment.

\begin{claim}
  \label{only4cosegs}
  $L$ and $R$ are $4$-cosegments.
\end{claim}
\begin{subproof}
  Say $L$ 
  is a triad.
  Since $M$ has no triads, $L$ is blocked by at least one of $a$, $b$, and $c$.
  Without loss of generality we may assume that $a$ blocks $L$.
  Consider a sequential ordering $\sigma_a=(p_1,p_2,p_3,\dotsc,p_n)$ for $M \ba b,c$.
  Now $a=p_i$ for some $i \in \seq{n}$.
  So $\sigma_a^-=(p_1,p_2,\dotsc,p_{i-1},p_{i+1},\ldots,p_n)$ is a sequential ordering for $M \ba a,b,c$ by \cref{pathbasics}.
  By \cref{welldefinedends}, we may assume (up to reversing the order of $\sigma_a^-$) that $L=L(\sigma_a^-)$ and $R=R(\sigma_a^-)$.

  Suppose $i > 3$. Then $L=\{p_1,p_2,p_3\}$ is a triad of $M \ba a,b,c$ by \cref{endslipperiness}\ref{esi}, and, as $\sigma_a$ is a sequential ordering for $M \ba b,c$, the set $\{p_1,p_2,p_3\}$ is either a triangle or a triad of $M \ba b,c$.
  If $\{p_1,p_2,p_3\}$ is a triangle of $M \ba b,c$, then it is also a triangle of $M \ba a,b,c$, so $M \ba a,b,c$ is not $3$-connected, a contradiction.
  So $\{p_1,p_2,p_3\}$ is a triad of $M \ba b,c$ and $M \ba a,b,c$, in which case $a$ does not block $L$, a contradiction.
  We deduce that $a = p_i$ for $i \in \{1,2,3\}$.

  Now, if $\{p_1,p_2,p_3\}$ is a triad of $M \ba b,c$, then $M \ba a,b,c$ is not $3$-connected, a contradiction.
  So $\{p_1,p_2,p_3\}$ is a triangle of $M \ba b,c$.
  As $L = L(\sigma_a^-)$ is a triad of $M \ba a,b,c$, \cref{endslipperiness}\ref{esi} implies that this triad is $\{p_1,p_2,p_3,p_4\}-p_i$.
  As the triad $L$ is blocked by $a$, we have that $\{p_1,p_2,p_3,p_4\}$ is a cocircuit of $M \ba b,c$.
  By \cref{atmostonetri}, the triangle $\{p_1,p_2,p_3\}$ is contained in a $4$-element fan $F$ of $M \ba b,c$.
  Since $M \ba a,b,c$ is $3$-connected, $a$ is not contained in the triad of $F$.
  So, for some element $z$ and $\{i,j,k\} = \{1,2,3\}$, the fan $F$ has ordering $(a,p_j,p_k,z)$ where $\{p_j,p_k,z\}$ is a triad.
  Note that $p_4 \neq z$, since in $M \ba b,c$ the set $\{p_j,p_k,p_4\}$ is properly contained in a cocircuit, whereas $\{p_j,p_k,z\}$ is a triad.
  But then $\{p_j,p_k,p_4,z\}$ is a $4$-cosegment of $M \ba a,b,c$ containing $L$, so $L$ is not a triad end of $M \ba a,b,c$, a contradiction.

  So $L$ is a cosegment of size at least~$4$.  The fact that $|L| = 4$ follows from the fact that $M \ba a,b,c$ is $\utfutf$-fragile.
  The result then follows by symmetry.
\end{subproof}

By \cref{only4cosegs}, we may now assume that $|L|=4$ and $|R|=4$.

\begin{claim}
  \label{unblockedtriad}
  For each $x \in \{a,b,c\}$ and $X \in \{L,R\}$, the element $x$ does not block every triad contained in $X$.
\end{claim}
\begin{subproof}
  It suffices to show that $a$ does not block every triad contained in $L$. 
  Consider a sequential ordering $\sigma_a=(p_1,p_2,p_3,\dotsc,p_n)$ for $M \ba b,c$.
  We have $a=p_i$ for some $i \in \seq{n}$, and $$\sigma_a^-=(p_1,p_2,\dotsc,p_{i-1},p_{i+1},\ldots,p_n)$$ is a sequential ordering for $M \ba a,b,c$ by \cref{pathbasics}.
  By reversing $\sigma_a^-$, if necessary, we may assume that $L(\sigma_a^-) = L$ and $R(\sigma_a^-) = R$, due to \cref{welldefinedends}.

  Suppose $i > 3$.
  Then $\{p_1,p_2,p_3\}$ is a triad of $M \ba a,b,c$, by \cref{endslipperiness}\ref{esii}, and $\{p_1,p_2,p_3\}$ is either a triangle or a triad of $M \ba b,c$.
  However, if $\{p_1,p_2,p_3\}$ is a triangle of $M \ba b,c$, then it is a triangle-triad in $M \ba a,b,c$, contradicting $3$-connectivity.
  So $\{p_1,p_2,p_3\}$ is a triad of $M \ba b,c$ and $M \ba a,b,c$.
  Then $\{p_1,p_2,p_3\} \subseteq L$ and $\{p_1,p_2,p_3\}$ is a triad that is not blocked by $a$, as required.

  So we may assume $a=p_i$ for some $i \le 3$.
  Now, if $\{p_1,p_2,p_3\}$ is a triad of $M \ba b,c$, then $M \ba a,b,c$ is not $3$-connected, a contradiction.
  So $\{p_1,p_2,p_3\}$ is a triangle.
  As $L = L(\sigma_a^-)$ is a $4$-cosegment of $M \ba a,b,c$, \cref{endslipperiness}\ref{esii} implies that $\{p_1,p_2,p_3,p_4\}-p_i$ is a triad~$T^*$ contained in $L$.
  We may assume that $\{p_1,p_2,p_3,p_4\}$ is a cocircuit of $M \ba b,c$, for otherwise $T^*$ is a triad contained in $L$ that is not blocked by $a$, as required.
  By \cref{atmostonetri}, the triangle $\{p_1,p_2,p_3\}$ is contained in a $4$-element fan $F$ of $M \ba b,c$.
  Since $M \ba a,b,c$ is $3$-connected, $a$ is not contained in the triad of $F$.
  So, for some element $z$ and $\{i,j,k\} = \{1,2,3\}$, the fan $F$ has ordering $(a,p_j,p_k,z)$ where $\{p_j,p_k,z\}$ is a triad.
  Note that $p_4 \neq z$, since, in $M \ba b,c$ the set $\{p_j,p_k,p_4\}$ is properly contained in a cocircuit, whereas $\{p_j,p_k,z\}$ is a triad.
  But then $\{p_j,p_k,p_4,z\}$ is a $4$-cosegment of $M \ba a,b,c$, and it follows that $L = \{p_j,p_k,p_4,z\}$.
  But then $\{p_j,p_k,z\}$ is a triad contained in $L$ that is not blocked by $a$, as required.
\end{subproof}

\begin{claim}
  \label{applyphp}
  Some $x \in \{a,b,c\}$ blocks a triad in $L$ and a triad in $R$.
\end{claim}
\begin{subproof}
  Every triad of $L$, and every triad of $R$, is blocked by at least one of $a$, $b$, and $c$, since $M$ has no triads.
  Without loss of generality, $a$ blocks one of the four triads of $L$.
  By \cref{unblockedtriad}, one of the other three triads of $L$ is not blocked by $a$; without loss of generality, there is a triad of $L$ blocked by $b$.
  By \cref{unblockedtriad}, at least one triad of $R$ is not blocked by $c$.
  So some triad of $R$ is blocked by $a$ or $b$, and hence \cref{applyphp} holds.
\end{subproof}

By \cref{applyphp}, we may assume that $c$ blocks a triad in $L$ and a triad in $R$.
Consider a sequential ordering $\sigma_c=(p_1,p_2,\dotsc,p_n)$ for $M \ba a,b$, where $c=p_{i_c}$ for some $i_c \in \seq{n}$.
Then $\sigma_c^-=(p_1,p_2,\dotsc,p_{i_c-1},p_{i_c+1},\ldots,p_n)$ is a sequential ordering for $M \ba a,b,c$ by \cref{pathbasics}.

We now break into two cases depending on whether or not $L$ and $R$ are disjoint.
We first consider the case where $L$ meets $R$.

\begin{claim}
  \label{singlecoflanoutcome}
  The ends $L$ and $R$ are disjoint.
\end{claim}
\begin{subproof}
  Towards a contradiction, suppose $L$ meets $R$.
  If $|L \cap R| \ge 2$, then $L \cup R$ is a cosegment of $M \ba a,b,c$, and it follows that $r^*(M \ba a,b,c) = 2$.
  But then, as $M \ba a,b,c$ is $\mathbb{P}$-representable, $M \ba a,b,c$ is isomorphic to a minor of $U_{4,6}$, implying $|E(M)| \le 9$, a contradiction.
  So we may assume that $|L \cap R| = 1$.  Let $L \cap R = \{s\}$ and $s = p_{i_s}$.

  Suppose that $i_c \le 3$ up to reversing the ordering of $\sigma_c$.
  Without loss of generality, $i_c = 3$.
  Then $\{p_1,p_2,p_4\} \subseteq L$, by \cref{endslipperiness}\ref{esii}.
  If $i_s > i_c$, then, by \cref{pathbasics}, $c$ does not block any triad contained in $R$, a contradiction.
  So $i_s \in \{1,2\}$.
  Now $L = \left\{p_1,p_2,p_4,p_{i_L}\right\}$ for some $i_L \ge 5$.
  Since $p_{i_L} \in \cocl_{M \ba a,b}(\{p_1,p_2,p_4\})$, we may also assume that $i_L = 5$.
  Let $(P_1,\dotsc,P_m)$ be the guts-coguts concatenation of $\sigma_c$ with ends $P_1 = \{p_1,p_2,c\}$ and $P_m = \{p_{n-2},p_{n-1},p_n\}$, where $s \in \{p_1,p_2\}$.
  Then $P_1$ is a triangle, $P_m$ is a triad, $P_2$ is a coguts set containing $\{p_4,p_{i_L}\}$, and $P_{m-1}$ is a guts set.
  Since $p_{i_s} \in \cocl_{M \ba a,b}(\{c,p_{n-2},p_{n-1},p_n\})$, we have $r^*_{M \ba a,b}(P_m \cup \{c,p_{i_s}\})=3$.
  Thus $\lc^*(\{c,s\},P_m) = 1$, and $\lc^*(P_1,P_m) \ge 1$ by the dual of \cref{growpi}.
  Now, also using the dual of \cref{pflancoguts}, if $P_j$ is a guts set for $3 \le j \le m-1$, then $|P_j| = 1$.
  Moreover, by the dual of \cref{pflantriad}, if $P_j$ is a coguts set with $|P_j|=1$ and $3 < j < m-1$, then $P_{j-1} \cup P_j \cup P_{j+1}$ is a triangle of $M \ba a,b$.
  As this triangle avoids $c$, it is also a triangle of $M \ba a,b,c$, contradicting that $\{a,b,c\}$ is a special delete triple.
  So any coguts set $P_j$ with $3 < j < m-1$ has $|P_j| \ge 2$.

  Observe that $m$ is even, $P_j$ is a guts set for each odd $j>1$, and $P_j$ is a coguts set for each even $j < m$.
  It follows that $r(M \ba a,b) = 3+q$ where $q=|P_2|+|P_4|+|P_6|+\dotsb+|P_{m-2}|$.
  Let $g = m/2 - 1$.
  There are $g$ guts sets, each of size one, so $|E(M \ba a,b)| = 6+q+g$, and thus $r^*(M \ba a,b) = 3+g$.
  By \cref{rkcorkbounds}, $3+q = r(M \ba a,b) \le r^*(M \ba a,b) + 2 = 5+g$, so $q \le g+2$.
  On the other hand, there are $g$ coguts sets (excluding ends), and all have size at least~$2$.
  So $q \ge 2g$.
  Now $2g \le q \le g+2$, so $g \le 2$.
  Moreover, $q \le g+2$, so $q \le 4$.
  So $|E(M\ba a,b)| = 6+q+g \le 12$ in the case that $i_c \le 3$, a contradiction.

  Now we assume that $3 < i_c < n-2$.
  If $3 < i_s < n-2$, then $L=\{p_1,p_2,p_3,p_{i_s}\}$ and $R=\{p_{i_s},p_{n-2},p_{n-1},p_n\}$, by \cref{endslipperiness}\ref{esii}, and either no triad of $L$ is blocked by $p_{i_c}$ when $i_s < i_c$, or no triad of $R$ is blocked by $p_{i_c}$ when $i_c < i_s$.
  So up to reversing the ordering we may assume that $i_s \le 3$.

  By \cref{endslipperiness}\ref{esii}, $\{p_1,p_2,p_3\} \subseteq L$ and $\{p_{n-2},p_{n-1},p_n\} \subseteq R$. 
  Since $i_s \in \{1,2,3\}$, we have $R=\{p_{i_s},p_{n-2},p_{n-1},p_n\}$.
  Let $L = \left\{p_1,p_2,p_3,p_{i_L}\right\}$ and observe that $i_L > i_c$, for otherwise $c$ does not block any triad of $L$ by \cref{pathbasics}.
  As $p_{i_s} \in \cocl_{M \ba a,b}(\{c,p_{n-2},p_{n-1},p_n\})$, we may assume that $i_s = i_c-1$.
  Using \cref{endslipperiness}\ref{esii}, we deduce that $i_s = 3$ and $i_c = 4$.
  As $p_{i_L} \in \cocl_{M \ba a,b}(\{p_1,p_2,p_3,c\})$, we may also assume that $i_L = i_c+1=5$.

  Let $(P_1,\dotsc,P_m)$ be the guts-coguts concatenation of $\sigma_c$ with ends $P_1 = \{p_1,p_2,s\}$ and $P_m = \{p_{n-2},p_{n-1},p_n\}$.
  Note that $P_2 = \{c\}$ and $p_{i_L} \in P_3$.
%
  Observe that $r^*_{M \ba a,b}(P_m) = 2$ and $p_{i_s} \in \cocl_{M \ba a,b,c}(P_m)$, so $r^*_{M \ba a,b}(P_m \cup \{c,p_{i_s}\})=3$.
  It follows that $\lc^*(P_m,\{c,p_{i_s}\})=2+2-3=1$.
  By the dual of \cref{growpi}, $\lc^*(P_1 \cup P_2, P_m) \ge 1$, and, by the dual of \cref{pflancoguts}, for every guts set $P_j$ with $j > 2$ we have $|P_j|=1$.  We have also seen that $|P_2|=1$.
  Suppose $P_j$ is a coguts set with 
  $j \neq 3$.
  Then $5 \le j \le m-2$.
  Then, by the duals of \cref{growpi,pflantriad}, $P_{j-1} \cup P_j \cup P_{j+1}$ is a triangle of $M \ba a,b$.
  As this triangle avoids $c$, it is also a triangle of $M \ba a,b,c$, contradicting that $\{a,b,c\}$ is a special delete triple.
  So for each coguts set $P_j$ with $j \neq 3$, we have $|P_j| \ge 2$.

  Observe that $m$ is odd, $P_j$ is a guts set for each even $j$, and $P_j$ is a coguts set for each odd $j \notin \{1,m\}$.
  It follows that $r(M \ba a,b) = 4+q$ where $q=|P_3|+|P_5|+|P_7|+\dotsb+|P_{m-2}|$.
  Let $g = (m-1)/2$.
  There are $g$ guts sets, each of size one, so $|E(M \ba a,b)| = 6+q+g$, and thus $r^*(M \ba a,b) = 2+g$.
  By \cref{rkcorkbounds}, $4+q = r(M \ba a,b) \le r^*(M \ba a,b) + 2 = 4+g$, so $q \le g$.
  On the other hand, there are $g - 1$ coguts sets (excluding ends), and all except possibly $P_3$ has size at least~$2$.
  So $q \ge 2(g - 1) - 1 = 2g - 3$.
  Now $2g - 3 \le q \le g$, so $g \le 3$.
  Moreover, $q \le g$, so $q \le 3$.
  So $|E(M\ba a,b)| = 6+q+g \le 12$, a contradiction.
\end{subproof}

By \cref{singlecoflanoutcome}, we may now assume that $L$ and $R$ are disjoint.
We may also assume that $\sigma_c=(p_1,p_2,\dotsc,p_n)$ is a sequential ordering for $M \ba a,b$ such that some initial segment and some terminal segment of $\sigma_c$ are ends of a nice path description for $M \ba a,b$.
%
Suppose that $i_c \le 3$.  Then $\{p_1,p_2,p_3,p_4\}-p_{i_c} \subseteq L$, by \cref{endslipperiness}\ref{esii}.  Since $L$ and $R$ are disjoint, $R \subseteq \{p_{i_c+1},\ldots,p_n\}$. By \cref{pathbasics}, no triad contained in $R$ is blocked by $c$, a contradiction. 
By symmetry, we deduce that $3 < i_c < n-2$.

By \cref{endslipperiness}\ref{esii}, $\{p_1,p_2,p_3\} \subseteq L$ and $\{p_{n-2},p_{n-1},p_n\} \subseteq R$. In particular, $\{p_1,p_2,p_3\}$ and $\{p_{n-2},p_{n-1},p_n\}$ are triads of $M \ba a,b,c$.
Let $L = \left\{p_1,p_2,p_3,p_{i_L}\right\}$ and $R = \left\{p_{i_R},p_{n-2},p_{n-1},p_n\right\}$.
If $i_L < i_c$, then, by \cref{pathbasics}, $c$ does not block any triad of $L$, a contradiction.
So $i_L > i_c$ and, similarly, $i_R < i_c$.
Moreover, since $p_{i_R} \in \cocl_{M \ba a,b,c}(\{p_{n-2},p_{n-1},p_n\})$, we have $p_{i_R} \in \cocl_{M\ba a,b}(\{p_{i_c},\dotsc,p_n\})$, so we may assume that $i_R = i_c-1$.  Similarly, we may assume that $i_L = i_c+1$.

Let $(P_1,\dotsc,P_m)$ be the guts-coguts concatenation of $\sigma_c$ with ends $P_1=\{p_1,p_2,p_3\}$ and $P_m=\{p_{n-2},p_{n-1},p_n\}$.
%
%
Choose $j_c \in \{2,3,\dotsc,m-1\}$ such that $c \in P_{j_c}$.
Since $i_L-1 = i_c = i_R+1$, where $p_{i_c}$ is a guts element but $p_{i_L}$ and $p_{i_R}$ are coguts elements, we have $|P_{j_c}| = 1$.

Observe that $r^*_{M \ba a,b}(P_1) = 2$ and $p_{i_L} \in \cocl_{M \ba a,b,c}(P_1)$, so $r^*_{M \ba a,b}(P_1 \cup \{c,p_{i_L}\})=3$.
It follows that $\lc_{M \ba a,b}^*(P_1,\{c,p_{i_L}\})=2+2-3=1$.
By the dual of \cref{growpi}, $\lc^*(P_1,P_{j_c} \cup \dotsb \cup P_m) \ge 1$, so, by the dual of \cref{pflancoguts}, for every guts set $P_j$ such that $j < j_c$ we have $|P_j| = 1$.
We have also seen that $|P_{j_c}| = 1$.
By symmetry, every guts set $P_j$ 
has size one.

Now suppose $P_j$ is a coguts set with $|P_j|=1$ and $j \notin \{j_c-1,j_c+1\}$.
Then, by the duals of \cref{growpi,pflantriad} and symmetry, $P_{j-1} \cup P_j \cup P_{j+1}$ is a triangle of $M \ba a,b$.
As this triangle avoids $c$, it is also a triangle of $M \ba a,b,c$, contradicting that $\{a,b,c\}$ is a special delete triple.
So for each coguts set $P_j$, where $j \notin \{j_c-1,j_c+1\}$, we have $|P_j| \ge 2$.

Observe that $m$ is odd, $P_j$ is a guts set for each even $j$, and $P_j$ is a coguts set for each odd $j \notin \{1,m\}$.
It follows that $r(M \ba a,b) = 4+q$ where $q=|P_3|+|P_5|+|P_7|+\dotsb+|P_{m-2}|$.
Let $g = (m-1)/2$.
There are $g$ guts sets, each of size one, so $|E(M \ba a,b)| = 6+q+g$, and thus $r^*(M \ba a,b) = 2+g$.
By \cref{rkcorkbounds}, $4+q = r(M \ba a,b) \le r^*(M \ba a,b) + 2 = 4+g$, so $q \le g$.
On the other hand, there are $g - 1$ coguts sets (excluding ends), at most two of which have size one.
So $q \ge 2(g - 1) - 2 = 2g - 4$.
Now $2g - 4 \le q \le g$, so $g \le 4$.
Moreover, $q \le g$, so $q \le 4$.

So far, we have shown that $|E(M\ba a,b)| =6+q+g \le 14$.
If $q \le 3$, then, as $2g-4 \le q \le 3$, we have $g \le 3$, and $|E(M\ba a,b)| =6+q+g \le 12$, as required.
So it remains only to rule out the possibility that $q=4$.

Assume $q=4$.
Then $g=4$, $m=9$, $r(M \ba a,b) = 8$, and $|E(M \ba a,b)| = 14$, and there are three coguts sets: two are singletons, and one has size two.
Recall that for each coguts set $P_j$ with $j \notin \{j_c-1,j_c+1\}$ we have $|P_j| \ge 2$.
So we may assume, without loss of generality, that $|P_3|=2$ and $|P_5|=|P_7| = 1$, where $j_c=6$, thus $(P_1,\dotsc,P_m) =$
$$(\{p_1,p_2,p_3\}, \{p_4\}, \{p_5,p_6\}, \{p_7\}, \{p_{i_R}\}, \{c\}, \{p_{i_L}\}, \{p_{11}\}, \{p_{12},p_{13},p_{14}\}).$$

Recall that, by \cref{mabcfragile}, $M \ba a,b,c$ has no $\utfutf$-essential elements.
Moreover, if $e$ is $\utfutf$-deletable (or $\utfutf$-contractible) in $M \ba a,b,c$, then it is $\utfutf$-deletable (or $\utfutf$-contractible respectively) in $M \ba a,b$; 
and, since $M \ba a,b$ is $\utfutf$-fragile, the converse also holds.
Thus, in what follows, when we say an element is $\utfutf$-deletable (or $\utfutf$-contractible), it is $\utfutf$-deletable (or $\utfutf$-contractible, respectively) in each of the matroids $M \ba a,b,c$, $M \ba a,b$, $M \ba a,c$, and $M \ba b,c$.
  
\begin{claim}
  \label{revealtriads}
  Up to labels,
  \begin{enumerate}[label=\rm(\Roman*)]
    \item $\{p_1,p_4,p_5\}$ and $\{p_2,p_4,p_6\}$ are triads of $M \ba a,b,c$ not blocked by $c$, and $p_3$ is the unique $\utfutf$-deletable element in $L$; and
    \item $\{p_{i_L},p_{11},p_{14}\}$ is a triad of $M \ba a,b,c$ not blocked by $c$, and $p_{12}$ is the unique $\utfutf$-deletable element in $R$.
  \end{enumerate}
\end{claim}
\begin{subproof}
  If $p_4$ is $\utfutf$-contractible, then some element in $\{p_1,p_2,p_3\}$ is $\utfutf$-flexible by \cref{minor3conn}, a contradiction.
  So $p_4$ is $\utfutf$-deletable. 
  Similarly, $p_5$ and $p_6$ are $\utfutf$-contractible and not $\utfutf$-deletable.
  Consider $M \ba a,b \ba p_5$.
  Observe that $r^*_{M \ba a,b \ba p_5}(\{p_1,p_2,p_3,p_4,p_6\}) = 2$.
  Thus, if $\{p_4,p_5\}$ does not cospan an element of $\{p_1,p_2,p_3\}$ in $M \ba a,b$, then $(M \ba a,b \ba p_5)^*|\{p_1,p_2,p_3,p_4,p_6\} \cong U_{2,5}$, so $p_5$ is $\utfutf$-deletable, a contradiction.
  So $\{p_4,p_5\}$ cospans an element of $\{p_1,p_2,p_3\}$ in $M \ba a,b$, and,
  similarly $\{p_4,p_6\}$ cospans an element of $\{p_1,p_2,p_3\}$.
  Without loss of generality, $\{p_1,p_4,p_5\}$ and $\{p_2,p_4,p_6\}$ are triads of $M \ba a,b$, and hence also of $M \ba a,b,c$.
  It now follows that $p_1$ and $p_2$ are $\utfutf$-contractible. 
  Since some initial segment of $\sigma_c$ is an end of a nice path description $\mathbf{P}_c$ for $M \ba a,b$, and
  $\{p_1,p_2,p_3\}$ is a coclosed triad in $M \ba a,b$ not contained in a $4$-element fan, this triad is an end of $\mathbf{P}_c$. 
  By \cref{nicepathdescription}\ref{nicepathdesciii}, $p_3$ is $\utfutf$-deletable. 

  In a similar manner, $r^*_{M \ba a,b,c}\left(\left\{p_{i_R},p_{i_L},p_{11},p_{12},p_{13},p_{14}\right\}\right) = 3$ and $R=\left\{p_{i_R},p_{12},p_{13},p_{14}\right\}$ is a $4$-cosegment of $M \ba a,b,c$, so if $\left\{p_{i_L},p_{11}\right\}$ does not cospan an element of $R$ in $M \ba a,b,c$, then $M \ba a,b,c \ba p_{i_L}$ has a $5$-cosegment, implying $M \ba a,b,c$ is not $\utfutf$-fragile, a contradiction.
  So we may assume that $\{p_{i_L},p_{11},p_{14}\}$ is a triad of $M \ba a,b,c$.
  As $c$ is a guts element, we know from $\sigma_c$ that $c \notin \cocl_{M \ba a,b}(\{p_{i_L},p_{11},p_{14}\})$, so $\{p_{i_L},p_{11},p_{14}\}$ is also a triad of $M \ba a,b$.
  Moreover, $p_{11}$ is $\utfutf$-deletable, and hence $p_{i_L}$ and $p_{14}$ are $\utfutf$-contractible.
  Since some terminal segment of $\sigma_c$ is an end of a nice path description for $M \ba a,b$, and by \cref{nicepathdescription}\ref{nicepathdesciii},
  $\{p_{12},p_{13},p_{14}\}$ has a unique element that is $\utfutf$-deletable; 
  we may assume that $p_{12}$ is this element, whereas $p_{13}$ is $\utfutf$-contractible.
\end{subproof}

Recall that each triad of $M \ba a,b,c$ is blocked by at least one of $a$, $b$, or $c$, and observe that neither $\{p_1,p_2,p_3\}$ nor $\{p_{12},p_{13},p_{14}\}$ is blocked by $c$.
By \cref{revealtriads}, we may assume that $\{p_1,p_4,p_5\}$, $\{p_2,p_4,p_6\}$, and $\{p_{i_L},p_{11},p_{14}\}$ are also triads of $M \ba a,b,c$ not blocked by $c$.
  Without loss of generality, assume $\{p_1,p_2,p_3\}$ is blocked by $a$.
  We next consider what other triads can be blocked by $a$.

\begin{claim}
  \label{whatablocks}
  If a triad $T^*$ of $M \ba a,b,c$ is blocked by $a$, then $T^* \cap L \neq \emptyset$.
  Moreover, at most one of $\{p_1,p_4,p_5\}$ and $\{p_2,p_4,p_6\}$ is blocked by $a$.
\end{claim}
\begin{subproof}
  Let $\sigma_a = (p'_1,p'_2,\dotsc,p'_{14})$ be a sequential ordering that is a refinement of a nice path description $\mathbf{P}_a$ of $M \ba b,c$.
  If the left end (or right end) of $\mathbf{P}_a$ is a fan of size at least~$4$, then we choose $\{p'_1,p'_2,p'_3\}$ (or $\{p'_{12},p'_{13},p'_{14}\}$, respectively) to be a triad.
  Let $\sigma_a^-$ be the sequential ordering of $M \ba a,b,c$ obtained from $\sigma_a$ by removing $a$, as described in \cref{pathbasics}.
  By reversing these orderings, if necessary, we may assume that $L(\sigma_a^-) = L$ and $R(\sigma_a^-) = R$, due to \cref{welldefinedends}.

  First, suppose $a \notin \{p'_1,p'_2,p'_3\}$.
  Then $\{p'_1,p'_2,p'_3\} \subseteq L$ by \cref{endslipperiness}\ref{esii}, and $a$ does not block the triad $\{p'_1,p'_2,p'_3\}$.
  But $a$ blocks $\{p_1,p_2,p_3\}$, so $\{p_1,p_2,p_3\} \neq \{p_1',p_2',p'_3\}$, implying $p_{i_L} \in \{p'_1,p'_2,p'_3\}$.
  Now $\{p'_1,p'_2,p'_3\}$ is a triad in $M \ba b,c$, and this triad is contained in the left end of the nice path description $\mathbf{P}_a$ for $M \ba b,c$.
  If this end is a $4$-cosegment, then it is $\{p'_1,p'_2,p'_3,p'_4\}$, in which case $L=\{p'_1,p'_2,p'_3,p'_4\}$ and $a$ does not block the triad $\{p_1,p_2,p_3\}$, a contradiction.
  So the left end of $\mathbf{P}_a$ is either a triad, or a $4$- or $5$-element fan where $a$ forms a triangle with elements of the triad $\{p'_1,p'_2,p'_3\}$ (since $M \ba a,b,c$ has no triangles).

  We show, in any case, that $p_3 \in \{p'_1,p'_2,p'_3\}$.
  Recall that $p_3$ is the unique $\utfutf$-deletable element 
  in $L$.
  Suppose $\{p'_1,p'_2,p'_3,a\}$ is a $4$-element fan in $M \ba b,c$ that is contained in the left end of $\mathbf{P}_a$.
  Since $\{p'_1,p'_2,p'_3\}$ is contained in a $4$-element fan with no $\utfutf$-essential elements, this set contains a $\utfutf$-deletable element.
  So $p_3 \in \{p'_1,p'_2,p'_3\}$ as claimed.
  Now suppose the left end of $\mathbf{P}_a$ is a triad.
  Then this end is $\{p_1',p_2',p_3'\}$, and this set contains a $\utfutf$-deletable element by \cref{nicepathdescription}\ref{nicepathdesciii}.
  Since $p_3$ is the unique $\utfutf$-deletable element 
  in $L$, we again have $p_3 \in \{p'_1,p'_2,p'_3\}$.

  Now $\{p'_1,p'_2,p'_3\}=\{p_h,p_3,p_{i_L}\}$ for some $h \in \{1,2\}$.
  We claim that $\{p_h,p_3,p_{i_L}\}$ is closed in $M \ba a,b,c$.
  Suppose not.
  Assume $h=1$ and say $p_k \in \cl_{M \ba a,b,c}(\{p_1,p_3,p_{i_L}\})$ for some $k \in \seq{14} - \{1,3,i_L\}$.
  Then $\{p_1,p_3,p_{i_L},p_k\}$ is a circuit and, since $p_{i_L}$ is a coguts element in $\sigma_c^-$, we have $k=11$,
  contradicting orthogonality with the triad $\{p_1,p_4,p_5\}$.
  The argument is essentially the same when $h=2$, but with $(p_1,p_5)$ and $(p_2,p_6)$ swapped.
  So $\{p_1',p_2',p_3'\}$ is closed in $M \ba a,b,c$.
  Now, since the left end of $\mathbf{P}_a$ is not a $4$-cosegment, $p'_4 \in \cl_{M \ba b,c}(\{p'_1,p'_2,p'_3\})$, which implies that $p'_4=a$.
  Thus, by \cref{pathbasics}, if $a$ blocks a triad, then this triad meets $\{p_h,p_3,p_{i_L}\} \subseteq L$. Moreover, $a$ does not block both $\{p_1,p_4,p_5\}$ and $\{p_2,p_4,p_6\}$.

  Now suppose $a \in \{p'_1,p'_2,p'_3\}$.
  We may assume, without loss of generality, that $a=p'_3$. Then $\{p'_1,p'_2,p'_4\} \subseteq L$, by \cref{endslipperiness}\ref{esii}, so $\{p'_1,p'_2,p'_4\}$ is a triad of $M \ba a,b,c$.
  Since $M \ba a,b,c$ is $3$-connected, $\{p'_1,p'_2,p'_3\}$ is a triangle of $M \ba b,c$.
  Note that $\{p'_1,p'_2,p'_3,p'_4\}$ is not a $4$-element fan of $M \ba b,c$, for otherwise we would have chosen $\sigma_a$ so that $\{p'_1,p'_2,p'_3\}$ is a triad.
  So $\{p'_1,p'_2,a,p'_4\}$ is a cocircuit of $M \ba b,c$, and the triangle $\{p'_1,p'_2,a\}$ is the left end of $\mathbf{P}_a$.
  By \cref{nicepathdescription}\ref{nicepathdesciii} and \cref{pathdesctris}, the set $\{p'_1,p'_2,a\}$ contains one $\utfutf$-contractible element and two $\utfutf$-deletable elements.
  Hence either $p'_1$ or $p'_2$ is $\utfutf$-deletable.
  Since $p_3$ is the unique $\utfutf$-deletable element 
  in $L$, we have $p_3 \in \{p'_1,p'_2\}$.
  Now $\{p'_1,p'_2\} \in \left\{\{p_1,p_3\}, \{p_2,p_3\}, \{p_{i_L},p_3\} \right\}$.
  Thus, if $a$ blocks a triad, then the triad meets $\{p'_1,p'_2\} \subseteq L$.
  Moreover, $a$ does not block both $\{p_1,p_4,p_5\}$ and $\{p_2,p_4,p_6\}$.
\end{subproof}


By \cref{whatablocks}, we may now assume that $b$ blocks $\{p_{12},p_{13},p_{14}\}$.

\begin{claim}
  \label{whatbblocks}
  Either
  \begin{enumerate}[label=\rm(\Roman*)]
    \item $b$ blocks neither $\{p_1,p_4,p_5\}$ nor $\{p_2,p_4,p_6\}$; or
    \item $\{p_5,p_7,p_{i_R}\}$ is a triad of $M \ba a,b,c$ that is not blocked by $b$, up to swapping $(p_1,p_5)$ and $(p_2,p_6)$.
  \end{enumerate}
\end{claim}
\begin{subproof}
  Let $\sigma_b = (p''_1,p''_2,\dotsc,p''_{14})$ be a sequential ordering for $M \ba a,c$ such that some initial segment and some terminal segment of $\sigma_b$ are ends of a nice path description $\mathbf{P}_b$ for $M \ba a,c$, where if the left (or right) end of $\mathbf{P}_b$ is a fan of size at least~$4$, then we choose $\{p''_{1},p''_{2},p''_{3}\}$ (or $\{p''_{12},p''_{13},p''_{14}\}$, respectively) to be a triad.
  Let $\sigma_b^-$ be the sequential ordering of $M \ba a,b,c$ obtained from $\sigma_b$ by removing $b$, as described in \cref{pathbasics}.
  By reversing these orderings, if necessary, we may assume that $L(\sigma_b^-) = L$ and $R(\sigma_b^-) = R$, due to \cref{welldefinedends}.

  First we assume that $b \in \{p''_{12},p''_{13},p''_{14}\}$.
  Without loss of generality, $b=p''_{12}$.
  Then $\{p''_{11},p''_{13},p''_{14}\} \subseteq R$, by \cref{endslipperiness}\ref{esii}, so $\{p''_{11},p''_{13},p''_{14}\}$ is a triad of $M \ba a,b,c$.
  Since $M \ba a,b,c$ is $3$-connected, $\{b,p''_{13},p''_{14}\}$ is a triangle of $M \ba a,c$.
  Note that $\{p''_{11},b,p''_{13},p''_{14}\}$ is not a $4$-element fan of $M \ba a,c$, for otherwise we would have chosen $\sigma_b$ so that $\{p''_{12},p''_{13},p''_{14}\}$ is a triad.
  So $\{p''_{11},b,p''_{13},p''_{14}\}$ is a cocircuit of $M \ba a,c$ and the triangle $\{b,p''_{13},p''_{14}\}$ is the right end of $\mathbf{P}_b$.
  By \cref{nicepathdescription}\ref{nicepathdesciii} and \cref{pathdesctris}, $\{b,p''_{13},p''_{14}\}$ contains one $\utfutf$-contractible element, and two $\utfutf$-deletable elements.
  Hence either $p''_{13}$ or $p''_{14}$ is $\utfutf$-deletable.
  Since $p_{12}$ is the unique $\utfutf$-deletable element in $R$, we have $p_{12} \in \{p''_{13},p''_{14}\}$.
  So $\{p''_{13},p''_{14}\} \in \{\{p_{12},p_{14}\},\{p_{12},p_{13}\},\{p_{12},p_{i_R}\}\}$.
  In any case, $b$ blocks neither $\{p_1,p_4,p_5\}$ nor $\{p_2,p_4,p_6\}$, as required.

  Now we may assume that $b \notin \{p''_{12},p''_{13},p''_{14}\}$.
  Then $\{p''_{12},p''_{13},p''_{14}\} \subseteq R$ by \cref{endslipperiness}\ref{esii}, and $b$ does not block the triad $\{p''_{12},p''_{13},p''_{14}\}$.
  But $b$ blocks $\{p_{12},p_{13},p_{14}\}$, so $\{p_{12},p_{13},p_{14}\} \neq \{p''_{12},p''_{13},p''_{14}\}$, implying $p_{i_R} \in \{p''_{12},p''_{13},p''_{14}\}$.
  Now $\{p''_{12},p''_{13},p''_{14}\}$ is a triad in $M \ba a,c$, and this triad is contained in the right end of $\mathbf{P}_b$.
  If this end is a $4$-cosegment, then it is $\{p''_{11},p''_{12},p''_{13},p''_{14}\}$, in which case $R = \{p''_{11},p''_{12},p''_{13},p''_{14}\}$ and $b$ does not block the triad $\{p_{12},p_{13},p_{14}\}$, a contradiction.
  So the right end of $\mathbf{P}_b$ is either a triad, or a $4$- or $5$-element fan where $b$ forms a triangle with elements of the triad $\{p''_{12},p''_{13},p''_{14}\}$.

  We show, in any case, that $p_{12} \in \{p''_{12},p''_{13},p''_{14}\}$.
  Recall that $p_{12}$ is the unique $\utfutf$-deletable element 
  in $R$.
  Suppose $\{p''_{12},p''_{13},p''_{14},b\}$ is a $4$-element fan in $M \ba a,c$ contained in the right end of $\mathbf{P}_b$.
  Since $\{p''_{12},p''_{13},p''_{14}\}$ is contained in a $4$-element fan with no $\utfutf$-essential elements, this set contains a $\utfutf$-deletable element.
  So $p_{12} \in \{p''_{12},p''_{13},p''_{14}\}$ as claimed.
  Now suppose the right end of $\mathbf{P}_b$ is a triad.
  Then this end is $\{p''_{12},p''_{13},p''_{14}\}$, and it contains a $\utfutf$-deletable element, by \cref{nicepathdescription}\ref{nicepathdesciii}.
  Since $p_{12}$ is the unique $\utfutf$-deletable element 
  in $R$, we again have $p_{12} \in \{p''_{12},p''_{13},p''_{14}\}$.

  Now $\{p''_{12},p''_{13},p''_{14}\} = \{p_{i_R},p_{12},p_{g}\}$ for some $g \in \{13,14\}$.
  We claim that either $\{p_{i_R},p_{12},p_{g}\}$ is closed in $M \ba a,b,c$, or $g=13$ and 
  $\cl_{M \ba a,b,c}(\{p_{i_R},p_{12},p_{13}\}) = \{p_7,p_{i_R},p_{12},p_{13}\}$.
  Suppose $p_k \in \cl_{M \ba a,b,c}(\{p_{i_R},p_{12},p_{g}\})$ for some $k \in \seq{14} - \{i_R,12,g\}$.
  Then $\{p_k,p_{i_R},p_{12},p_{g}\}$ is a circuit and, 
  since $p_{i_R}$ is a coguts element in $\sigma_c^-$, we have $k \in \{4,7\}$.
  By orthogonality with the triads $\{p_1,p_4,p_5\}$ and $\{p_{i_L},p_{11},p_{14}\}$, we have $k=7$ and $g = 13$.
  So either $\{p''_{12},p''_{13},p''_{14}\}$ is closed in $M \ba a,b,c$, or $\cl_{M \ba a,b,c}(\{p''_{12},p''_{13},p''_{14}\}) = \{p_7,p_{i_R},p_{12},p_{13}\}$, as claimed.

  Now, since the right end of $\mathbf{P}_b$ is not a $4$-cosegment, $p''_{11} \in \cl_{M \ba a,c}(\{p''_{12},p''_{13},p''_{14}\})$, which implies that either $p''_{11}=b$, or $p''_{11} = p_7$.
  But in the former case, $b$ does not block either of the triads $\{p_1,p_4,p_5\}$ or $\{p_2,p_4,p_6\}$, as required.
  So we may assume that $p''_{11} = p_7$.

  Consider $p''_{10}$.
  If $p''_{10}=b$, then neither $\{p_1,p_4,p_5\}$ nor $\{p_2,p_4,p_6\}$ is blocked by $b$, as required; so may assume that $p''_{10} \neq b$.
  Let $Q=E(M \ba a,b,c) - \{p''_{11},p''_{12},p''_{13},p''_{14}\} = \{p_1,p_2,p_3,p_4,p_5,p_6,p_{i_L},p_{11},p_{14}\}$, so $p''_{10} \in Q$.
  Observe that each element in $Q$ is in a triad of $M \ba a,b,c$ that is contained in $Q$.
  Hence $p''_{10}$ is not a guts element, so $p''_{10} \in \cocl_{M \ba a,b,c}(\{p''_{11},p''_{12},p''_{13},p''_{14}\}) = \cocl_{M \ba a,b,c}(\{p_7,p_{i_R},p_{12},p_{13}\})$.
  Note also that $p''_{10} \neq p_{14}$, for otherwise $b$ does not block $\{p_{12},p_{13},p_{14}\}$.
  Since $C=\{p_1,p_2,p_3,p_4\}$ is a circuit, $p''_{10} \notin C$.
  If $p''_{10} \in \{p_{11},p_{i_L}\}$, then $\{p_{14},p_{11},p_{i_L}\} \subseteq \cocl_{M \ba a,b,c}(\{p_7,p_{i_R},p_{12},p_{13}\})$, in which case $\{p_5,p_6\} \subseteq \cocl_{M \ba a,b,c}(\{p_7,p_{i_R},p_{12},p_{13}\})$ since $p_5$ and $p_6$ are coguts elements in $\sigma_c$.
  It follows that $r^*(M \ba a,b,c) \le 4$, so 
  $r(M \ba a,b) \ge 9$, a contradiction.
  Thus $p''_{10} \in \{p_5,p_6\}$.

  Up to possibly swapping the labels on $(p_1,p_5)$ and $(p_2,p_6)$, we may now assume that $p''_{10} = p_5$.
  We claim that $\{p_5,p_7,p_{i_R}\}$ is a triad that is not blocked by $b$.
  As $\{p_{i_R},p_{12},p_{13}\}$ is a triad in $M \ba a,b,c$ and $p_5 \in \cocl_{M \ba a,b,c}(\{p_7,p_{i_R},p_{12},p_{13}\})$,
  we have that $\{p_5,p_7\}$ is contained in a $3$- or $4$-element cocircuit~$C^*$ that is contained in $\{p_5,p_7,p_{i_R},p_{12},p_{13}\}$.
  By orthogonality with the circuit $\{p_{11},p_{12},p_{13},p_{14}\}$, either $C^* = \{p_5,p_7,p_{i_R}\}$ or $C^* = \{p_5,p_7,p_{12},p_{13}\}$, so we may assume the latter.
  But then, by cocircuit elimination with $\{p_{i_R},p_{12},p_{13}\}$, there is a cocircuit contained in $\{p_5,p_7,p_{i_R},p_{12}\}$, which, again by orthogonality, is the triad $\{p_5,p_7,p_{i_R}\}$.
  By \cref{pathbasics}, this triad is not blocked by $b$, as required.
\end{subproof}

By \cref{whatablocks}, $a$ blocks at most one of $\{p_1,p_4,p_5\}$ and $\{p_2,p_4,p_6\}$.
As neither of these triads is blocked by $c$, at least one of $\{p_1,p_4,p_5\}$ and $\{p_2,p_4,p_6\}$ is blocked by $b$.
Now, by \cref{whatbblocks}, we may assume that $\{p_5,p_7,p_{i_R}\}$ is a triad of $M \ba a,b,c$ that is not blocked by $b$.
But $\{p_5,p_7,p_{i_R}\}$ is not blocked by $a$, by \cref{whatablocks}, and, recalling that $i_c > i_R > 7$, it is also not blocked by $c$.
From this contradiction, we deduce that $M$ has no delete triples, thus completing the proof.
\end{proof}

\section{The no-delete-triples case}
\label{ndtsec}

In this section we prove the following:

\begin{theorem}
  \label{nodeltriplethm}
  Let $M$ be an excluded minor for the class of $\mathbb{P}$-representable matroids where $\mathbb{P} \in \{\mathbb{H}_5,\mathbb{U}_2\}$, and $M$ has no triads.
  Suppose there is a pair $\{a,b\} \subseteq E(M)$ such that $M \ba a,b$ is $3$-connected with a $\utfutf$-minor.
  If $M$ has no delete triples, then $|E(M)| \le 15$.
\end{theorem}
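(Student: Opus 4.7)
The plan is to assume for contradiction that $|E(M)| \ge 16$ and exploit the nice path structure of $M \ba a,b$. By \cref{utfutffragile}, $M \ba a,b$ is $\utfutf$-fragile with rank and corank at least~$4$; by \cref{lemmaC} it has an $\{X_8,Y_8,Y_8^*\}$-minor (so it is not a wheel or a whirl) and a nice path description $(P_1, \dotsc, P_m)$; and by \cref{niceends}, each of $P_1, P_m$ is a coclosed cosegment or a maximal fan. I first rule out fan ends: if $P_1$ were a maximal fan of size at least~$4$, then by \cref{fanendsstrong} deleting a spoke end $f$ of $P_1$ yields a $3$-connected matroid, and by \cref{pathdescends} such an $f$ is $\utfutf$-deletable; so $\{a,b,f\}$ would be a delete triple, contradicting the hypothesis. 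Hence both $P_1$ and $P_m$ are coclosed cosegments of size $3$ or~$4$.

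Next, I extract the key structural consequence of the no-delete-triple hypothesis. For every $\utfutf$-deletable element $e$ of $M \ba a,b$, the matroid $M \ba a,b \ba e$ is $\utfutf$-fragile (since $e$ is $\utfutf$-deletable) and so, by \cref{genfragileconn}, is $3$-connected up to series and parallel classes. Since $M \ba a,b$ is simple, no new parallel pair can arise upon deletion, so $M \ba a,b \ba e$ is $3$-connected if and only if $e$ lies in no triad of $M \ba a,b$. Hence every $\utfutf$-deletable $e$ must lie in a triad of $M \ba a,b$, and every such triad is blocked by $a$ or $b$ in $M$, as $M$ has no triads.

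I then fix a sequential ordering $\sigma$ refining the nice path description, with resulting guts-coguts concatenation $(P_1, \dotsc, P_m)$; write $k_L = |P_1|, k_R = |P_m| \in \{3,4\}$. By \cref{pathdescrank},
\[
r(M \ba a,b) = k_L + k_R - 2 + \sum_{\text{coguts } P_i}|P_i|, \qquad r^*(M \ba a,b) = 2 + \sum_{\text{guts } P_i}|P_i|,
\]
and $|E(M \ba a,b)| = r(M \ba a,b) + r^*(M \ba a,b)$. A coguts singleton in the middle flanked by two guts singletons forces, via the dual of \cref{pflantriad}, a triangle $P_{j-1} \cup P_j \cup P_{j+1}$ of $M \ba a,b$; by \cref{atmostonetri}, at most one such triangle exists, so almost all middle coguts sets have size at least~$2$. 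For each middle guts element $e$, the set $P_1 \cup e$ contains a circuit (since $e \in \cl(P_1)$), and orthogonality forces any triad of $M \ba a,b$ through $e$ to contain an element of $P_1$; a symmetric statement holds with $P_m$ in place of $P_1$.

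Combining these constraints with the inequalities of \cref{rkcorkbounds}, a case analysis on $m$ and on $(k_L, k_R)$ yields $|E(M \ba a,b)| \le 13$, and hence $|E(M)| \le 15$, contradicting the assumption $|E(M)| \ge 16$. The case $m=3$ is immediate from the rank inequalities, which give $|P_2| \le k_L+k_R-3 \le 5$ and so $|E(M\ba a,b)| \le 8+5 = 13$. For $m \ge 5$, the main obstacle is the bookkeeping: with only $a$ and $b$ available to block the many forced triads, one must use cocircuit elimination in $M$ to show that $a$ (respectively $b$) cannot block too many triads of the forced form simultaneously, lest one derive a triad of $M$ (which is forbidden). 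This parallels the final counting in \cref{deltripleprop}, but the absence of a third blocker $c$ is compensated by the stronger constraint that every $\utfutf$-deletable element must lie in a triad of $M \ba a,b$.
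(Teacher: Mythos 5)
Your opening moves match the paper's: rule out fan ends via \cref{fanendsstrong,pathdescends}, observe that every $\utfutf$-deletable element of $M\ba a,b$ must lie in a triad of $M\ba a,b$ (else deleting it directly gives a delete triple), and track guts/coguts sizes via \cref{pathdescrank} and \cref{rkcorkbounds}. But there are two genuine problems with the way you push this to a conclusion.

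First, the invocation of the dual of \cref{pflantriad} to conclude that a coguts singleton flanked by guts singletons ``forces'' a triangle is incomplete: that lemma only produces a triangle when $\lc^*$ between the two sides equals~$1$, and establishing or ruling out that local connectivity condition is a substantial portion of the paper's work (e.g.\ Claim~7.2.10, which shows $\lc^*_{M\ba a,b}(P_1,P_m)=0$ under the theorem's hypotheses). Without pinning down $\lc^*$ along the path, you cannot conclude that short coguts sets create forbidden triangles.

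Second, and more fundamentally, the claimed conclusion ``a case analysis on $m$ and $(k_L,k_R)$ yields $|E(M\ba a,b)|\le 13$'' is too strong: the paper's own careful case analysis in \cref{nodeltripleprop} does \emph{not} reach $13$ purely combinatorially. It terminates with two residual structural outcomes in which $|E(M\ba a,b)|=14$, i.e.\ $|E(M)|=16$, and these have very specific circuit/cocircuit patterns. \Cref{nodeltriplematrices} then identifies $M\ba a,b$ up to isomorphism as one of three explicit $14$-element $\mathbb{U}_2$-representable matroids, and the paper's proof of \cref{nodeltriplethm} disposes of those three cases with a computer search over all relevant two-element extensions. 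Your proposal hand-waves at exactly the part that the authors found impossible to do by counting alone: with only $a$ and $b$ available as blockers, cocircuit elimination does rule out most configurations, but the argument (Claim~\ref{ndtcrux} and the delicate claims that feed it) is far more intricate than ``every deletable element lies in a triad'', and even after all that work two boundary cases survive. As written, the proposal asserts but does not supply the bookkeeping that closes the gap, and the bound it asserts appears to be false without the computer step.
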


The bulk of the work in proving this \lcnamecref{nodeltriplethm} is accomplished by \cref{nodeltripleprop}, which proves the result except when $M$ has $16$ elements and specific structure.  In \cref{nodeltriplematrices}, we show that the specific structure implies that $M \ba a,b$ is, up to isomorphism, one of three particular $2$-regular matroids.
We performed a computer search to show that, in fact, when $M$ satisfies the hypotheses of the theorem and contains one of these three matroids as a minor, then $M$ has a delete triple.

We first require some definitions.
For a guts-coguts path $\mathbf{P}=(P_1,P_2,\dotsc,P_m)$, we say that $\mathbf{P}$ is \emph{left-justified} if for all $i \in \{2,3,\dotsc,m-1\}$,
\begin{enumerate}[label=\rm(\Roman*)]
  \item if $P_i$ is a guts set, then $\cl\bigl(\bigcup_{j \in [i]} P_j\bigr) - \bigl(\bigcup_{j \in [i]}P_{j}\bigr) \subseteq P_m$; and
  \item if $P_i$ is a coguts set, then $\cocl\bigl(\bigcup_{j \in [i]} P_j\bigr) - \bigl(\bigcup_{j \in [i]} P_{j}\bigr) \subseteq P_m$.
\end{enumerate}
Similarly, we say that $\mathbf{P}$ is \emph{right-justified} if $(P_m,P_{m-1},\dotsc,P_1)$ is left-justified.
Given a guts-coguts path $\mathbf{P}$, one can easily obtain a left-justified guts-coguts path $\mathbf{P}' = (P'_1,P'_2,\dotsc,P'_{m'})$ with $P_1 = P'_1$ and $P_m = P'_{m'}$; we call $\mathbf{P}'$
%
the \emph{left-justification} of $\mathbf{P}$.

Suppose that $\mathbf{P} = (P_1,P_2,\dotsc,P_m)$ is a nice path description.
Recall that a nice path description is a guts-coguts path.
Note that the left-justification of $\mathbf{P}$, and the left-justification of $(P_m,P_{m-1},\dotsc,P_1)$, are also nice path descriptions.
We say that the \emph{reversal} of $(P_1,P_2,\dotsc,P_m)$ is the nice path description $\mathbf{P'}$ obtained from the left-justification of $(P_m,P_{m-1},\dotsc,P_1)$.
By \cref{niceends}, the ends of $\mathbf{P'}$ are $P_m$ and $P_1$, when $|E(M)| \ge 15$.

For the remainder of this section we let $M$ be an excluded minor for the class of $\mathbb{P}$-representable matroids where $\mathbb{P} \in \{\mathbb{H}_5,\mathbb{U}_2\}$, and $M$ has no triads.

\begin{proposition}
  \label{nodeltripleprop}
  Suppose there is a pair $\{a,b\} \subseteq E(M)$ such that $M \ba a,b$ is $3$-connected with a $\utfutf$-minor.
  If $M$ has no delete triples, then either 
  \begin{enumerate}
    \item $|E(M)| \le 15$; or
    \item $|E(M)| = 16$ and\label{ndtendgameoutcomes16}
      \begin{enumerate}[label=\rm(\alph*)]
        \item $M \ba a,b$ has a nice path description \[(\{a',p_1',p_1\},\{p_2\},\{p_3\},\{p_4\},\{p_5,p_5'\},\{p_6\},\{p_7\},\{p_8\},\{p_9,p_9',b'\})\] where, for some $\{q,q'\} = \{p_5,p_5'\}$,
          \begin{itemize}
            \item $\{a',p_1',p_1\}$, $\{p_1,p_2,p_3\}$, $\{p_3,p_4,p_5\}$, $\{q,p_6,p_7\}$, $\{p_7,p_8,p_9\}$, and $\{p_9,p_9',b'\}$ are triads of $M \ba a,b$,
            \item $\{a',p_1,p_4,p_5'\}$ and $\{q',p_6,p_9,b'\}$ are cocircuits of $M \ba a,b$,
          \end{itemize}
          and $M \ba a,b$ has no triangles;\label{ndtendgameoutcome16a}
        \item $M \ba a,b$ has a nice path description \[(\{a',p_1'',p_1',p_1\},\{p_2,p_2'\},\{p_3,p_3'\},\{p_4\},\{p_5\},\{p_6\},\{p_7,p_7',b'\})\] where
          \begin{itemize}
            \item $\{a',p_1'',p_1',p_1\}$ is a cosegment of $M \ba a,b$, 
            \item $\{p_1',p_2',p_3'\}$, $\{p_1,p_2,p_3\}$, $\{p_3,p_4,p_5\}$, $\{p_5,p_6,p_7\}$, and $\{p_7,p_7',b'\}$ are triads of $M \ba a,b$, and
            \item $\{p_3',p_4,p_7,b'\}$ is a cocircuit of $M \ba a,b$;
          \end{itemize}\label{ndtendgameoutcome16b}
          and $M \ba a,b$ has no triangles.
      \end{enumerate}
  \end{enumerate}
\end{proposition}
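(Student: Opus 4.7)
The plan is to analyze the nice path description $(P_1, P_2, \dotsc, P_m)$ of $M \ba a,b$ guaranteed by \cref{lemmaC}, and show that, unless the path description matches one of the two listed configurations at $|E(M)| = 16$, we have either $|E(M)| \le 15$ or a contradiction. By \cref{niceends}, each end $P_i$ for $i \in \{1,m\}$ is a coclosed cosegment (of size $3$ or $4$) or a maximal fan. The governing principle throughout is that if some $c \in E(M \ba a,b)$ is $\utfutf$-deletable and $M \ba a,b \ba c$ is $3$-connected, then $\{a,b,c\}$ is a delete triple, contradicting the hypothesis; so every $\utfutf$-deletable element whose deletion preserves $3$-connectivity must be ruled out by the structure.

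First I would classify the ends. Fan ends of size at least~$5$ have a spoke end whose deletion preserves $3$-connectivity by \cref{fanendsstrong}, and this spoke is $\utfutf$-deletable by \cref{fragilefanelements,lemmaC}; this produces a delete triple, so fan ends are either of size~$4$ or absent. A $4$-element fan end similarly provides a $\utfutf$-deletable spoke end whose deletion preserves $3$-connectivity, again yielding a delete triple unless the fan is forced to be non-maximal, contradicting \cref{niceends}. For coclosed cosegment ends (triads or $4$-cosegments), deleting the unique $\utfutf$-deletable element collapses the cosegment to a triad in $M \ba a,b \ba c$; showing that $3$-connectivity typically persists (via the dual of \cref{lineconn} and the connectivity computations of \cref{pathbasics}) forces delete triples, except in configurations where the blocking behavior of $a$ or $b$ creates a $2$-separation in $M \ba a,b \ba c$. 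This tightly constrains the ends to be either triads or $4$-cosegments, and moreover their $\utfutf$-deletable elements must play a specific role relative to $a$ and $b$ — this is where $a'$ and $b'$ emerge as these marked elements in outcomes~(a) and~(b).

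Next I would bound the middle of the path. By \cref{pathdescprops}, each interior coguts set consists of $\utfutf$-contractible elements lying in $B^*$; by \cref{pflancoguts}, a coguts set of size at least~$2$ limits $\lc(P_1, P_m)$, and by the dual of \cref{pflantriad}, a singleton coguts set flanked by singleton guts sets forces a triangle of $M \ba a,b$, which is severely restricted by \cref{atmostonetri}. These constraints force guts sets to be singletons (except possibly when they meet $\{x,y\}$) and coguts sets to have size at most~$2$, and force any non-singleton coguts set to be near an end. Combined with the rank/corank inequalities $r(M \ba a,b) \le r^*(M\ba a,b) + 2$ and $r^*(M\ba a,b) \le r(M \ba a,b) + 1$ from \cref{rkcorkbounds}, these yield an inequality of the form $2g - c \le q \le g + c'$ where $g$ is the number of interior guts sets, $q$ counts interior coguts elements, and $c, c'$ are small constants; this bounds $m$ tightly.

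The main obstacle will be the detailed case analysis tracking which triads of $M \ba a,b$ are blocked by $a$ versus by $b$. Since $M$ has no triads, every triad must be blocked, and there are many triads arising from the ends and the interior coguts sets. Reconciling the blocking structure with the requirement that no $\utfutf$-deletable $c$ yields a $3$-connected $M \ba a,b,c$ — while simultaneously respecting orthogonality and the rank bounds — produces a narrow window of configurations. The two exceptional outcomes at $|E(M)| = 16$ emerge as precisely the configurations where the number of triads is minimal enough that $\{a,b\}$ can block them all, yet the path is long enough to exhaust the rank/corank budget. Verifying that these two configurations are the unique survivors, and that each element $c \in E(M \ba a,b)$ in these configurations either fails to be $\utfutf$-deletable or fails to preserve $3$-connectivity when deleted, is the most delicate part of the argument.
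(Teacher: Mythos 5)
Your proposal correctly identifies several genuine constraints — fan ends are eliminated via \cref{fanendsstrong,fragilefanelements,lemmaC}, guts elements not in a triad yield $\{a,b,c\}$ delete triples, and the rank/corank inequalities from \cref{rkcorkbounds} together with \cref{pflancoguts,pflantriad} bound $q$ in terms of $g$. But the "governing principle" you adopt — that a $\utfutf$-deletable $c$ with $M \ba a,b,c$ still $3$-connected gives a delete triple $\{a,b,c\}$ — runs out of fuel almost immediately. Every candidate $c$ that survives the first pass is forced into a triad of $M \ba a,b$ (this is exactly the content of the \cref{ndttriadseq} step), and once $c$ is in a triad of $M \ba a,b$, deleting it leaves a series pair, so $M \ba a,b,c$ is \emph{not} $3$-connected. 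In particular your treatment of cosegment ends is wrong: if $P_1$ is a triad and $c$ is its unique $\utfutf$-deletable element, $P_1 - c$ is a series pair in $M \ba a,b,c$, so $3$-connectivity does not "typically persist" — it never does. The dual of \cref{lineconn} concerns deleting an element of a segment of size at least~$4$, not removing one element from a cosegment, and offers no help here.

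The missing idea is the central mechanism the paper builds: rather than looking for a delete triple of the form $\{a,b,c\}$, choose a triple $\{a',b',c'\}$ \emph{entirely inside} $E(M \ba a,b)$, with $a'$ the distinguished deletable element of $P_1$, $b'$ the distinguished deletable element of $P_m$, and $c'$ a guts element in some interior $P_k$, and show $M \ba a',b',c'$ is $3$-connected with a $\utfutf$-minor. Proving this requires the structural claim (the paper's \cref{ndtcrux}) that if $M \ba \{a,b,a',b',c'\}$ is $3$-connected up to small series classes, with at least three series classes, and $\{a',b',c'\}$ satisfies orthogonality-flavoured conditions (coindependent, no pair in a $4$-element cocircuit, no $5$-element cocircuit containing all three), then any $2$-separation of $M \ba a',b',c'$ must be of a restricted cosegment type, whose blocking by $a$ and $b$ forces a contradiction via $3$-connectivity of $M \ba b',c'$ and $\utfutf$-fragility. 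This claim — which exploits the absence of triads in $M$ to constrain which of $a,b$ blocks which series class — is the engine of the entire argument; the remaining claims (\cref{ndtgoodtrip,ndtgoodtripsupplement,nodoubleguts,onedoublecoguts,ndtendgame,ndtendgame2}) are devoted to verifying its hypotheses case by case. Without this mechanism you cannot even begin the analysis after the first reductions, and you certainly cannot pin down the two exceptional $16$-element configurations, which arise precisely as the positions where no valid $(a',b',c')$ triple exists because every interior guts element $c'$ lies in a forbidden $4$- or $5$-element cocircuit together with $a'$ or $b'$.
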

\begin{proof}
  Suppose that $M$ has no delete triples and $|E(M)| \ge 16$.
  By \cref{utfutffragile}, $M \ba a,b$ is $\utfutf$-fragile.
  Loosely speaking, our strategy is to use the structure of this matroid to find a triple that, once we add back $a$ and $b$, is a delete triple in $M$; if we cannot find such a triple, then \labelcref{ndtendgameoutcomes16} holds.
  The crux to this approach is the following:

\begin{claim}
  \label{ndtcrux}
  Suppose there are distinct elements $a',b',c' \in E(M\ba a,b)$ such that the matroid $M \ba \{a,b,a',b',c'\}$ 
  \begin{itemize}
    \item is $3$-connected up to series classes of size at most three,
    \item has at least three distinct series classes, and
    \item has a $\utfutf$-minor,
  \end{itemize}
  and, in $M \ba a,b$,
  \begin{itemize}
    \item $\{a',b',c'\}$ is not contained in a $5$-element cocircuit,
    \item no pair of elements of $\{a',b',c'\}$ is contained in a $4$-element cocircuit, and
    \item $\{a',b',c'\}$ is coindependent. 
  \end{itemize}
  Then $\{a',b',c'\}$ is a delete triple for $M$.
\end{claim}
\begin{subproof}
  First, we claim that if $S$ is a series pair of $M \ba \{a,b,a',b',c'\}$, then it is blocked by $a$ or $b$ in $M \ba a',b',c'$.
  Let $S$ be a series pair of $M \ba \{a,b,a',b',c'\}$.
  Then $S \cup \{a',b',c'\}$ contains a cocircuit in $M \ba a,b$.
  If this cocircuit has size four or five, then it intersects $\{a',b',c'\}$ in two or three elements, respectively, a contradiction.
  So $S \cup e$ is a triad of $M \ba a,b$ for some $e \in \{a',b',c'\}$.
  This triad is blocked by $a$ or $b$ in $M$, since $M$ has no triads.
  Without loss of generality, say $a$ is the element that blocks the triad $S \cup e$ of $M \ba a,b$.
  Then 
  $a \in \cocl_{M \ba b}(S \cup e)$,
  so $a \in \cocl_{M \ba b,a',b',c'}(S)$, and hence
  the series pair $S$ of $M \ba \{a,b,a',b',c'\}$ is blocked by $a$, thus proving the first claim.

  Suppose $M \ba a',b',c'$ has a coloop $e$.
  Since $M \ba \{a,b,a',b',c'\}$ has no coloops, $e \in \{a,b\}$.
  Then $\{e,a',b',c'\}$ is a cocircuit of $M$, since $M$ is $3$-connected and has no triads.
  But then $\{a',b',c'\}$ is not coindependent in $M \ba a,b$, a contradiction.

  Now suppose $M \ba a',b',c'$ has a series pair $\{e,f\}$.
  Then $\{e,f,a',b',c'\}$ contains a cocircuit of $M$.
  If $\{a,b\} \cap \{e,f\} = \emptyset$, then $r^*_{M \ba a,b,a',b',c'}(\{e,f\}) \le 1$.
  Note that $\{e,f\}$ is not a series pair of $M \ba \{a,b,a',b',c'\}$, by the first claim.  So $e$ or $f$ is a coloop of $M \ba \{a,b,a',b',c'\}$.
  But this contradicts that $M \ba \{a,b,a',b',c'\}$ is $3$-connected up to series classes.
  Similarly, if $|\{a,b\} \cap \{e,f\}| = 1$, then $e$ or $f$ is a coloop of $M \ba \{a,b,a',b',c'\}$, again contradicting that $M \ba \{a,b,a',b',c'\}$ is $3$-connected up to series classes.
  So $\{a,b\} = \{e,f\}$.
  Then $\{a,b,a',b',c'\}$ contains a cocircuit of $M$, which meets $\{a',b',c'\}$ since $M$ is $3$-connected, so $\{a',b',c'\}$ is not coindependent in $M \ba a,b$, a contradiction.

  Next, we work towards proving that if $(U,V)$ is a $2$-separation of $M \ba a',b',c'$, then we may assume, up to swapping $U$ and $V$, that $U$ is a cosegment such that for some $\{e,e'\} = \{a,b\}$, we have $e \in U \cap \cl(U-e)$ and $e' \in V$.
  Let $(U,V)$ be a $2$-separation of $M \ba a',b',c'$ with $a \in U$.
  Since $M \ba a',b',c'$ has no loops, coloops, parallel pairs or series pairs, $|U|,|V| \ge 3$.
  Note that $\{a,b\}$ is coindependent in $M \ba a',b',c'$, since $M$ is $3$-connected and $\{a',b',c'\}$ is coindependent in $M \ba a,b$.
  Now $(U-a,V)$ is a $2$-separation in $M \ba \{a, a',b',c'\}$.
  Since $a$ does not block the $2$-separating set $V$, we have $a \in \cl(U-a)$.

  Suppose $U-a$ is contained in a series class of $M \ba \{a, a',b',c'\}$.
  Then $b \in V$, for otherwise the non-empty set $U-\{a,b\}$ consists of coloops of $M \ba \{a,b,a',b',c'\}$, contradicting that $M \ba \{a,b,a',b',c'\}$ is $3$-connected up to series classes.
Now each pair of $U-a$ is a series pair of $M \ba \{a,b,a',b',c'\}$ not blocked by $b$.
  So $a$ blocks $U-a$, implying $a \in \cocl_{M \ba a',b',c'}(U-a)$.
  Recalling that $a \in \cl(U-a)$, we see that $U$ is a cosegment of $M \ba a',b',c'$ with $a \in \cl(U-a)$ as required.

  So we may assume that $U-a$ is not contained in a series class of $M \ba \{a,a',b',c'\}$. 
  Suppose $b \in U$.
  Then $(U-\{a,b\},V)$ is a $2$-separation of $M \ba \{a,b,a',b',c'\}$.
  As neither $a$ nor $b$ blocks the $2$-separating set $V$, we have $\{a,b\} \subseteq \cl(U-\{a,b\})$.
  If $V$ is contained in a series class of $M \ba \{a,b,a',b',c'\}$, then it is also contained in a series class of $M \ba a',b',c'$, a contradiction.
  Since $M \ba \{a, b, a',b',c'\}$ is $3$-connected up to series classes, $U-\{a,b\}$ is contained in a series class $S$ say. 
  Now $M \ba \{a,b,a',b',c'\}$ contains some series class $S'$ distinct from $S$.
  Since $a,b \in \cl(U-\{a,b\})$, neither $a$ nor $b$ blocks $S'$, a contradiction.
  We deduce that $b \in V$.
  Now, by symmetry, 
  $b \in \cl(V-b)$. 
  Since $M \ba \{a, b, a',b',c'\}$ is $3$-connected up to series classes, either $U-a$ or $V-b$ is contained in a series class of $M \ba \{a,b,a',b',c'\}$; without loss of generality, say it is $V-b$.
  Since $a \in \cl(U-a)$, the element $a$ does not block $V-b$.
  So $b$ blocks $V-b$, that is, $b \in \cocl_{M \ba a',b',c'}(V-b)$.
  Now $V$ is a cosegment in $M \ba a',b',c'$ with $b \in V \cap \cl(V-b)$ and $a \notin V$, as required.

  Now we may assume that $M \ba a',b',c'$ has a cosegment $G$, with $a \in G \cap \cl(G-a)$ and $b \notin G$, up to swapping $a$ and $b$, for otherwise $M$ has a delete triple, $\{a',b',c'\}$, as required.
  Without loss of generality, $G$ is coclosed in $M \ba a',b',c'$,
  and it follows that $G-a$ is a series class in $M \ba \{a, b, a',b',c'\}$.
  Let $G-a$, $S'$ and $S''$ be distinct series classes of $M \ba \{a, b, a',b',c'\}$.
  Since $a \in \cl(G-a)$, it follows that $a$ blocks neither $S'$ nor $S''$.
  So $b$ blocks both $S'$ and $S''$.  Note that $b \notin \cl(S')$, for otherwise $b$ does not block $S''$; and similarly $b \notin \cl(S'')$.
  
  We deduce that the only $2$-separations of $M \ba a',b',c'$ are of the form $(G', E(M \ba a',b',c')-G')$ where $G' \subseteq G$ is a cosegment and $a \in \cl(G'-a)$, and $b \notin G'$.
  Now $G-a$ is a series class of $M \ba \{a,b,a',b',c'\}$ that is blocked in $M \ba a,b$.
  Since $M \ba a,b$ has no $4$-element cocircuits containing a pair of elements in $\{a',b',c'\}$, and no $5$-element cocircuit containing $\{a',b',c'\}$, each series pair of $M \ba \{a,b,a',b',c'\}$ contained in $G-a$ is blocked by exactly one of $a'$, $b'$, and $c'$.
  Observe that $|G-a| \in \{2,3\}$.

  We claim that there is some $e \in \{a',b',c'\}$ that blocks every pair contained in $G-a$.
  Clearly this is the case when $|G-a| =2$, so let $G-a=\{s,t,q\}$, and suppose $a'$ blocks $\{s,t\}$ but $a'$ does not block $\{s,q\}$.
  Then, we may assume that $b'$ blocks $\{s,q\}$, so $\{s,t,a'\}$ and $\{s,q,b'\}$ are triads of $M \ba a,b$.
  By cocircuit elimination, $\{t,q,a',b'\}$ contains a cocircuit; so either $\{t,q,a'\}$ or $\{t,q,b'\}$ is a triad of $M \ba a,b$.
  In the former case, $\{s,q,a'\}$ is also a triad of $M \ba a,b$, by cocircuit elimination, so $a'$ blocks $\{s,q\}$, a contradiction.
  So $\{t,q,b'\}$ is a triad of $M \ba a,b$.
  But then, by cocircuit elimination with $\{s,q,b'\}$, so is $\{s,t,b'\}$, so $b'$ blocks each pair in $G-a$.
  We may now assume that $a'$ blocks every pair contained in $G-a$.
  Then $(G-a) \cup a'$ is a cosegment of $M \ba a,b$.

  Suppose $M \ba b',c'$ is not $3$-connected.  Let $(U,V)$ be a $2$-separation in $M \ba b',c'$ with $a' \in U$.
  Note that $|U| \ge 3$, since $a'$ is not in a parallel or series pair of $M \ba b',c'$.
  Since $a'$ is not a coloop in $M \ba b',c'$, we have $\lambda_{M \ba a',b',c'}(U-a') \le \lambda_{M \ba b',c'}(U) \le 1$.
  So $(U-a',V)$ is a $2$-separation in $M \ba a',b',c'$.
  Thus either $U-a'$ or $V$ is a cosegment $G' \subseteq G$, with $a \in G' \cap \cl(G'-a)$ and $b \notin G'$.
  As $V$ is $2$-separating in both $M \ba a',b',c'$ and $M \ba b',c'$, we see that $a'$ does not block $V$, so
  $a' \in \cl_{M\ba b',c'}(U-a')$.
  If $U-a' = G'$, then $a' \in \cl(G') = \cl(G'-a)$, and so in $M \ba a,b$, the set $(G'-a) \cup a'$ is a dependent cosegment and is therefore $2$-separating, a contradiction.
  So $V = G'$, and $a'$ does not block $G'$, with $a \in V$ and $b \in U$.  
  Note that $|V-a| \ge 2$, 
  so $a'$ does not block each series pair of $M \ba a,b,a',b',c'$ contained in $G-a$, a contradiction.

  So $M \ba b',c'$ is $3$-connected. 
  As $M \ba a',b',c'$ has a $\utfutf$-minor, $M \ba a',b',c'$ is $\utfutf$-fragile, by \cref{utfutffragile}.
  But then $M \ba a',b',c'$ is $3$-connected up to series and parallel classes, by \cref{genfragileconn}, contradicting that $G$ is $2$-separating in $M \ba a',b',c'$.
\end{subproof}

Now, if $M \ba a,b$ has a triple $\{a',b',c'\}$ as described in \cref{ndtcrux}, then $M$ has a contradictory delete triple.  Our strategy is to attempt to find such a triple $\{a',b',c'\}$; when we cannot, we have the structure described in \ref{ndtendgameoutcomes16}.

Recall that $M \ba a,b$ is $3$-connected and $\utfutf$-fragile, and, due to \cref{lemmaC}, $M\ba a,b$ has an $\{X_8,Y_8,Y_8^*\}$-minor, $M \ba a,b$ has a nice path description $\mathbf{P} = (P_1,P_2,\dotsc,P_m)$, and every element of $M \ba a,b$ is either $\utfutf$-deletable or $\utfutf$-contractible. 
  Let $N \in \utfutf$ such that $M\ba a,b$ has an $N$-minor.
  By \cref{fragilecase}, there exists a basis~$B$ for $M$ and a $B \times B^*$ companion $\mathbb{P}$-matrix~$A$ for which $\{x,y,a,b\}$ incriminates $(M,A)$ where $\{x,y\} \subseteq B$ and $\{a,b\} \subseteq B^*$.
  By \cref{atmostonetri}, $M \ba a,b$ has at most one triangle, and if such a triangle $T$ exists, then $T \cup \{x,y\}$ is a $4$-element fan containing $u$.

  In what follows, we work in the matroid $M \ba a,b$ unless explicitly specified otherwise; for example, when we say $P_1$ is a triad, we mean it is a triad of $M \ba a,b$.

\begin{claim}
  \label{ndtends}
  Let $i \in \{1,m\}$.  Then $P_i$ is either a cosegment or a $5$-element fan whose ends are rim elements.
\end{claim}
\begin{subproof}
  The end $P_i$ contains either a triangle or a triad.
  If $P_i$ does not contain a triangle, then it is a cosegment.
  On the other hand, if $P_i$ contains a triangle, then, by \cref{atmostonetri}, $P_i$ is a fan of size at least~$4$.
  If $P_i$ is a fan of size at least~$6$, then it contains at least $2$ triangles, a contradiction.
  If the fan $P_i$ has a spoke end $d$, then $M \ba a,b,d$ is $3$-connected and has a $\utfutf$-minor, by \cref{fanends,pathdescends}, contradicting that $M$ has no delete triples.
  It follows that $P_i$ has size five and both its ends are rim elements, as required.
\end{subproof}

Suppose neither $P_1$ nor $P_m$ is a $5$-element fan.
Then both $P_1$ and $P_m$ are cosegments, by \cref{ndtends}.
By \cref{niceends}, $P_1$ and $P_m$ are coclosed.
Hence 
$P_2$ and $P_{m-1}$ are guts sets.
Moreover, $m$ is odd.

On the other hand, if $P_1$ is a $5$-element fan, then $P_2$ could be either a guts set (in which case $m$ is odd) or a coguts set (in which case $m$ is even).

For ease of notation, for any $i \in \{2,3,\dotsc,m-1\}$ we let $P_i^- = P_1 \cup \dotsm \cup P_{i-1}$ and $P_i^+ = P_{i+1} \cup \dotsm \cup P_{m}$.

\begin{claim}
  \label{ndttriadseq}
  Let $i \in \{2,3,\dotsc,m-1\}$ such that $P_i$ is a guts set.
  Then $|P_i| \le 2$ and for each $e \in P_i$, there is a triad of $M \ba a,b$ containing $e$ that meets $P_i^-$ and $P_i^+$.
\end{claim}
\begin{subproof}
  First, observe that if some $e \in P_i$ is in a triad~$T^*$, then it follows from orthogonality that $T^*$ meets both $P_i^-$ and $P_i^+$.

  If $|P_i| = 3$, then $P_i$ is a triangle, so $P_i$ is contained in a $4$-element fan by \cref{atmostonetri}.
  But then there is a triad containing two elements of $P_i$, so it does not meet both $P_i^-$ and $P_i^+$, a contradiction.
  So $|P_i| \le 2$.

  Now let $e \in P_i$.
  By \cref{pathdescprops}, $e$ is $\utfutf$-deletable in $M \ba a,b$, and $\co(M \ba a,b \ba e)$ is $3$-connected.  Thus, if $e$ is not in a triad, then $\{a,b,e\}$ is a delete triple, a contradiction.
  So $e$ is in a triad which, by the foregoing, meets both $P_i^-$ and $P_i^+$.
\end{subproof}

We say that $T^*$ is an \emph{internal triad} if $T^*$ is a triad that contains an element in some guts set $P_i$, for $i \in \{2,3,\dotsc,m-1\}$.

\begin{claim}
  \label{ndtdisjtriads}
  Let $\{\ell,e,r\}$ and $\{\ell',e',r'\}$ be distinct internal triads, where $e \in P_i$ and $e' \in P_{i'}$ for guts sets $P_i$ and $P_{i'}$, and
$\ell \in P_i^-$, $r \in P_i^+$, $\ell' \in P_{i'}^-$, and $r' \in P_{i'}^+$.
  Then $\ell \neq \ell'$ and $r \neq r'$.
  In particular, if for some guts set $P_j$ we have $|P_j|=2$, say $P_j = \{e,e'\}$, then the triads containing $e$ and $e'$ are disjoint.
\end{claim}
\begin{subproof}
  Suppose $\ell = \ell'$. Then, by cocircuit elimination, there is a cocircuit~$C^*$ contained in $\{e,e',r,r'\}$.
  First suppose that $i = i'$.
  Since $e$ and $e'$ are in the guts set $P_i$, and $P_i^+ \cap C^* = \{r,r'\}$,
  it follows from orthogonality that neither $e$ nor $e'$ is in the cocircuit~$C^*$.  But then $\{r,r'\}$ is a series pair, a contradiction.
  Now suppose that $i \neq i'$.  Without loss of generality, let $i < i'$.
  Then, it follows from orthogonality that $e \notin C^*$, so $\{r,e',r'\}$ is a triad, where $r \in P_i^+ \cap P_{i'}^-$.
  Now $\{\ell,r,e',r'\}$ is a $4$-cosegment.
  But then $\{\ell,r,e'\}$ is a triad that avoids $P_{i'}^+$, contradicting orthogonality.
  So $\ell \neq \ell'$ and, similarly, $r \neq r'$.
\end{subproof}

We now assume that $\mathbf{P} = (P_1, \dotsc, P_m)$ is a nice path description for $M \ba a,b$ such that $P_m$ is a triad, using \cref{oneendisatriad} and up to the reversal of $\mathbf{P}$.
Recall also that the reversal of $\mathbf{P}$ is, by definition, left-justified.

\begin{claim}
  \label{ndttriads}
  $|P_{m-1}| = 1$, and if an internal triad meets $P_m$, then this triad contains $P_{m-1}$.
  Moreover,
  \begin{enumerate}[label=\rm(\Roman*)]
    \item if $P_1$ is a triad, then $|P_2| = 1$, and each internal triad that meets $P_1$ contains $P_2$;\label{ndttriadsi}
    \item if $P_1$ is a $4$-cosegment or a $5$-element fan and $P_2$ is a guts set, and $|P_i|=2$ for some guts set $P_i$ with $i > 2$, then for some $e \in P_i$ each triad containing $e$ is disjoint from $P_1$.\label{ndttriadsii}
  \end{enumerate}
\end{claim}
\begin{subproof}
  Let $P_1$ be a cosegment.
  For each $p_2 \in P_2$, the set $P_1 \cup p_2$ contains a circuit.
  If this circuit is a triangle, then, by orthogonality, $|P_1|=3$ and $P_1$ is contained in a $4$-element fan, violating the definition of a nice path description.
  So the circuit contained in $P_1 \cup p_2$ has size at least~$4$.
  In particular, if $P_1$ is a triad, then $P_1 \cup p_2$ is a $4$-element circuit for each $p_2 \in P_2$.

  Suppose $P_1$ is a triad and $|P_2| \ge 2$.
  Let $P_2 = \{p_2,p_2'\}$.
  By \cref{ndttriadseq}, $p_2$ is in a triad $T^*$ that contains an element of $P_1$, and an element of $P_2^+$.
  But then $|T^* \cap (P_1 \cup p_2')| = 1$, contradicting orthogonality.
  So if $P_1$ is a triad, then $|P_2| = 1$.
  Similarly, since $P_m$ is a triad, $|P_{m-1}| = 1$.

  Let $P_1$ be a cosegment,
  let $p_2 \in P_2$, and let $T^*_2 = \{\ell_2,p_2,r_2\}$ be the internal triad containing $p_2$, with $\ell_2 \in P_1$.
  Suppose there is some internal triad $\{\ell_i,p_i,r_i\}$ with $\ell_i \in P_1$, and $p_i \in P_i$ for some guts set $P_i$ with $i > 2$.
  Then $r_i \in P_i^+$, so $|\{\ell_i,p_i,r_i\} \cap (P_1 \cup p_2)| = 1$.
  By orthogonality, $P_1 \cup p_2$ is not a circuit.
  In particular, we deduce that if $P_1$ is a triad, then no such internal triad $\{\ell_i,p_i,r_i\}$ exists, and \cref{ndttriads}\ref{ndttriadsi} follows.
  (By symmetry, if an internal triad meets $P_m$ then it contains $P_{m-1}$.)
  If $P_1$ is a $4$-cosegment, then we deduce that $(P_1 - \ell_i) \cup p_2$ is a $4$-element circuit.
  Now if there is some $e \in P_i-p_i$, then any internal triad containing $e$ does not contain $\ell_i$, by \cref{ndtdisjtriads}, and does not meet $P_1 - \ell_i$, by orthogonality.
  So \cref{ndttriads}\ref{ndttriadsii} holds in the case that $P_1$ is a $4$-cosegment.

  Finally, suppose $P_1$ is a $5$-element fan with ordering $(f_1,f_2,f_3,f_4,f_5)$ and $P_2$ is a guts set.
  Then $\{f_2,f_3,f_4\}$ is a triangle.
  For each internal triad $\{\ell_i,p_i,r_i\}$ with $p_i \in P_i$, $\ell_i \in P_i^-$ and $r_i \in P_i^+$, we have $|P_1 \cap \{\ell_i,p_i,r_i\}| \le 1$, so, by orthogonality, either $\ell_i \in \{f_1,f_5\}$ or $\ell_i \notin P_1$.
  By \cref{ndtdisjtriads}, at most two internal triads meet $P_1$.
  There is an internal triad containing $p_{2}$ that meets $P_1$.
  Thus for any guts set $P_i$ with $i > 2$ and $|P_i| =2$, there is some $e \in P_i$ such that any internal triad containing $e$ avoids $P_1$, as required.
\end{subproof}

Let $G$ and $Q$ be the guts and coguts elements in $P_2 \cup \dotsm \cup P_{m-1}$, respectively.

\begin{claim}
  \label{ndtcogutsupper}
  $|Q| \le |G|$.
  Moreover, 
  \begin{enumerate}[label=\rm(\Roman*)]
    \item if $P_1$ is a $4$-cosegment, or $M \ba a,b$ has a triangle, then $|Q| \le |G|-1$; and
    \item if $P_1$ is a $4$-cosegment and $M \ba a,b$ has a triangle, then $|Q| \le |G|-2$.
  \end{enumerate}
\end{claim}
\begin{subproof}
  Observe that for each coguts element $q \in P_i$, we have \[q \notin \cl(P_1 \cup \dotsm \cup P_{i-1} \cup (P_i - q)).\]
  It follows that $r(M \ba a,b) = r(P_1) + |Q| + r(P_m) - 2$.

  Suppose $P_1$ is a $5$-element fan.
  Then, by \cref{atmostonetri}, $M \ba a,b$ has an $(N,B)$-robust element outside of $\{x,y\}$, so $r(M \ba a,b) \le r^*(M \ba a,b)+1$ by \cref{rkcorkbounds}.
  By \cref{pathdescends}\ref{pde3}, an element of $P_1$ is $\utfutf$-deletable if and only if it is a spoke, so $P_1$ has precisely two elements that are $\utfutf$-deletable.
  On the other hand, the triad $P_m$ has precisely one $\utfutf$-deletable element, also by \cref{pathdescends}\ref{pde2}.
  So $M \ba a,b$ has precisely $|G|+3$ elements that are $\utfutf$-deletable, by \cref{pathdescprops}, and hence $r^*(M \ba a,b) = |G|+3$, by \cref{pathdescrank}.
  Since $r(P_1) = 4$ and $r(P_m) = 3$,
  \begin{align*}
    |Q| &= r(M \ba a,b) + 2 - r(P_1) - r(P_m) \\
    &\le r^*(M \ba a,b) + 3 - 4 - 3\\
    &= (|G|+3) -4 = |G|-1,
  \end{align*}
  as required.

  Now, by \cref{ndtends}, we may assume that $P_1$ is a cosegment.
  Then $M \ba a,b$ has $|G| + 2$ elements that are $\utfutf$-deletable, by \cref{pathdescprops,pathdescends}.
  If $M \ba a,b$ has a triangle, then $r(M \ba a,b) \le r^*(M \ba a,b)+1 = |G|+3$ by \cref{atmostonetri,rkcorkbounds}.
  Otherwise, by \cref{rkcorkbounds,pathdescrank}, $r(M \ba a,b) \le r^*(M \ba a,b)+2 = |G| + 4$.
  Observe that $r(P_1)\ge 3$ and $r(P_m)=3$.
  In the case that $M \ba a,b$ does not have a triangle,
  \begin{align*}
    |Q| &= r(M \ba a,b) - r(P_1) - r(P_m) + 2 \\
    &\le (|G|+4) -r(P_1) -3 + 2 \\
    &= |G| -r(P_1) +3 \le |G|, 
  \end{align*}
  and, if $P_1$ is a $4$-cosegment, then $r(P_1) = 4$, in which case $|Q| \le |G|-1$.
  Similarly, if $M \ba a,b$ has a triangle, then
  \begin{align*}
    |Q| &\le 
    |G| -r(P_1) +2 \le |G|-1,
  \end{align*}
  and, if $P_1$ is a $4$-cosegment, then $r(P_1) = 4$, in which case $|Q| \le |G|-2$.
\end{subproof}

\begin{claim}
  \label{ndt5eltfan}
  If $P_1$ is a $5$-element fan, then $P_2$ is a guts set and $|P_2|=1$.
\end{claim}
\begin{subproof}
Let $P_1$ be a $5$-element fan.
First, suppose $P_2$ is a coguts set.
Then $m$ is even. 
Let $P_i$ be a guts set of $\mathbf{P}$ with $i \neq m-1$.
If $|P_i| = 1$, then clearly $|P_i| \le |P_{i+1}|$.
Otherwise, $|P_i| = 2$, by \cref{ndttriadseq}, in which case, by \cref{ndtdisjtriads,ndttriads}, there are two disjoint internal triads that meet $P_i$ and avoid $P_m$.
Since $\mathbf{P}$ is left-justified, $|P_i| = 2 \le |P_{i+1}|$.
Finally, observe that $1=|P_{m-1}| \le |P_2|$, by \cref{ndttriads}.
Since 
$m$ is even, it follows that
$|G| \le |Q|$,
but this contradicts \cref{ndtcogutsupper}.
We deduce that $P_2$ is a guts set.

Now suppose $|P_2| \ge 2$.
Then $|P_2| = 2$, by \cref{ndttriadseq}, so let $P_2 = \{e,e'\}$.
By \cref{ndtdisjtriads} there are distinct elements $\ell$ and $\ell'$ such that $\{\ell,e\}$ and $\{\ell',e'\}$ are contained in triads where, for each triad, the final element is in $P_2^+$.
By orthogonality, $\ell$ and $\ell'$ are the rim ends of the fan $P_1$.
Let $(\ell,f_2,f_3,f_4,\ell')$ be an ordering of $P_1$.
As $r(\{\ell,f_2,f_4,\ell',e,e'\}) = 4$, and $\{\ell,f_2,f_3\}$ is a triad, $r(\{f_4,\ell',e,e'\}) \le 3$.
But $M \ba a,b$ has at most one triangle, $\{f_2,f_3,f_4\}$, so $\{f_4,\ell',e,e'\}$ is a circuit.
This circuit intersects the triad containing $\{\ell,e\}$ in a single element, contradicting orthogonality.
We deduce that $|P_2| = 1$.
\end{subproof}

By \cref{ndt5eltfan}, we may now assume that $P_2$ is a guts set, so $m$ is odd, and if $P_1$ is not a $4$-cosegment, then $|P_2|=1$.

We may also assume that $m \ge 5$, for otherwise $|E(M)| \le 11$.
Suppose $m = 5$.  As $|P_1|+|P_2| \le 6$ and $|P_{4}| + |P_5| = 4$, we may assume that $|P_3|=3$ and $|P_1|+|P_2| = 6$.  But the latter implies that $P_1$ is either a $5$-element fan or $4$-cosegment, so $|Q| \le |G|-1$ by \cref{ndtcogutsupper}, in which case $|P_3| \le 2$.
So $m \ge 7$.

\begin{claim}
  \label{ndtendcandidates}
  There exist elements $a' \in P_1$ and $b' \in P_m$ such that $M \ba a,b \ba a',b'$ is $\utfutf$-fragile, where each $\utfutf$-deletable (or $\utfutf$-contractible) element of $M \ba a,b$ not in $\{a',b'\}$ remains $\utfutf$-deletable (or $\utfutf$-contractible, respectively) in $M \ba a,b \ba a',b'$.
  Moreover,
  \begin{enumerate}[label=\rm(\Roman*)]
    \item neither $a'$ nor $b'$ is in an internal triad of $M \ba a,b$;
    \item $a'$ is in a circuit contained in $P_1 \cup p_2$ for each $p_2 \in P_2$, and $P_{m-1} \cup P_m$ is a $4$-element circuit containing $b'$; and
    \item if $P_1$ is a $5$-element fan, then $a'$ is a spoke of $P_1$.
  \end{enumerate}
\end{claim}
\begin{subproof}
  Let $i \in \{1,m\}$.
  When $P_i$ is not a $5$-element fan, then, using \cref{pathdescends}\ref{pde2}, we choose $e_i$ to be the unique element in $P_i$ that is $\utfutf$-deletable.
  When $P_1$ is a $5$-element fan, we choose $e_1$ to be a spoke of $P_1$;
  then $e_1$ is $\utfutf$-deletable, by \cref{pathdescends}\ref{pde3}. 
  Let $a' = e_1$ and $b'=e_m$.
  By \cref{keepfragilelabels}\ref{kfl1}, $M \ba a,b \ba a',b'$ is $\utfutf$-fragile and has no $\utfutf$-essential elements.  Since $M \ba a,b$ has no $\utfutf$-flexible elements, each element of $M \ba a,b \ba a',b'$ is $\utfutf$-deletable (or $\utfutf$-contractible) if and only if it was $\utfutf$-deletable (or $\utfutf$-contractible, respectively) in $M \ba a,b$.
  This proves the first part of \cref{ndtendcandidates}.

  Suppose $a'$ is in an internal triad $\{a',e,r\}$, where $e$ is the guts element.
  Then $M \ba a,b \ba a'$ has a $\utfutf$-minor, and $\{e,r\}$ is a series pair in this matroid, so $e$ is $\utfutf$-contractible in $M \ba a,b \ba a'$ and thus also in $M \ba a,b$.
  But this contradicts \cref{pathdescprops}.
  So $a'$ is not in an internal triad.
  Similarly, neither is $b'$.

  By \cref{ndttriads}, $|P_{m-1}| = 1$, so $P_m \cup P_{m-1}$ is a circuit containing $b'$.
  Similarly, if $P_1$ is a triad, then $|P_2| = 1$ and $P_1 \cup P_2$ is a circuit containing $a'$.
  If $P_1$ is a $5$-element fan, then there is a unique triangle $T \subseteq P_1$, and $a' \in T$, since $a'$ was chosen to be a spoke of $P_1$.

  Finally, suppose that $P_1$ is a $4$-cosegment.
  For each $p_2 \in P_2$, the set $P_1 \cup p_2$ contains a circuit.
  It remains to show that this circuit contains $a'$.
  Clearly this is the case if $P_1 \cup p_2$ is a circuit, so suppose otherwise.
  By orthogonality, the circuit is not a triangle, so $(P_1 - a') \cup p_2$ is a $4$-element circuit.
  Let $P_1 = \{p_1,p_1',p_1'',a'\}$, so $\{p_1,p_1',p_1'',p_2\}$ is a circuit.
  By \cref{pathdescends}\ref{pde3}, $p_1$, $p_1'$, and $p_1''$ are $\utfutf$-contractible.
By \cref{keepfragilelabels}\ref{kfl2}, $M \ba a,b / p_1$ is $\utfutf$-fragile and has no $\utfutf$-essential elements, so $p_1'$ is $\utfutf$-contractible in $M \ba a,b / p_1$.
  Thus $M \ba a,b / p_1,p_1'$ has a $\utfutf$-minor, and $\{p_1'',p_2\}$ is a parallel pair in this matroid.
  Thus $p_1''$ is $\utfutf$-deletable in $M \ba a,b / p_1,p_1'$, and hence in $M \ba a,b$, a contradiction.
  We deduce that, for each $p_2 \in P_2$, there is a circuit contained in $P_1 \cup p_2$ that contains $a'$, as required.
\end{subproof}

We work towards applying \cref{ndtcrux} using $a'$ and $b'$ as given in \cref{ndtendcandidates}.
First we require the following.

\begin{claim}
  \label{localcoconn}
  $\lc^*_{M \ba a,b}(P_1,P_m) = 0$.
\end{claim}
\begin{subproof}
  Suppose $\lc^*_{M \ba a,b}(P_1,P_m) \ge 1$.
  Let $P_i$ be a guts set for some $i \in \{2,3,\dotsc,m-1\}$.
  Then, by the duals of \cref{growpi,pflancoguts}, $\lc^*_{M \ba a,b}(P_i^-,P_i^+) = 1$, so $|P_i|=1$.
  So every guts set has size one.

  First, assume that $P_1$ is a cosegment.
  Then, by \cref{ndtcogutsupper}, $|Q| \le |G|$.
  Since each guts set has size one, and the number of guts sets is one more than the number of coguts sets, there is at most one coguts set of size two. 
  Recall that $m \ge 7$, so there exists a coguts set $P_j$ with $|P_j|=1$, for some $\{j,j'\} = \{3,5\}$.
  By the dual of \cref{pflancoguts}, $P_{j-1} \cup P_j \cup P_{j+1}$ is a triangle.
  But this implies that $|Q| \le |G|-1$ by \cref{ndtcogutsupper}, so every coguts set has size one.
  In particular, $|P_{j'}| = 1$, so $P_{j'-1} \cup P_{j'} \cup P_{j'+1}$ is also a triangle, contradicting that $M \ba a,b$ has at most one triangle.

  Now assume $P_1$ is a $5$-element fan.
  By \cref{ndtcogutsupper}, $|Q| \le |G|-1$, so every guts and coguts set has size one.
  By the dual of \cref{pflancoguts}, $P_{2} \cup P_3 \cup P_{4}$ is a triangle, so $|Q| \le |G|-2$, by \cref{ndtcogutsupper}, a contradiction.
\end{subproof}

\begin{claim}
  \label{ndtgoodtrip}
  Let $a'$ and $b'$ be as given in \cref{ndtendcandidates}, and let $c'$ be a guts element in $P_k$ for some $k \in \{4,6,\dotsc,m-3\}$.
  Suppose $C^*$ is a cocircuit of $M \ba a,b$.
  \begin{enumerate}
    \item $C^* \nsubseteq \{a',b',c'\}$.\label{ndtgt1}
    \item If $\{a',c',b'\} \subseteq C^*$ and $|C^*|=5$, then $C^* = \{a'',a',c',b',b''\}$ for some $a'' \in P_1-a'$ and $b'' \in P_m-b'$.\label{ndtgt2}
    \item If $\{a',c'\} \subseteq C^*$ and $|C^*|=4$, then $C^* = \{a'',a',c',r\}$ for some $a'' \in P_1-a'$ and $r \in P_k^+$.\label{ndtgt3}
    \item If $\{c',b'\} \subseteq C^*$ and $|C^*|=4$, then $C^* = \{\ell,c',b',b''\}$ for some $\ell \in P_k^-$ and $b'' \in P_m-b'$.\label{ndtgt4}
    \item If $\{a',b'\} \subseteq C^*$, then $|C^*| \neq 4$.\label{ndtgt5}
  \end{enumerate}
\end{claim}
\begin{subproof}
  Since $P_1 \cup P_2$ contains a circuit containing $a'$, by \cref{ndtendcandidates}, any cocircuit of $M \ba a,b$ containing $a'$ meets $(P_1 -a')\cup P_2$, by orthogonality.
  Similarly, $P_{m-1} \cup P_m$ contains a circuit containing $b'$, so any cocircuit of $M \ba a,b$ containing $b'$ meets $P_{m-1} \cup (P_m - b')$.
  It follows that $\{a',b',c'\}$ is coindependent in $M \ba a,b$, thus proving \ref{ndtgt1}.

  Observe that $M \ba \{a,b,a',b',c'\}$ has a $\utfutf$-minor, by \cref{ndtendcandidates} and since $c'$ is $\utfutf$-deletable in $M \ba a,b$.
  Suppose $\{a',b',c'\}$ is contained in a $5$-element cocircuit $\{a',b',c',a'',b''\}$.
  Since $M \ba \{a,b,a',b',c'\}$ has a $\utfutf$-minor and $\{a'',b''\}$ is a series pair in this matroid, $a''$ and $b''$ are $\utfutf$-contractible in $M \ba a,b$.
  By \cref{pathdescprops}, $a''$ and $b''$ are not guts elements, so $a'' \in P_1-a$ and $b'' \in P_m-b'$, thus proving \ref{ndtgt2}.
  Similarly, if some pair of elements in $\{a',b',c'\}$ is contained in a $4$-element cocircuit, then the other two elements in this cocircuit are $\utfutf$-contractible in $M \ba a,b$.
  Cases~\ref{ndtgt3} and~\ref{ndtgt4} of the claim then follow from orthogonality.
  For case~\ref{ndtgt5}, if $\{a',b'\}$ is contained in a $4$-element cocircuit $\{a'',a',b',b''\}$ say, 
  then $a'' \in P_1-a'$ and $b'' \in P_m-b'$, by orthogonality.
  But this contradicts \cref{localcoconn} and the dual of \cref{picircuits}.
%
\end{subproof}

\begin{claim}
  \label{ndtgoodtripsupplement}
  Let $a'$ and $b'$ be as given in \cref{ndtendcandidates}, and let $c'$ be a guts element in $P_i$, for some $i \in \{4,6,\dotsc,m-3\}$, such that
  \begin{enumerate}[label=\rm(\Roman*)]
    \item neither $\{a',c'\}$ nor $\{c',b'\}$ is contained in a $4$-element cocircuit of $M \ba a,b$, and\label{ndtgts1}
    \item there is a unique triad containing $c'$, and this triad avoids $P_1 \cup P_m$.\label{ndtgts2}
  \end{enumerate}
  Then $\{a',b',c'\}$ is a delete triple.
\end{claim}
\begin{subproof}
  Observe that $M \ba \{a,b,a',b',c'\}$ has a $\utfutf$-minor, by \cref{ndtendcandidates} and since $c'$ is $\utfutf$-deletable in $M \ba a,b$; hence this matroid is $\utfutf$-fragile.
  By \cref{genfragileconn}, it follows that $M\ba \{a,b,a',b',c'\}$ is $3$-connected up to series classes.

  We work towards an application of \cref{ndtcrux}.
  First, observe that in $M \ba a,b$, the set $\{a',b',c'\}$ is coindependent by \cref{ndtgoodtrip}\ref{ndtgt1}, and $\{a',b'\}$ is not contained in a $4$-element cocircuit by \cref{ndtgoodtrip}\ref{ndtgt5}.

  Suppose $\{a',b',c'\}$ is contained in a $5$-element cocircuit of $M \ba a,b$.
  Then, by \cref{ndtgoodtrip}\ref{ndtgt2}, this cocircuit is $\{a'',a',c',b',b''\}$ for some $a'' \in P_1 - a'$ and $b'' \in P_m$.
  Let $\{\ell,c',r\}$ be an internal triad containing $c'$, with $\ell \in P_i^-$ and $r \in P_i^+$.
  Then, by \ref{ndtgts2}, $\ell \notin P_1$, and $r \notin P_m$. 
  By cocircuit elimination, there is a cocircuit~$C^*$ contained in $\{a'',a',\ell,r,b',b''\}$.
  By the dual of \cref{pflancoguts}, $\lc^*_{M \ba a,b}(P_i^-,P_i^+)=0$.
  But $C^* \subseteq P_i^- \cup P_i^+$, so, by the dual of \cref{picircuits}, either $C^* \subseteq P_i^-$ or $C^* \subseteq P_i^+$.
  Thus either $\{a'',a',\ell\}$ or $\{r,b',b''\}$ is a triad.
  But $\ell \notin P_1$ and $r \notin P_m$, so this contradicts that $P_1$ and $P_m$ are ends of a nice path description.
  So $\{a',b',c'\}$ is not contained in a $5$-element cocircuit.

  It remains only to show that $M \ba \{a,b,a',b',c'\}$ has at least three non-trivial series classes and
  is $3$-connected up to series classes of size at most three.
  Suppose $S'$ is a series pair of $M \ba \{a,b,a',b',c'\}$.
  Then $S' \cup \{a',b',c'\}$ contains a cocircuit $C^*$ in $M \ba a,b$.
  By \cref{ndtgoodtrip}, either $C^*=5$ and $\{a',b',c'\} \subseteq C^*$, or $C^*=4$ with $c' \in C^*$ and $C^* \cap \{a',b'\} = 1$, or $|C^*|=3$ and $|C^* \cap \{a',b',c'\}| = 1$.
  But by the foregoing, and \ref{ndtgts2}, only the latter is possible; that is, every element in a non-trivial series class of $M \ba \{a,b,a',b',c'\}$ is in a triad of $M \ba a,b$ that contains one of $a'$, $b'$, or $c'$.
  Let $S_a$, $S_b$ and $S_c$ be the set of elements in $M\ba a,b$ that are in a triad with $a'$, $b'$, and $c'$, respectively.
  Then, each of $S_a$, $S_b$ and $S_c$ is contained in a series class of $M \ba \{a,b,a',b',c'\}$, and each element in a non-trivial series class of $M \ba \{a,b,a',b',c'\}$ is in $S_a \cup S_b \cup S_c$.

  Observe that $a'$, $b'$, and $c'$ are each in at least one triad,
  so the sets $S_a$, $S_b$, and $S_c$ are non-empty.
  We claim that these three sets have size at most three, and are pairwise disjoint.
  Suppose $P_1$ is a cosegment, so $P_1-a' \subseteq S_a$.
  Since $a'$ is in a circuit contained in $P_1 \cup p_2$ for any $p_2 \in P_2$, any triad containing $a'$ is either contained in $P_1$, or is an internal triad containing a guts element $p_2 \in P_2$, by orthogonality.
  But $a'$ is not in an internal triad, so $S_a = P_1-a'$ when $P_1$ is a cosegment.
  Similarly, $S_b = P_m-b'$.
  Now suppose $P_1$ is a $5$-element fan.
  We may assume that $(f_1,f_2,f_3,a',f_5)$ is a fan ordering of $P_1$, where $\{f_2,f_3,a'\}$ is a triangle.
  Suppose $a'$ is in a triad that also contains some $z \notin P_1$.
  Then, by orthogonality, this triad is $\{a',z,f_2\}$.  But then $(M \ba a,b)^*|(P_1 \cup z) \cong M(K_4)$, contradicting \cref{noMK4}.
  So $S_a = \{f_3,f_5\} \subseteq P_1$ when $P_1$ is a $5$-element fan.
  Now $|S_a| \le 3$ and $|S_b| = 2$.
  By \ref{ndtgts2}, $|S_c| = 2$ and $S_c \cap (P_1 \cap P_m) = \emptyset$, so the sets $S_a$, $S_b$, and $S_c$ are pairwise disjoint.
  It remains to show that the series classes of $M \ba \{a,b,a',b',c'\}$ containing $S_a$, $S_b$, and $S_c$ are distinct.

  We first show that $c' \notin \cocl_{M \ba a,b,a',b'}(S_a \cup S_b)$.
  Suppose that $c' \in \cocl_{M \ba a,b,a',b'}(S_a \cup S_b)$.
  Then $c'$ is in a cocircuit~$D_1$ of $M \ba a,b$ contained in $S_a \cup S_b \cup \{a',b',c'\}$.
  Note that $r^*_{M \ba a,b}(S_a \cup S_b \cup \{a',b'\}) \le 4$, so $|D_1| \le 5$.
  If $D_1$ contains at most two elements in $S_a \cup S_b$, then $S'=D_1-\{a',b',c'\}$ is a series pair in $M \ba \{a,b,a',b',c'\}$, in which case $S' \cup c'$ is a triad.
  But then $S' \subseteq S_c$, a contradiction.
  So $|D_1 \cap (S_a \cup S_b)| \in \{3,4\}$. 
  Since $S_a \cup a' \subseteq P_1$ and $S_b \cup b' \subseteq P_m$, and $c'$ is a guts element, 
  $D_1 \cap P_m \neq \emptyset$.
  By orthogonality, $|D_1 \cap P_m| \neq 1$, so $|D_1 \cap P_m| = 2$.
  We claim that there is a cocircuit $D_2$ with $|D_2| \in \{4,5\}$ and $\{c',b'\} \subseteq D_2 \subseteq P_1 \cup \{a',c',b'\} \cup P_m$.
  If $b' \in D_1$, then we can just let $D_2 = D_1$; so suppose that $b' \notin D_1$.
  Let $s_b \in S_b$.
  By cocircuit elimination, there is a cocircuit $D_2$ contained in $(D_1 \cup b') - s_b$.
  By \cref{localcoconn}, this cocircuit contains $c'$.
  Thus, arguing as for $D_1$, we have that $|D_2 \cap P_m| \ge 2$.
  As $c'$ is a guts element, $D_2 \cap P_1 \neq \emptyset$.
  Now $D_2$ has the claimed properties; in particular, $|D_2| \in \{4,5\}$.
  By \ref{ndtgts1}, $|D_2| = 5$.
  Then $a' \notin D_2$, since no $5$-element cocircuit contains $\{a',c',b'\}$.
  Let $s_a \in S_a$.
  By cocircuit elimination, there is a cocircuit $D_3$ contained in $(D_2 - a') \cup s_a$.
  Arguing as before, $|D_3 \cap P_1|=|D_3 \cap P_m| \ge 2$, so $D_3 = D_2 \triangle \{a',s_1\}$.
  Thus $\{a',b',c'\}$ is contained in a $5$-element cocircuit, a contradiction.
  This proves that $c' \notin \cocl_{M \ba a,b,a',b'}(S_a \cup S_b)$.

  By \cref{localcoconn}, $r^*_{M \ba a,b}(S_a \cup S_b \cup \{a',b'\})=4$, so $r^*_{M \ba a,b,a',b'}(S_a \cup S_b)=2$.
  As $c' \notin \cocl_{M \ba a,b,a',b'}(S_a \cup S_b)$, we have $r^*_{M \ba a,b,a',b',c'}(S_a \cup S_b) = 2$, so the series classes containing $S_a$ and $S_b$ are distinct.

  Now we claim that the series classes containing $S_a$ and $S_c$ are distinct.
  Pick $s_a \in S_a$ and $s_c \in S_c$, and observe that $r^*_{M \ba a,b}(S_a \cup S_c \cup \{a',c'\}) = r^*_{M \ba a,b}(\{a',s_a,c',s_c\})$.
  Suppose $r^*_{M \ba a,b}(S_a \cup S_c \cup \{a',c'\}) \le 3$.
  Then $\{a',s_a,c',s_c\}$ is dependent in $(M \ba a,b)^*$.
  But $\{a',s_a,c',s_c\}$ does not contain a triad of $M \ba a,b$, so $\{a',s_a,c',s_c\}$ is a cocircuit, contradicting \ref{ndtgts1}.
  Thus $r^*_{M \ba a,b}(S_a \cup S_c \cup \{a',c'\}) = 4$.
Suppose $b' \in \cocl_{M \ba a,b,a',c'}(S_a \cup S_c)$.
  Then there is a cocircuit contained in $S_a \cup S_c \cup \{a',b',c'\}$ and containing $b'$.
  But this cocircuit intersects the circuit $P_{m-1} \cup P_m$ in a single element, a contradiction.
  So $b' \notin \cocl_{M \ba a,b,a',c'}(S_a \cup S_c)$.
  Now $r^*_{M \ba a,b,a',b',c'}(S_a \cup S_c) = 2$, so the series classes of $M \ba \{a,b,a',b',c'\}$ containing $S_a$ and $S_c$ are distinct.

  By a similar argument, the series classes containing $S_c$ and $S_b$ are distinct.  This completes the proof.
\end{subproof}

Next, we argue that each guts set of $\mathbf{P}=(P_1,P_2,\dotsc,P_m)$ has size one, except perhaps $P_2$ when $P_1$ is a $4$-cosegment.

\begin{claim}
  \label{nodoubleguts}
  Let $P_i$ be a guts set for some $i \in \{2, \dotsc, m-1\}$ such that if $P_1$ is a $4$-cosegment then $i \neq 2$. 
  Then $|P_i| = 1$.
\end{claim}
\begin{subproof}
  Recall that $|P_{m-1}| = 1$ and if $P_1$ is not a $4$-cosegment then $|P_2| = 1$, so \cref{nodoubleguts} holds when $i = m-1$ or $i=2$. 
  So we may assume that $3 \le i < m-2$.
  Let $a'$ and $b'$ be as given in \cref{ndtendcandidates}.
  Let $P_i = \{c',c''\}$ such that each triad containing $c'$ is disjoint from $P_1$, where such a $c' \in P_i$ exists by \cref{ndttriads}\ref{ndttriadsii}.
  Towards an application of \cref{ndtgoodtripsupplement}, it remains to show that there is a unique triad containing $c'$, which avoids $P_1 \cup P_m$, and neither $\{a',c'\}$ nor $\{c',b'\}$ is contained in a $4$-element cocircuit of $M \ba a,b$.

  Suppose $T_1^*$ and $T_2^*$ are distinct triads containing $c'$.
  Then, by \cref{ndttriadseq,ndtdisjtriads}, $T_1^* = \{\ell_1,c',r_1\}$ and $T_2^* = \{\ell_2,c',r_2\}$ for distinct $\ell_1,\ell_2 \in P_i^-$ and distinct $r_1,r_2 \in P_i^+$.
  By cocircuit elimination, there is a cocircuit contained in $\{\ell_1,\ell_2,r_1,r_2\}$, which has size at least~$3$, since $M \ba a,b$ is $3$-connected.  But this contradicts the dual of \cref{picircuits}.
  We deduce that there is a unique triad containing $c'$.

  Suppose $\{a',c'\}$ is contained in a $4$-element cocircuit.
  Then this cocircuit is $\{a',a'',c',r'\}$ where $a'' \in P_1 - a'$ and $r' \in P_i^+$, by \cref{ndtgoodtrip}\ref{ndtgt3}. 
  Let $\{\ell,c',r\}$ be the internal triad containing $c'$, with $\ell \in P_i^- - P_1$ and $r \in P_i^+ - P_m$.
  Then, by cocircuit elimination, there is a cocircuit~$C^*$ contained in $\{a',a'',\ell,r,r'\}$.
  By the dual of \cref{pflancoguts}, $\lc^*_{M \ba a,b}(P_i^-,P_i^+)=0$, so, by the dual of \cref{picircuits}, either $C^* \subseteq P_i^-$ or $C^* \subseteq P_i^+$.
  Thus $\{a'',a',\ell\}$ is a triad.
  But then $P_1 \cup \ell$ is a $4$-cosegment, contradicting that $P_1$ is an end of a nice path description.
  By a symmetric argument, $\{c',b'\}$ is not contained in a $4$-element cocircuit.

  Now, by \cref{ndtgoodtripsupplement}, $M$ has a delete triple, a contradiction.
  This proves \cref{nodoubleguts}.
\end{subproof}

\begin{claim}
  \label{onedoublecoguts}
  At most one coguts set of $\mathbf{P}$ has size more than one, and if a coguts set of size more than one exists, then it has size two.
  Moreover,
  \begin{enumerate}
    \item if $P_1$ is a $4$-cosegment, then $|P_2|=|P_3|$ and $M \ba a,b$ has no triangles; and\label{odcg1}
    \item if $M \ba a,b$ has a triangle, then $|P_i|=1$ for each $i \in \{2,3,\dotsc,m-1\}$.\label{odcg2}
  \end{enumerate}
\end{claim}
\begin{subproof}
%
  Suppose $P_1$ is not a $4$-cosegment.
  By \cref{nodoubleguts}, every guts set has size one.
  By \cref{ndtcogutsupper}, $|Q| \le |G|$.
  So either every coguts set has size one, in which case $|Q| =|G|-1$,
  or all but one coguts set has size one, and this coguts set has size two, in which case $|Q| =|G|$.
  If $M \ba a,b$ has a triangle, 
  then $|Q| \le|G|-1$ by \cref{ndtcogutsupper}, so every coguts set
  also
  has size one.

  Now suppose $P_1$ is a $4$-cosegment.
  Then $|Q| \le|G|-1$, by \cref{ndtcogutsupper}.
  Again by \cref{nodoubleguts}, every guts set except perhaps $P_2$ has size one.
  Thus, if $|P_2| = 1$, then every coguts set has size one and $|Q| = |G|-1$; in particular, $|P_2|=|P_3|=1$ and $M \ba a,b$ has no triangles, the latter by \cref{ndtcogutsupper}.
  On the other hand, if $|P_2| = 2$, then at most one coguts set has size two; by \cref{ndttriadseq,ndtdisjtriads} and since $\mathbf{P}$ is left-justified, 
  we have $|P_3|=2$,
  so again $|Q| = |G|-1$, and $M \ba a,b$ has no triangles by \cref{ndtcogutsupper}.
\end{subproof}

By \cref{onedoublecoguts}, there is at most one coguts set with size two.
If such a coguts set exists,
let $j \in \{3,5,\dotsc,m-2\}$ such that $|P_j|=2$; otherwise, let $j=0$.
We work towards applying \cref{ndtgoodtripsupplement}, first when $P_1$ is a triad, and then when it is not.  First we prove one more claim that holds in either case.

\begin{claim}
  \label{ndtinternaltris}
  Let $p_k \in P_k$ be a guts element, for some $k \in \{2,4,\dotsc,m-1\}$, and let $T^*$ be an internal triad containing $p_k$.
  Then $T^* = \{\ell_k,p_k,r_k\}$ for some $\ell_k \in P_k^-$ and $r_k \in P_{k+1}$.
  Moreover, if
  \begin{enumerate}[label=\rm(\Roman*)]
    \item $P_1$ is a triad, and $j=0$ or $k \le j+1$;\label{ndtit1}
    \item $P_1$ is a $4$-cosegment, $|P_2|=|P_3|=2$, and $k \in \{2,4\}$; or\label{ndtit2}
    \item $P_1$ is a $5$-element fan;\label{ndtit3}
  \end{enumerate}
  then $\ell_k \in P_{k-1}$.
\end{claim}
\begin{subproof}
  By \cref{ndttriadseq}, we may assume that $T^* = \{\ell_k,p_k,r_k\}$ for some $\ell_k \in P_k^-$ and $r_k \in P_k^+$.
  Since $\mathbf{P}$ is left-justified, either $r_k \in P_{k+1}$ or $r_k \in P_m$.
  If $k < m-1$, then $r_k \notin P_m$ by \cref{ndttriads}.
  Thus $r_k \in P_{k+1}$ for each even $k$.
  
  We first consider when \ref{ndtit3} holds.
  Let $P_1$ be a $5$-element fan $(f_1,f_2,f_3,f_4,f_5)$ and let $\{\ell_{i},p_{i},r_{i}\}$ is an internal triad for each guts element~$p_{i}$.
  Suppose $\ell_t \in P_1$ for some even $t \ge 4$.
  By orthogonality and \cref{ndtdisjtriads}, we may assume that $f_1 = \ell_2$ and $f_5 = \ell_t$.
  By \cref{onedoublecoguts}\ref{odcg2}, we have $P_{i} = \{p_{i}\}$ for each $i \in \{2,3,\dotsc,m-1\}$.
  Observe that $r^*_{M \ba a,b}(P_1 \cup p_2) = 4$ and $r^*_{M \ba a,b}(P_2^+) = r(M \ba a,b)-2$.
  Now $f_5 \in \cocl_{M \ba a,b}(P_2^+)$, so $\cocl_{M \ba a,b}(P_2^+ \cup \{f_4,f_5\})$ is contained in a cohyperplane.  As $f_3 \in \cocl_{M \ba a,b}(P_2^+ \cup \{f_4,f_5\})$, the set $\{f_1,f_2,p_2\}$ is a triangle.
  But then $P_1 \cup p_2$ is a $6$-element fan, contradicting that $P_1$ is an end of a nice path description.

  Now suppose \ref{ndtit1} or \ref{ndtit2} holds.  It remains to show that $\ell_k \in P_{k-1}$.
  This is clear if $k = 2$.
  Suppose $P_1$ is a $4$-cosegment, $|P_2|=2$, and $k=4$.
  By the dual of \cref{pflancoguts}, $\lc^*_{M \ba a,b}(P_1,P_2^+) = 0$.
  Thus, by the dual of \cref{picircuits}, $\ell_4 \in P_2^+$, so $\ell_4 \in P_3$.
  Now we may assume that $P_1$ is a triad and $j=0$ or $k \le j+1$.
  Let $i$ be even, with $2 < i \le k$, and suppose for all $i'$ such that $2 \le i' < i$, we have $\ell_{i'} \in P_{i'-1}$.
  Now $\ell_{i} \notin P_1$, by \cref{ndttriads}\ref{ndttriadsi}.
  Observe, for each even $i'$ with $2 < i' < i$, we have $i' \le j-1$, since $i' < i \le k \le j+1$ where $i'$ and $k$ are even.
  So, for such an $i'$, we have $P_{i'-1}=\{\ell_{i'}\}$, and thus $\ell_{i} \notin P_{i'-1}$, by \cref{ndtdisjtriads}.
  By orthogonality, $\ell_{i}$ is not a guts element, so $\ell_{i} \in P_{i-1}$.
  The claim follows by induction.
\end{subproof}

\begin{claim}
  \label{ndtendgame}
  Suppose $P_1$ is a triad. 
  Then \labelcref{ndtendgameoutcomes16}\labelcref{ndtendgameoutcome16a} holds.
\end{claim}
\begin{subproof}
  Recall that $\mathbf{P}=(P_1,P_2,\dotsc,P_m)$ is a nice path description of $M \ba a,b$ with $m$ odd, where $P_2$ and $P_{m-1}$ are guts sets,
  and $|P_i|=1$ for every $i \in \{2,3,\dotsc,m-1\} - j$.
  Let $P_i = \{p_i\}$ for all $i \in \{2,3,\dotsc,m-1\} - j$ and, if $j \neq 0$, let $P_j = \{p_j,p_j'\}$.

  Recall also that $M \ba a,b$ has at most one triangle.
  If $M \ba a,b$ has a triangle, then, by \cref{onedoublecoguts}\ref{odcg2}, $j=0$, and, up to replacing $(P_1,P_2,\dotsc,P_m)$ with its reversal, $\{p_{m-3},p_{m-2},p_{m-1}\}$ is not a triangle.  
  So we may assume that $\{p_{m-3},p_{m-2},p_{m-1}\}$ is independent.
  Since $13 \le |E(M \ba a,b)| \le m+5$, and $m$ is odd, we have $m \ge 9$.
  We distinguish the following cases:
  \begin{enumerate}[label=\rm(\Roman*)]
    \item $m=9$, and $j=5$.\label{ndteg1}
    \item $m \ge 11$ and $j=5$.\label{ndteg2}
    \item $m=9$ and $j=7$.\label{ndteg3}
    \item None of \ref{ndteg1} to \ref{ndteg3} holds; that is, $j \notin \{5,7\}$, or $m \ge 11$ and $j =7$.\label{ndteg4}
  \end{enumerate}
  Note that $M \ba a,b$ has no triangles in cases~\ref{ndteg1} to \ref{ndteg3}; in the case that $M \ba a,b$ has a triangle, case~\ref{ndteg4} holds.

  We first handle cases~\ref{ndteg2} to \ref{ndteg4}, before returning to case~\ref{ndteg1}.
  Let $a'$ and $b'$ be as given in \cref{ndtendcandidates}.
  In cases~\ref{ndteg2} and \ref{ndteg3} we let $c' = p_6$; in case~\ref{ndteg4} we let $c'=p_4$; whereas in case~\ref{ndteg1}, $c' \in \{p_4,p_6\}$ as appropriate.
  Choose $k \in \{4,6\}$ so that $c' = p_k$.
  We work towards an application of \cref{ndtgoodtripsupplement} with the elements $a',b',c'$; it remains to show that neither $\{a',c'\}$ nor $\{c',b'\}$ is contained in a $4$-element cocircuit of $M \ba a,b$, and there is a unique triad containing $c'$, which avoids $P_1 \cup P_m$.

  \smallskip

  Suppose case~\ref{ndteg3} holds, so $m=9$, $j=7$, and $k=6$.
  Note that, by \cref{ndtdisjtriads,ndtinternaltris}, $\{p_5,c',p_7\}$ is the unique triad containing $c'$, up to swapping the labels on $p_7$ and $p_7'$.
  As $M \ba a,b$ has no triangles, $\{p_2,p_3,p_4\}$ is independent, so $\lc^*_{M \ba a,b}(P_1,P_4^+) = 0$ by the dual of \cref{pflantriad}.
  By \cref{ndtgoodtrip}\ref{ndtgt3}, if there is a $4$-element cocircuit containing $\{a',c'\}$, then it avoids $\{p_2,p_3,p_4\}$; hence, by the dual of \cref{picircuits}, no such cocircuit exists.
  Suppose $\{c',b'\}$ is contained in a $4$-element cocircuit.
  By \cref{ndtgoodtrip}\ref{ndtgt4} and orthogonality, this cocircuit is $\{\ell,c',b',b''\}$ for some $b'' \in P_m -b'$ and $\ell \in \{p_3,p_5\}$.
  If $\ell = p_3$, then, by cocircuit elimination with the triad $\{p_3,p_4,p_5\}$ there is also a cocircuit contained in $\{p_4,p_5,c',b',b''\}$, which (again by orthogonality) does not contain $p_4$.
  So we may assume that $\ell=p_5$.
  Recall that $\{p_5,c',p_7\}$ is a triad.
  By cocircuit elimination, $\{c',p_7,b',b''\}$ contains a cocircuit.
  As $c'$ is a guts element, $c' \notin \cocl_{M \ba a,b}(\{p_7,b',b''\})$.
  Thus $\{p_7,b',b''\}$ is a triad.
  But $p_7 \notin \cocl_{M \ba a,b}(P_m)$, since $P_m$ is an end of a nice path description, so this is contradictory.
  So $\{c',b'\}$ is not contained in a $4$-element cocircuit.
  Thus, by \cref{ndtgoodtripsupplement}, $M$ has a delete triple, a contradiction.

  \smallskip

  Now assume we are in case~\ref{ndteg2} or~\ref{ndteg4}.
  Observe that $j=0$ or $k \le j+1$ in either case, so, by \cref{ndtdisjtriads,ndtinternaltris}, there is a unique triad containing $c'$, which we may assume is $\{p_{k-1},c',p_{k+1}\}$, up to switching the labels on $p_{k-1}$ and $p_{k-1}'$ when $j=k-1$.
  We claim that $\{a',c'\}$ is not contained in a $4$-element cocircuit.
  Towards a contradiction, suppose $\{a',c'\}$ is contained in a $4$-element cocircuit~$C^*$.
  Then $C^* = \{a'',a',c',r\}$ with $a'' \in P_1 - a'$ and $r \in P_k^+$, by \cref{ndtgoodtrip}\ref{ndtgt3}.
  Since $\mathbf{P}$ is left-justified, either $r \in P_{k+1}$ or $r \in P_m$.
  But if $r \in P_m$, then $C^*$ intersects the circuit $P_m \cup p_{m-1}$ in a single element, contradicting orthogonality.
  So $r \in P_{k+1}$.
  Note that, in either case, $j \neq k+1$, so $|P_{k+1}|=1$ and $p_{k+1} = r$.
  Recall that $\{p_{k-1},c',p_{k+1}\}$ is a triad.
  By cocircuit elimination with $C^*$, there is a cocircuit contained in $\{a'',a',p_{k-1},c'\}$.
  But $c' \notin \cocl_{M \ba a,b}(\{a'',a',p_{k-1}\})$, since $\{a'',a',p_{k-1}\} \subseteq P_k^-$ and $P_k=\{c'\}$ is a guts set, so $\{a'',a',p_{k-1}\}$ is a triad of $M \ba a,b$.
  Since $a'' \in P_1 - a'$,
  the set $P_1 \cup p_{k-1}$ is a $4$-cosegment, contradicting that $P_1$ is an end of a nice path description.
  We deduce that $\{a',c'\}$ is not contained in a $4$-element cocircuit.

  Suppose $\{c',b'\}$ is contained in a $4$-element cocircuit~$C^*$. Then $C^*=\{c'',c',b',b''\}$ for some $b'' \in P_m -b'$ and $c'' \in P_k^-$, by \cref{ndtgoodtrip}\ref{ndtgt4}.
  By the dual of \cref{picircuits}, the existence of $C^*$ implies that $\lc^*_{M \ba a,b}(P_{k+1}^-,P_m) \ge 1$.
  Recall that $\{p_{m-3},p_{m-2},p_{m-1}\}$ is not a triangle, so $\lc^*_{M \ba a,b}(P_{m-3}^-,P_m) =0$, by the dual of \cref{pflantriad}.
  But then, as $k \le m-5$ and by the dual of \cref{growpi}, $\lc^*_{M \ba a,b}(P_{k+1}^-,P_m) \le \lc^*_{M \ba a,b}(P_{m-3}^-,P_m) =0$, a contradiction.
  We deduce that $\{c',b'\}$ is not contained in a $4$-element cocircuit.
  Now, by \cref{ndtgoodtripsupplement}, $M$ has a delete triple, a contradiction.

  \smallskip

  It remains only to consider case~\ref{ndteg1}, where $m=9$, $j=5$ and $M \ba a,b$ has no triangles.
  Let $\mathbf{P}' = (P_1',P_2'\dotsc,P_m')$ be the (left-justified) reversal of $\mathbf{P} = (P_1,P_2,\dotsc,P_m)$, where $P_1' = P_m$ and $P_1 = P_m'$.
  We may assume $|P_5'|=2$, for otherwise case~\ref{ndteg4} applies for $\mathbf{P}'$.
  Thus $\mathbf{P}$ is both left- and right-justified; in particular, $\{p_1,p_2,p_3\}$ and $\{p_7,p_8,p_9\}$ are triads for some $p_1 \in P_1$ and $p_9 \in P_9$.
  Up to swapping the labels on $p_5$ and $p_5'$, we may assume that $\{p_3,p_4,p_5\}$ is a triad, and this is the unique triad containing $p_4$, by \cref{ndtdisjtriads,ndtinternaltris}.
  Let $q \in \{p_5,p_5'\}$ such that $\{q,p_6,p_7\}$ is a triad, and note that this is the unique triad containing $p_6$, by \cref{ndtdisjtriads,ndtinternaltris}.
  Since $\lc^*_{M \ba a,b}(P_1,P_4^+) = 0$ and $\lc^*_{M \ba a,b}(P_6^-,P_9) = 0$, by the dual of \cref{pflantriad}, it follows from \cref{picircuits,ndtgoodtrip} that there are no $4$-element cocircuits containing $\{a',p_6\}$ or $\{p_4,b'\}$.
  Thus, if $\{a',p_4\}$ is not contained in a $4$-element cocircuit, or $\{p_6,b'\}$ is not contained in a $4$-element cocircuit, then we can apply \cref{ndtgoodtripsupplement}, with $c' = p_4$ or $c' = p_6$ respectively, to deduce that $M$ has a contradictory delete triple.
  So we may assume that $\{a',p_4\}$ and $\{p_6,b'\}$ are contained in $4$-element cocircuits.
  Let the former cocircuit be $\{a'',a',p_4,r\}$.
  Then $a'' \in P_1-a'$ and $r \in \{p_5,p_5'\}$, due to the left-justification of $\mathbf{P}$ and \cref{ndtgoodtrip}\ref{ndtgt3}.
  If $r = p_5$, then, by cocircuit elimination with the triad $\{p_3,p_4,p_5\}$, the set $\{a'',a',p_3,p_4\}$ contains a cocircuit.
  Since $p_4$ is a guts element, $\{a'',a',p_3\}$ is a triad, a contradiction.
  So $r = p_5'$.
  Now $\{a'',a',p_4,p_5'\}$ is a cocircuit.
  By cocircuit elimination with the triad $P_1$, the set $\{p_1',p_1'',p_4,p_5'\}$ is a cocircuit for any pair $\{p_1',p_1''\} \subseteq P_1$.
  By a symmetric argument, after letting $P_5 = \{q,q'\}$ such that the internal triad containing $p_6$ is $\{q,p_6,p_7\}$, the set $\{q',p_6,p_9',p_9''\}$ is a cocircuit for any pair $\{p_9',p_9''\} \subseteq P_9$.
  So \labelcref{ndtendgameoutcomes16}\labelcref{ndtendgameoutcome16a} holds, thus completing the proof of \cref{ndtendgame}.
\end{subproof}

Finally, we handle the case where one end of $\mathbf{P}$ is a $4$-cosegment or a $5$-element fan.

\begin{claim}
  \label{ndtendgame2}
  If $P_1$ is not a triad, then \labelcref{ndtendgameoutcomes16}\labelcref{ndtendgameoutcome16b} holds.
\end{claim}
\begin{subproof}
  Suppose $P_1$ is not a triad, so $P_1$ is a $5$-element fan or a $4$-cosegment.
  Recall that $\mathbf{P}=(P_1,P_2,\dotsc,P_m)$ is a nice path description of $M \ba a,b$ with $m$ odd, where $P_2$ and $P_{m-1}$ are guts sets, and $m \ge 7$.
  By \cref{nodoubleguts,onedoublecoguts}, if $P_1$ is a $5$-element fan, then every guts and coguts set has size one; whereas if $P_1$ is a $4$-cosegment, then every guts and coguts set except perhaps $P_2$ and $P_3$ has size one, and $|P_2|=|P_3| \in \{1,2\}$.
  For all $i \in \{2,3,\dotsc,m-1\}$, let $P_i = \{p_i\}$ if $|P_i|=1$, otherwise let $P_i = \{p_i,p_i'\}$.

  We distinguish the following cases:
  \begin{enumerate}[label=\rm(\Roman*)]
    \item $|P_2| = |P_3| = 2$, and $m \ge 9$.\label{ndtegg1}
    \item $|P_2| = |P_3| = 2$, and $m = 7$.\label{ndtegg2}
    \item $|P_2| = |P_3| = 1$ and $P_1$ is a $5$-element fan.\label{ndtegg3}
    \item $|P_2| = |P_3| = 1$, $P_1$ is a $4$-cosegment, and $m \ge 11$.\label{ndtegg4}
    \item $|P_2| = |P_3| = 1$, $P_1$ is a $4$-cosegment, and $m = 9$.\label{ndtegg5}
  \end{enumerate}
  Note that if $P_1$ is a $5$-element fan, then $|P_2| = |P_3| = 1$, by \cref{ndt5eltfan}. 
  So only in case~\ref{ndtegg3} is $P_1$ a $5$-element fan.
  Moreover, by \cref{onedoublecoguts}\ref{odcg1}, if $M \ba a,b$ has a triangle, then it is a triangle of $P_1$, where $P_1$ is a $5$-element fan; so only in case~\ref{ndtegg3} does $M \ba a,b$ have any triangles.
  Observe also that when cases~\ref{ndtegg1} to \ref{ndtegg3} do not hold, then, since $13 \le |E(M \ba a,b)| \le m+5$ and $m$ is odd, we have $m \ge 9$.
  So these five cases are exhaustive.

  Let $a'$ and $b'$ be as given in \cref{ndtendcandidates}.
  In case~\ref{ndtegg1} we let $c'=p_{m-5}$;
  in case~\ref{ndtegg2} and~\ref{ndtegg3} we let $c'=p_4$;
  in case~\ref{ndtegg4} we let $c'=p_6$;
  while in case~\ref{ndtegg5}, $c' \in \{p_4,p_6\}$ as appropriate.
  We work towards an application of \cref{ndtgoodtripsupplement} with the elements $a',b',c'$; it remains to show that neither $\{a',c'\}$ nor $\{c',b'\}$ is contained in a $4$-element cocircuit of $M \ba a,b$, and there is a unique triad containing $c'$, which avoids $P_1 \cup P_m$.

  \smallskip

  Firstly we address cases~\ref{ndtegg1} and~\ref{ndtegg2}.
  Recall that $c' = p_{m-5}$ in case~\ref{ndtegg1}, and $c'=p_4$ in case~\ref{ndtegg2}.
  In either case, $c' \in P_3^+$.
  Since $|P_2|=2$, we have $\lc^*_{M \ba a,b}(P_1,P_2^+)=0$, by the dual of \cref{pflancoguts}.
  By \cref{ndtgoodtrip}\ref{ndtgt3}, any $4$-element cocircuit containing $\{a',c'\}$ avoids $P_2$; so, by the dual of \cref{picircuits}, no such cocircuit exists.
  Note also that, by \cref{ndtdisjtriads,ndtinternaltris}, there is a unique triad $\{\ell_c,c',r_c\}$ containing $c'$ where $\ell_c \in P_{m-5}^-$ and $r_c=p_{m-4}$ in case~\ref{ndtegg1}, and $\ell_c \in P_4^-$ and $r_c=p_5$ in case~\ref{ndtegg2}. 
  In either case, $\ell_c \notin P_1$, by \cref{picircuits}, since $\lc^*_{M \ba a,b}(P_1,P_2^+)=0$.
  Consider case~\ref{ndtegg1}.
  It remains only to show that $\{c',b'\}$ is not contained in a $4$-element cocircuit.
  As $\{p_{m-3},p_{m-2},p_{m-1}\}$ is independent, $\lc^*_{M \ba a,b}(P_{m-3}^-,P_m) = 0$ by the dual of \cref{pflantriad}.
  As $c' \in P_{m-3}^-$ and $b' \in P_m$, and by \cref{ndtgoodtrip}\ref{ndtgt4}, there is no $4$-element cocircuit containing $\{c',b'\}$, and hence $M$ has a delete triple, by \cref{ndtgoodtripsupplement}, a contradiction.

  Now consider case~\ref{ndtegg2}, where $c'=p_4$ and $m=7$.
  We will show that \labelcref{ndtendgameoutcomes16}\labelcref{ndtendgameoutcome16b} holds.
  If $\{c',b'\}$ is not contained in a $4$-element cocircuit, then $M$ has a contradictory delete triple, by the foregoing and \cref{ndtgoodtripsupplement}.
  So let $C^*$ be a $4$-element cocircuit containing $\{c',b'\}$.
  Then $C^* = \{\ell,c',b',b''\}$ with $b'' \in P_m - b'$ and $\ell \in P_{4}^-$, by \cref{ndtgoodtrip}\ref{ndtgt4}.
%
  Recall that there is a triad $\{\ell_c,p_4,p_5\}$ with $\ell_c \in P_4^- -P_1$.
  By orthogonality, $\ell_c \notin P_2$, so we may assume that $\{p_3,p_4,p_5\}$ is a triad, up to swapping $p_3$ and $p'_3$.
  By \cref{ndtinternaltris,ndtdisjtriads,ndtendcandidates}, we may assume that $\{p_1,p_2,p_3\}$ and $\{p_1',p_2',p_3'\}$ are internal triads, where $P_1 = \{a',a'',p_1,p_1'\}$, up to swapping the labels on $p_2$ and $p_2'$.
  Observe that $p_2$ is in a circuit contained in $P_1 \cup p_2$, and $p_2'$ is in a circuit contained in $P_1 \cup p_2'$, where neither of these circuits is a triangle.
  If there is some element of $P_1$ that both of these circuits avoid, then, by circuit elimination, there is a circuit contained in $P_1 \cup P_2$ that avoids two elements of $P_1$, contradicting orthogonality.
  So $P_1 \cup P_2$ is the union of two circuits.
  Now, by orthogonality, $\ell \notin P_1 \cup P_2$.
  So $\ell \in \{p_3,p_3'\}$.
  If $\ell = p_3$, then, by cocircuit elimination with the triad $\{p_3,p_4,p_5\}$, there is a cocircuit contained in $\{p_4,p_5,b',b''\}$.
  But $p_4 \notin \cocl_{M \ba a,b}(\{p_5,b',b''\})$, since $\{p_5,b',b''\} \subseteq P_4^+$ and $p_4$ is a guts element, so $\{p_5,b',b''\}$ is a triad of $M \ba a,b$, in which case $P_m \cup p_5$ is a $4$-cosegment, contradicting that $P_m$ is an end of a nice path description.
  So $\ell = p_3'$ and $\{p_3',p_4,b',b''\}$ is a cocircuit.
  By cocircuit elimination with the triad $P_7$, orthogonality, and the fact that $P_7$ is an end of a nice path description, we deduce $\{p_3',p_4,p_7,p_7'\}$ is a cocircuit for any pair $\{p_7,p_7'\} \subseteq P_7$.
  It remains to show that $\{p_5,p_6,p_7\}$ is a triad. 
  If $\{p_3',p_6,p_7\}$ is a triad, then the corank-$5$ set $P_m \cup \{p_6, p_2',a'\}$ cospans $H^* = E(M \ba a,b) - \{p_2,p_3,p_4\}$, so $H^*$ is contained in a cohyperplane.  But then $\{p_2,p_3,p_4\}$ is a circuit, 
  a contradiction.
  By \cref{ndtdisjtriads} and orthogonality, $\{p_5,p_6,p_7\}$ is a triad.
  So \labelcref{ndtendgameoutcomes16}\labelcref{ndtendgameoutcome16b} holds.

  \smallskip

  Now consider case~\ref{ndtegg3}, where $P_1$ is a $5$-element fan.
  Recall that $c' = p_4$, and the only triangle of $M \ba a,b$ is contained in $P_1$.
  Let $(f_1,f_2,f_3,f_4,f_5)$ be a fan ordering of $P_1$.
  First, observe that $\{p_3,c',p_5\}$ is the unique triad containing $c'$, by \cref{ndtinternaltris}.
  Next we show that $\{a',c'\}$ is not contained in a $4$-element cocircuit.
  Towards a contradiction, let $C^*$ be a $4$-element cocircuit containing $\{a',c'\}$.
  Then $C^* = \{a'',a',c',p_5\}$ with $a'' \in P_1 - a'$, by \cref{ndtgoodtrip}\ref{ndtgt3} and since $\mathbf{P}$ is left-justified.
  By cocircuit elimination with the triad $\{p_3,c',p_5\}$, the set $\{a'',a',p_3,c'\}$ contains a cocircuit.
  By orthogonality, this cocircuit is the triad $\{a'',a',p_3\}$.
  Since $a'$ is a spoke of the fan $P_1$, by \cref{ndtendcandidates}, we may assume that $a' =f_2$.
  Note that $a'' \neq f_3$, for otherwise $\{a'',a',p_3,f_1\}$ is a cosegment and $\{a'',a',f_4\}$ is a triangle, a contradiction.
  So, by orthogonality, $a'' = f_4$.
  Now $(M \ba a,b)^*|(P_1 \cup p_3) \cong M(K_4)$, contradicting \cref{noMK4}.
  We deduce that $\{a',c'\}$ is not contained in a $4$-element cocircuit.

  We next show that $\{c',b'\}$ is not contained in a $4$-element cocircuit.
  Assume that $m \ge 9$.
  Then $c' \in P_{m-3}^-$.
  Since $\{p_{m-3},p_{m-2},p_{m-1}\}$ is independent, $\lc^*_{M \ba a,b}(P_{m-3}^-,P_m) = 0$, by the dual of \cref{pflantriad}.
  Since $b' \in P_m$, it follows, by \cref{ndtgoodtrip}\ref{ndtgt4} and the dual of \cref{picircuits}, that $\{c',b'\}$ is not contained in a $4$-element cocircuit.
  Now assume that $m=7$ and suppose that $\{c',b'\}$ is contained in a $4$-element cocircuit.
  Then this cocircuit is $\{\ell,c',b',b''\}$ with $b'' \in P_m - b'$ and $\ell \in P_4^-$, by \cref{ndtgoodtrip}\ref{ndtgt4}.
  Recall that $\{p_3,c',p_5\}$ is a triad.
  If $\ell = p_3$, then, by cocircuit elimination and orthogonality, $\{p_5,b',b''\}$ is a triad, so $P_m$ is not coclosed, a contradiction.
  So $\ell \neq p_3$.
  Without loss of generality,
  $\{f_1,p_2,p_3\}$ is a triad.
  By \cref{ndtendcandidates}, $a' \in \{f_2,f_4\}$.
  Now, by orthogonality, \cref{ndtdisjtriads}, and the foregoing, $\ell =f_5$.
  Then $f_5 \in \cocl_{M \ba a,b}(P_2^+)$, where $r^*_{M \ba a,b}(P_2^+) = r^*(M \ba a,b)-2$.
  Thus, $\cocl_{M \ba a,b}(P_2^+ \cup \{f_4,f_5\})$ is contained in a cohyperplane.  As $f_3 \in \cocl_{M \ba a,b}(P_2^+ \cup \{f_4,f_5\})$, the set $\{f_1,f_2,p_2\}$ is a triangle, a contradiction.
  So $\{c',b'\}$ is not contained in a $4$-element cocircuit.
  By \cref{ndtgoodtripsupplement}, $M$ has a delete triple, a contradiction.

  \smallskip

  Next we assume that case~\ref{ndtegg4} or \ref{ndtegg5} holds, so $m \ge 9$.
  In case~\ref{ndtegg5}, for now let $c' = p_6$.
  Since $\{p_2,p_3,p_4\}$ is independent, $\lc^*_{M \ba a,b}(P_1,P_{4}^+) =0$ by the dual of \cref{pflantriad}.
  Observe that, by \cref{ndtdisjtriads,ndtinternaltris}, there is a unique triad containing $p_6$, which also contains $p_7$, and avoids $P_1$, by the dual of \cref{picircuits}.
  As $a' \in P_1$ and $c' \in P_4^+$, the dual of \cref{picircuits} and \cref{ndtgoodtrip}\ref{ndtgt3} imply that $\{a',c'\}$ is not contained in a $4$-element cocircuit.
  We first consider case~\ref{ndtegg4}; it remains to show that $\{c',b'\}$ is not contained in a $4$-element cocircuit.
  Since $\{p_{m-3},p_{m-2},p_{m-1}\}$ is independent, $\lc^*_{M \ba a,b}(P_{m-3}^-,P_{m}) =0$ by the dual of \cref{pflantriad}, with $b' \in P_m$.
  As $c' \in P_{m-3}^-$, we have that $\{c',b'\}$ is not contained in a $4$-element cocircuit, by \cref{ndtgoodtrip}\ref{ndtgt4} and the dual of \cref{picircuits}.
  By \cref{ndtgoodtripsupplement}, $M$ has a delete triple, a contradiction.

  Now consider case~\ref{ndtegg5}, where $m=9$.
  We may assume that $\{p_6,b'\}$ is in a $4$-element cocircuit, for otherwise, by the foregoing, we can apply \cref{ndtgoodtripsupplement} with $c' = p_6$ to obtain a contradictory delete triple.
  By the dual of \cref{picircuits} and orthogonality, $C_1^* = \{\ell,p_6,b',b''\}$ is a cocircuit for $\ell \in \{p_3,p_5\}$ and $b'' \in P_m-b'$.
  Observe also that, by \cref{ndtdisjtriads,ndtinternaltris}, there is a unique triad containing $p_4$, which also contains $p_5$.
  It follows that this triad is either $\{p_3,p_4,p_5\}$, or $\{p_1,p_4,p_5\}$ for some $p_1 \in P_1-a'$. 

  Suppose $\{p_3,p_4,p_5\}$ is a triad.
  As $\lc^*_{M \ba a,b}(P_{6}^-,P_{m}) = 0$, by the dual of \cref{pflantriad}, with $p_4 \in P_{6}^-$ and $b' \in P_m$, the pair $\{p_4,b'\}$ is not contained in a $4$-element cocircuit, by \cref{ndtgoodtrip}\ref{ndtgt4} and the dual of \cref{picircuits}.
  So we may assume that $\{a',p_4\}$ is in a $4$-element cocircuit, for otherwise we can apply \cref{ndtgoodtripsupplement} with $c'=p_4$.
  Let $C_2^*$ be the $4$-element cocircuit containing $\{a',p_4\}$.
  Then, by \cref{ndtgoodtrip} and since $\mathbf{P}$ is left-justified, $C_2^*=\{a'',a',p_4,p_5\}$ for $a'' \in P_1 - a'$.
  By cocircuit elimination with $\{p_3,p_4,p_5\}$, there is a cocircuit contained in $\{a'',a',p_3,p_4\}$.
  But then, since $p_4$ is a guts element, $\{a'',a',p_3\}$ is a triad, contradicting that $P_1$ is an end of a nice path description.

  So we may assume that $\{p_1,p_4,p_5\}$ is a triad, where $P_1 = \{a',a'',p_1,p_1'\}$. 
  By \cref{ndtdisjtriads} and the left-justification of $\mathbf{P}$, the internal triad containing $p_2$ is, without loss of generality, $\{p_1',p_2,p_3\}$.
  Now $\{p_1',p_2,p_3\}$ and $\{p_1,p_4,p_5\}$ are triads, and $C_1^*=\{\ell,p_6,b',b''\}$ is a cocircuit.
  Since $\mathbf{P}$ is left-justified, $\{p_3,p_4\}$ is contained in a circuit which is, in turn, contained in $P_3^- \cup \{p_3,p_4\}$. 
  Hence, by orthogonality, $\ell = p_5$, so $C_1^*=\{p_5,p_6,b',b''\}$.
  If $\{p_5,p_6,p_7\}$ is a triad, then, by cocircuit elimination, there is a cocircuit contained in $\{p_6,p_7,b',b''\}$.
  Then, by orthogonality, $\{p_7,b',b''\}$ is a triad, so $P_m$ is not an end of a nice path description, a contradiction.
  So there is an internal triad $\{\ell',p_6,p_7\}$ with $\ell' \in P_5^-$.
  Since $\{p_2,p_3,p_4\}$ is independent, the dual of \cref{picircuits,pflantriad}, and orthogonality, imply that $\ell' =p_3$, so $\{p_3,p_6,p_7\}$ is a triad.
  But this contradicts orthogonality, since $p_3$ is in a circuit contained in $P_5^-$.
\end{subproof}

The \lcnamecref{nodeltripleprop} now follows from \cref{ndtendgame,ndtendgame2}.
\end{proof}

\begin{corollary}
  \label{nodeltriplematrices}
  Suppose there is a pair $\{a,b\} \subseteq E(M)$ such that $M \ba a,b$ is $3$-connected with a $\utfutf$-minor,
  $M$ has no delete triples, and $|E(M)| \ge 16$.
  Then $M \ba a,b \cong M[I|A_i]$, for some $i \in \{1,2,3\}$, where each $A_i$ is a $\mathbb{U}_2$-matrix as follows:

$$A_1= \kbordermatrix{
      & a' &  p_1  & p_5 & p_5' &    p_9 & b' \\
 p_1' &  1 & \alpha_1 &   0 &    0 &      0 &  0 \\
 p_2  &  1 & \alpha_1 &   1 &    1 &      0 &  0 \\
 p_3  &  1 &     1 &   1 &    1 &      0 &  0 \\
 p_4  &  1 &     1 &   0 &    1 &      0 &  0 \\
 p_6  &  0 &     0 &   0 &    1 &      1 &  1 \\
 p_7  &  0 &    0  &   1 &    1 &      1 &  1 \\
 p_8  &  0 &     0 &   1 &    1 & \alpha_2 &  1 \\
 p_9' &  0 &     0 &   0 &    0 & \alpha_2 &  1 \\
},$$

$$A_2= \kbordermatrix{
      & a' &  p_1  & p_5 & p_5' &    p_9 & b' \\
 p_1' &  1 & \alpha_1 &   0 &    0 &      0 &  0 \\
 p_2  &  1 & \alpha_1 &   1 &    1 &      0 &  0 \\
 p_3  &  1 &     1 &   1 &    1 &      0 &  0 \\
 p_4  &  1 &     1 &   0 &    1 &      0 &  0 \\
 p_6  &  0 &     0 &   1 &    0 &      1 &  1 \\
 p_7  &  0 &    0  &   1 &    1 &      1 &  1 \\
 p_8  &  0 &     0 &   1 &    1 & \alpha_2 &  1 \\
 p_9' &  0 &     0 &   0 &    0 & \alpha_2 &  1 \\
},$$

$$A_3= \kbordermatrix{
     & p_1'' &     p_2 & p_2' & p_3    &     p_7 &    b' \\
 a'  &     1 &  \alpha_1 &  0   & \alpha_1 &       0 &     0 \\
p_1  &     0 &       1 &  0   &  1     &       0 &     0 \\
p_1' &     1 &      1  &  0   & 1      &       0 &     0 \\
p_3' &     1 &      1  &  1   & 1      &       0 &     0 \\
p_4  &     1 &       1 &  1   & 1      & \alpha_2-1 &     1 \\
p_5  &     1 &       1 &  1   & 0      & \alpha_2-1 &     1 \\
p_6  &     1 &       1 &  1   & 0      &   \alpha_2 &     1 \\
p_7' &     0 &       0 &  0   & 0      &   \alpha_2 &     1 \\
 }.$$
\end{corollary}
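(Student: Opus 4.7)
The plan is to exploit the very detailed structural information supplied by \cref{nodeltripleprop} to determine $M\ba a,b$ up to isomorphism and then write down explicit $\mathbb{U}_2$-matrix representations. Since $|E(M)|\ge 16$, \cref{nodeltripleprop} forces $|E(M)|=16$ and places us in one of the two cases \ref{ndtendgameoutcome16a} or \ref{ndtendgameoutcome16b}, each of which specifies the nice path description of $M\ba a,b$, all its triads meeting the internal structure, and the additional $4$- and $5$-element cocircuits.

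In each case I would fix a basis $B$ of $M\ba a,b$ compatible with the path structure: concretely, take $B$ to consist of the coguts elements of the path together with two suitably chosen elements from each end, so that the listed triads each have the form $\{p,q,r\}$ with two elements in $B$ and one outside (for case \ref{ndtendgameoutcome16a}) or match the row labellings of $A_3$ (for case \ref{ndtendgameoutcome16b}). Taking $A$ to be a companion $\mathbb{P}$-matrix on this basis, the triad structure forces the zero-pattern of $A$, since for each listed triad $\{p,q,r\}$ with $q\in B$ and $p,r\in B^{\ast}$ that is a fundamental cocircuit, the row of $q$ in $A$ is supported on $\{p,r\}$; rows whose fundamental cocircuits are the longer listed cocircuits are supported exactly on the elements listed there. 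The non-zero entries are then pinned down up to row and column scaling by propagating the linear dependences imposed by the $4$- and $5$-element cocircuits (e.g.\ $\{a',p_1,p_4,p_5'\}$ and $\{q',p_6,p_9,b'\}$ in case \ref{ndtendgameoutcome16a}), together with the constraint, from \cref{u2stabs} and \cref{3reglemma}, that restricting $A$ to any $U_{2,5}$-minor yields a $\mathbb{U}_2$-representation of $U_{2,5}$. After normalization, the only residual freedom is the labelling choice $\{q,q'\}=\{p_5,p_5'\}$ in case \ref{ndtendgameoutcome16a}, producing the two matrices $A_1$ and $A_2$, while case \ref{ndtendgameoutcome16b} yields $A_3$ directly. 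One then verifies by inspection that each $A_i$ is a $\mathbb{U}_2$-matrix and that $M[I|A_i]$ realises the nice path description, triads and cocircuits required.

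The main obstacle will be the bookkeeping required to chase the cocircuit relations through the $8\times 6$ matrix: each listed cocircuit yields a single linear relation among the corresponding columns, and one must chain several such relations (using scaling at each step) to arrive at the specific fundamental elements of $\mathbb{U}_2$ (i.e.\ $\pm 1,\pm\alpha_i,\pm(1-\alpha_i),\pm(\alpha_1-\alpha_2)$ and their inverses) appearing in $A_1,A_2,A_3$. A subsidiary point is to confirm that no further cocircuits (beyond those listed in \cref{nodeltripleprop}\labelcref{ndtendgameoutcomes16}) are needed, which will follow from the path-width-three structure and \cref{picircuits,pflancoguts,pflantriad} applied to the left-justified guts-coguts refinement of the path description.
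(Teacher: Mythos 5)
The proposal captures the right high-level plan — apply \cref{nodeltripleprop}, pin down $M\ba a,b$ from the listed structure, and write down explicit $\mathbb{U}_2$-representations — and that is the same approach as the paper. However, there is a gap in the execution.

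Your subsidiary claim that ``no further cocircuits (beyond those listed in \cref{nodeltripleprop}\labelcref{ndtendgameoutcomes16}) are needed'' is not correct, and it is exactly where the real work lies. The data in \cref{nodeltripleprop}\labelcref{ndtendgameoutcomes16} (triads, two long cocircuits, path description, no triangles) does not by itself determine the matroid nor the matrix entries, and the extra linear dependences you need cannot be extracted just from the path-width-three machinery (\cref{picircuits,pflancoguts,pflantriad}) together with $\mathbb{U}_2$-representability. The paper derives further circuits and cocircuits using the fact that $M\ba a,b$ is $\utfutf$-\emph{fragile}: in case~\ref{ndtendgameoutcomes16}\ref{ndtendgameoutcome16a} one shows $\{a',p_1',p_1,p_2\}$, $\{p_8,p_9,p_9',b'\}$, $\{a',p_1,p_3,p_4\}$, $\{p_6,p_7,p_9,b'\}$, $\{p_4,p_5,p_5',p_6\}$ are circuits and $\{p_1',p_2,p_8,p_9'\}$ is a cocircuit (and analogous extra circuits/cocircuits in case~\ref{ndtendgameoutcome16b}). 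These fragility-derived dependences are what actually force the non-zero entries of the companion matrix; representability constraints on $U_{2,5}$-minors (via \cref{u2stabs}) restrict cross-ratios but do not by themselves tell you which four-element sets are circuits. Without first nailing those down, the ``propagating the linear dependences'' step of your proof has nothing to propagate in the middle of the matrix.

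A second, more minor point: the paper is careful to note that even the hand argument is not fully spelled out, and that the determination and the explicit matrices $A_1,A_2,A_3$ were independently verified by a computer enumeration of $3$-connected $\mathbb{P}$-representable matroids on $14$ elements (using the machinery of \cite{BP20}). If you want a self-contained by-hand proof you would need to carry out the fragility arguments and then verify that the resulting constraints have a unique solution up to the allowed relabelling $\{q,q'\}=\{p_5,p_5'\}$ — which is genuinely more bookkeeping than your sketch suggests.
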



\begin{proof}
  We apply \cref{nodeltripleprop} and observe that, since $|E(M)| \ge 16$, \cref{nodeltripleprop}\ref{ndtendgameoutcomes16} holds.
  It remains to find the matroids satisfying \ref{ndtendgameoutcomes16}\ref{ndtendgameoutcome16a} or \ref{ndtendgameoutcomes16}\ref{ndtendgameoutcome16b}, and $\mathbb{P}$-representations for these matroids. 

  These can be found by hand; here we do not give all the details, but observe a few key points.
  Let $M' = M \ba a,b$.
  Observe that both ends of the nice path descriptions for $M'$ are cosegments, in either case~\ref{ndtendgameoutcomes16}\ref{ndtendgameoutcome16a} or case~\ref{ndtendgameoutcomes16}\ref{ndtendgameoutcome16b}.
  By \cref{pathdescends}\ref{pde2}, each of these ends has a unique $\utfutf$-deletable element. 
  Up to labels we assume that $a'$ is the unique $\utfutf$-deletable at one end; similarly $b'$ is the unique $\utfutf$-deletable element at the other end.
  Consider when \ref{ndtendgameoutcomes16}\ref{ndtendgameoutcome16a} holds.
  Recall that $M'$ has no triangles, and thus $\{a',p_1',p_1,p_2\}$, and $\{p_8,p_9,p_9',b'\}$ are circuits. 
  Since $M'$ is $\utfutf$-fragile, it can be argued that $\{a',p_1,p_3,p_4\}$, $\{p_6,p_7,p_9,b'\}$, and $\{p_4,p_5,p_5',p_6\}$ are circuits,
  and $\{p_1',p_2,p_8,p_{9}'\}$ is a cocircuit.
  We obtain $M_1$ if $(q,q')=(p_5,p_5')$ and $M_2$ if $(q,q')=(p_5',p_5)$.
  When \ref{ndtendgameoutcomes16}\ref{ndtendgameoutcome16b} holds, it can be argued that $\{p_2,p_3,p_5,p_6\}$, $\{p_4,p_5,p_7,b'\}$, and $\{p_6,p_7,p_7',b'\}$ are circuits and $\{p_1'',p_2,p_2',p_6,p_7'\}$ is a cocircuit; we obtain $M_3$ in this case.

  Alternatively, these matroids and representations can be found by a computer search on all $\mathbb{P}$-representable matroids on 14 elements, for $\mathbb{P} \in \{\mathbb{U}_2, \mathbb{H}_5\}$ (recall that all $3$-connected $\mathbb{U}_2$-representable matroids with a $\utfutf$-minor, and at most 15 elements, were enumerated in \cite{BP20}).
  This approach was used to verify the correctness of the representations found by hand.
\end{proof}

\begin{proof}[Proof of \cref{nodeltriplethm}]
  By \cref{nodeltriplematrices}, if $M$ has no delete triples and $|E(M)| \ge 16$, then $|E(M)|=16$ and $M \ba a,b$ is $2$-regular and isomorphic to $M[I|A_i]$, for some $i \in \{1,2,3\}$, with $A_i$ as described in \cref{nodeltriplematrices}.
  By a computer search, we found all $\mathbb{H}_5$-representable matroids that are single-element extensions of these three matroids. 
  Fix some $i \in \{1,2,3\}$.
  For each (not necessarily distinct) pair of extensions of $M[I|A_i]$, say $N_1$ and $N_2$, we found each matroid~$M$ with a pair $\{a,b\}$ such that $M \ba a \cong N_1$ and $M \ba b \cong N_2$, using the splicing techniques described in \cite{BP20}.
  We then discarded any such matroid $M$ with at least one triad.
  For each of the 
  matroids, we verified the matroid indeed has a delete triple.
  For example, for $i=1$ there were $56$ pairwise non-isomorphic single-element extensions that were $2$-regular, and a further $7$ pairwise non-isomorphic single-element extensions that were only $\mathbb{H}_5$-representable; after splicing a pair of these matroids, $368$ matroids were obtained that had no triads.
\end{proof}

\section{Proof of \texorpdfstring{\cref{intro1}}{Theorem 1.1}}
\label{concsec}

Combining \cref{deltripleprop,nodeltriplethm}, we prove our main result.

\begin{proof}[Proof of \cref{intro1}]
  Observe that $U_{2,5}$ is a non-binary $3$-connected strong stabilizer for the class of $\mathbb{P}$-representable matroids, by \cref{u2stabs}.
  We may assume that $M$ has a $U_{2,5}$-minor, for otherwise $M$ is an excluded minor
  for the class of near-regular matroids, in which case $|E(M)| \le 8$ \cite{HMvZ11}.
  Assume that $|E(M)| \ge |E(U_{2,5})| + 11 = 16$.
  By \cref{notriads2}, there exists a matroid~$M_1 \in \Delta^*(M)$ such that $M_1$ has a pair of elements $\{a,b\}$ for which $M_1 \ba a,b$ is $3$-connected and has a $\utfutf$-minor, and $M_1$ has no triads.
  By \cref{osvdelta}, $M_1$ is an excluded minor for the class of $\mathbb{P}$-representable matroids.
  By \cref{utfutffragile}, $M_1 \ba a,b$ is $\utfutf$-fragile.
  If $M_1$ has a delete triple, then, by \cref{deltripleprop}, $|E(M_1)| \le 15$; whereas if $M_1$ has no delete triples, then, by \cref{nodeltriplethm}, $|E(M_1)| \le 15$.
  But $|E(M)| = |E(M_1)|$, so this is contradictory.
  We deduce that $|E(M)| \le 15$, as required.
\end{proof}

\bibliographystyle{acm}
\bibliography{lib}

\appendix

\section*{Appendix: matroids appearing as excluded minors}

The matroids $P_6$, $F_7$, $F_7^-$, $P_8$, and $P_8^=$ are well known (see Oxley~\cite[Appendix]{Oxley11}, for example), as is the rank-$r$ uniform matroid on $n$ elements, $U_{r,n}$.
We now provide representations for other matroids appearing as excluded minors in this paper.  Note that we provide \emph{reduced} representations: that is, we provide a matrix $A$ such that $M \cong M[I|A]$.
For maximum generality, we provide a representation for a matroid $M$ over $\mathbb{P}_M$, the universal partial field of $M$, but in each case, we describe how one can obtain a finite field representation of $M$.

The following are reduced
$\mathbb{K}_2$-representations for $F_7^=$, $\TQ_8$, and $P_8^-$, respectively. 
The partial field $\mathbb{K}_2$ is formally defined in \cite{PvZ10b}, but note that a $\GF(4)$-representation can be obtained by setting $\alpha = \omega$, where $\omega$ is a generator of $\GF(4)$.
Alternatively, two inequivalent $\GF(5)$-representations can be obtained by setting $\alpha \in \{2,3\}$.

The matroid $F_7^=$ can be obtained by relaxing a circuit-hyperplane of $F_7^-$, and it has the following reduced representation:

$$\begin{bmatrix}
    1 & 1 & 0 & 1 \\
    1 & 0 & 1 & 1 \\
    0 & 1 & 1 & \alpha
\end{bmatrix}$$

The matroid $\TQ_8$ was introduced in \cite{BP20}, and it has the following reduced representation:

$$\begin{bmatrix}
    0 & 1 & 1 & 1\\
    1 & \alpha & 1 & \alpha + 1\\
    1 & 1 & 1 & \alpha \\
    1 & \alpha + 1 & \alpha & \alpha
\end{bmatrix}$$

Finally, $P_8^-$ can be obtained by relaxing one of the pair of disjoint circuit-hyperplanes of $P_8$, and it has the following reduced representation: 

$$\begin{bmatrix}
           1 & \alpha+1 &      0 & 1\\
    \alpha+1 & \alpha+1 & \alpha & 1\\
           0 & \alpha   & \alpha & 1\\
           1 & 1        &      1 & 0
\end{bmatrix}$$

\medskip

There is, up to isomorphism, a unique matroid that can be obtained by deleting an element from the affine geometry $\AG(2,3)$; following \cite{HMvZ11}, we denote this matroid $\AG(2,3)\ba e$.
We use $(\AG(2,3)\ba e)^{\Delta Y}$ to denote the self-dual matroid that can be obtained by performing a single \dY\ exchange on a triangle of $\AG(2,3)\ba e$.

\medskip

For a matroid $M$, let $M+e$ denote the free single-element extension of $M$.
Consider the matroids that can be obtained by relaxing zero or more circuit-hyperplanes starting from the Fano matroid $F_7$.
We obtain the sequence $$F_7, F_7^-, F_7^=, \{H_7,M(K_4)+e\}, \{\mathcal{W}^3+e,\Lambda_3\}, Q_6+e, P_6+e, U_{3,7}$$
where $\Lambda_3$ denotes the rank-$3$ tipped free spike.
Note that $H_7$ is the dual of the matroid (unique up to isomorphism) that can be obtained by performing a \dY\ exchange on a triangle of $M(K_4)+e$.

We first provide reduced $\mathbb{H}_4$-representations of $M(K_4)+e$ and $\Lambda_3$ (see \cite{PvZ10b} for a definition of the partial field $\mathbb{H}_4$).
Note that four inequivalent $\GF(5)$-representations can be obtained by substituting $(\alpha,\beta) \in \{(2,2), (3,3), (3,4), (4,3)\}$.

\noindent
\begin{tabular}{p{.45\textwidth} p{0.45\textwidth}}
$M(K_4)+e$:
$\begin{bmatrix}
 1 &  \alpha & \alpha & 1 \\
 0 &       1 &      1 & 1 \\
 1 &       0 & \alpha & \frac{\beta(\alpha-1)}{1-\beta} \\
\end{bmatrix}$ & %
$\Lambda_3$:
$\begin{bmatrix}
 1 &      1 & \alpha+\beta-2\alpha\beta & \alpha\beta-1 \\
 1 & \alpha &                         0 & \alpha(\beta-1) \\
 1 &      0 &           \alpha(1-\beta) & \alpha(\beta-1) \\
\end{bmatrix}$
\end{tabular}

Finally, we provide reduced $\mathbb{H}_2$-representations of $\mathcal{W}^3+e$ and $Q_6+e$ (see \cite{PvZ10b} for a definition of the partial field $\mathbb{H}_2$).
Note that two inequivalent $\GF(5)$-representations can be obtained by substituting $i \in \{2,3\}$.

\noindent
\begin{tabular}{p{.45\textwidth} p{0.45\textwidth}}
$\mathcal{W}^3+e$:
$\begin{bmatrix}
  1 & 0 & i & 1 \\
  i & 1 & 0 & 1 \\
  0 & i & 1 & 1
\end{bmatrix}$ & 
$Q_6+e$:
$\begin{bmatrix}
  \frac{i+1}{2} & 0 & i & 1 \\
  1 & 1 & 1 & 1 \\
  0 & \frac{1-i}{2} & -i & 1
\end{bmatrix}$
\end{tabular}

\end{document}